
\documentclass[11pt]{amsart}









\newtheorem{theorem}{Theorem}[section]
\newtheorem{prop}[theorem]{Proposition}
\newtheorem{lemma}[theorem]{Lemma}
\newtheorem{cor}[theorem]{Corollary}




\theoremstyle{definition}
\newtheorem{defin}[theorem]{Definition}

\newtheorem{remark}[theorem]{Remark}







\numberwithin{equation}{section}







\usepackage{amsthm}
\usepackage[arrow,curve,matrix]{xy}
\usepackage{amsmath}  
\usepackage{amssymb}
\usepackage{graphicx}
\usepackage{bbm}
\usepackage{mathrsfs}
\usepackage{amscd}
\usepackage[utf8x]{inputenc}
\usepackage{a4wide}


\DeclareMathOperator{\Stab}{Stab}
\DeclareMathOperator{\Spec}{Spec}
\DeclareMathOperator{\pr}{pr}
\DeclareMathOperator{\id}{id}
\DeclareMathOperator{\sgn}{sgn}
\DeclareMathOperator{\Hom}{Hom}

\DeclareMathOperator{\ihom}{\underline{\mathcal{H}\textit{om}}}
\DeclareMathOperator{\iext}{\underline{\mathcal{E}\textit{xt}}}
\DeclareMathOperator{\sHom}{\mathcal{H}\textit{om}}
\DeclareMathOperator{\Ext}{Ext}
\DeclareMathOperator{\ext}{ext}
\DeclareMathOperator{\Coh}{Coh}
\DeclareMathOperator{\QCoh}{QCoh}
\DeclareMathOperator{\sExt}{\mathcal{E}\textit{xt}}
\DeclareMathOperator{\D}{D}
\DeclareMathOperator{\Ho}{H}
\DeclareMathOperator{\Hyp}{H}
\DeclareMathOperator{\For}{For}
\DeclareMathOperator{\Inf}{Inf}
\DeclareMathOperator{\Res}{Res}
\DeclareMathOperator{\Hilb}{Hilb}
\DeclareMathOperator{\GHilb}{GHilb}
\DeclareMathOperator{\glob}{\Gamma}
\DeclareMathOperator{\Aut}{Aut}
\DeclareMathOperator{\Tor}{Tor}
\DeclareMathOperator{\sTor}{\mathfrak{T}\textit{or}}
\DeclareMathOperator{\codim}{codim}
\DeclareMathOperator{\GL}{GL}

\DeclareMathOperator{\supp}{supp}
\DeclareMathOperator{\Pic}{Pic}
\DeclareMathOperator{\Kom}{Kom}

\DeclareMathOperator{\triv}{triv}
\DeclareMathOperator{\Yon}{Yon}
\DeclareMathOperator{\tot}{tot}
\DeclareMathOperator{\Mod}{Mod}
\DeclareMathOperator{\im}{im}

\newcommand{\alt}{\mathfrak a}
\newcommand{\sym}{\mathfrak S}
\newcommand{\C}{\mathbbm C}
\newcommand{\N}{\mathbbm N}
\newcommand{\Q}{\mathbbm Q}

\newcommand{\Z}{\mathbbm Z}

\newcommand{\F}{\mathcal F}
\newcommand{\E}{\mathcal E}

\newcommand{\M}{\mathcal M}
\newcommand{\Fb}{\mathcal F^\bullet}
\newcommand{\Eb}{\mathcal E^\bullet}
\newcommand{\Gb}{\mathcal G^\bullet}
\newcommand{\Ab}{\mathcal A^\bullet}
\newcommand{\m}{\mathfrak m}
\newcommand{\dv}{\mathbbm v}
\newcommand{\nax}{\mathfrak n}
\newcommand{\reg}{\mathcal O}
\newcommand{\I}{\mathcal I}
\newcommand{\n}{[n]}
\newcommand{\eps}{\varepsilon}
\renewcommand{\theta}{\vartheta}
\renewcommand{\rho}{\varrho}
\renewcommand{\phi}{\varphi}
\renewcommand{\_}{\underline{\,\,\,\,}}


\setcounter{secnumdepth}{1}

\begin{document}

\title{Extension groups of tautological sheaves on Hilbert schemes}
\author{Andreas Krug}
\address{Universität Augsburg, 
Lehrstuhl für Algebra und Zahlentheorie}
\email{andreas.krug@math.uni-augsburg.de}
\begin{abstract}
 We give formulas for the extension groups between tautological sheaves and more general between tautological objects twisted by a determinant line bundle on the Hilbert scheme of points on a smooth quasi-projective surface. We do this using L. Scala's results about the image of tautological sheaves under the Bridgeland--King--Reid equivalence. We also compute the Yoneda products in terms of these formulas.
\end{abstract}
\maketitle
\section{Introduction}
For every smooth quasi-projective surface $X$ over $\C$ there is a series of associated higher dimensional smooth varieties namely the \textit{Hilbert schemes of $n$ points on $X$} for $n\in\N$. They are the fine moduli spaces $X^{[n]}$ of zero dimensional subschemes of length $n$ of $X$.
Thus, there is a universal family $\Xi$ together  with its projections
\[X\overset{\pr_X}\gets\Xi\overset{\pr_{X^{[n]}}}\to X^{[n]}\,.\]
Using this, one can associate to every coherent sheaf $F$ on $X$ the so called \textit{tautological sheaf} $F^{[n]}$ on each $X^{[n]}$ given by  
\[F^{[n]}:=\pr_{X^{[n]}*}\pr_X^* F\,.\]
More generally, for any object of the bounded derived category $F^\bullet\in\D^b(X)$ using the Fourier-Mukai transform with kernel the structural sheaf of $\Xi$ yields the \textit{tautological object}
\begin{align*}(F^\bullet)^{[n]}:=\Phi^{X\to X^{[n]}}_{\reg_\Xi}(F^\bullet)\,.\end{align*}
It is well known (see \cite{Fog}) that the Hilbert scheme $X^{[n]}$ of $n$ points on $X$ is a resolution of the singularities of $S^nX=X^n/\sym_n$ via the \textit{Hilbert-Chow morphism} 
\[\mu\colon X^{[n]}\to S^nX\quad,\quad \xi\mapsto\sum_{x\in\xi}\ell(\xi,x)\cdot x\,.\]
For every line bundle $L$ on $X$ the line bundle $L^{\boxtimes n}\in \Pic(X^n)$ descents to the line bundle $(L^{\boxtimes n})^{\sym_n}$ on $S^nX$. Thus, for every $L\in\Pic(X)$ there is the \textit{determinant line bundle} on $X^{[n]}$ given by 
\[\mathcal D_L:=\mu^*((L^{\boxtimes n})^{\sym_n})\,.\]
One goal in studying Hilbert schemes of points is to find formulas expressing the invariants of $X^{[n]}$ in terms of the invariants of the surface $X$. This includes the invariants of the induced sheaves defined above. There are already some results in this area.
For example, in \cite{Leh} there is a formula for the Chern classes of $F^{[n]}$ in terms of those of $F$ in the case that $F$ is a line bundle. In \cite{BNW} the existence of 
universal formulas, i.e. formulas independent of the surface $X$, expressing the characteristic classes of any tautological sheaf in terms of the characteristic classes of $F$ is shown and those formulas are computed  in some cases.
Furthermore, Danila (\cite{Dandual}, \cite{Dan}, \cite{Danglob}) and Scala (\cite{Sca1}, \cite{Sca2}) proved formulas for the cohomology of tautological sheaves, determinant line bundles, and some natural constructions (tensor, wedge, and symmetric products) of these. 
In particular, in \cite{Sca2} there is the formula 
\begin{align}\label{Scaformula}\Ho^*(X^{[n]},F^{[n]}\otimes \mathcal D_L)\cong \Ho^*(F\otimes L)\otimes S^{n-1}\Ho^*(L)\end{align}
for the cohomology of a tautological sheaf twisted by a determinant line bundle. 
In this paper we compute extension groups between tautological sheaves and more generally \textit{twisted tautological objects}, i.e. tautological objects tensorised with determinant line bundles. 
Our main theorem is the existence of natural isomorphisms of graded vector spaces 
\begin{align*}
\Ext^*((E^\bullet)^{[n]}\otimes \mathcal D_L,(F^\bullet)^{[n]}\otimes \mathcal D_M)&\cong \begin{aligned} &\Ext^*(E^\bullet\otimes L,F^\bullet\otimes M)\otimes S^{n-1}\Ext^*(L,M)\oplus\\ &\Ext^*(E^\bullet\otimes L,M)\otimes \Ext^*(L,F^\bullet\otimes M)\otimes S^{n-2}\Ext^*(L,M)\end{aligned}
\end{align*}
for objects $E^\bullet,F^\bullet\in\D^b(X)$ and line bundles $L,M\in \Pic(X)$. We also give a similar formula for $\Ext^*((E^\bullet)^{[n]}\otimes \mathcal D_L,\mathcal D_M)$. Since $\mathcal D_{\reg_X}=\reg_{X^{[n]}}$, by setting $L=M=\reg_X$ the extension groups and the cohomology of the dual of non-twisted tautological bundles occur as special cases. As an application of the formula for the extension groups we show for $X$ a projective surface with a trivial canonical bundle that twisted tautological objects are never spherical or $\mathbbm P^n$-objects in $\D^b(X^{[n]})$.

We will use Scala's approach of \cite{Sca1}, which in turn uses the Bridgeland--King--Reid equivalence. 
Let $G$ be a finite group acting on a smooth quasi-projective variety $M$. A \textit{G-cluster} on $M$ is a zero-dimensional closed $G$-invariant subscheme $Z$ of $M$ where $\glob(Z,\reg_Z)$ equipped with the induced $G$-action is isomorphic to the natural representation $\C^G$.
Bridgeland, King and Reid proved in \cite{BKR} that under some requirements the irreducible component $\Hilb^G(M)$, called the \textit{Nakamura $G$-Hilbert scheme}, of the fine moduli space of $G$-clusters on $M$ which contains the points corresponding to free orbits is a crepant resolution of the quotient $M/G$. Furthermore, they showed that the $G$-equivariant Fourier-Mukai transform with kernel the structural sheaf of the universal family $\mathcal Z$ of $G$-clusters 
\[\Phi:=\Phi_{\reg_{\mathcal Z}}\colon \D^b(\Hilb^G(M))\to \D^b_G(M)\]
between the bounded derived category of $\Hilb^G(M)$ and the equivariant bounded derived category $\D_G^b(M)=\D^b(\Coh_G(M))$ of $M$ is an equivalence of triangulated categories. Hence, $\Phi$ is called the 
\textit{Bridgeland--King--Reid equivalence}. 
Haiman proved in \cite{Hai} that $X^{[n]}$ is isomorphic as a resolution of $S^nX$ to $\Hilb^{\sym_n}(X^n)$ with the \textit{isospectral Hilbert scheme}
\[I^nX:=\bigl(X^{[n]}\times_{S^nX} X^n\bigr)_{\text{red}}\]
as the universal family of $\sym_n$-clusters. Furthermore, the conditions of the Bridgeland--King--Reid theorem are satisfied in this situation. Thus, there is the equivalence
\[\Phi:=\Phi_{\reg_{I^nX}}^{X^{[n]}\to X^n}\colon \D^b(X^{[n]})\xrightarrow\simeq \D^b_{\sym_n}(X^n)\,.\]
In general, for $\Eb,\Fb\in \D^b(\mathcal A)$ objects in the derived category of any abelian category with enough injectives and every $k\in \Z$ there is the identification  $\Ext^k(\Eb,\Fb)\cong \Hom_{\D^b(\mathcal A)}(\Eb,\Fb[k])$. Using this, we can compute extension groups on $X^{[n]}$ as $\sym_n$-invariant extension groups on $X^n$ 
after applying the Bridgeland-King-Reid equivalence, i.e. for $\Eb,\Fb\in\D^b(X^{[n]})$ we have
\[\Ext^*_{X^{[n]}}(\Eb,\Fb)\simeq \sym_n\Ext^*_{X^n}(\Phi(\Eb),\Phi(\Fb))\,.\]
Using the quotient morphism $\pi\colon X^n\to S^nX$ the right-hand side can be rewritten further as
\begin{align*}
\sym_n\Ext^i_{X^n}(\Phi(\E^\bullet),\Phi(\F^\bullet)))
\cong \Ho^i(S^nX,[\pi_*R\sHom_{X^n}(\Phi(\Eb),\Phi(\Eb)]^{\sym_n})\,. 
\end{align*}
So instead of computing the extension groups of tautological sheaves and objects on $X^{[n]}$ directly, the approach is to compute them for the image of these objects under the Bridgeland--King--Reid equivalence.  
In order to do this we need a good description of $\Phi(F^{[n]})\in \D^b_{\sym_n}(X^n)$ for $F^{[n]}$ a tautological sheaf. This was provided by Scala in \cite{Sca1} and \cite{Sca2}. He showed that $\Phi(F^{[n]})$ is always concentrated in degree zero. This means that we can replace $\Phi$ by its non-derived version $p_*q^*$ where $p$ and $q$ are the projections from $I^nX$ to $X^n$ and $X^{[n]}$ respectively, i.e. we have $\Phi(F^{[n]})\simeq p_*q^*(F^{[n]})$. Moreover, he gave for $p_*q^*(F^{[n]})$ a right resolution $C^\bullet_F$. This is a $\sym_n$-equivariant complex associated to $F$ concentrated in non-negative degrees whose terms are of the form
\[C^0_F=\bigoplus_{i=1}^n \pr_i^* F\quad,\quad
C^p_F=\bigoplus_{I\subset [n],\,|I|=p+1} F_I \text{  for $p>0$.}
\]
Here, $F_{I}$ denotes the sheaf $\iota_{I*}p_I^*F$, where $\iota_{I}\colon \Delta_{I}\to X^n$ is the inclusion of the partial diagonal and 
$p_{I}\colon \Delta_{I}\to X$ is the projection induced by the projection $\pr_i\colon X^n\to X$ for any $i\in I$.
 
The main step of the proof of the main formula is to compute $[\pi_*R\sHom_{X^n}(\Phi(E^{[n]}),\Phi(F^{[n]})]^{\sym_n}$ in the case of tautological bundles, i.e. for $E,F$ locally free sheaves on $X$. We show that  
\begin{align}\label{bundlehom}
[\pi_*R\sHom(\Phi(E^{[n]}),\Phi(F^{[n]}))]^{\sym_n}\simeq[\pi_*\sHom(C^0_E,C^0_F)]^{\sym_n}\,.           
\end{align}
In particular the $\sym_n$-invariants of the higher sheaf-Ext vanish.
The isomorphism in degree zero is shown using the fact that the supports of the terms $C^p_E$ for $p>0$ have codimension at least two and the normality of the variety $X^n$. For the vanishing of the higher derived sheaf-Homs we do computations in the spectral sequences associated to the bifunctor
\[\ihom(\_,\_):=[\pi_*\sHom_{X^n}(\_,\_)]^{\sym_n}\] and the complexes $C^\bullet_E$ and $C^\bullet_F$.    
We can generalise (\ref{bundlehom}) from locally free sheaves $E$ and $F$ to arbitrary objects $E^\bullet,F^\bullet\in \D^b(X)$ by taking locally free resolutions on $X$ and using some formal arguments for derived functors. Using the fact that $\Phi(\F^\bullet\otimes \mathcal D_L)\simeq \Phi(\F^\bullet)\otimes L^{\boxtimes n}$ holds for every object $\F^\bullet\in \D^b(X^{[n]})$ we can generalise further to twisted tautological objects and get
\begin{align*}
\left[\pi_*R\sHom(\Phi((E^\bullet)^{[n]}\otimes \mathcal D_L),\Phi((F^\bullet)^{[n]}\otimes \mathcal D_M))\right]^{\sym_n}&\simeq \left[\pi_*R\sHom(C^0_{E^\bullet}\otimes L^{\boxtimes n},C^0_{F\bullet}\otimes M^{\boxtimes n})\right]^{\sym_n}\,.
\end{align*}
Then we can compute the desired extension groups as the cohomology of the object on the right and get the main theorem.
There is also a similar formula for the extension groups between two determinant line bundles and we can apply our arguments to generalise Scala's result to get a formula for the cohomology of twisted tautological objects.
So now there are formulas for $\Ext^*_{X^{[n]}}(\Fb,\Gb)$ whenever both of $\Fb$ and $\Gb$ are either twisted tautological objects or determinant line bundles. In the last section we describe how to compute all the possible Yoneda products in terms of these formulas.

The author wants to thank his adviser Marc Nieper-Wi\ss kirchen for his many valuable suggestions. He thanks Malte Wandel for interesting and helpful discussions about the topic of this paper. 
\section{Notations and conventions}\label{not}
\begin{itemize}
 \item Given an abelian category $\mathcal A$ we will write $\cong$ for isomorphisms in $\mathcal A$ and $\simeq$ for isomorphisms in the derived category $\D(\mathcal A)$. If an object $A\in\mathcal A$ shows up on one side of the sign $\simeq$ it is considered as the complex with $A$ in degree 0 and vanishing terms elsewhere. If we want to emphasize that an isomorphism is $G$-equivariant we will sometimes write $\cong_G$ respectively $\simeq_G$.
\item For an exact functor $F\colon \mathcal A\to \mathcal B$ between abelian categories we write again $F$ for the induced functor $\D(\mathcal A)\to \D(\mathcal B)$ on the  level of the derived categories. If we want to emphasize that the image of a complex $A^\bullet\in\D^b(\mathcal A)$ under $F$ is computed by applying $F\colon\mathcal A\to \mathcal B$ term-wise we write $F(A)^\bullet$ instead of $F(A^\bullet)$. We also sometimes write $F(A^\bullet)$ in formulas for object in the derived categoies when $F$ is not exact although in this case $F$ is not a functor between the derived categories. This means that we apply the functor $\Kom(F)$ to the complex $A^\bullet$ and consider it again as an object in the derived category afterwards.
\item
Let $C^\bullet\in \D^b(\mathcal A)$ for any abelian category $\mathcal A$. We write $\mathcal H^i(C^\bullet)$ for the $i$-th cohomology of a complex, i.e. $\mathcal H^i(C^\bullet)=\ker(d^i)/\im(d^{i-1})$. If $\mathcal A=\Coh(X)$ is the category of coherent sheaves on a scheme $X$, we write $\Ho^i(X,C^\bullet)$ for the $i$-th (hyper-)cohomology of the complex of sheaves, i.e. $\Ho^i(X,\_)=R^i\Gamma(X,\_)$. 
We will often drop the $X$ in the notation, i.e. we write $\Ho^i(\_):=\Ho^i(X,\_)$, especially when $X$ is a fixed smooth quasi-projective surface.
\item On a scheme $X$ the sheaf-Hom functor is denoted by $\sHom_{\reg_X}$, $\sHom_X$ or just $\sHom$.
We write $(\_)^\vee=\sHom(\_,\reg_X)$ for the operation of taking the dual of a sheaf and $(\_)^\dv=R\sHom(\_,\reg_X)$ for the derived dual.
 \item All varieties are reduced and irreducible.
 \item Graded vector spaces are denoted by $V^*:=\oplus_{i\in\Z} V^i[-i]$. The symmetric power of a graded vector space is taken in the graded sense. That means that $S^nV^*$ are the coinvariants of $(V^*)^{\otimes n}$ under the $\sym_n$-action given on homogeneous vectors by
\[\sigma(u_1\otimes\dots\otimes u_k):=\eps_{\sigma,p_1,\dots,p_k}(u_{\sigma^{-1}(1)}\otimes\dots\otimes u_{\sigma^{-1}(k)})\,.\]
Here the $p_i$ are the degrees of the $u_i$ and the sign $\eps_{\sigma,p_1,\dots,p_k}$ is defined by setting $\eps_{\tau,p_1,\dots,p_k}=(-1)^{p_i\cdot p_{i+1}}$ for the transposition $\tau=(i\,,\,i+1)$ and requiring it to be a homomorphism in $\sigma$. Since the graded vector spaces we will consider are defined over $\C$, the coinvariants under this action coincide with the invariants under the isomorphism
\[ u_1\cdots u_n\mapsto \frac1{n!}\sum_{\sigma\in\sym_n} \eps_{\sigma,p_1,\dots,p_k}(u_{\sigma^{-1}(1)}\otimes\dots\otimes u_{\sigma^{-1}(k)})\,.\]
\item For every positive integer $n\in\N$ we set $\n:=\{1,\dots,n\}$. For any subset $I\subset \n$ we denote by $\bar I=\n\setminus I$ its complement in $\n$.
We denote by $\sym_I$ the group of permutations of the set $I$ and consider it as a subgroup of $\sym_n=\sym_{[n]}$. For better readability we sometimes write
$\overline{\sym_I}$ instead of $\sym_{\bar I}$. 
\item We will often write the symbol ``PF`` above an isomorphism symbol to indicate that the isomorphism is given by the projection formula.
 Also we use ``lf`` when the given isomorphism is because of some sheaf being locally free and thus a tensor product or sheaf-Hom functor needs not to be derived  or commutes  with taking cohomology.
\item Let $\iota\colon Z\to X$ be a closed embedding of schemes and let $F\in \QCoh(X)$ be a quasi-coherent sheaf on $X$. The symbol $F_{\mid Z}$ will sometimes denote the sheaf $\iota^*F\in \QCoh(Z)$ and at other times the sheaf $\iota_*\iota^*F\in\QCoh(X)$. The restriction morphism \[F\to F_{\mid Z}=\iota_*\iota^*F\] is the unit of the adjunction $(\iota^*, \iota_*)$. The image of a section $s\in F$ under this morphism is denoted by $s_{\mid Z}$.   
\end{itemize}

\section{Equivariant sheaves}
\subsection{Basic definitions}
Let $G$ be a finite group acting on a scheme $X$.
All group actions will be left actions.
Let $F\in \QCoh(X)$ be a quasi-coherent sheaf. A \textit{$G$-linearization} on $F$ is a family of $\reg_X$-linear isomorphisms
$(\lambda_g\colon F\to g^*F)_{g\in G}$ with the properties $\lambda_e=\id$ and 
\[\lambda_{hg}= g^*\lambda_h\circ \lambda_g\colon F\to g^*h^*F\cong(hg)^*F\,.\] 
A quasi-coherent sheaf $F\in \QCoh(X)$ together with a $G$-linearization is called a \textit{$G$-equivariant sheaf} or just a \textit{$G$-sheaf}.
For two $G$-sheaves $(E,\lambda)$, $(F,\mu)$ the group $G$ acts on $\Hom_{\reg_X}(E,F)$ by 
\[ g\cdot\phi:=\mu_{g^{-1}}^{-1}\circ ((g^{-1})^*\phi)\circ \lambda_{g^{-1}}\,.\]
A \textit{morphism of $G$-sheaves} is an $\reg_X$-linear morphism of the underlying ordinary sheaves which is invariant under this action, i.e.
\[G\Hom_{\reg_X}(E,F):=(\Hom_{\reg_X}(E,F))^G=\{\phi\colon E\to F\mid \mu_g\circ \phi=(g^*\phi)\circ \lambda_g\,\forall g\in G\}\,.\]
This gives the abelian category $\QCoh_G(X)$ of $G$-equivariant sheaves on $X$. The full abelian subcategory of $G$-sheaves 
whose underlying sheaves are coherent is denoted by
 $\Coh_G(X)$.
\subsection{Inflation and Restriction}
For every subgroup $H\subset G$ there is the \textit{restriction functor} $\Res_G^H\colon \QCoh_G(X)\to \QCoh_H(X)$ given by restricting the $G$-linearization of a $G$-sheaf $F$ to a $H$-linearization. In the special case $H=1$ we call the functor
\[\For:=\Res_G^1\colon \QCoh_G(X)\to \QCoh(X)\]
also the \textit{forgetful functor}. We choose a system of representatives of $H\setminus G$. Then there is the \textit{inflation functor}
\[\Inf^G_H\colon \QCoh_H(X)\to\QCoh_G(X)\quad,\quad \Inf_H^G(E,\lambda):= \bigoplus_{[g]\in H\setminus G} g^*E\]
depending only by canonical isomorphism on the chosen representatives.
The $G$-linearization $\mu$ of $\Inf_H^G(E)$ is given as follows.
For every $\sigma\in G$ and $g$ one of the chosen representatives of the classes in $H\setminus G$ there exist uniquely $h\in H$ and $\hat g$ one of the 
chosen representatives such that $h\cdot \hat g= g \cdot \sigma$. 
Now for a local section $s=(s_g\in g^*E)_{[g]\in H\setminus G}$ we define 
$\mu_\sigma(s)_g\in \sigma^*g^* E$ as the image of $s_{\hat g}$ under the isomorphism
\[ \hat g ^* E\overset{\hat g ^*\lambda_h}{\to} \hat g^*h^*E\cong (h\hat g)^* E=(g\sigma)^* E\cong \sigma^*g^* E\,.\]
The inflation functor is left adjoint to the restriction functor. The adjoint pair
\[\Inf^G_H\colon \QCoh_H(X) \rightleftharpoons \QCoh_G(X)\colon \Res_G^H\]
clearly restricts to an adjoint pair of the full subcategories of equivariant coherent sheaves. Both the inflation and the restriction functor are exact. 
Let $G$ act transitively on a set $I$. 
Let 
$\M$ be a $G$-sheaf on $X$ where the underlying ordinary sheaf admits a decomposition $\M=\oplus_{i\in I} \M_i$ such that for any $i\in I$ and $g\in G$ the linearization $\lambda$ restricted to $\M_i$ is an isomorphism $\lambda_g\colon \M_i\overset\cong\to g^*\M_{g(i)}$. Then the $G$-linearization of $\M$ restricts to a  $\Stab_G(i)$-linearization of $\M_i$.
\begin{lemma}\label{inf}\label{infrem}
 Under these assumptions for every  $i\in I$ we have $\M\cong_G \Inf_{\Stab(i)}^G\M_i$.
\end{lemma}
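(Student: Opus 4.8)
The plan is to construct an explicit $G$-equivariant isomorphism and reduce its equivariance to the cocycle identity $\lambda_{hg}=g^*\lambda_h\circ\lambda_g$. Fix $i\in I$ and write $H:=\Stab_G(i)$. First I would record that the $H$-linearization on $\M_i$ is well defined: for $h\in H$ we have $h(i)=i$, so by hypothesis $\lambda_h$ restricts to an isomorphism $\M_i\to h^*\M_i$, and restricting the $G$-cocycle for $\lambda$ to elements of $H$ shows these satisfy the $H$-linearization axioms. Next I would set up the bookkeeping that matches the two index sets: since $G$ acts transitively on $I$, the orbit--stabilizer theorem gives a bijection $H\backslash G\to I$, $[g]\mapsto g^{-1}(i)$ (well defined because $h^{-1}(i)=i$ for $h\in H$), so that the summand $g^*\M_i$ of $\Inf_H^G\M_i$ is to be sent to the summand $\M_{g^{-1}(i)}$ of $\M$.

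With this dictionary fixed, I would define the candidate morphism $\Phi\colon\Inf_H^G\M_i\to\M$ on underlying sheaves summand by summand as
\[\Phi|_{g^*\M_i}:=\lambda_g^{-1}\colon g^*\M_i\to\M_{g^{-1}(i)},\]
where I use that $\lambda_g$ restricts to an isomorphism $\M_{g^{-1}(i)}\to g^*\M_{g(g^{-1}(i))}=g^*\M_i$. Because each component is an isomorphism and $[g]\mapsto g^{-1}(i)$ is a bijection of index sets, $\Phi$ is an isomorphism of the underlying $\reg_X$-modules.

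It remains to check that $\Phi$ is $G$-equivariant, i.e.\ $\lambda_\sigma\circ\Phi=(\sigma^*\Phi)\circ\mu_\sigma$ for all $\sigma\in G$, where $\mu$ is the inflation linearization. I would verify this one summand at a time: fixing a representative $\hat g$ and letting $h\in H$ and the representative $g$ be determined by $h\hat g=g\sigma$ (as in the definition of $\mu$), one checks that both composites send the summand $\hat g^*\M_i$ into $\sigma^*\M_{g^{-1}(i)}$, using $\sigma(\hat g^{-1}(i))=g^{-1}(i)$. On that summand the required equality $\lambda_\sigma\circ\lambda_{\hat g}^{-1}=\sigma^*\lambda_g^{-1}\circ\hat g^*\lambda_h$ follows by applying the cocycle identity to the two factorizations of the same group element $h\hat g=g\sigma$, namely $\hat g^*\lambda_h\circ\lambda_{\hat g}=\lambda_{h\hat g}=\lambda_{g\sigma}=\sigma^*\lambda_g\circ\lambda_\sigma$, and rearranging. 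Once equivariance is established, an equivariant morphism whose underlying map is an isomorphism is automatically an isomorphism in $\QCoh_G(X)$ (the inverse sheaf map is again equivariant), so $\Phi$ is the desired isomorphism $\M\cong_G\Inf_{\Stab(i)}^G\M_i$.

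A cleaner alternative that avoids the explicit $\mu$-computation is to use the adjunction $\Inf_H^G\dashv\Res_G^H$ directly: the summand inclusion $\M_i\hookrightarrow\Res_G^H\M$ is $H$-equivariant (again because $h(i)=i$), and its adjoint $\Inf_H^G\M_i\to\M$ is $G$-equivariant by construction, so one only has to identify this adjoint with the map above and check that it is an isomorphism on underlying sheaves. I expect the main obstacle in either route to be purely organizational: keeping the correspondence between the right cosets $H\backslash G$, their chosen representatives, and the points $g^{-1}(i)\in I$ consistent throughout, so that the cocycle identity can be applied to the relation $h\hat g=g\sigma$ without indexing errors.
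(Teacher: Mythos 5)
Your proof is correct, but your primary route differs from the paper's. The paper does not construct the isomorphism at all: it observes that $\Inf_{\Stab(i)}^G\M_i$ is characterized up to canonical isomorphism by the adjunction $G\Hom(\Inf_{\Stab(i)}^G\M_i,\F)\cong \Stab(i)\Hom(\M_i,\Res\F)$, and then shows directly that $\M$ represents the same functor — a $G$-equivariant $\phi\colon\M\to\F$ is determined by its component $\phi_i$ via $\phi_j=\mu_g^{-1}\circ(g^*\phi_i)\circ(\lambda_{g|\M_j})$ for any $g$ with $g(j)=i$, and conversely any $\Stab(i)$-equivariant $\phi_i$ extends by this formula. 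That Yoneda-style argument is shorter and sidesteps the bookkeeping with coset representatives and the inflation linearization $\mu$ entirely, at the cost of leaving the isomorphism implicit. Your main route builds the map $\Phi|_{g^*\M_i}=\lambda_g^{-1}$ explicitly and verifies equivariance from the cocycle identity applied to $h\hat g=g\sigma$; this is more work but yields a concrete formula for the isomorphism, which can be useful downstream (e.g.\ when tracking morphisms as in Danila's lemma). Your checks are right: the dictionary $[g]\mapsto g^{-1}(i)$ is well defined and $G$-equivariant for the action $\sigma(Hg)=Hg\sigma^{-1}$ of Remark \ref{conv}, the target summands match because $\sigma(\hat g^{-1}(i))=g^{-1}(i)$, and the identity $\lambda_\sigma\circ\lambda_{\hat g}^{-1}=\sigma^*\lambda_g^{-1}\circ\hat g^*\lambda_h$ is exactly the two factorizations of $\lambda_{h\hat g}=\lambda_{g\sigma}$. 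Your sketched alternative via the adjoint of the $H$-equivariant inclusion $\M_i\hookrightarrow\Res_G^H\M$ is essentially the paper's argument in unit/counit language.
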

\begin{proof}
 We only have to show that $\M$ fulfills the adjointness property of $\Inf_{\Stab(i)}^G(\M_i)$. Let $(\F,\mu)$ be a $G$-sheaf and $\phi\colon \M\to \F$ a $G$-equivariant morphism. For $j\in I$ let $\phi_j\colon \M_j\to \F$ be the component of $\phi$. Choose a $g\in G$ with $g(j)=i$. Then by the $G$-equivariance
\[\phi_j= \mu_g^{-1}\circ(g^*\phi_i)\circ (\lambda_{g|\M_j})\,.\]
Thus $\phi$ is determined by the $\Stab(i)$-equivariant morphism $\phi_i$. On the other hand every given $\Stab(i)$-equivariant morphism 
$\phi_i\colon \M_i\to\F$ gives rise to a $G$-equivariant $\phi\colon \M\to \F$ by taking the above equation as the definition of the other components.     
\end{proof}
\begin{remark}\label{conv}
Conversely the inflated sheaf as defined above can be written in the form of $\M$ from above by setting $I=H\setminus G$ with $\sigma\in G$ acting on $I$ by $\sigma(Hg)=Hg\sigma^{-1}$.
\end{remark} 
\subsection{Schemes with trivial $G$-action}\label{triv} 
If $G$ acts trivially on $X$, a $G$-linearization of $F$ is just a $G$-action on $F$, i.e. a family of $G$-actions on $F(U)$ for each open subset $U\subset X$,
which are compatible with the restrictions. Thus, in this case there is also the \textit{functor of taking invariants}
$[\_]^G\colon \QCoh_G(X)\to \QCoh(X)$ given by $F^G(U):=[F(U)]^G$. If the scheme is defined over a field of characteristic zero the functor of taking invariants is exact because of the existence of the Reynolds operator. Considering an ordinary sheaf on $X$ as a sheaf with the trivial $G$-action also yields an exact functor $\triv\colon \QCoh(X)\to\QCoh_G(X)$.
Togethter these functors form the adjoint pair
\[\triv \colon \QCoh(X) \rightleftharpoons \QCoh_G(X)\colon [\_]^G\]
 Again both functors restrict to functors between the coherent categories. We will write interchangeably $[\_]^G$ and $(\_)^G$ or leave the brackets away. 
\subsection{Equivariant geometric functors}\label{geomfun}
Let $G$ act also on another scheme $Y$ and $f\colon X\to Y$ be an $G$-equivariant morphism.
Then there are the \textit{equivariant geometric functors} \[\otimes,\sHom\colon \QCoh_{G}(X)\times\QCoh_{G}(X)\to \QCoh_G(X)\,,\]
 $f_*\colon \QCoh_G(X)\to\QCoh_G(Y)$ and 
$f^*\colon \QCoh_G(Y)\to \QCoh_G(X)$ which are defined in a canonical way such that they are compatible with the 
corresponding non-equivariant functors via the forgetful functor. 
More concretely, since $f$ commutes with the action of every $\sigma\in G$ for $D,E\in\QCoh(X)$ and $F\in \QCoh(Y)$ there are natural isomorphisms $f^*\sigma^* F \cong \sigma^*f^* F$, $f_*\sigma^*E\cong \sigma^*f_*E$ (flat base change) and $\sigma^*D\otimes \sigma^*E\cong \sigma^*(D\otimes E)$. This allows us to define the $G$-linearization of the pull-back, the push-forward and the tensor product as the pull-back, push-forward and tensor product of the $G$-linearization of the original sheaves. Since the morphism $\sigma$ is an automorphism, the natural morphism $\sigma^*\sHom(D,E)\to \sHom(\sigma^*D,\sigma^*E)$ is also an isomorphism.
Using this identification, the linearization $\alpha$ of $\sHom(D,E)$ is given by $\alpha_\sigma(\phi)=(\mu_{\sigma|U})\circ \phi\circ(\lambda^{-1}_{\sigma|U})$ where $\phi\in \sHom(D,E)(U)$ for an open subset $U\subset X$ and $\lambda$ and $\mu$ are the linearizations of $D$ and $E$.   
Because of the compatibility with the forgetful functor, all these functors restrict to functors between the 
full subcategories of coherent sheaves if and only if the corresponding non-equivariant geometric functors do. Every $G$-linearization induces a $G$-action on the global sections of $X$.   
Hence, there is also the functor \[\glob(X,\_)\colon \QCoh(X)\to \Mod(\Z[G])\] mapping to the category of abelian groups equipped with an additive $G$-action. If $X$ is defined over a field $k$ we always assume the structural morphism $X\to \Spec k$ to be $G$-invariant, i.e $G$ acts $k$-linear on $X$. Thus, in this case the global sections functor factorises  over the category of $k[G]$-modules. We have the formula
\[G\Hom(D,E)=[\glob(X,\sHom(D,E))]^G\,.\]
\begin{lemma}\label{inffun}
Let $H\le G$ be a subgroup, $\E$ a $G$-sheaf and $\F$ a $H$-sheaf on $X$. We set $\Inf=\Inf_H^G$ and $\Res=\Res_G^H$. Then
\begin{align*}\E\otimes\Inf\F\cong_G \Inf(\Res(\E)\otimes \F)\,&,\, \sHom(\E,\Inf \F)\cong_G \Inf(\sHom(\Res \E,\F))\,,\\ \sHom(\Inf \F,\E)&\cong_G \Inf(\sHom(\F,\Res \E))\,.
\end{align*}
\end{lemma}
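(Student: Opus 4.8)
The plan is to treat all three isomorphisms uniformly: in each case I would distribute the relevant geometric functor over the direct sum defining $\Inf\F$, use the $G$-linearization of $\E$ to pull the twists $g^*$ outside the functor, and then recognise the resulting direct sum as an inflation by appealing to Lemma~\ref{inf}. Thus the content of the lemma is really a single computation applied to $\otimes$ and to $\sHom$ in each of its two variables.

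Concretely, for the first isomorphism I would start from the defining decomposition $\Inf\F=\bigoplus_{[g]\in H\setminus G} g^*\F$. Since tensoring with $\E$ commutes with direct sums, this gives
\[\E\otimes\Inf\F=\bigoplus_{[g]\in H\setminus G}\bigl(\E\otimes g^*\F\bigr).\]
On each summand I would use the linearization isomorphism $\lambda_g\colon\E\xrightarrow\cong g^*\E$ together with the canonical isomorphism $g^*\E\otimes g^*\F\cong g^*(\E\otimes\F)$ to obtain $\E\otimes g^*\F\cong g^*(\Res\E\otimes\F)$. So the underlying sheaf of $\E\otimes\Inf\F$ decomposes over $I=H\setminus G$ with each summand isomorphic to $g^*$ of the $H$-sheaf $\Res\E\otimes\F$. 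For the two sheaf-Hom statements the mechanism is identical: $\sHom$ turns finite direct sums in either variable into direct sums, and for the automorphism $g$ the natural map $g^*\sHom(D,E)\to\sHom(g^*D,g^*E)$ is an isomorphism, as recalled in Section~\ref{geomfun}. Hence $\sHom(\E,g^*\F)\cong g^*\sHom(\Res\E,\F)$ and $\sHom(g^*\F,\E)\cong g^*\sHom(\F,\Res\E)$, again via $\lambda_g$.

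The key — and only nontrivial — step is to promote these summand-wise isomorphisms of ordinary sheaves to $G$-equivariant isomorphisms. For this I would verify that each of the three $G$-sheaves on the left satisfies the hypothesis of Lemma~\ref{inf}: its underlying sheaf decomposes as $\bigoplus_{i\in I}\M_i$ over the transitive $G$-set $I=H\setminus G$ (with $\sigma$ acting by $\sigma(Hg)=Hg\sigma^{-1}$, as in Remark~\ref{conv}), and the ambient linearization — built from the tensor and sheaf-Hom linearizations of Section~\ref{geomfun} together with that of $\Inf\F$ — carries $\M_i$ isomorphically onto $\sigma^*\M_{\sigma(i)}$ for every $\sigma\in G$. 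Lemma~\ref{inf} then yields $\M\cong_G\Inf_{\Stab(i_0)}^G\M_{i_0}$ for $i_0$ the identity coset $[e]$, whose stabilizer is exactly $H$; and the induced $H$-linearization on $\M_{[e]}$ is precisely the tensor or sheaf-Hom linearization of $\Res\E\otimes\F$, of $\sHom(\Res\E,\F)$, or of $\sHom(\F,\Res\E)$, respectively.

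This last verification is where the main obstacle lies: it is a bookkeeping check that the permutation of the summands and the stabilizer linearizations are compatible with the cocycle condition $\lambda_{hg}=g^*\lambda_h\circ\lambda_g$, and that the signs and base-change identifications used to move $g^*$ across $\otimes$ and $\sHom$ are consistent across all summands. Once this compatibility is confirmed, all three claimed $G$-equivariant isomorphisms follow simultaneously from Lemma~\ref{inf}.
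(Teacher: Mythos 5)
Your proposal is correct and follows essentially the same route as the paper's own proof: decompose the underlying sheaf over the cosets $H\setminus G$, check that the ambient linearization permutes the summands as required by Lemma~\ref{inf} (with the action of Remark~\ref{conv}), identify the induced $H$-linearization on the identity-coset component with that of $\Res(\E)\otimes\F$ (resp.\ the two $\sHom$ variants), and conclude by Lemma~\ref{inf}. The paper likewise leaves the final compatibility check as a routine verification, so your write-up matches it in both strategy and level of detail.
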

\begin{proof}
The underlying sheaf of $\E\otimes \Inf\F$ is $\oplus_{[g]\in H\setminus G} \E\otimes g^*\F$. For $\sigma\in G$ the $\sigma$-linearization of $\E\otimes \Inf \F$ maps $\E\otimes \hat g^* F$ isomorphic to $\sigma^*(\E\otimes g^*\F)$, where $[g]$ is the image of $[\hat g]$ under the action of $\sigma $ on $H\setminus G$ defined in remark \ref{conv}. Furthermore the $H$-linearization on the component $\E\otimes\F$ coincides with the $H$-linearization of $\Res(\E)\otimes \F$. By lemma \ref{inf} this gives the result. The other two cases are proven similarly.    
\end{proof}
Also the equivariant pull-backs and push-forwards commute with the inflation functor.
\begin{remark}\label{induequi}
Let $X$ be a $G$-scheme and $\pi\colon X\to Y$ a $G$-invariant morphism, e.g. the quotient morphism. Then there is the composition 
\[\pi^*\circ \triv\colon \QCoh(Y)\to \QCoh_G(X)\quad,\quad F\mapsto (\pi^*F,\lambda)\,.\]
For $\sigma\in G$ we denote $\lambda_\sigma$ in this case also by $\sigma_*$. The map $\sigma_*(U)\colon (\pi^*F)(U)\to (\pi^*F)(\sigma(U))$ is given for $s\in (\pi^*F)(U)$ by $(\sigma_*(U)s)(x)=s(\sigma^{-1}x)$ for every ring $R$ and every $R$-valued point $x$ of $\sigma(U)$. In the later sections, mostly $F$ will be a locally free sheaf and $X$ a variety over $\C$. In this case it suffices to consider $\C$-valued points $x\in \sigma(U)$.
\end{remark}
\subsection{Derived equivariant categories}
Let from now on $X$ be a noetherian scheme. We denote the derived categories as \[\D_G(\QCoh(X)):=\D(\QCoh_G(X))\,,\, 
\D_G(X):=\D_G(\Coh(X)):=\D(\Coh_G(X))\,.\]
As usual for $*=+,-,b$ we denote by $\D^*_G(\QCoh(X))$ respectively $\D^*_G(X)$ the full subcategories of complexes with bounded from below, bounded from above or bounded cohomology. For the same reasons as in the non-equivariant case (see e.g. \cite[Prop. 3.5]{Huy}) the category $\D^*_G(X)$ can be identified with the full subcategory of $\D^*_G(\QCoh(X))$ of complexes with coherent cohomology sheaves. Since they are exact, the functors $\Inf$, $\Res$, and in case of the trivial $G$-action also $\triv$ from above define functors on the derived categories. If the scheme $X$ equipped with the trivial $G$-action is defined over a field $k$ of characteristic zero, the functor of taking invariants also is defined between the derived categories. For an $k$-scheme $X$ with an arbitrary $G$-action let $\pi\colon X\to Y$ be a $G$-invariant morphism of $k$-schemes, i.e. an equivariant morphism when considering $Y$ with the trivial action. Then we can push forward $G$-sheafs  on $X$ along $\pi$ and take the $G$-invariants on $Y$ afterwards. If $\pi_*$ is an exact functor (which is independent of considering the equivariant functor $\QCoh_G(X)\to \QCoh_G(Y)$ or the non-equivariant one $\QCoh(X)\to \QCoh(Y)$ because of the compatibility with the forgetful functor), we write for short $[\_]^G$ instead of $[\_]^G\circ \pi_*$, which yields also directly a functor on the derived categories.
\subsection{Injective and locally free sheaves}
There are always enough injectives in the category $\QCoh_G(X)$ (see \cite[section 5.1]{Gro1}).
\begin{lemma} Whenever a $G$-sheaf $(F,\lambda)\in\QCoh_G(X)$ is an injective object also its underlying sheaf $F\in\QCoh(X)$ is injective.
If $X$ is defined over a field of characteristic zero also the converse holds. 
\end{lemma}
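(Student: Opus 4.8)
The plan is to derive both implications from the adjoint pair
$\Inf_1^G\colon \QCoh(X)\rightleftharpoons\QCoh_G(X)\colon \For$ together with the identification $G\Hom_{\reg_X}(E,F)=[\Hom_{\reg_X}(E,F)]^G$ recorded above. Recall that $\For=\Res_G^1$ is \emph{right} adjoint to the inflation functor $\Inf_1^G$, and that $\Inf_1^G$ is exact.

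For the first statement I would invoke the standard principle that the right adjoint of an exact functor preserves injective objects. Concretely, let $(F,\lambda)\in\QCoh_G(X)$ be injective. The adjunction furnishes a natural isomorphism $\Hom_{\reg_X}(\_,F)\cong G\Hom_{\reg_X}(\Inf_1^G(\_),(F,\lambda))$ of functors on $\QCoh(X)$. The right-hand side is the composite of the exact functor $\Inf_1^G$ with the functor $G\Hom_{\reg_X}(\_,(F,\lambda))$, the latter being exact because $(F,\lambda)$ is injective in $\QCoh_G(X)$. Hence $\Hom_{\reg_X}(\_,F)$ is exact and $F=\For(F,\lambda)$ is injective in $\QCoh(X)$.

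For the converse assume $X$ is defined over a field $k$ of characteristic zero and that the underlying sheaf $F$ of $(F,\lambda)$ is injective in $\QCoh(X)$; I would check that $G\Hom_{\reg_X}(\_,(F,\lambda))$ is exact. By the identification above it factors as $(\_)^G\circ \Hom_{\reg_X}(\For(\_),F)$, where for a morphism $E'\to E$ of $G$-sheaves the induced map $\Hom_{\reg_X}(E,F)\to\Hom_{\reg_X}(E',F)$ is $G$-equivariant for the $G$-actions coming from the linearizations, so the middle functor genuinely lands in $k[G]$-modules. Since $\For$ is exact and $F$ is injective, $\Hom_{\reg_X}(\For(\_),F)$ is exact, and since $\operatorname{char}k=0$ the invariants functor $(\_)^G$ on $k[G]$-modules is exact (Maschke's theorem, equivalently the Reynolds operator $\tfrac1{|G|}\sum_{g\in G}g$). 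A composite of exact functors is exact, whence $(F,\lambda)$ is injective in $\QCoh_G(X)$.

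The argument is essentially formal, and I expect the only genuinely load-bearing step to be the exactness of $(\_)^G$ on $k[G]$-modules: this is the single place where the characteristic-zero hypothesis is indispensable, and it is what I regard as the crux of the converse. In positive characteristic dividing $|G|$ this exactness fails, so one should not expect the converse to hold there, which is why the two implications are stated asymmetrically.
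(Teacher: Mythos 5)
Your proof is correct and follows essentially the same route as the paper: the first implication via the adjunction $\Inf_1^G\dashv\For$ together with exactness of $\Inf_1^G$ (the paper writes out the resulting isomorphism of Hom-complexes explicitly rather than quoting the "exact left adjoint" principle), and the converse via the factorization $G\Hom(\_,F)=[\Hom(\_,F)]^G$ and exactness of invariants in characteristic zero. Your write-up of the converse is in fact slightly more explicit than the paper's one-line argument.
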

\begin{proof} Let $(F,\lambda)\in\QCoh_G(X)$ be injective and
\[0\to E'\to E \to E''\to 0 \]
a short exact sequence in $\QCoh(X)$. Applying $\Inf$ to this sequence yields a short exact sequence in $\QCoh^G(X)$. By the adjointness of 
$\Inf$ and $\For$ we get  the following isomorphism of complexes:
\[\begin{xy}
\xymatrix{ 0 & \Hom(E',F)\ar[d]^\cong\ar[l] & \Hom(E,F)\ar[d]^\cong\ar[l] & \Hom(E'',F) \ar[l]\ar[d]^\cong & 0 \ar[l]
       \\  0 & G\Hom(\Inf E',F)\ar[l] & G\Hom(\Inf E,F)\ar[l] & G\Hom(\Inf E'',F)\ar[l] & 0 \ar[l]\,.}
\end{xy}\]  
Since the lower complex is exact by the injectivity of $F$ as an $G$-sheaf, the upper one is also. This shows the injectivity of $F\in \QCoh(X)$.
Taking invariants of $G$-representations over a field of characteristic zero is exact. Since $G\Hom(E,F)$ is defined as the space of invariants of $\Hom(E,F)$ we get the second statement.
\end{proof}
A \textit{$G$-equivariant locally free sheaf} is just a 
coherent $G$-sheaf whose underlying ordinary sheaf is locally free. 
\begin{lemma}
 There are enough $G$-equivariant locally free sheaves in $\Coh_G(X)$ if and only if there are enough locally free sheaves in $\Coh(X)$ 
\end{lemma}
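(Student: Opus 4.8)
The plan is to prove both implications from the exact adjoint pair $\Inf \dashv \For$ established above, where I abbreviate $\Inf := \Inf_1^G$ and recall that $\For = \Res_G^1$. From the previous subsections both $\Inf$ and $\For$ are exact, and $\For \Inf F = \bigoplus_{g \in G} g^* F$ for every $F \in \Coh(X)$. The one geometric input I would use is that $g^* E$ is locally free whenever $E$ is, since each $g$ acts as an automorphism of $X$; consequently $\Inf E = \bigoplus_{g \in G} g^* E$ is a $G$-equivariant locally free sheaf whenever $E$ is locally free, and $\For \E$ is an ordinary locally free sheaf whenever $\E$ is a $G$-equivariant locally free sheaf.

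For the direction assuming enough locally free sheaves in $\Coh(X)$, I would start with an arbitrary $G$-sheaf $(\F,\lambda) \in \Coh_G(X)$ and choose a surjection $E \twoheadrightarrow \For \F$ from a locally free sheaf $E$. Applying the exact functor $\Inf$ gives a surjection $\Inf E \twoheadrightarrow \Inf \For \F$, and composing with the counit $\Inf \For \F \to \F$ of the adjunction yields a morphism $\Inf E \to \F$ in $\Coh_G(X)$ whose source is $G$-equivariant locally free by the remark above. It then remains to check that this composition is surjective, which reduces to surjectivity of the counit.

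For the converse, assuming enough $G$-equivariant locally free sheaves, I would take an arbitrary $F \in \Coh(X)$ and apply $\Inf$ to obtain $\Inf F \in \Coh_G(X)$. By hypothesis there is a $G$-equivariant locally free sheaf $\E$ with a surjection $\E \twoheadrightarrow \Inf F$, and applying the exact forgetful functor gives a surjection $\For \E \twoheadrightarrow \For \Inf F = \bigoplus_{g \in G} g^* F$ whose source is locally free. Composing with the projection onto the $g = e$ summand, which is visibly surjective, produces the desired surjection $\For \E \twoheadrightarrow F$ from a locally free sheaf.

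The only nontrivial point is the surjectivity of the counit $\Inf \For \F \to \F$ in the first direction; everything else is a formal consequence of the exactness of $\Inf$ and $\For$ together with the elementary stability of local freeness under pullback along an automorphism. I expect this to be mild: the triangle identity for the adjunction forces the counit to restrict to the identity on the $g = e$ summand of $\For \Inf \For \F$, so already the identity component surjects onto $\F$, which gives the surjectivity needed to conclude.
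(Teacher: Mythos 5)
Your proof is correct and follows essentially the same route as the paper: both directions are handled via the exact adjoint pair $\Inf\dashv\For$, with the projection onto the $e$-summand of $\Inf F=\bigoplus_{g\in G}g^*F$ in one direction and the adjoint (counit-composed) morphism $\Inf E\to\F$ in the other. The only difference is that you spell out the surjectivity of the counit via the triangle identity, which the paper dispatches with ``by construction''.
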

\begin{proof}
Let there be enough locally free $G$-sheaves in $\Coh_G(X)$ and let $F\in\Coh(X)$. Then there is a locally free $G$-sheaf $E$ and a surjection 
$\phi\colon E\to \Inf(F)$. The component $\phi_e\colon E\to F$ is then a surjection in $\Coh(X)$. Let on the other hand $\Coh(X)$ have enough locally free sheaves and consider $F\in \Coh_G(X)$. Then there is a surjection $\psi\colon E\to \For(F)$ from a locally free sheaf $E\in \Coh(X)$. By the adjoint property of the inflation functor there is a map $\Inf(E)\to F$ in $\Coh_G(X)$. By construction it is still surjective.
\end{proof}
\subsection{Derived equivariant functors}    
Because of the existence of enough injective equivariant sheaves the functors $\glob(X,\_)$, $f_*$ and $\sHom$ from subsection \ref{geomfun} can be derived to (bi-)functors
\begin{align*}R\glob(X,\_)\colon\D^+_G(\QCoh(X))\to \D^+(\Mod(\Z[G])\,,\,  Rf_*\colon \D^+_G(\QCoh(X))\to \D^+_G(\QCoh(Y))\,,\,\\
 R\sHom\colon \D^-_G(\QCoh(X))^{\circ}\times \D^+_G(\QCoh(X))\to D^+_G(\QCoh(X))\,.
\end{align*}
If there are enough locally free coherent sheaves on $Y$ also the derived equivariant functors 
\[Lf^*\colon \D_G^-(Y)\to \D_G^-(X)\quad,\,\quad \_\otimes^L\_\colon \D_G^-(Y)\times \D_G^-(Y)\to \D_G^-(Y) \]
exist. Since the forgetful functor maps injective to injective and locally free to locally free sheaves, all the equivariant derived functors are compatible 
with their non-equivariant versions via the forgetful functor, for example the following diagram commutes
\[ \begin{CD}
\D^+_G(\QCoh(X))
@>{Rf_*}>>
\D^+_G(\QCoh(Y)) \\
@V{\For}VV
@VV{\For}V \\
\D^+(\QCoh(X))
@>>{Rf_*}>
\D^+(\QCoh(Y))\,.
\end{CD} \]
This implies that (when there are enough locally free sheaves) a derived geometric equivariant functor restricts to a functor between the bounded derived categories of coherent sheaves if and only if the corresponding non-equivariant functor does.  
Also the functor $G\Hom(\_,\_)$ of global $G$-homomorphisms can be derived. We define $G\Ext^i(\_,\_)$ to be the $i$-th derived functor of $G\Hom(\_,\_)$. It coincides with $\Hom_{\D(\QCoh^G(X))}(\_,\_[-i])$. 
All the common formulas (see e.g. the section ``Compatibilities`` in \cite[chapter 3]{Huy} ) relating the geometric derived functors generalise directly to the equivariant case. In the following lemma we proof one of them 
as an example. 
\begin{lemma}
Let $X$ be a scheme with a $G$-action such that there are enough locally free sheaves on $X$ and the geometric derived bifunctors restrict to
\[R\sHom\,,\,\otimes\colon \D^b_G(X)\times\D^b_G(X)\to \D^b_G(X)\,.\]
Then for every $\mathcal D^\bullet,\Eb,\Fb\in \D^b_G(X)$ there is a natural isomorphism 
\[R\sHom(\mathcal D^\bullet, \Eb)\otimes^L \Fb\simeq R\sHom(\mathcal D^\bullet, \Eb\otimes^L \Fb)\,.\]
\end{lemma}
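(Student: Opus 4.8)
The plan is to construct the comparison morphism by hand on suitably chosen equivariant resolutions and then verify that it is an isomorphism by transporting the question, via the forgetful functor, to the already-known non-equivariant statement, exploiting the compatibility of all equivariant derived functors with their non-equivariant versions.

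First I would produce the natural morphism. By hypothesis there are enough locally free sheaves on $X$, so by the lemma above there are enough $G$-equivariant locally free sheaves; choose a bounded-above complex $P^\bullet$ of such sheaves quasi-isomorphic in $\Coh_G(X)$ to $\mathcal D^\bullet$. Since $\sHom(P^i,\_)$ is exact for each locally free $P^i$, the object $R\sHom(\mathcal D^\bullet,\Eb)$ is computed term-wise as $\tot\sHom^\bullet(P^\bullet,\Eb)$, carrying the induced $G$-linearization of subsection \ref{geomfun}. Choosing moreover an equivariant locally free resolution $Q^\bullet\to\Fb$, both sides of the claimed isomorphism are represented by explicit $G$-equivariant total complexes, namely $\tot\bigl(\sHom^\bullet(P^\bullet,\Eb)\otimes Q^\bullet\bigr)$ and $\tot\sHom^\bullet\bigl(P^\bullet,\tot(\Eb\otimes Q^\bullet)\bigr)$. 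The term-wise maps $\sHom(P^i,E^j)\otimes Q^k\to\sHom(P^i,E^j\otimes Q^k)$, $\phi\otimes s\mapsto(\,d\mapsto\phi(d)\otimes s\,)$, are $G$-equivariant by the formulas for the linearizations of $\otimes$ and $\sHom$, and they assemble into a morphism of total complexes; this is the natural morphism in $\D^b_G(X)$.

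Next I would show this morphism is an isomorphism. Here the key is the compatibility, stressed earlier in the excerpt, of the equivariant derived functors with their non-equivariant versions through the forgetful functor $\For$: the functor $\For$ is exact, hence commutes with $\mathcal H^i$, and it commutes with both $R\sHom$ and $\otimes^L$. Moreover $\For$ reflects isomorphisms, since the inverse of a $G$-equivariant isomorphism of sheaves is again $G$-equivariant; consequently a morphism in $\D^b_G(X)$ is an isomorphism precisely when its image under $\For$ is an isomorphism in $\D^b(X)$. Applying $\For$ to the morphism constructed above yields exactly the non-equivariant comparison morphism $R\sHom(\For\mathcal D^\bullet,\For\Eb)\otimes^L\For\Fb\to R\sHom(\For\mathcal D^\bullet,\For\Eb\otimes^L\For\Fb)$, because the underlying complexes and term-wise maps are literally those above with the linearizations forgotten. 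That non-equivariant morphism is an isomorphism by the standard compatibility (see \cite[chapter 3]{Huy}); concretely, for $P^i$ locally free of finite rank one has $\sHom(P^i,\_)\cong (P^i)^\vee\otimes\_$, so each term-wise map is an isomorphism and hence so is the morphism of total complexes.

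I expect the only genuine subtlety to be bookkeeping rather than mathematics: verifying that the term-wise comparison maps are $G$-equivariant and that $\For$ carries the equivariant morphism precisely to the classical one, so that the non-equivariant result may be quoted verbatim. Once this identification is in place the invertibility is inherited for free, and no further computation inside the equivariant category is required. Naturality in $\mathcal D^\bullet,\Eb,\Fb$ follows from the naturality of the term-wise maps together with the functoriality of the chosen resolutions up to the usual homotopy arguments.
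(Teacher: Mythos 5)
Your proposal is correct and follows essentially the same route as the paper: reduce to equivariant locally free resolutions, write down the term-wise comparison map $\phi\otimes s\mapsto(d\mapsto\phi(d)\otimes s)$, check it is $G$-equivariant via the explicit formulas for the linearizations of $\otimes$ and $\sHom$, and conclude because the underlying non-equivariant map is the classical isomorphism. Your framing of the last step as ``$\For$ reflects isomorphisms'' is just a slightly more formal phrasing of the paper's appeal to the well-known non-equivariant statement.
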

\begin{proof}
Both sides are  computed by taking locally free resolutions and then applying the non-derived functors. Thus it suffices to show the formula for locally free equivariant sheaves and the non-derived functors. Let $(D,\lambda)$, $(E,\mu)$ and $(F,\nu)$ be $G$-equivariant locally free sheaves on $X$. It is well known that the map of ordinary sheaves
\[T\colon\sHom(D,E)\otimes F\to\sHom(D,E\otimes F)\quad,\quad \phi\otimes s\mapsto (d\mapsto \phi(d)\otimes s)\]
is an isomorphism. We only have to show that it is equivariant. We denote the linearization of the left-hand side by $\alpha$ and that of the right-hand side by $\beta$. Then for $g\in G$ indeed
\begin{align*} (g^*T)(\alpha_g(\phi\otimes s))=(g^*T)((\mu_g\circ\phi\circ\lambda_g^{-1})\otimes \nu_g(s))&= \left(d\mapsto(\mu_g\circ\phi\circ\lambda_g^{-1})(d)\otimes \nu_g(s)\right)\\
 &= \beta_g(d\mapsto \phi(d)\otimes s)\\
&= \beta_g(T(\phi\otimes s))\,. 
\end{align*}
\end{proof}
\subsection{Equivariant Grothendieck duality}\label{Verd}
Also Grothendieck duality generalises to proper $G$-equivariant morphisms (see \cite{Has}).
\begin{theorem} Let the finite group $G$ act on schemes $X$ and $Y$, which are of finite type over $\C$, and let $f\colon X\to Y$ be a proper $G$-equivariant morphism. Then there exists an exact functor $f^!\colon \D^+(\QCoh^G(Y))\to \D^+(\QCoh^G(X))$ which is compatible with the non-equivariant twisted inverse image functor via the forgetful functor and has the property that for every $\Fb\in\D^-(\QCoh^G(X))$ and $\Gb\in \D^+(\QCoh^G(Y))$ the following holds in $\D(\QCoh^G(Y))$:
\[Rf_*R\sHom_Y(\Fb,f^!\Gb)\simeq R\sHom_X(Rf_*\Fb, \Gb)\,. \]
\end{theorem}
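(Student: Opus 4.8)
The plan is to deduce the equivariant statement from ordinary Grothendieck duality by means of the forgetful functor, using that every functor occurring in the formula carries a canonical $G$-linearization compatible with $\For$. An alternative would be to invoke Hashimoto's equivariant duality formalism \cite{Has} wholesale; I sketch instead the reduction argument, since it only uses facts already established in the previous subsections.

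First I would produce the equivariant twisted inverse image. Writing $g$ also for the automorphisms of $X$ and $Y$ induced by $\sigma\in G$, the $G$-equivariance of $f$ gives $f\circ g=g\circ f$, so the commuting square formed by $f$ and the two automorphisms is Cartesian. Since each such $g$ is an isomorphism, hence flat, the non-equivariant functor $f^!$ is compatible with base change along $g$, which yields canonical isomorphisms $g^*f^!\simeq f^!g^*$. Given a $G$-linearization $(\mu_g\colon \Gb\to g^*\Gb)$ on $\Gb$, transporting it through these identifications defines morphisms $f^!\Gb\to g^*f^!\Gb$, and functoriality of the base-change isomorphisms supplies the cocycle relation, so this is a genuine $G$-linearization on $f^!\Gb$. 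This promotes $f^!$ to an exact functor $\D^+(\QCoh^G(Y))\to\D^+(\QCoh^G(X))$ compatible with the non-equivariant one via $\For$ by construction.

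Next, observe that both sides of the asserted formula are objects of $\D(\QCoh^G(Y))$: the equivariant versions of $Rf_*$ and $R\sHom$ from the subsection on derived equivariant functors, together with the equivariant $f^!$ just constructed, supply their linearizations, and applying $\For$ recovers the two non-equivariant sides. Ordinary Grothendieck duality provides a natural isomorphism $\Theta$ between these, built from the canonical trace map $Rf_*f^!\to\id$. The heart of the argument is then to check that $\Theta$ is $G$-equivariant, i.e. that it commutes with the linearizations on both sides. This should follow because the trace map and the resulting duality isomorphism are canonical and natural with respect to isomorphisms of the base, hence compatible with the base-change isomorphisms along each $g$; tracing through the definitions of the two linearizations one sees that $g^*\Theta$ and $\Theta$ agree under these identifications, so $\Theta$ descends to a morphism in $\D(\QCoh^G(Y))$.

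Finally, I would conclude using that $\For\colon \D(\QCoh^G(Y))\to\D(\QCoh(Y))$ is exact and reflects isomorphisms: an equivariant morphism of sheaves is invertible as soon as its underlying morphism is (the underlying inverse automatically respects the linearizations), and $\For$ commutes with taking cohomology sheaves, so the same holds on the level of derived categories. As $\For(\Theta)$ is the non-equivariant duality isomorphism, $\Theta$ is an isomorphism in $\D(\QCoh^G(Y))$, which is the claim. I expect the equivariance check in the third paragraph to be the main obstacle: it amounts to verifying the coherence of the canonical base-change isomorphisms $g^*f^!\simeq f^!g^*$ and their compatibility with the trace, a bookkeeping argument that is conceptually transparent but requires care to carry out rigorously.
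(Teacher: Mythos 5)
The paper does not actually prove this theorem: it is quoted from Hashimoto's work on equivariant duality, which is developed in the much more general framework of diagrams of schemes, and only its consequences (Proposition \ref{Gro}) are extracted later. Your proposal is therefore a genuinely different route --- a direct reduction to ordinary Grothendieck duality via the forgetful functor --- and in outline it is the natural ``low-tech'' argument: linearize $f^!$ through the canonical identifications $g^*f^!\simeq f^!g^*$ (which for an automorphism $g$ follow already from $(f\circ g)^!\simeq g^!\circ f^!$ and $g^!=g^*$, no base-change theorem needed), check that the duality isomorphism is compatible with these identifications, and conclude because $\For$ is exact, faithful, and detects isomorphisms on cohomology sheaves. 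That last step is unproblematic.

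There is, however, one genuine gap. The categories in the statement are $\D^+(\QCoh^G(Y))$, i.e.\ derived categories of the abelian categories of equivariant sheaves, whereas your construction naturally produces something weaker: an object of the ordinary derived category $\D^+(\QCoh(X))$ together with a $G$-linearization by isomorphisms \emph{in the derived category} satisfying the cocycle condition. These two notions do not coincide for a general group action, and identifying them --- and upgrading your morphism $\Theta$, which a priori is only a $G$-invariant element of $\Hom_{\D(\QCoh(Y))}$ between linearized objects, to a morphism in $\D(\QCoh^G(Y))$ --- requires a descent theorem for derived categories under finite group actions. It is available here because $|G|$ is invertible in $\C$, so that $\Hom_{\D_G}=(\Hom_{\D})^G$ and linearized objects lift to the equivariant derived category, but it is a nontrivial input that your sketch does not invoke; without it, your $f^!$ is not yet a functor on $\D^+(\QCoh^G(Y))$ and $\Theta$ does not yet live where the theorem asserts. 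Once that comparison is in place, what remains is exactly the coherence bookkeeping you identify (the cocycle condition for $g^*f^!\simeq f^!g^*$ and the compatibility of the trace map with these identifications). Alternatively one can sidestep descent entirely by constructing $f^!$ directly as the right adjoint of the equivariant $Rf_*\colon\D^+(\QCoh^G(X))\to\D^+(\QCoh^G(Y))$, or at the level of equivariant complexes --- which is essentially why the paper defers to Hashimoto rather than arguing as you do.
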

We will need the case where $f=\iota\colon Z\to X$ is a regular embedding with vanishing ideal $\I_Z$ of a $G$-invariant subvariety.
In this case the normal sheaf
$N_{Z}:=(\iota^* \I_Z)^\vee$ is a locally free sheaf of rank equal to the codimension of $Z$ in $X$.
If $G$ acts on a scheme $X$, the structural sheaf always has a natural $G$-linearization given by $\lambda_g=(g^\#)^{-1}$, where $g^\#\colon g^*\reg_X\to \reg_X$ is the map of sheaves 
which belongs to the morphism $g\in\Aut(X)$. Now if $Z$ is a $G$-invariant closed subscheme with vanishing ideal $\I$, the map $g^\#$ restricts to
$g^\#\colon g^*\I\to \I$. Thus the sheaf $\I$ carries a natural $G$-linearization $\lambda$.  In the following we will always consider $\I_Z$ and $N_Z$ equipped with the linearization induced by $\lambda$.  
\begin{prop}\label{Gro}
Let $X$ be a variety over a field $k$ and $\iota\colon Z\to X$ be the immersion of a closed $G$-invariant local complete intersection subvariety of codimension $c$ with vanishing ideal $\I$ and $\Gb\in \D^b_G(\QCoh(X))$. 
Then there are canonical $G$-equivariant isomorphisms
\begin{enumerate}
 \item \[(\iota_*\reg_Z)^\dv\simeq \iota_*({\wedge}^c N_Z)[-c]\,,\]
 \item \[\sExt^k(\iota_*\reg_Z,\iota_*\reg_Z)\cong \iota_*({\wedge}^k N_Z)\quad \forall\, 0\le k\le c\,,\]
 \item \[\iota^!\Gb\simeq L\iota^*\Gb\otimes ({\wedge}^c N_Z)[-c]\,.\]
\end{enumerate}
That means, that the $G$-linearizations on the right sides of the formulas are all the ones canonically induced by the linearization $\lambda$ of $\I$. 
\end{prop}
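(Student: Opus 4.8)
\emph{Proof plan.} Underneath, the three isomorphisms are classical facts about regular embeddings, and the real content of the proposition is that they can be realised by \emph{canonical}, hence $G$-equivariant, maps. The plan is to obtain (1) and (2) from the local Koszul resolution of $\iota_*\reg_Z$ on $X$, and to deduce (3) from (1) together with the equivariant Grothendieck duality theorem stated above. All objects on the right-hand sides carry the linearization induced by $\lambda$ (as the statement demands), so at each step I must check that the comparison maps respect these linearizations.

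I would first settle the non-equivariant content of (1) and (2). Since $Z$ is a local complete intersection of codimension $c$, on a small enough open $U\subset X$ the ideal $\I$ is generated by a regular sequence $f_1,\dots,f_c$, so the Koszul complex $K^\bullet=\wedge^\bullet(\reg_U^{\oplus c})$ with differential the contraction against $(f_1,\dots,f_c)$ is a locally free resolution of $\iota_*\reg_Z$ on $U$. For (1) I compute $R\sHom(\iota_*\reg_Z,\reg_X)$ by dualizing $K^\bullet$; since $(f_1,\dots,f_c)$ is regular the dual complex has vanishing cohomology except in degree $c$, where it equals $\wedge^c N_Z$, giving $(\iota_*\reg_Z)^\dv\simeq\iota_*(\wedge^c N_Z)[-c]$. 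For (2) I apply $\sHom(\_,\iota_*\reg_Z)$ to $K^\bullet$; because each $f_i$ vanishes on $Z$ all of the resulting differentials are zero, whence $\sExt^k(\iota_*\reg_Z,\iota_*\reg_Z)\cong\iota_*(\wedge^k N_Z)$. In both cases the top exterior power of $\iota^*\bigl((\reg_U^{\oplus c})^\vee\bigr)\cong N_Z$ appears, via the surjection $\reg_U^{\oplus c}\to\I$, and these local computations glue over $X$ because $\iota^*\I$, the normal sheaf $N_Z$ and the comparison maps are all independent of the chosen regular sequence.

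For the equivariance of (1) and (2) I would argue that the cohomology sheaves $\sExt^c(\iota_*\reg_Z,\reg_X)$ and $\sExt^k(\iota_*\reg_Z,\iota_*\reg_Z)$ carry the $G$-linearization coming from the equivariant $R\sHom$, which is compatible with the non-equivariant one via $\For$. The identifications with $\wedge^c N_Z$ and $\wedge^k N_Z$ are the standard fundamental-local isomorphisms, and being canonical they commute with the action of every $\sigma\in G$ on the pair $(X,Z)$; hence they are equivariant for the linearization induced by $\lambda$. This upgrades the isomorphisms of (1) and (2) to $\simeq_G$ and $\cong_G$.

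For (3) I would first identify $\iota^!\reg_X$. Equivariant Grothendieck duality with $\Fb=\reg_Z$ and $\Gb=\reg_X$ gives $\iota_*\iota^!\reg_X\simeq_G R\sHom_X(\iota_*\reg_Z,\reg_X)\simeq_G\iota_*(\wedge^c N_Z)[-c]$ by (1). Since $\iota_*$ is exact and fully faithful on the equivariant coherent categories and the right-hand side is concentrated in a single degree, this determines $\iota^!\reg_X\simeq_G(\wedge^c N_Z)[-c]$. The general formula then follows from the projection formula for the twisted inverse image of the perfect proper morphism $\iota$, namely the natural isomorphism $\iota^!\Gb\simeq L\iota^*\Gb\otimes\iota^!\reg_X$, which is equivariant because the equivariant $\iota^!$ is compatible with its non-equivariant version via $\For$ and the comparison map is natural. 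I expect the main obstacle to be precisely this bookkeeping of equivariance: the local Koszul resolution is not $G$-stable, so the linearizations cannot simply be read off from it, and one must verify that the fundamental local isomorphism, the duality isomorphism and the $\iota^!$-projection formula are genuinely the canonical, $\sigma$-equivariant maps for every $\sigma\in G$.
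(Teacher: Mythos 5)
Your plan is mathematically sound, but it takes a genuinely different route from the paper: the paper does not prove the proposition at all, it simply invokes Hashimoto's equivariant Grothendieck duality for diagrams of schemes (chapters 28--29 of \cite{Has}), where the fundamental local isomorphism and the formula $\iota^!\Gb\simeq L\iota^*\Gb\otimes(\wedge^cN_Z)[-c]$ are established in a framework in which the $G$-linearizations are built in from the start, so no separate equivariance check is needed. You instead reprove the non-equivariant statements by hand via the local Koszul resolution and then upgrade them to equivariant ones by arguing that the comparison maps are canonical, i.e.\ independent of the chosen regular sequence, hence commute with each $\sigma\in G$. This buys self-containedness and makes visible exactly where $\wedge^cN_Z$ comes from, at the cost of having to verify carefully the one point you yourself flag: that the fundamental local isomorphism, the duality isomorphism and the $\iota^!$-projection formula really are the canonical maps and that canonicity here does imply $\sigma$-equivariance (the Koszul resolution itself is not $G$-stable, so one must compare the two local trivializations over $U$ and $\sigma(U)$ and check the transition is the identity on $\wedge^kN_Z$). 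Your deduction of (3) from (1) is also fine: $\iota_*$ is exact, fully faithful and reflects zero objects on the equivariant categories, so $\iota_*\iota^!\reg_X\simeq_G\iota_*(\wedge^cN_Z)[-c]$ does pin down $\iota^!\reg_X$, and the $\iota^!$-projection formula for the perfect proper morphism $\iota$ finishes the argument. In short: correct, more elementary and more explicit than the paper, but it shoulders the equivariance bookkeeping that the citation to \cite{Has} is designed to avoid.
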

\begin{proof}
The proposition is proved in chapter 28 of \cite{Has} in the more general framework of diagrams of schemes. How to obtain schemes with a group action as a special case is explained in the introduction and chapter 29.  
\end{proof}

\section{Preliminary lemmas}
\subsection{Derived bifunctors}
Let $\mathcal A$, $\mathcal B$ and $\mathcal C$ be abelian categories and 
$F\colon \mathcal A\times  \mathcal B\to \mathcal C$
be an additive bifunctor which is left exact in both arguments. The functor 
$K^+(F)\colon K^+(\mathcal A)\times K^+(\mathcal B)\to K^+(\mathcal C)$ is defined by 
\[K(F)(A^\bullet,B^\bullet):=F^\bullet(A^\bullet, B^\bullet) :=\tot F(A^\bullet,B^\bullet)\,.\]
We assume that there is a full additive subcategory $\mathcal J$ of $\mathcal B$ such that for every $B\in \mathcal J$ the functor
$F(\_,B)\colon \mathcal A\to \mathcal C$ is exact and for every $A\in\mathcal A$ the subcategory $\mathcal J$  is a $F(A,\_)$-adapted class.
Under these assumptions the right derived bifunctor
\[RF\colon \D^+(\mathcal A)\times \D^+(\mathcal B)\to \D^+(\mathcal C)\]
exists. Furthermore $K^+(\mathcal J)$ is a $K^+(F)(A^\bullet,\_)$-adapted class for every $A^\bullet\in K^+(\mathcal A)$. Thus also the right derived functor $R(K^+(F)(A^\bullet,\_))$ exists and we have for each $A^\bullet\in \mathcal A$ and $B^\bullet\in \mathcal B$  
\[RF(A^\bullet,B^\bullet)\simeq R(K^+(F)(A^\bullet,\_))(B^\bullet)\]
(see \cite[Section 13.4]{KS}). An example were the above assumptions are fulfilled is for any scheme $X$ the functor
\[\sHom\colon \QCoh(X)^\circ \times \QCoh(X)\to \QCoh(X)\,,\]
where we can choose $\mathcal J$ as the class of all injective sheaves (see \cite[Lemma II 3.1]{Har1}). 
\begin{prop}\label{bifun}
Under the assumptions from above let $A^\bullet\in \D^+(\mathcal A)$ and $B^\bullet\in \D^+(\mathcal B)$ be complexes such that 
$R^qF(A^i,B^j)=0$  for all $q\neq 0$ and all pairs $i,j\in \Z$. Then we have $RF(A^\bullet,B^\bullet)\simeq F^\bullet(A^\bullet,B^\bullet)$.  
\end{prop}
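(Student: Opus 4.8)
The plan is to reduce the statement to a single-variable derived functor and then compare two double complexes through their spectral sequences. By the identity $RF(A^\bullet,B^\bullet)\simeq R(K^+(F)(A^\bullet,\_))(B^\bullet)$ recalled above, it suffices to compute the right derived functor of $G:=K^+(F)(A^\bullet,\_)\colon K^+(\mathcal B)\to K^+(\mathcal C)$ on $B^\bullet$. Since $K^+(\mathcal J)$ is a $G$-adapted class, I would choose a quasi-isomorphism $\phi\colon B^\bullet\to I^\bullet$ with $I^\bullet\in K^+(\mathcal J)$ bounded below, so that by definition of the derived functor $RG(B^\bullet)\simeq G(I^\bullet)=F^\bullet(A^\bullet,I^\bullet)$. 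The task then becomes to prove that the termwise map $F^\bullet(A^\bullet,B^\bullet)\to F^\bullet(A^\bullet,I^\bullet)$ induced by $\phi$ is a quasi-isomorphism.

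First I would fix the index $i$ and consider the left exact functor $G_i:=F(A^i,\_)\colon\mathcal B\to\mathcal C$. The hypothesis $R^qF(A^i,B^j)=0$ for $q\neq 0$ says precisely that each term $B^j$ is $G_i$-acyclic, while every $I^j\in\mathcal J$ is $G_i$-acyclic because $\mathcal J$ is a $G_i$-adapted class. Hence both $G_i^\bullet(B^\bullet)$ and $G_i^\bullet(I^\bullet)$ compute $RG_i$ on the respective quasi-isomorphic complexes, and the comparison map $G_i^\bullet(B^\bullet)\to G_i^\bullet(I^\bullet)$ is a quasi-isomorphism; this is the standard principle that a bounded-below complex of acyclic objects computes the derived functor. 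In the language of double complexes, the map $F(A^\bullet,B^\bullet)\to F(A^\bullet,I^\bullet)$ is a quasi-isomorphism on every row, that is, for each fixed $i$.

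To finish I would pass from rowwise quasi-isomorphism to a quasi-isomorphism of total complexes. Since $A^\bullet$, $B^\bullet$ and $I^\bullet$ all lie in the bounded-below derived categories, both double complexes are bounded below in each direction, so only finitely many entries contribute to each total degree and the spectral sequences obtained by taking cohomology first in the $j$-direction converge. On the $E_1$-page the induced map is $\mathcal H^q_j(F(A^i,B^\bullet))\to\mathcal H^q_j(F(A^i,I^\bullet))$, which is an isomorphism by the previous paragraph; by convergence the map on abutments $\mathcal H^\bullet(F^\bullet(A^\bullet,B^\bullet))\to\mathcal H^\bullet(F^\bullet(A^\bullet,I^\bullet))$ is therefore an isomorphism. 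Composing the resulting quasi-isomorphism $F^\bullet(A^\bullet,B^\bullet)\simeq F^\bullet(A^\bullet,I^\bullet)$ with $F^\bullet(A^\bullet,I^\bullet)\simeq RF(A^\bullet,B^\bullet)$ yields the claim.

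The step I expect to be the main obstacle is the final one: verifying that the two double complexes are sufficiently bounded for the spectral sequences to converge and that the $E_1$-isomorphism forces an isomorphism of abutments, so that no convergence pathology intervenes. Everything else is an application of the standard fact that acyclic-termed complexes compute derived functors, which the vanishing hypothesis $R^qF(A^i,B^j)=0$ is designed to provide.
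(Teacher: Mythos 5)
Your proof is correct and follows essentially the same route as the paper: both arguments rest on the observation that the vanishing hypothesis makes every term $B^j$ acyclic for each $F(A^i,\_)$, and both conclude via the first-quadrant spectral sequence of the bounded-below double complex. The only difference is one of packaging: the paper enlarges the adapted class $\mathcal J$ to the class $\mathcal J'$ of objects acyclic for all $F(A^i,\_)$, so that $B^\bullet$ itself lies in $K^+(\mathcal J')$ and computes the derived functor directly, whereas you compare $B^\bullet$ with a $\mathcal J$-resolution $I^\bullet$ through a map of spectral sequences --- the same computation in a slightly different arrangement.
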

\begin{proof}
We show that we can enlarge the $K^+(F)(A^\bullet,\_)$-adapted subcategory $K^+(\mathcal J)$ to the $K^+(F)(A^\bullet,\_)$-adapted subcategory $K^+(\mathcal J')$ consisting of all complexes $B^\bullet$ with the property as above, i.e. $\mathcal J'$ is the full subcategory of all objects $B\in \mathcal B$ which are $F(A^i,\_)$-acyclic for every $i\in \Z$. The subcategory  
$\mathcal J'$ is $F(A^i,\_)$-adapted for every $i\in \Z$ (KS lemma 13.3.12). Thus for every  acyclic complex $B^\bullet\in K^+(\mathcal J')$ the double complex 
$F(A^\bullet,B^\bullet)$ has exact columns. Using the spectral sequence 
\[E_2^{i,j}=\mathcal H_I^i(\mathcal H_{II}^j(F(A^\bullet,B^\bullet)))\Longrightarrow \mathcal H^n(\tot(F(A^\bullet,B^\bullet)))=\mathcal H^n(K^+(F)(A^\bullet,B^\bullet))\]
we see that  $K^+(F)(A^\bullet,B^\bullet)$ is again acyclic. Hence the category $K^+(\mathcal J')$ is indeed adapted to the functor $K^+(F)(A^\bullet,\_)$ and we can use it to compute the derived functor. We get for $B^\bullet\in K^+(\mathcal J')$ as desired
\[RF(A^\bullet,B^\bullet)=R(K^+(F)(A^\bullet,\_))(B^\bullet)=F^\bullet(A^\bullet, B^\bullet)\,.\]
\end{proof}
Clearly there is an analogous statement for bifunctors which are right exact in each variable. For a fixed object $A^\bullet\in \D^+(\mathcal A)$ and $F$ as above, $G:=K^+(F)(A^\bullet,\_)$ and $\mathcal J'$ as in the proof we call every $B\in \mathcal J'$ a \textit{$G$-acyclic} object.     
\subsection{Danila's lemma and corollaries}\label{Dani}
Let $G$ be a finite group acting transitively on a finite set $I$, $R$ a ring and $M$ an $R[G]$-module admitting a decomposition $M=\oplus_{i\in I} M_i$ such that for any $i\in I$ and $g\in G$ the action of $g$ on $M$ restricted to $M_i$ is an isomorphism $g\colon M_i\overset\cong\to M_{g(i)}$. Then the $G$-action on $M$ induces a $\Stab_G(i)$-action on $M_i$, which makes the projection $M\to M_i$ a $\Stab_G(i)$-equivariant map.
\begin{lemma}[\cite{Dan}]\label{Dan}
For all $i\in I$ the projection $M\to M_i$ induces an isomorphism $M^G\overset\cong\to M_i^{\Stab_G(i)}$.
\end{lemma}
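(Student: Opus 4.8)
The plan is to exhibit an explicit inverse to the map $M^G \to M_i^{\Stab_G(i)}$ induced by the projection $\pr_i\colon M \to M_i$. Write $H := \Stab_G(i)$ and fix a system of representatives $g_1, \dots, g_k$ of the left cosets $G/H$, chosen so that $g_1 = e$. Since $G$ acts transitively on $I$ and $H$ is the stabiliser of $i$, the orbit map induces a bijection $G/H \to I$, $g_\ell H \mapsto g_\ell(i)$; thus the decomposition $M = \bigoplus_{j \in I} M_j$ can be rewritten as $M = \bigoplus_{\ell=1}^k M_{g_\ell(i)}$, and each $g_\ell\colon M_i \overset\cong\to M_{g_\ell(i)}$ is an isomorphism that depends only on the coset $g_\ell H$ (because any two representatives differ by an element of $H$, which acts on $M_i$).

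First I would check that $\pr_i$ restricts to a well-defined map $M^G \to M_i^H$. For $m \in M^G$ its component $\pr_i(m) \in M_i$ is fixed by every $h \in H$ since $h$ fixes $m$ and preserves the summand $M_i$; so the target is correct. For injectivity, suppose $m \in M^G$ has $\pr_i(m) = 0$. Applying $g_\ell$ and using $G$-invariance gives $\pr_{g_\ell(i)}(m) = g_\ell \cdot \pr_i(g_\ell^{-1} \cdot m) = g_\ell \cdot \pr_i(m) = 0$ for every $\ell$, so every component of $m$ vanishes and $m = 0$.

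For surjectivity I would define, for $m_i \in M_i^H$, the element
\[
\Psi(m_i) := \sum_{\ell=1}^{k} g_\ell \cdot m_i \in M,
\]
where $g_\ell \cdot m_i \in M_{g_\ell(i)}$ is its $g_\ell(i)$-component. This is independent of the chosen representatives precisely because $m_i$ is $H$-invariant. The main verification is that $\Psi(m_i)$ is $G$-invariant: for $\sigma \in G$, left multiplication permutes the cosets, so $\sigma g_\ell H = g_{\tau(\ell)} H$ for a permutation $\tau$ of $\{1,\dots,k\}$, whence $\sigma g_\ell = g_{\tau(\ell)} h_\ell$ with $h_\ell \in H$; then
\[
\sigma \cdot \Psi(m_i) = \sum_{\ell} \sigma g_\ell \cdot m_i = \sum_\ell g_{\tau(\ell)} h_\ell \cdot m_i = \sum_\ell g_{\tau(\ell)} \cdot m_i = \Psi(m_i),
\]
using $h_\ell \cdot m_i = m_i$ and reindexing by the permutation $\tau$. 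Finally $\pr_i(\Psi(m_i)) = g_1 \cdot m_i = m_i$ since $g_1 = e$ is the unique representative with $g_\ell(i) = i$, so $\Psi$ is a right inverse to the projection on $M_i^H$; combined with injectivity this gives the isomorphism.

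The only real obstacle is bookkeeping: one must be careful that the isomorphisms $g \colon M_i \to M_{g(i)}$ genuinely depend only on the coset $gH$, which is exactly the hypothesis that $H = \Stab_G(i)$ acts on $M_i$, and that the reindexing by $\tau$ in the invariance computation is a genuine bijection of summands. Everything else is the routine verification that $\Psi$ and $\pr_i$ are mutually inverse $R$-linear maps; no deeper input is needed beyond the transitivity of the action and the compatibility of the decomposition with the group action.
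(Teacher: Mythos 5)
Your proof is correct and follows the same route as the paper, which simply exhibits the inverse $m_i\mapsto \oplus_{[g]\in G/\Stab_G(i)} g\cdot m_i$ and leaves the verifications to the reader; you have filled in exactly those verifications (well-definedness on cosets via $H$-invariance of $m_i$, $G$-invariance of $\Psi(m_i)$, and the two-sided inverse property). The only wording to tighten is the early parenthetical claiming $g_\ell\colon M_i\to M_{g_\ell(i)}$ depends only on the coset $g_\ell H$ — that holds only on $M_i^{H}$, as you correctly use later.
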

\begin{proof}
The inverse is given by 
$m_i\mapsto \oplus_{[g]\in G/\Stab_G(i)} g\cdot m_i$ with $g\cdot m_i\in M_{g(i)}$.
\end{proof}
We can globalise Danila's lemma to $G$-sheaves. Let $G$ and $I$ be as above and $G$ act on a scheme $X$. Let 
$\M$ be a $G$-sheaf on $X$ admitting a decomposition $\M=\oplus_{i\in I} \M_i$ such that for any $i\in I$ and $g\in G$ the linearization $\lambda$ restricted to $\M_i$ is an isomorphism $\lambda_g\colon \M_i\overset\cong\to g^*\M_{g(i)}$. Then the $G$-linearization of $\M$ restricts to a  $\Stab_G(i)$-linearization of $\M_i$, which makes the projection $\M\to \M_i$ a $\Stab_G(i)$-equivariant morphism. By lemma \ref{inf} for every $i\in I$ we have
\[\M\cong_G\Inf_{\Stab(i)}^G\M_i\,.\]
\begin{cor} 
Let $\pi\colon X\to Y$ be a $G$-invariant morphism of schemes.
Then for all $i\in I$ the projection $\M\to\M_i$ induces an isomorphism $(\pi_*\M)^G\overset\cong\to (\pi_*\M_i)^{\Stab_G(i)}$.
\end{cor}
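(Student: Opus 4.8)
The plan is to reduce the statement to the module version of Danila's lemma (Lemma \ref{Dan}), applied section by section over $Y$, after transporting the decomposition of $\M$ along $\pi_*$. Since $\pi$ is $G$-invariant, $Y$ carries the trivial $G$-action and $\pi_*$ is the equivariant push-forward $\QCoh_G(X)\to \QCoh_G(Y)$ from subsection \ref{geomfun}. Being compatible with $\sigma^*$ via flat base change, and with $\sigma^*=\id$ on $Y$, the functor $\pi_*$ carries the decomposition $\M=\oplus_{i\in I}\M_i$ to a decomposition $\pi_*\M=\oplus_{i\in I}\pi_*\M_i$ whose linearization restricts on the summand $\pi_*\M_i$ to an isomorphism onto $\pi_*\M_{g(i)}$. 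Thus $\pi_*\M$, together with this decomposition, satisfies the hypotheses of the globalised Danila setup on $Y$ with trivial action, and the projection $\pi_*\M\to\pi_*\M_i$ is $\Stab_G(i)$-equivariant.

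Next I would pass to sections. Fix an open $V\subset Y$ and set $U=\pi^{-1}(V)$. Because $\pi\circ g=\pi$ for all $g\in G$, the open $U$ is $G$-stable, so $(\pi_*\M)(V)=\M(U)$ inherits a $G$-action which is $\reg_Y(V)$-linear, the $G$-invariance of the structural morphism making the linearization $\reg_Y$-linear. Hence $M:=\M(U)$ is an $\reg_Y(V)[G]$-module, and its decomposition $M=\oplus_{i\in I}\M_i(U)$ is exactly of the form required in Lemma \ref{Dan}: each $g\in G$ sends $\M_i(U)$ isomorphically onto $\M_{g(i)}(U)$. Applying Lemma \ref{Dan} yields an isomorphism $M^G\overset\cong\to M_i^{\Stab_G(i)}$ induced by the projection $M\to M_i$.

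Finally I would identify these invariants with sections of the invariant sheaves. By the definition of the invariants functor for the trivial action in subsection \ref{triv} one has $(\pi_*\M)^G(V)=[(\pi_*\M)(V)]^G=M^G$ and likewise $(\pi_*\M_i)^{\Stab_G(i)}(V)=M_i^{\Stab_G(i)}$; note that no sheafification is needed, as these invariants are kernels of sheaf maps and hence already sheaves. The inverse supplied by Lemma \ref{Dan} is $m_i\mapsto\oplus_{[g]\in G/\Stab_G(i)}g\cdot m_i$, which is manifestly natural in the module, so the sectionwise isomorphisms commute with the restriction maps of both sheaves and glue to the desired sheaf isomorphism $(\pi_*\M)^G\overset\cong\to(\pi_*\M_i)^{\Stab_G(i)}$ induced by $\M\to\M_i$.

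The only genuinely substantive input is Danila's lemma itself, which is already available, so the remaining work is bookkeeping: verifying that the section modules $\M(\pi^{-1}(V))$ satisfy its hypotheses — in particular that the $G$-action permutes the summands $\M_i(U)$ as required and is $\reg_Y(V)$-linear — together with the naturality needed to pass from sections to sheaves. I expect the mild care in checking the $G$-stability of $U$ and the compatibility of the linearization with the restriction maps to be the most delicate point, though it is routine.
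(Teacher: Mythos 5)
Your proof is correct and follows essentially the same route as the paper: restrict to sections over (preimages of) open subsets of $Y$, apply the module version of Danila's lemma there, and observe that the resulting isomorphisms are all induced by the projection to the $i$-th summand and hence glue. The extra care you take with the $G$-stability of $\pi^{-1}(V)$ and the sheaf property of the invariants is sound but amounts to the same bookkeeping the paper leaves implicit.
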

\begin{proof}
For an affine open $U\subset Y$ we set $R=\reg(U)$, $M=\glob(\pi^{-1}U, \M)$ and $M_i=\glob(\pi^{-1}U, \M_i)$. Then the lemma applies and gives the  isomorphism of sheaves over $U$. Since for varying $U$ all the isomorphisms are induced by the projection to the $i$-th summand, they glue together.
\end{proof}
\begin{remark}
The assertion of the lemma respectively the corollary remains true if we consider complexes of $R[G]$-modules $M^\bullet$ and $M_i^\bullet$ respectively complexes of $G$-sheaves. Let $k$ be a field of characteristic zero. If $R$ is a $k$-algebra respectively in the case of $G$-sheaves if $X$ and $Y$ are $k$-schemes and $\pi_*$ is exact, taking invariants is exact. Hence in this case we also have Danila's Lemma for the cohomology of the complexes, i.e.
\[[\mathcal H^k(M^\bullet)]^G\cong [\mathcal H^k (M_i^\bullet)]^{\Stab_G(i)}\,.\]
\end{remark}
Let $G$ act on a scheme $X$ and let $\Eb\in \D^b_G(X)$.
Let $F$ be one of the functors $\Eb\otimes\_$, $\sHom(\Eb,\_)$ or $\sHom(\_,\Eb)$. We assume that there are enough locally free sheaves on $X$. Then the functor $F$ can be derived and we denote its right respectively left derived by $DF$. Let $H\le G$ be a subgroup. We can consider $F$ and $DF$ as functors on the $H$-equivariant categories by replacing $\Eb$ by $\Res_G^H \Eb$. 
\begin{lemma}
For $\Fb\in \D^b_H(X)$ there is in $\D_G(\QCoh(X))$ a natural isomorphism \[DF(\Inf_H^G\Fb)\simeq \Inf_H^G(DF(\Fb))\,.\]
\end{lemma}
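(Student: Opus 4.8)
The plan is to compute each of the derived functors $DF$ by means of a locally free resolution and to reduce the statement to the corresponding non-derived identities of Lemma~\ref{inffun}, applied term by term. The two facts that make this work are that both $\Inf_H^G$ and $\Res_G^H$ are exact and that each of them sends locally free sheaves to locally free sheaves: for $\Res$ this is clear since the underlying sheaf is unchanged, and for $\Inf$ it follows from $\Inf_H^G\F=\bigoplus_{[g]}g^*\F$ together with the fact that pullbacks and finite direct sums of locally free sheaves are locally free. In particular $\Inf$ and $\Res$ preserve quasi-isomorphisms and carry locally free resolutions to locally free resolutions.

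First I would fix, once and for all, resolutions adapted to the variable that actually interacts with $\Inf$. For $F=\Eb\otimes\_$ (left derived) and $F=\sHom(\Eb,\_)$ (right derived) I would choose a bounded-above locally free resolution $Q^\bullet\to\Eb$ of the \emph{fixed} $G$-object; then $\Eb\otimes^L\Inf\Fb\simeq Q^\bullet\otimes\Inf\Fb$ and $R\sHom(\Eb,\Inf\Fb)\simeq\sHom^\bullet(Q^\bullet,\Inf\Fb)$, since $(\text{loc.\ free})\otimes\_$ and $\sHom(\text{loc.\ free},\_)$ are exact, so a locally free resolution of the first argument already computes the derived (bi)functor in the sense used throughout the paper. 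For $F=\sHom(\_,\Eb)$ (right derived) the variable sits in the contravariant slot, so instead I would take a locally free resolution $P^\bullet\to\Fb$ on the $H$-equivariant side and inflate it to the locally free resolution $\Inf P^\bullet\to\Inf\Fb$; then $R\sHom(\Inf\Fb,\Eb)\simeq\sHom^\bullet(\Inf P^\bullet,\Eb)$.

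In each case I would then apply the appropriate formula of Lemma~\ref{inffun} term by term: $Q^j\otimes\Inf\Fb^i\cong_G\Inf(\Res Q^j\otimes\Fb^i)$, respectively $\sHom(Q^j,\Inf\Fb^i)\cong_G\Inf\sHom(\Res Q^j,\Fb^i)$ and $\sHom(\Inf P^j,\Eb^i)\cong_G\Inf\sHom(P^j,\Res\Eb^i)$. Because these isomorphisms are natural in both arguments, they commute with the horizontal and vertical differentials of the relevant double complex and therefore assemble, after totalisation, into an isomorphism of complexes of $G$-sheaves. Since $\Inf$ is additive and exact it commutes with the formation of the total complex, so the right-hand sides equal $\Inf$ applied to $\Res Q^\bullet\otimes\Fb$, to $\sHom^\bullet(\Res Q^\bullet,\Fb)$, and to $\sHom^\bullet(P^\bullet,\Res\Eb)$ respectively. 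As $\Res$ is exact and preserves local freeness, $\Res Q^\bullet\to\Res\Eb$ is a locally free resolution, so these three complexes compute $DF(\Fb)$ on the $H$-equivariant side (with $\Eb$ replaced by $\Res\Eb$); using once more that $\Inf$ is exact gives $\simeq\Inf_H^G(DF(\Fb))$ in $\D_G(\QCoh(X))$, as desired.

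The step requiring the most care is the functor $\sHom(\Eb,\_)$. The default recipe for the right derived functor of the second variable is an injective resolution of $\Fb$, but $\Inf$, being a left adjoint, need not preserve injectives, so inflating such a resolution is of no use. The resolution of the obstacle is exactly the choice made above: to compute $R\sHom(\Eb,\_)$ through a locally free resolution of the \emph{first} argument, which keeps everything inside the class of sheaves on which $\Inf$, $\Res$ and the identities of Lemma~\ref{inffun} are well behaved. Boundedness of $\Eb$ and $\Fb$ guarantees that all these resolutions may be taken bounded above, so the totalisations involved are harmless.
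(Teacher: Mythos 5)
Your argument is correct, and it ends in the same place as the paper's proof --- a termwise application of Lemma \ref{inffun} to a well-chosen resolution, followed by totalisation --- but it gets there by a genuinely different choice of what to resolve. The paper gives a single uniform argument covering all three functors at once: take an $F$-acyclic $H$-equivariant resolution $\Ab$ of $\Fb$ itself, observe that $\Inf\Ab$ is again a resolution of $\Inf\Fb$ by $F$-acyclic objects (acyclicity is detected after applying the forgetful functor, and $\Inf$ is a finite direct sum of pullbacks along automorphisms, so it preserves $F$-acyclicity even though it does not preserve injectivity), and then apply Lemma \ref{inffun} termwise. You instead resolve the \emph{fixed} object $\Eb$ by locally free sheaves for $\Eb\otimes\_$ and $\sHom(\Eb,\_)$, so that the variable object $\Inf\Fb$ is never resolved at all, and only for the contravariant case $\sHom(\_,\Eb)$ do you resolve $\Fb$ on the $H$-side and inflate. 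This is perfectly sound, and your diagnosis of the obstacle --- that inflating an injective resolution of $\Fb$ is useless because $\Inf$ is a left adjoint --- is exactly the point; but your route leans on the balancing property of the derived bifunctors (that they may be computed from a locally free resolution of either argument), which is the content of the paper's discussion of derived bifunctors around Proposition \ref{bifun}, whereas the paper's route needs only the one-variable derived functor together with the (easy) check that $F$-acyclicity survives inflation. The paper's proof is shorter and treats the three functors uniformly; yours makes the role of local freeness and the failure of $\Inf$ to preserve injectives completely explicit.
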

\begin{proof}
Let $\Ab$ be a $F$-acyclic $H$-equivariant resolution of $\Fb$. Then $DF(\Fb)\simeq F(\Ab)$ in $\D_{H}(\QCoh(X))$. Since the inflation functor is exact, $\Inf(\Ab)$ is a $G$-equivariant resolution of $\Inf\Fb$. The objects $\Inf(\mathcal A^i)$ are still $F$-acyclic because of the compatibility of $DF$ with the forgetful functor and with direct sums. Thus 
$DF(\Inf\Fb)\simeq F(\Inf(\Ab))\simeq\Inf(F(\Ab))$ holds using lemma \ref{inffun}.  
\end{proof}
\begin{cor}\label{deriveddanila}
Let $G$, $I$, $X$, $\pi\colon X\to Y$, $\M$ and $F$ be as above such that $X$ and $Y$ are schemes over a field of characteristic zero and $\pi_*$ is exact. Then there are natural isomorphisms $[\pi_*DF(\M)]^G\simeq [\pi_*DF(\M_i)]^{\Stab(i)}$ and $[\pi_*D^k F(\M)]^G\cong [\pi_*D^kF(\M_i)]^{\Stab(i)}$ for every $k\in \Z$.  
\end{cor}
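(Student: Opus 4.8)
The plan is to deduce this corollary formally from the two results immediately preceding it: the globalised form of Danila's lemma (the corollary following Lemma~\ref{Dan}, together with its extension to complexes) and the lemma asserting that $DF$ commutes with inflation. Throughout set $H:=\Stab_G(i)$.

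First I would invoke Lemma~\ref{inf}, whose hypotheses hold for $\M$ by assumption, to obtain the $G$-equivariant isomorphism $\M\cong_G\Inf_H^G\M_i$, where $\M_i$ carries the $H$-linearization induced by the restriction of $\lambda$. Applying $DF$ and using the preceding lemma, which gives $DF(\Inf_H^G\Fb)\simeq\Inf_H^G(DF(\Fb))$ for $\Fb\in\D^b_H(X)$ (with $DF$ on the right understood via $\Res_G^H\Eb$), I obtain in $\D_G(\QCoh(X))$ the isomorphism $DF(\M)\simeq\Inf_H^G(DF(\M_i))$.

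Next I would push forward along $\pi$ and take $G$-invariants. By Remark~\ref{conv} the inflated complex $\Inf_H^G(DF(\M_i))$ is, as an object of $\D_G(\QCoh(X))$, of the form $\bigoplus_{[g]\in H\setminus G}g^*(DF(\M_i))$, a direct sum indexed by the transitive $G$-set $H\setminus G$ which satisfies the hypotheses of Danila's corollary; moreover the summand attached to the trivial coset $[e]$ is exactly $DF(\M_i)$, and its stabiliser under the action $\sigma(Hg)=Hg\sigma^{-1}$ is precisely $H$. Applying the globalised Danila corollary in its complex version (legitimate since $X,Y$ are over a field of characteristic zero and $\pi_*$ is exact, so that $[\_]^G$ is exact) yields $[\pi_*\Inf_H^G(DF(\M_i))]^G\simeq[\pi_*DF(\M_i)]^H$. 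Chaining this with the isomorphism of the previous paragraph gives the first assertion $[\pi_*DF(\M)]^G\simeq[\pi_*DF(\M_i)]^{\Stab(i)}$, and naturality is inherited because each step is natural.

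For the second assertion I would pass to cohomology sheaves. Since $\pi_*$ is exact by hypothesis and the invariants functor is exact in characteristic zero (Reynolds operator), both commute with $\mathcal H^k$, so that $\mathcal H^k([\pi_*DF(\M)]^G)\cong[\pi_*D^kF(\M)]^G$ and likewise on the $H$-side. Applying $\mathcal H^k$ to the first isomorphism therefore produces $[\pi_*D^kF(\M)]^G\cong[\pi_*D^kF(\M_i)]^{\Stab(i)}$ for every $k\in\Z$. The argument is essentially formal; the only points requiring care are checking that the inflation presents the correct Danila decomposition with stabiliser $H$ at the distinguished coset, and that the exactness of $\pi_*$ and of $[\_]^G$ genuinely interchanges cohomology with these functors. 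I do not expect any substantial obstacle beyond this equivariant bookkeeping.
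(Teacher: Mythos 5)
Your proof is correct and follows exactly the paper's route: identify $\M\cong_G\Inf_{\Stab(i)}^G\M_i$, commute $DF$ with inflation via the preceding lemma, and conclude with the complex/cohomology version of Danila's lemma, using exactness of $\pi_*$ and of taking invariants in characteristic zero. The paper's own proof is a one-line citation of these same ingredients, so your write-up is just a more detailed version of the intended argument.
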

\begin{proof}
Using the identification of $\M$ with the inflation of $\M_i$ we can conclude by the previous lemma and the previous remark. 
\end{proof}
We can get analogous results with $F$ being the push-forward or the pull-back along an equivariant morphism.
\begin{remark}\label{morphismdanila}
 Let $G$, $I$ and $M$ be as above, $N=\oplus_{j\in J}N_j$ a further $R[G]$ module such that $G$ acts transitive on $J$ and such that $g\colon N_j\to N_{g(j)}$
for all $j\in J$. Let $\phi\colon M\to N$ be a morphism of $R[G]$-modules with components  $\phi(i,j)\colon M_i\to N_j$. Then for fixed $i\in I$ and $j\in J$ the map $\phi^G$ under the isomorphisms $M^G\cong M_i^{\Stab(i)}$ and $N^G\cong N_j^{\Stab(j)}$ of lemma \ref{Dan} is given by (see also \cite[Appendix B]{Sca1}) 
\[\phi^G\colon M_i^{\Stab(i)}\to N_j^{\Stab(j)}\quad,\quad m\mapsto\sum_{[g]\in \Stab(i)\setminus G} \phi(g(i),j)(g\cdot m)\,.\]
Clearly, there is the analogous formula in the case of $G$-sheaves.   
\end{remark}
\begin{remark}\label{notrans}
Danila's lemma and the corollaries can also be used to simplify the computation of invariants if $G$ does not act  transitively on $I$. In that case let $I_1,\dots,I_k$ be the $G$-orbits in $I$. Then $G$ acts transitively on $I_\ell$ for every $1\le\ell\le k$ and the lemma can be applied to every $M_{I_\ell}=\oplus_{i\in I_\ell}M_i$ instead of $M$. Choosing representatives $i_\ell\in I_\ell$ yields
\[M^G\cong \bigoplus_{\ell=1}^k M_{i_\ell}^{\Stab_G(i_\ell)}\,.\]   
\end{remark}
\begin{lemma}\label{tensinv}
 Let $k$ be a field of characteristic zero, $R$ a $k$-algebra, $G$ a finite group and $M$ an $R[G]$-module. Let $N$ be a $R$-module, i.e. a $R[G]$-module where $G$ is acting trivially. Then
\[(M\otimes_R^LN)^G=M^G\otimes^L_R N\,.\]
\end{lemma}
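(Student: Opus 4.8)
The plan is to reduce this derived statement to the characteristic-zero exactness of the invariants functor together with a single termwise identity, by computing $\otimes_R^L$ through a resolution of the factor on which $G$ acts trivially. First I would recall that, since $k$ has characteristic zero and $G$ is finite, the functor $[\_]^G$ on $R[G]$-modules is exact (via the Reynolds operator) and is in fact realised as multiplication by the averaging idempotent $e:=\frac1{|G|}\sum_{g\in G}g\in R[G]$, so that $M^G=eM$ as a direct summand of $M$. Being exact, $[\_]^G$ descends to the derived category and commutes with passing to cohomology; hence it is harmless to apply it termwise to any complex representing $M\otimes_R^LN$.

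Next I would compute the derived tensor product by resolving $N$. Because $N$ carries the trivial $G$-action, I choose a resolution $F^\bullet\to N$ of the $R$-module $N$ by free $R$-modules and regard each $F^i$ as an $R[G]$-module with the trivial linearization; this is then a resolution of $N$ in the category of $R[G]$-modules. Each $F^i$ is free, hence flat, over $R$, so $\Tor_q^R(M,F^i)=0$ for $q>0$. Applying the forgetful functor to $R$-modules shows that the underlying complex of $M\otimes_RF^\bullet$ computes $M\otimes_R^LN$, and since the $G$-action inherited from $M$ is natural, we obtain $M\otimes_R^LN\simeq M\otimes_RF^\bullet$ already as complexes of $R[G]$-modules.

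Applying the exact functor $[\_]^G$ termwise then yields $(M\otimes_R^LN)^G\simeq(M\otimes_RF^\bullet)^G$, and the problem is reduced to the termwise identity $(M\otimes_RF^i)^G\cong M^G\otimes_RF^i$. Here I would observe that the idempotent $e$ acts on $M\otimes_RF^i$ through the first factor alone, as $G$ acts trivially on $F^i$, so that $e(m\otimes f)=(em)\otimes f$. Since $F^i$ is flat over $R$, the functor $\_\otimes_RF^i$ is exact and therefore commutes with forming the image of $e$; hence $(M\otimes_RF^i)^G=e(M\otimes_RF^i)=(eM)\otimes_RF^i=M^G\otimes_RF^i$. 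Reassembling gives $(M\otimes_RF^\bullet)^G=M^G\otimes_RF^\bullet\simeq M^G\otimes_R^LN$, the last isomorphism because $F^\bullet$ is a flat resolution of $N$, which is the asserted identity.

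The only point requiring genuine care, and the one I would single out as the potential obstacle, is the verification that $M\otimes_RF^\bullet$ with its inherited $G$-equivariant structure really represents the equivariant derived tensor product and that taking invariants afterwards computes the left-hand side. This rests entirely on the characteristic-zero exactness of $[\_]^G$ and on the flatness over $R$ of the trivially linearised free modules $F^i$; everything else is formal, and in particular no transitivity or inflation machinery is needed here since the $G$-action is carried solely by $M$.
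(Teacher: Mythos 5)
Your proof is correct. Note that the paper does not actually prove this lemma itself: it simply cites \cite[Lemma 1.7.1]{Sca1}, so there is no internal argument to compare against. What you supply is the standard direct proof: replace $N$ by a complex of free $R$-modules with trivial $G$-linearization (which computes $\otimes_R^L$ equivariantly because flatness over $R$ is all that acyclicity requires, and this is detected after forgetting the $G$-structure), use the characteristic-zero exactness of $[\_]^G$ to apply invariants termwise, and observe that the Reynolds idempotent $e$ acts through the $M$-factor alone, so $e(M\otimes_R F^i)=(eM)\otimes_R F^i=M^G\otimes_R F^i$. One tiny remark: flatness of $F^i$ is not even needed for that last termwise identity, since $M=eM\oplus(1-e)M$ is a direct sum decomposition and tensoring preserves it; flatness is only needed to know that $M^G\otimes_R F^\bullet$ still computes $M^G\otimes_R^L N$. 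All steps are sound and the argument is complete.
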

\begin{proof}
 See \cite[Lemma 1.7.1]{Sca1}.
\end{proof}
Also this lemma can be globalised to get an analogous result for $G$-sheaves.
\subsection{Pull-back along regular embeddings}\label{regemb}
Let $G$ be a finite group. In this subsection every variety is supposed to be a $G$-variety (over a fixed field $k$) and every subvariety is $G$-invariant and considered with the restricted $G$-action, i.e. all embeddings are $G$-equivariant. Also, all sheaves and the considered functors and derived functors are  equivariant. Of course one can apply the results to the non-equivariant case by setting $G=1$. Throughout the whole section $X$ is a non-singular variety.
\begin{lemma}[{\cite[Lemma A.2]{Sca1}}]
Let $A$ be a regular notherian local ring, $M_1,\dots,M_\ell$ finite Cohen-Macauley modules over $A$, such that 
\[
\codim(M_1\otimes\dots\otimes M_\ell)=\sum_{i=1}^\ell \codim(M_i)\,.
\]
Then all the higher torsion modules vanish, i.e. $\Tor^A_i(M_1,\dots,M_\ell)=0$ for all $i>0$.  
\end{lemma}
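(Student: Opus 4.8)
The plan is to derive everything from the two structural facts available over a regular local ring, namely the Auslander--Buchsbaum formula and Serre's dimension inequality, and to feed them into the Peskine--Szpiro acyclicity lemma. Write $d=\dim A$ and $c_j=\codim M_j$. Since $A$ is regular we have $\operatorname{depth}A=d$, and since each $M_j$ is Cohen--Macaulay we have $\operatorname{depth}M_j=\dim M_j$; hence Auslander--Buchsbaum gives $\operatorname{pd}M_j=d-\operatorname{depth}M_j=\codim M_j=c_j$, so each $M_j$ admits a finite free resolution of length exactly $c_j$. I would also record the general inequality $\codim(P\otimes_A Q)\le\codim P+\codim Q$ (Serre's dimension inequality over the regular local ring $A$), so that the hypothesis will force all intermediate tensor products to have additive codimension.

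First I would reduce to the case $\ell=2$ by induction on $\ell$, proving the stronger statement that $M_1\otimes\cdots\otimes M_\ell$ is again Cohen--Macaulay of codimension $\sum_j c_j$. Indeed, the chain $\sum_j c_j=\codim(M_1\otimes\cdots\otimes M_\ell)\le\codim(M_1\otimes\cdots\otimes M_{\ell-1})+c_\ell\le\sum_j c_j$ must be an equality throughout, so $P:=M_1\otimes\cdots\otimes M_{\ell-1}$ has codimension $\sum_{j<\ell}c_j$; by the inductive hypothesis $P$ is Cohen--Macaulay and $M_1\otimes^L\cdots\otimes^L M_{\ell-1}\simeq P$ is concentrated in degree zero. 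Viewing the iterated $\Tor$ as the homology of the derived tensor product, one then gets $\Tor^A_i(M_1,\dots,M_\ell)\cong\Tor^A_i(P,M_\ell)$, and the pair $(P,M_\ell)$ satisfies the hypotheses of the case $\ell=2$, which yields both the vanishing and the Cohen--Macaulayness of $P\otimes M_\ell=M_1\otimes\cdots\otimes M_\ell$.

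For the case $\ell=2$ (Cohen--Macaulay modules $M,N$ with $\codim(M\otimes N)=c_M+c_N$), let $F_\bullet\to M$ be a free resolution of length $c_M$ and set $C_\bullet:=F_\bullet\otimes N$, so $H_i(C_\bullet)=\Tor^A_i(M,N)$ and each $C_i$ is a direct sum of copies of $N$ with $\operatorname{depth}C_i=\operatorname{depth}N=d-c_N\ge c_M\ge i$ (using $c_M+c_N=\codim(M\otimes N)\le d$). To invoke the acyclicity lemma I must check that every higher homology module is either zero or has depth zero; this is the crux, and I would establish it by an inner induction on $t:=\dim(M\otimes N)=d-c_M-c_N$. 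When $t=0$ the set $\supp M\cap\supp N$ lies in the closed point, so each $\Tor^A_i(M,N)$ has finite length and hence depth zero, and the lemma applies directly. When $t>0$, for each prime $\mathfrak p\neq\mathfrak m$ in $\supp(M\otimes N)$ the localizations $M_{\mathfrak p},N_{\mathfrak p}$ are Cohen--Macaulay over the regular local ring $A_{\mathfrak p}$; since a Cohen--Macaulay module is unmixed and $A$ is catenary and equidimensional, codimension is unchanged under such localization, so the additivity hypothesis descends to $A_{\mathfrak p}$ while $\dim_{A_{\mathfrak p}}(M\otimes N)_{\mathfrak p}<t$. The inner induction then gives $\Tor^A_i(M,N)_{\mathfrak p}=0$ for $i>0$ and all $\mathfrak p\neq\mathfrak m$, so each higher $\Tor^A_i(M,N)$ is supported at $\mathfrak m$, again of finite length and depth zero, and the acyclicity lemma forces $\Tor^A_i(M,N)=0$ for $i>0$. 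Finally the now-exact resolution $0\to N^{b_{c_M}}\to\cdots\to N^{b_0}\to M\otimes N\to 0$, together with the depth count $\operatorname{depth}(M\otimes N)\ge\operatorname{depth}N-c_M=d-c_N-c_M=t$ and the bound $\operatorname{depth}\le\dim$, shows $M\otimes N$ is Cohen--Macaulay, closing both inductions.

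The main obstacle is the highlighted step in the case $\ell=2$: producing the depth-zero condition on the homology that the acyclicity lemma requires. The pleasant point is that one never needs Serre's intersection-multiplicity positivity; the localization induction reduces everything to the finite-length situation, where that depth condition is automatic. The only technical care is in verifying that the codimension-additivity hypothesis genuinely localizes, which rests on the unmixedness of Cohen--Macaulay modules and on $A$ being catenary and equidimensional.
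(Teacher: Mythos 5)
The paper offers no proof of this lemma at all: it is quoted verbatim from Scala's appendix with the citation \cite[Lemma A.2]{Sca1} standing in for the argument. So there is nothing internal to compare against; what you have written is a self-contained proof of an imported black box, and it is correct. It is in fact the classical route to Tor-independence for properly intersecting Cohen--Macaulay modules (essentially Serre's treatment of the Cohen--Macaulay case in \emph{Alg\`ebre locale}): Auslander--Buchsbaum gives $\operatorname{pd}M_j=\codim M_j$, Serre's dimension inequality forces additivity of codimension at every intermediate stage so that the outer induction on $\ell$ reduces to $\ell=2$ via $\Tor_i(M_1,\dots,M_\ell)\cong\Tor_i(P,M_\ell)$, and the Peskine--Szpiro acyclicity lemma applied to $F_\bullet\otimes N$ handles the case of two factors once the higher homology is known to have depth zero. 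Your localization induction on $\dim(M\otimes N)$ is the right way to obtain that depth condition without invoking anything about intersection multiplicities, and strengthening the induction to include Cohen--Macaulayness of the tensor product (via the depth count along the now-exact complex) is exactly what makes the reduction from $\ell$ to $\ell-1$ legitimate.

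Two small points deserve an explicit line. First, when you say the additivity hypothesis descends to $A_{\mathfrak p}$, the preservation of $\codim$ under localization that you justify by unmixedness applies to $M_{\mathfrak p}$ and $N_{\mathfrak p}$, but not yet to $(M\otimes N)_{\mathfrak p}$, since at that stage of the induction $M\otimes N$ is not known to be Cohen--Macaulay; you need the squeeze $c_M+c_N=\codim_A(M\otimes N)\le\codim_{A_{\mathfrak p}}(M\otimes N)_{\mathfrak p}\le\codim M_{\mathfrak p}+\codim N_{\mathfrak p}=c_M+c_N$, where the lower bound comes from the fact that every minimal prime of $\supp M\cap\supp N$ contained in $\mathfrak p$ has height at least $\codim_A(M\otimes N)$, and the upper bound is Serre's inequality over $A_{\mathfrak p}$. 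Second, the height identities you use ($\operatorname{ht}\mathfrak q+\dim A/\mathfrak q=\dim A$ and $\operatorname{ht}(\mathfrak p/\mathfrak q)=\operatorname{ht}\mathfrak p-\operatorname{ht}\mathfrak q$) are best attributed to $A$ being a Cohen--Macaulay (hence catenary) local domain rather than to "catenary and equidimensional" in the abstract, since catenarity alone does not give the dimension formula. Neither point is a gap; both are one-line fixes.
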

\begin{cor}\label{propint}
Let $i\colon Y\hookrightarrow X$ and $j\colon Z\hookrightarrow X$ be embeddings of Cohen-Macaulay subvarieties which intersect properly in $X$, i.e. \[\codim(Y\cap Z)= \codim(Y)+\codim(Z)\,,\] and $F$ a Cohen-Macaulay sheaf on $Z$. Then all the derived pull-backs $L^{-q}i^*j_*F$ for $q>0$ vanish.  
\end{cor}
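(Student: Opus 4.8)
The plan is to reduce the assertion to the purely local vanishing of higher $\Tor$-modules supplied by the previous lemma.

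First I would rewrite the derived pull-back as a sheaf-$\Tor$. As $i\colon Y\hookrightarrow X$ is a closed embedding, $Li^*(j_*F)\simeq \reg_Y\otimes^L_{\reg_X}(j_*F)$, and taking cohomology gives, at every point $x\in Y$, the stalk identification
\[
\bigl(L^{-q}i^*(j_*F)\bigr)_x\cong \Tor_q^{A}\!\bigl(\reg_{Y,x},(j_*F)_x\bigr),\qquad A:=\reg_{X,x},
\]
while the stalk is obviously zero for $x\notin Y$. Since $X$ is non-singular, $A$ is a regular local ring, so I am exactly in the situation of the lemma. Moreover, because every functor occurring here is compatible with the forgetful functor and a $G$-sheaf vanishes if and only if its underlying sheaf does, it suffices to prove the vanishing non-equivariantly; the equivariant statement then follows.

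Next I would feed the two stalk-modules into the lemma with $\ell=2$. Set $M_1:=\reg_{Y,x}=A/\I_{Y,x}$ and $M_2:=(j_*F)_x$. The module $M_1$ is Cohen–Macaulay over $A$ because $Y$ is a Cohen–Macaulay variety; and $M_2$ is Cohen–Macaulay over $A$ because $F$ is a Cohen–Macaulay sheaf on $Z$ and Cohen–Macaulayness of a finite module is insensitive to whether it is viewed over $\reg_{Z,x}$ or, through the surjection $\reg_X\twoheadrightarrow\reg_Z$, over $A$ (depth and Krull dimension coincide in both pictures).

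The decisive step is to verify the codimension hypothesis $\codim(M_1\otimes_A M_2)=\codim M_1+\codim M_2$. Here $\codim$ of a finite $A$-module is the codimension of its support, so $\codim M_1=\codim Y$ and $\codim M_2=\codim Z$ (using that a Cohen–Macaulay sheaf on $Z$ has full support $Z$). Since $\supp(M_1\otimes_A M_2)=\supp M_1\cap\supp M_2=Y\cap Z$ locally at $x$, the proper-intersection assumption $\codim(Y\cap Z)=\codim Y+\codim Z$ is precisely the required additivity. The lemma then gives $\Tor_q^A(M_1,M_2)=0$ for all $q>0$; as $x$ was arbitrary this means $L^{-q}i^*(j_*F)=0$ for $q>0$. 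I expect the only point needing genuine care to be this last codimension bookkeeping: one must be sure that the codimension of a module equals that of its support, that ``Cohen–Macaulay sheaf on $Z$'' is read so that $F$ has full support $Z$, and that the support of the tensor product is exactly $Y\cap Z$, so that the geometric proper-intersection condition matches the module-theoretic hypothesis of the lemma verbatim.
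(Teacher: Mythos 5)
Your proof is correct and follows essentially the same route as the paper: identify $L^{-q}i^*j_*F$ with the sheaf $\sTor_q^{\reg_X}(i_*\reg_Y,j_*F)$, pass to stalks over the regular local rings $\reg_{X,x}$, and invoke the Tor-vanishing lemma, with your depth/dimension argument for why $(j_*F)_x$ is Cohen--Macaulay over $\reg_{X,x}$ playing the role of the paper's citation of Serre. The one caveat you rightly flag --- that the codimension bookkeeping requires reading ``Cohen--Macaulay sheaf on $Z$'' so that $\supp F=Z$ --- is an implicit assumption shared with the paper's own proof, so it is not a gap on your part.
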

\begin{proof}
 By projection formula $L^{-q}i^*j_*F\cong \sTor_{q}^{\reg_X}(i_*\reg_Y,j_*F)$. 
Since $\reg_Y$ and $F$ are Cohen-Macaulay, $i_*\reg_Y$ and $j_*F$ are also (see \cite[IV B prop 11]{Ser}) .
The stalks of the higher torsion sheaves can be computed as the torsion modules of the stalks so the result follows from the previous lemma. 
\end{proof}
\begin{lemma}
\begin{enumerate}\label{exactcoh} 
\item Let $f\colon X\to Y$ be a $G$-equivariant morphism such that the derived pull-back exists and $\Eb\in \D^-_G(Y)$ a complex such that the cohomology $\mathcal H^q(\Eb)$ is $f^*$-acyclic for all  $q\in \Z$.
Then $L^qf^*\Eb\cong f^*\mathcal H^{q}(\Eb)$ for all  $q\in \Z$.   
\item Let $\F\in \Coh_G(X)$ such that $\mathcal H^q(\Eb)$ is $(\F\otimes\_)$-acyclic for all integers $q$. Then $\sTor_{q}(\F,\Eb)\cong \F\otimes\mathcal H^{-q}(\Eb)$ for all $q\in \Z$
\end{enumerate}
\end{lemma}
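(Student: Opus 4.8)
The plan is to deduce both statements uniformly from the degeneration of the second hyper-derived-functor spectral sequence, treating them as two instances of one fact about left-derived functors. Write $F$ for the right-exact functor in play: $f^*\colon\QCoh_G(Y)\to\QCoh_G(X)$ in part (1) and $\F\otimes\_\colon\QCoh_G(X)\to\QCoh_G(X)$ in part (2). In both cases the left-derived functor exists (by hypothesis, respectively because there are enough equivariant locally free sheaves, as established above), and I want to prove the single claim: if $\Eb$ is a bounded-above complex all of whose cohomology sheaves are $F$-acyclic, then $L^nF(\Eb)\cong F(\mathcal H^n(\Eb))$ for every $n\in\Z$.

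First I would set up the second hyper-derived-functor spectral sequence
\[E_2^{p,q}=L^pF(\mathcal H^q(\Eb))\Longrightarrow L^{p+q}F(\Eb)\,,\]
built in the usual way (compare the double-complex argument in Proposition \ref{bifun}) from a bounded-above resolution of $\Eb$ by $F$-acyclic, i.e. locally free resp. flat, equivariant sheaves, along which $LF$ is computed. Because $\Eb$ is bounded above one has $\mathcal H^q(\Eb)=0$ for $q\gg0$, and $L^pF$ of a single sheaf is concentrated in degrees $p\le0$; hence on each diagonal $p+q=n$ only finitely many terms are nonzero and the spectral sequence converges.

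The \emph{key step} is the collapse. The hypothesis that every $\mathcal H^q(\Eb)$ is $F$-acyclic says precisely that $L^pF(\mathcal H^q(\Eb))=0$ for all $p\neq0$, while $L^0F(\mathcal H^q(\Eb))=F(\mathcal H^q(\Eb))$. Thus $E_2^{p,q}$ vanishes off the single column $p=0$, the spectral sequence degenerates at $E_2$, and the edge isomorphism yields $L^nF(\Eb)\cong E_2^{0,n}=F(\mathcal H^n(\Eb))$. Substituting $F=f^*$ gives (1) directly. Substituting $F=\F\otimes\_$ and reading the left-derived functors in the homological convention $\sTor_q(\F,\_)$, the isomorphism $L^nF(\Eb)\cong F(\mathcal H^n(\Eb))$ becomes, with $q=-n$, the statement $\sTor_q(\F,\Eb)\cong\F\otimes\mathcal H^{-q}(\Eb)$, which is (2).

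The \emph{main obstacle} I anticipate is bookkeeping rather than anything deep: one must ensure the spectral sequence and the resulting isomorphism live in the equivariant category $\QCoh_G(X)$. This is handled by the compatibilities recalled earlier, since the equivariant derived functors are computed by equivariant locally free (resp. flat) resolutions and agree with their non-equivariant versions under the forgetful functor; as the edge isomorphism is natural in $\Eb$ and constructed functorially, it automatically respects the $G$-linearizations, so it suffices to check the underlying non-equivariant statement. The only point needing genuine care is convergence in the unbounded-below direction, but boundedness above of $\Eb$ together with the support of $L^pF$ in degrees $p\le0$ confines the nonzero terms on each total-degree diagonal to a finite range, so the hyperhomology spectral sequence of the bounded-above resolution converges as required.
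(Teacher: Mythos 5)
Your proof is correct and follows essentially the same route as the paper: both use the hyper-derived-functor spectral sequence $E_2^{p,q}=L^pF(\mathcal H^q(\Eb))\Rightarrow L^{p+q}F(\Eb)$ and observe that the acyclicity hypothesis concentrates it on the column $p=0$, in each of the two cases $F=f^*$ and $F=\F\otimes\_$. The paper's proof is just a terser version of yours, citing the spectral sequence directly rather than rebuilding it.
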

\begin{proof}
We consider the spectral sequence
\[E^{p,q}_2=L^pf^*(\mathcal H ^q(\Eb))\Longrightarrow E^n=L^nf^*(\Eb)\]
(see \cite[p.81]{Huy}). By the assumption this spectral sequence is concentrated on the $q$-axis, hence $E^q=E^{0,q}_2$ for each integer $q$.
For the second part we use the spectral sequence
\[E^{p,q}_2=\sTor_{-p}(\F,\mathcal H ^q(\Eb))\Longrightarrow E^n=\sTor_{-n}(\F,\Eb)\,.\]
\end{proof}
\begin{lemma}
 Let $j\colon Z\to X$ be an embedding of varieties. Then  $j_*\colon \Coh_G(Z)\to \Coh_G(X)$ is a fully faithful functor.
\end{lemma}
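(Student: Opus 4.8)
The plan is to reduce the equivariant assertion to the underlying non-equivariant full faithfulness of $j_*$ and to deduce the latter from the adjunction $j^*\dashv j_*$. For $\E,\F\in\Coh_G(Z)$ the functor $j_*$ produces a natural map
\[ \Hom_Z(\E,\F)\to \Hom_X(j_*\E,j_*\F)\,,\quad \phi\mapsto j_*\phi\,, \]
between the underlying ordinary $\Hom$-spaces, and by definition $G\Hom$ is obtained from each side by passing to $G$-invariants. Because $j_*$ is an equivariant functor compatible with the forgetful functor (subsection \ref{geomfun}), this map is a homomorphism of $G$-representations. Hence, once I know it is an isomorphism of vector spaces, it is automatically an isomorphism of $G$-representations, and its restriction to the invariant subspaces is exactly the bijection $G\Hom_Z(\E,\F)\cong G\Hom_X(j_*\E,j_*\F)$ we are after. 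This reduces the claim to the non-equivariant full faithfulness of $j_*$.

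For the non-equivariant statement I would exploit the adjunction $j^*\dashv j_*$. For an embedding the counit $\varepsilon\colon j^*j_*\to\id_{\Coh(Z)}$ is an isomorphism: on stalks at $z\in Z$ it is the canonical map $\reg_{Z,z}\otimes_{\reg_{X,j(z)}}\F_z\to\F_z$, which is an isomorphism since $\F_z$ is already an $\reg_{Z,z}$-module. A formal diagram chase with the triangle identities then shows that the functorial map $\phi\mapsto j_*\phi$ factors as
\[ \Hom_Z(\E,\F)\xrightarrow{-\circ\varepsilon_\E}\Hom_Z(j^*j_*\E,\F)\cong\Hom_X(j_*\E,j_*\F)\,, \]
where the last isomorphism is the adjunction; as $\varepsilon_\E$ is an isomorphism, both maps are bijections and therefore so is $\phi\mapsto j_*\phi$. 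In the case of a closed embedding this is just the familiar statement that $j_*$ identifies $\Coh(Z)$ with the full subcategory of $\reg_X$-modules annihilated by the vanishing ideal $\I$ of $Z$, on which the $\reg_X$-module structure factors through $\reg_Z=\reg_X/\I$, so that $\reg_X$-linear and $\reg_Z$-linear morphisms coincide.

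There is no essential obstacle here; the content is entirely formal. The one point that requires a little care is the equivariant bookkeeping in the reduction step: I must check that under the identification $\Hom_X(j_*\E,j_*\F)\cong\Hom_Z(\E,\F)$ the $G$-action induced by the linearizations of $j_*\E$ and $j_*\F$ corresponds to the $G$-action induced by the linearizations of $\E$ and $\F$. This is immediate from the description of the equivariant push-forward and of the $G$-action on $\sHom$ given in subsection \ref{geomfun}, so that taking $G$-invariants commutes with the isomorphism and the proof concludes.
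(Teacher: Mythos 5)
Your proposal is correct and follows essentially the same route as the paper: the paper's one-line proof is exactly the chain $G\Hom_X(j_*\E,j_*\F)\cong G\Hom_Z(j^*j_*\E,\F)\cong G\Hom_Z(\E,\F)$, i.e.\ the adjunction $j^*\dashv j_*$ combined with the counit $j^*j_*\to\id$ being an isomorphism for an embedding. Your extra step of first reducing to the non-equivariant statement via the forgetful functor and then taking $G$-invariants is a harmless reorganization of the same argument, not a different method.
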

\begin{proof}
 As in the non-equivariant case for $E,F\in \Coh_G(X)$ there are the natural isomorphisms
\[G\Hom_X(j_*E,j_*F)\cong G\Hom_Z(j^*j_* E,F)\cong G\Hom_Z(E,F)\,.\]
\end{proof}
Let $S$ be a variety and $Y,Z$ closed subvarieties. We denote the inclusions by 
\[
\begin{CD}
Y\cap Z
@>{d}>>
Y \\
@V{c}VV
@VV{b}V \\
Z
@>>{a}>
S\,.
\end{CD}
\]
\begin{lemma}\label{closedbase}
For every $F\in \QCoh_G(Z)$ the base change formula $b^*a_*F\cong d_*c^*F$ holds.
\end{lemma}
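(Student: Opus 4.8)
The plan is to produce the canonical base-change morphism and then check that it is an isomorphism by a local computation on $S$. First I would reduce to the non-equivariant statement. All four functors $a_*$, $b^*$, $c^*$, $d_*$ are available as equivariant functors compatible with their ordinary versions via $\For$, and the adjunctions $(c^*,c_*)$ and $(b^*,b_*)$ hold equivariantly; hence the base-change morphism, once it is constructed from the equivariant units and adjunction isomorphisms, is automatically $G$-equivariant. Since $\For$ is faithful and detects isomorphisms (the inverse of an equivariant isomorphism of underlying sheaves is again equivariant), it then suffices to show that the underlying morphism of ordinary sheaves is an isomorphism, so I may assume $G=1$.

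Next I would write down the morphism whose invertibility is at stake. Applying $a_*$ to the unit $F\to c_*c^*F$ of the adjunction $(c^*,c_*)$ and using $a\circ c=b\circ d$, so that $a_*c_*=b_*d_*$, gives a morphism $a_*F\to b_*d_*c^*F$; under the adjunction $(b^*,b_*)$ this corresponds to the base-change morphism
\[\beta\colon b^*a_*F\to d_*c^*F.\]
Because the formation of $\beta$ commutes with restriction to open subsets, the claim is local on $S$, so I may take $S=\Spec R$ affine with $Z=\Spec(R/I)$, $Y=\Spec(R/J)$, and the Cartesian square giving $Y\cap Z=Y\times_S Z=\Spec(R/(I+J))$. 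Writing $F$ as the sheaf associated to an $R/I$-module $M$, the sheaf $b^*a_*F$ is associated to $M\otimes_R R/J$ and $d_*c^*F$ to $M\otimes_{R/I}R/(I+J)$ viewed as an $R/J$-module, with $\beta$ the canonical comparison map. Since $M$ is an $R/I$-module, the ideal $I$ annihilates $M\otimes_R R/J$, so the surjection $R/J\twoheadrightarrow R/(I+J)$ induces an isomorphism $M\otimes_R R/J\xrightarrow{\sim}M\otimes_R R/(I+J)=M\otimes_{R/I}R/(I+J)$, and this is exactly $\beta$.

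There is no deep obstacle here. The only points requiring attention are the bookkeeping that identifies $\beta$ with this canonical module isomorphism, rather than merely exhibiting some abstract isomorphism of the two sides, and the observation that the square is Cartesian, so that $Y\cap Z$ is indeed $\Spec(R/(I+J))$ locally (the scheme-theoretic intersection, which for closed immersions is the fibre product). The equivariance, which at first sight might look like the substantive content, costs nothing once $\beta$ is built from equivariant adjunction morphisms and one invokes the compatibility of the equivariant geometric functors with $\For$ established earlier.
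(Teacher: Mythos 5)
Your proof is correct and follows essentially the same route as the paper: reduce to the affine case $S=\Spec R$ and observe that, since $I$ annihilates the $R/I$-module $M$, one has $M\otimes_R R/J\cong M\otimes_R R/(I+J)$. The extra care you take in constructing the canonical base-change morphism and in reducing the equivariance to the non-equivariant statement via $\For$ is welcome but not a different method; the paper leaves both points implicit.
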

\begin{proof}
We reduce to the case $S=\Spec A$ affine and notice that $\frac M{I_Y\cdot M}=\frac M{(I_Y+I_Z)\cdot M}$ holds for every $A/I_Z$-module $M$. 
\end{proof}
\begin{lemma}\label{selfint}
 Let $j\colon S\hookrightarrow X$ be a regular embedding of codimension $c$ with vanishing ideal $\I_S$ and $F\in\Coh_G(X)$. Then for every $q\in \Z$
\[L^{-q}j^*j_*F\cong F\otimes\wedge^{q}(j^*\I_S)\,.\]
\end{lemma}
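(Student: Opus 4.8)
The plan is to compute the derived self-intersection $L^{-q}j^*j_*F$ by resolving $\reg_X/\I_S=j_*\reg_S$ with a Koszul complex and reading off the cohomology sheaves. First I would reuse the projection-formula identification from the proof of Corollary~\ref{propint}, namely $L^{-q}i^*j_*F\cong\sTor_q^{\reg_X}(i_*\reg_Y,j_*F)$, specialized to the case $i=j$ and $Y=S$. This reduces the statement to the computation of the sheaf-Tor
\[
L^{-q}j^*j_*F\cong\sTor_q^{\reg_X}(j_*\reg_S,\,j_*F)\,,
\]
so the whole content becomes identifying these Tor-sheaves with $F\otimes\wedge^q(j^*\I_S)$.

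Next I would carry out the Koszul computation locally. Since $j\colon S\hookrightarrow X$ is a regular embedding of codimension $c$, every point of $X$ has a neighbourhood on which $\I_S$ is generated by a regular sequence $f_1,\dots,f_c$; the associated Koszul complex $\wedge^\bullet E$, where $E$ is locally free of rank $c$ with a map $E\to\reg_X$ of image $\I_S$, is a locally free resolution of $\reg_X/\I_S=j_*\reg_S$. The \emph{key observation} is that $j_*F$ is annihilated by $\I_S$, whereas the Koszul differential $\wedge^pE\to\wedge^{p-1}E$ is contraction against the section of $E^\vee$ determined by $f_1,\dots,f_c$; hence all differentials in $\wedge^\bullet E\otimes_{\reg_X}j_*F$ vanish, and
\[
\sTor_q^{\reg_X}(j_*\reg_S,\,j_*F)\cong\wedge^qE\otimes_{\reg_X}j_*F\overset{\mathrm{PF}}{\cong}j_*\bigl(\wedge^q(j^*E)\otimes_{\reg_S}F\bigr)\,,
\]
the last isomorphism being the projection formula, valid since $\wedge^qE$ is locally free. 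It then remains to identify $j^*E$ with the conormal sheaf: for a regular embedding the surjection $E\twoheadrightarrow\I_S$ induces a canonical isomorphism $j^*E\cong\I_S/\I_S^2=j^*\I_S$, so $\wedge^q(j^*E)\cong\wedge^q(j^*\I_S)$. Feeding this back and cancelling the fully faithful functor $j_*$ (previous lemma) yields the claimed isomorphism of underlying sheaves.

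The main obstacle is not this local calculation but its globalization and $G$-equivariance, since a regular sequence generating $\I_S$ exists only locally and is in general not $G$-invariant, so $\wedge^\bullet E$ is merely a local object. I expect to dispose of this by noting that the final isomorphism is \emph{canonical}: it is induced solely by the natural map $j^*E\to j^*\I_S$ and is independent of the chosen regular sequence, so the local isomorphisms agree on overlaps and glue to a global one. For the linearization, recall from the discussion preceding Proposition~\ref{Gro} that $\I_S$ carries the canonical $G$-linearization restricted from $\reg_X$, whence $j^*\I_S$ and its exterior powers $\wedge^q(j^*\I_S)$ inherit canonical $G$-linearizations; since every step above—the Tor-sheaves, the projection formula, and the conormal identification—is functorial and built entirely from this $G$-equivariant data, the resulting isomorphism is automatically $G$-equivariant. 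Alternatively, one could bypass the local Koszul choices altogether by deriving the statement from the equivariant self-intersection data already recorded in Proposition~\ref{Gro}, of which the present Tor-formula is the homological counterpart.
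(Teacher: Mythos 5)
Your argument is correct, but it follows a genuinely different route from the paper. The paper first treats the special case $F=\reg_S$ by applying the equivariant Grothendieck duality package of Proposition~\ref{Gro}: parts (ii) and (iii) give $j_*(L^{-q}j^*j_*\reg_S\otimes\wedge^c j^*\I_S^\vee)\cong j_*(\wedge^{c-q}j^*\I_S^\vee)$, after which $j_*$ is cancelled by full faithfulness and the twist by $\wedge^c j^*\I_S$ removed; the general case is then reduced to this one by applying the projection formula twice, $j_*Lj^*j_*F\simeq j_*F\otimes^L j_*\reg_S\simeq j_*(F\otimes^L Lj^*j_*\reg_S)$, together with Lemma~\ref{exactcoh}(ii). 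You instead compute the Tor sheaves directly from a local Koszul resolution, using that $j_*F$ is killed by $\I_S$ so the differentials vanish, and you identify $j^*E$ with the conormal sheaf. Both proofs are valid; yours is more elementary in that it never invokes $j^!$, while the paper's gets the global, $G$-equivariant statement immediately because Proposition~\ref{Gro} is already formulated equivariantly. The one step you assert rather than prove is precisely what that machinery buys: that the Koszul identification $\sTor_q(j_*\reg_S,j_*F)\cong\wedge^q(j^*\I_S)\otimes F$ is independent of the chosen regular sequence (for $q=1$ this is the canonical $\Tor_1(A/I,A/I)\cong I/I^2$; for higher $q$ one needs that the comparison map between two Koszul complexes can be chosen multiplicatively, so that it induces the exterior power of the degree-one map). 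This is standard and you correctly flag it as the point on which both gluing and equivariance rest, so I would not call it a gap, but in a written version that lemma should be stated explicitly; note also that the paper itself proves exactly this kind of compatibility, by the same Koszul argument, in Lemma~\ref{indu}.
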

\begin{proof}
We first proof the special case $F=\reg_S$. By \ref{Gro} (iii)
\[j_*(Lj^*j_*\reg_S\otimes \wedge^cj^*\I_S^\vee[-c])\simeq j_*j^!j_*\reg_S\simeq j_*R\sHom_S(\reg_S,j^!j_*\reg_S)\simeq R\sHom_X(j_*\reg_S,j_*\reg_S)\,.\]
Now taking the $(c-q)$-th cohomology on both sides by \ref{Gro} (ii) yields
\[j_*(L^{-q}j^*j_*\reg_S\otimes \wedge^cj^*\I_S^\vee)\cong j_*(\wedge^{c-q}j^*\I_S^\vee)\,.\]
Using the fact that $j_*$ is fully faithful we can chancel it from the isomorphism. Tensoring with $\wedge^cj^*\I_S$ gives the result.
For general $F$ using the projection formula twice we get
\[j_*Lj^*j_*F\simeq j_*(Lj^*j_*F\otimes \reg_S)\simeq j_*F\otimes^L j_*\reg_S\simeq j_*(F\otimes^L Lj^*j_*\reg_S)\,.\]
We have already proven, that the $L^qj^*j_*\reg_S$ are locally free, hence $(F\otimes\_)$-acyclic. Thus we can use \ref{exactcoh} (ii) with $\Eb=Lj^*j_*\reg_S$ which proves the  general formula.
\end{proof}
Let $j\colon S'\hookrightarrow X$ and $S\hookrightarrow X$ be regular embeddings. Then the composition $j'\colon S'\hookrightarrow X$ is also a regular embedding (see e.g. \cite[Appendix B.7]{Ful}). We denote by $\pi\colon j_*\reg_{S}\to j'_*\reg_{S'}$ the natural surjection.
\begin{lemma}\label{indu}
The induced morphism of the $(-q)$-th derived pull-back functors  
\[L^{-q}j'_*(\pi)\colon L^{-q}j'^*j_*\reg_S\cong \wedge^q (j^*\I_S)_{|S'}\to \wedge^q j'^*\I_{S'}\cong L^{-q}j'^*j'_*\reg_{S'}\]
coincides with the one induced by the inclusion $\I_S\subset \I_{S'}$. 
\end{lemma}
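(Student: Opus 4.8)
The statement is a naturality assertion, so the plan is to factor $j'$ through $S$, recall how the two isomorphisms arise, and then reduce the comparison of maps to a single explicit local Koszul computation.

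Write $j'=j\circ\iota$ where $\iota\colon S'\hookrightarrow S$ is the regular embedding whose composition with $j\colon S\hookrightarrow X$ gives $j'$, so that $Lj'^*=L\iota^*\circ Lj^*$ and $\pi=j_*(r)$ for the restriction $r\colon\reg_S\to\iota_*\reg_{S'}$. By Lemma \ref{selfint} applied to $j$ the cohomology sheaves $\mathcal H^{-q}(Lj^*j_*\reg_S)\cong\wedge^q j^*\I_S$ are locally free on $S$, hence $\iota^*$-acyclic, so Lemma \ref{exactcoh}(i) gives the degeneration
\[L^{-q}j'^*j_*\reg_S\cong\iota^*\bigl(\wedge^q j^*\I_S\bigr)=\wedge^q(j^*\I_S)_{|S'}\,;\]
this is the source identification, while the target identification $L^{-q}j'^*j'_*\reg_{S'}\cong\wedge^q j'^*\I_{S'}$ is Lemma \ref{selfint} for $j'$. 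As both sides are locally free on $S'$ and a morphism of coherent sheaves is determined on stalks, it suffices to prove the claimed equality of maps on the local ring of $X$ at an arbitrary point of $S'$; I may thus assume $X=\Spec A$ with $A$ regular local and $\I_S\subset\I_{S'}$.

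Now I would choose compatible regular sequences: a regular sequence $f_1,\dots,f_c$ generating $\I_S$ together with $g_1,\dots,g_d$ whose images generate $\I_{S'}/\I_S$ by a regular sequence in $A/\I_S$, so that $f_1,\dots,f_c,g_1,\dots,g_d$ is a regular sequence generating $\I_{S'}$. Resolving $A/\I_S$ and $A/\I_{S'}$ by the Koszul complexes $K_\bullet(f_1,\dots,f_c)$ and $K_\bullet(f_1,\dots,f_c,g_1,\dots,g_d)$ and computing $Lj'^*=\_\otimes^L_A A/\I_{S'}$, every Koszul differential becomes zero, since all the $f_i$ and $g_k$ lie in $\I_{S'}$; hence
\[L^{-q}j'^*j_*\reg_S\cong\wedge^q(A/\I_{S'})^{\,c}\,,\qquad L^{-q}j'^*j'_*\reg_{S'}\cong\wedge^q(A/\I_{S'})^{\,c+d}\,.\]
The surjection $\pi\colon A/\I_S\to A/\I_{S'}$ is covered by the inclusion of Koszul complexes $K_\bullet(f_1,\dots,f_c)\hookrightarrow K_\bullet(f_1,\dots,f_c,g_1,\dots,g_d)$, namely the identity in degree $0$ and the inclusion $\wedge^\bullet A^{c}\hookrightarrow\wedge^\bullet A^{c+d}$ of the first $c$ coordinates; this is a chain map because the two differentials agree on the $f_i$. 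Tensoring this lift with $A/\I_{S'}$ and passing to homology (trivially, as all differentials vanish) identifies $L^{-q}j'^*(\pi)$ with $\wedge^q$ of the coordinate inclusion $(A/\I_{S'})^{c}\hookrightarrow(A/\I_{S'})^{c+d}$.

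It then remains to recognise this coordinate inclusion as the map induced by $\I_S\subset\I_{S'}$. Under $(A/\I_{S'})^{c}\cong(\I_S/\I_S^2)\otimes_{A/\I_S}A/\I_{S'}=j'^*\I_S$ and $(A/\I_{S'})^{c+d}\cong\I_{S'}/\I_{S'}^2=j'^*\I_{S'}$ it sends the class of $f_i$ to the class of $f_i$, which is exactly the restriction to $S'$ of the inclusion $\I_S\hookrightarrow\I_{S'}$ (it factors through $j'^*\I_S$ because $\I_S^2$ and $\I_{S'}\I_S$ lie in $\I_{S'}^2$); taking $\wedge^q$ gives the assertion. The one step that genuinely needs care is the compatibility of the two isomorphisms coming from Lemma \ref{selfint} — one applied to $j$ and restricted along $\iota$, the other applied to $j'$ — with the Koszul identifications used here. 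This holds because, locally at a regular embedding, the isomorphism of Lemma \ref{selfint} is the one carried by the Koszul resolution, the self-$\Tor$ of the structure sheaf being the exterior algebra on the conormal module presented through $K_\bullet$. I expect this matching of the Grothendieck-duality isomorphism of Lemma \ref{selfint} with the explicit Koszul presentation to be the main obstacle; the remaining manipulations with Koszul complexes are routine.
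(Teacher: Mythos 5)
Your proof is correct and follows essentially the same route as the paper's: reduce to a local computation over a regular local ring, resolve both structure sheaves by Koszul complexes on compatible regular sequences, observe that all differentials vanish after pulling back along $j'$, and lift $\pi$ to the evident inclusion of Koszul complexes, which on cohomology becomes $\wedge^q$ of the map induced by $\I_S\subset\I_{S'}$. The compatibility issue you flag at the end — matching the Grothendieck-duality isomorphism of Lemma \ref{selfint} with the Koszul identification — is real but is passed over silently in the paper's own proof as well, so your treatment is if anything slightly more careful.
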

\begin{proof}
Because of the compatibility via the forgetful functor of the equivariant derived functors with the non-equivariant ones, we can compute 
$L^{-q}j'^*(\pi)$ on the non-equivariant derived functor. The question is local. Thus we can assume that both vanishing ideals are globally generated by regular sequences
\[\I_S=(f_1,\dots, f_\alpha)\quad,\quad\I_{S'/S}=(f_{\alpha+1},\dots, f_{\alpha+\beta})\,.
\]
Now the non-equivariant derived pull-backs are computed (see e.g. \cite[chapter 11.1]{Huy}) using the Koszul complexes  $K^\bullet(f_1,\dots f_{\alpha})$ and $K^\bullet(f_1,\dots f_{\alpha+\beta})$ as free resolutions of $j_*\reg_S$ and $j'_*\reg_{S'}$. After pulling back the Koszul complexes along $j'$ all the differentials vanish
which yields the isomorphism
\[L^{-q}j'^*J_*\reg_S\cong \mathcal H^{-q}(j'^*K^\bullet(f_i,\dots,f_\alpha))\overset \cong\to \wedge^{q} j'^*\I_S\,,\,\quad e_{i_1}\wedge\dots\wedge e_{i_q}
 \mapsto f_{i_1}\wedge\dots\wedge f_{i_q}
\]
and the analogous isomorphism for $S'$ instead of $S$.
The morphism $\pi$ can be continued on the Koszul resolutions by
\[K^{-q}(f_1,\dots,f_\alpha)\to K^{-q}(f_1,\dots,f_{\alpha+\beta})\,,\, e_{i_1}\wedge\dots\wedge e_{i_q}
 \mapsto e_{i_1}\wedge\dots\wedge e_{i_q}\]
which yields the result.    
\end{proof}
The two lemmas \ref{propint} and \ref{selfint} can be combined as follows. Let again $Y$ and $Z$ be closed Cohen-Macaulay subvarieties of $X$. We assume that there is a non-singular subvariety $S\hookrightarrow X$ such that $S$ contains $Y$ and $Z$ such that $Y$ and $Z$ intersect properly in $S$. 
Note that the embedding $S\hookrightarrow X$ is regular since both $S$ and $X$ are non-singular.
We use the following notations for the closed embeddings:
\[\xymatrix{
            & Y \ar^i[dr]\ar^b[d]&  \\
   Y\cap Z \ar^d[ur]\ar_c[dr]   & S\ar^\iota[r] &  X\,.    \\
        &   Z\ar^a[u]\ar_j[ur]    & 
} \]
\begin{lemma}\label{mixedint}
Let $F\in\Coh_G(Z)$ be Cohen-Macauley. With the notations introduced above, for $q\in \Z$ there is the following isomorphism
\[L^{-q}i^*j_*F\cong d_*c^*\left(F\otimes(a^*\wedge^q \iota^*\I_S)\right)\,.\]
\end{lemma}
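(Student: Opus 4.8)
The plan is to push the whole computation down onto the non-singular variety $S$, where both proper intersection (Corollary \ref{propint}) and the self-intersection formula (Lemma \ref{selfint}) are available, and then transport the result back to $X$ by base change. Since $S$ is non-singular and contains both $Y$ and $Z$, the inclusions factor as $i=\iota\circ b$ and $j=\iota\circ a$, so that $j_*F\cong\iota_*(a_*F)$ and
\[Li^*j_*F\simeq Lb^*L\iota^*\iota_*(a_*F)\,.\]
First I would analyse the inner composition. The embedding $\iota\colon S\hookrightarrow X$ is regular (both are non-singular), so Lemma \ref{selfint} applied to the sheaf $a_*F\in\Coh_G(S)$ gives, for every $q$,
\[L^{-q}\iota^*\iota_*(a_*F)\cong (a_*F)\otimes\wedge^q(\iota^*\I_S)\,,\]
i.e.\ the cohomology sheaves of $\Eb:=L\iota^*j_*F$ are $a_*F$ tensored with the locally free sheaves $\wedge^q(\iota^*\I_S)$.

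The next step is to commute $Lb^*$ past these cohomology sheaves. Because $\wedge^q(\iota^*\I_S)$ is locally free on $S$, the $b^*$-acyclicity of $(a_*F)\otimes\wedge^q(\iota^*\I_S)$ reduces to that of $a_*F$ alone; and $a_*F$ is $b^*$-acyclic precisely by Corollary \ref{propint}, since $Y$ and $Z$ are Cohen--Macaulay subvarieties of $S$ that intersect properly \emph{there} and $F$ is Cohen--Macaulay on $Z$. With all cohomology sheaves of $\Eb$ now known to be $b^*$-acyclic, Lemma \ref{exactcoh}(i) lets me pull $b^*$ through cohomology:
\[L^{-q}i^*j_*F\cong b^*\bigl((a_*F)\otimes\wedge^q(\iota^*\I_S)\bigr)\cong b^*(a_*F)\otimes\wedge^q(b^*\iota^*\I_S)\,,\]
the last isomorphism again using local freeness of the second factor.

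Finally I would identify the two factors separately. Closed base change (Lemma \ref{closedbase}) for the square with corners $Y\cap Z,Y,Z,S$ gives $b^*(a_*F)\cong d_*c^*F$. Since $b\circ d=a\circ c$ as maps $Y\cap Z\to S$, pulling the locally free factor back along $d$ satisfies $d^*b^*\iota^*\I_S\cong c^*a^*\iota^*\I_S$, and hence by the projection formula along $d$
\[d_*c^*F\otimes\wedge^q(b^*\iota^*\I_S)\cong d_*\bigl(c^*F\otimes c^*(a^*\wedge^q\iota^*\I_S)\bigr)\cong d_*c^*\bigl(F\otimes a^*\wedge^q(\iota^*\I_S)\bigr)\,,\]
which is the claimed formula. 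I expect the one genuinely delicate point to be the $b^*$-acyclicity verification: one must be careful to invoke the proper intersection hypothesis inside $S$ (not in $X$, where $Y$ and $Z$ need not meet properly), while the equivariance of every step is automatic, since each cited lemma is stated for $G$-sheaves and $G$-equivariant functors.
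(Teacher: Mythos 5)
Your proof is correct and follows essentially the same route as the paper: factor $i=\iota\circ b$, $j=\iota\circ a$, compute $L\iota^*\iota_*(a_*F)$ via Lemma \ref{selfint}, verify $b^*$-acyclicity of the cohomology sheaves through Corollary \ref{propint} applied inside $S$, commute $Lb^*$ with cohomology by Lemma \ref{exactcoh}(i), and finish with closed base change and the projection formula. The only cosmetic difference is the order of the last two steps (the paper applies the projection formula along $a$ on $S$ before base-changing, you base-change first and then project along $d$), which changes nothing.
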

\begin{proof}
 We have $L^{-q}i^*j_*F\cong L^{-q}b^*(L\iota^*\iota_*(a_*F))$. Also by lemma \ref{selfint} and the projection formula
\[L^{-q}\iota^*\iota_*(a_*F)\cong a_*F\otimes\wedge^{q} (\iota^*\I_S)\cong a_*(F\otimes a^*\wedge^q N_S^\vee)\,.\] 
Since $F$ is Cohen-Macaulay and $\wedge^{q} N_S^\vee$ is locally free, the whole $F\otimes a^*\wedge^q N_S^\vee$ is Cohen-Macaulay for every $q$.
So by \ref{propint} the assumptions of \ref{exactcoh} (i) are satisfied with $f=b$ and $\Eb=L\iota^*\iota_*(a_*F)$. Thus,
\begin{align*} 
 L^{-q}i^*j_*F\cong b^*(a_*F\otimes \wedge^{q} (\iota^*\I_S))&\overset{\text{PF}}\cong b^*a_*(F\otimes a^*\wedge^q \iota^*\I_S)
\overset{\ref{closedbase}}\cong d_*c^*\left(F\otimes(a^*\wedge^q \iota^*\I_S)\right)\,.
\end{align*}
\end{proof}
Let $Y,Z$ and $S$ be as above and $S'\hookrightarrow S$ another non-singular subvariety such that $S'$ contains $Y$ and intersects properly with $Z$ in $S$. We denote the intersection by $Z'=Z\cap S'$. It is again Cohen-Macaulay (see \cite[section IV B 2]{Ser}).  
Furthermore, the subvarieties $Z'$ and $Y$ intersect properly in $S'$. Hence, the above lemma applies again and 
\[L^{-q}i^*j'_*\reg_{Z'}\cong \wedge^{q} (N_{S'}^\vee)_{|Y\cap Z}\,,\]
where $j'$ is the inclusion of $Z'$ in $X$.
\begin{lemma}\label{induced}
Let $\pi\colon j_*\reg_Z\to j'_*\reg_{Z'}$ be the natural surjection. Then the morphism 
\[L^{-q}i^*(\pi)\colon L^{-q}i^*j_*\reg_Z\cong \wedge^q (N^\vee_S)_{|Y\cap Z}\to \wedge^q (N^\vee_{S'})_{|Y\cap Z'}\cong L^{-q}i^*j'_*\reg_{Z'}\]
is the one induced by the inclusion $\I_S\subset \I_{S'}$. 
\end{lemma}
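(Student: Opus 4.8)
The plan is to reduce to a local computation with Koszul resolutions, exactly as in the proof of Lemma \ref{indu}. Since the derived equivariant pull-back is compatible with the forgetful functor and the assertion is local on $X$, I may drop the $G$-action and assume $X=\Spec A$ with $A$ regular local, so that $i^{*}=\reg_{Y}\otimes_{A}(\_)$ and $L^{-q}i^{*}j_{*}\reg_{Z}\cong\Tor^{A}_{q}(\reg_{Z},\reg_{Y})$ (a module supported on $Y\cap Z$). I choose a regular sequence $f_{1},\dots,f_{s}$ generating $\I_{S}$ and extend it by $f_{s+1},\dots,f_{s+t}$ to a regular sequence generating $\I_{S'}$; this is possible because $S'$ is a non-singular subvariety of the non-singular $S$, so $\I_{S'/S}$ is cut out by a regular sequence which I lift to $A$. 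With these choices the inclusion $\I_{S}\subset\I_{S'}$ sends $f_{i}$ to $f_{i}$, so the conormal map $\wedge^{q}(N_{S}^{\vee})_{|Y\cap Z}\to\wedge^{q}(N_{S'}^{\vee})_{|Y\cap Z'}$ becomes $f_{i_{1}}\wedge\dots\wedge f_{i_{q}}\mapsto f_{i_{1}}\wedge\dots\wedge f_{i_{q}}$ for $\{i_{1},\dots,i_{q}\}\subset\{1,\dots,s\}$. The genuine difficulty is that the two identifications of Lemma \ref{mixedint} use \emph{different} ambient non-singular varieties, $S$ for $\reg_{Z}$ and $S'$ for $\reg_{Z'}$, so one cannot apply a single self-intersection formula to both sides.

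The key step is to resolve $\reg_{Z'}$ through $\reg_{Z}$. Since $\I_{S}\subset\I_{Z}$ we have $\I_{Z'}=\I_{Z}+(f_{s+1},\dots,f_{s+t})$ and hence $\reg_{Z'}=\reg_{Z}/(f_{s+1},\dots,f_{s+t})\reg_{Z}$. Because $Z$ and $S'$ intersect properly in $S$ and $\reg_{Z}$ is Cohen--Macaulay, the images of $f_{s+1},\dots,f_{s+t}$ form a regular sequence on $\reg_{Z}$. Thus, if $P_{\bullet}\to\reg_{Z}$ is an $A$-free resolution, the total complex $Q_{\bullet}:=P_{\bullet}\otimes_{A}K_{\bullet}(f_{s+1},\dots,f_{s+t})$ of the Koszul complex is an $A$-free resolution of $\reg_{Z'}$, and the surjection $\pi$ lifts to the chain map $P_{\bullet}\to Q_{\bullet}$, $p\mapsto p\otimes 1$, given by the inclusion of the $K_{0}=A$ column (the Koszul differential vanishes on $1$, so this is indeed a chain map lifting $\pi$).

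Now I would apply $\_\otimes_{A}\reg_{Y}$ and pass to homology. Since $Y\subset S'$, all of $f_{s+1},\dots,f_{s+t}$ lie in $\I_{Y}$, so $K_{\bullet}(f_{s+1},\dots,f_{s+t})\otimes_{A}\reg_{Y}$ has vanishing differential and equals the exterior algebra $\wedge^{\bullet}(\reg_{Y}^{\oplus t})$. Therefore $Q_{\bullet}\otimes_{A}\reg_{Y}\cong(P_{\bullet}\otimes_{A}\reg_{Y})\otimes_{\reg_{Y}}\wedge^{\bullet}(\reg_{Y}^{\oplus t})$ with trivial differential on the second factor, whence
\[\Tor^{A}_{q}(\reg_{Z'},\reg_{Y})\cong\bigoplus_{p+b=q}\Tor^{A}_{p}(\reg_{Z},\reg_{Y})\otimes_{\reg_{Y}}\wedge^{b}(\reg_{Y}^{\oplus t})\,,\]
and the lift $p\mapsto p\otimes 1$ shows that the map $\pi_{*}=L^{-q}i^{*}(\pi)$ is precisely the inclusion of the summand $b=0$, i.e.\ $x\mapsto x\otimes 1$. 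Identifying $\Tor^{A}_{p}(\reg_{Z},\reg_{Y})\cong\wedge^{p}(N_{S}^{\vee})_{|Y\cap Z}$ with basis the $f_{i_{1}}\wedge\dots\wedge f_{i_{p}}$ via Lemma \ref{mixedint}, and the generators of $\wedge^{b}(\reg_{Y}^{\oplus t})$ with $f_{s+1},\dots,f_{s+t}$, the right-hand side becomes $\wedge^{q}(N_{S'}^{\vee})_{|Y\cap Z'}$ and $x\mapsto x\otimes 1$ becomes $f_{i_{1}}\wedge\dots\wedge f_{i_{q}}\mapsto f_{i_{1}}\wedge\dots\wedge f_{i_{q}}$, which is exactly the conormal map induced by $\I_{S}\subset\I_{S'}$.

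The main obstacle is the bookkeeping that makes the two applications of Lemma \ref{mixedint} compatible: I must check that appending the regular sequence $f_{s+1},\dots,f_{s+t}$ to the self-intersection Koszul complex of $S$ reproduces exactly the self-intersection identification for $S'$. This rests on the multiplicativity $K_{\bullet}(f_{1},\dots,f_{s+t})\cong K_{\bullet}(f_{1},\dots,f_{s})\otimes_{A}K_{\bullet}(f_{s+1},\dots,f_{s+t})$ together with the (locally split) conormal sequence of $S'\subset S\subset X$, and on the hypotheses ensuring that the higher $\Tor$ in the $Z$-direction vanish and that the $f_{j}$ act as zero after restriction to $Y$ (proper intersection and Cohen--Macaulayness, via Corollary \ref{propint} and Lemma \ref{exactcoh}). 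Alternatively, one could factor $i$ through $S'$ and deduce the comparison on the conormal exterior powers directly from Lemma \ref{indu}.
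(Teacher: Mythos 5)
Your argument is correct, but it is organised quite differently from the paper's. The paper's proof of Lemma \ref{induced} is a two-line reduction: it observes that $\reg_{Z'}\cong\reg_Z\otimes_{\reg_S}\reg_{S'}$ and that $\pi$ is induced by the surjection $\reg_S\to\reg_{S'}$, so that the map on the $\wedge^q N^\vee$-factors appearing in Lemma \ref{mixedint} is exactly the one already computed in Lemma \ref{indu} (this is the ``alternative'' you mention in your last sentence). You instead work entirely with resolutions of $\reg_Z$ and $\reg_{Z'}$ over the ambient regular local ring: the new ingredient is that proper intersection of $Z$ with $S'$ in $S$ plus Cohen--Macaulayness of $\reg_Z$ makes $f_{s+1},\dots,f_{s+t}$ a regular sequence on $\reg_Z$, so $P_\bullet\otimes_A K_\bullet(f_{s+1},\dots,f_{s+t})$ resolves $\reg_{Z'}$ and $\pi$ lifts to the inclusion of the $K_0$-column; after $\otimes_A\reg_Y$ the Koszul differential dies and $L^{-q}i^*(\pi)$ becomes the inclusion of the $b=0$ summand of $\bigoplus_{p+b=q}\Tor^A_p(\reg_Z,\reg_Y)\otimes\wedge^b(\reg_Y^{\oplus t})$. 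This buys a self-contained, very explicit computation that never leaves $X$, at the cost of the compatibility check you flag at the end: one must verify that your direct sum decomposition agrees with the identification $\Tor^A_q(\reg_{Z'},\reg_Y)\cong\wedge^q(N^\vee_{S'})_{|Y\cap Z'}$ produced by Lemma \ref{mixedint} (which factors through $S'$). You correctly name the ingredients that make this work --- multiplicativity $K_\bullet(f_1,\dots,f_{s+t})\cong K_\bullet(f_1,\dots,f_s)\otimes_A K_\bullet(f_{s+1},\dots,f_{s+t})$ and the locally split conormal sequence of $S'\subset S\subset X$ --- but you leave that verification as a sketch; it is routine and does go through, though in a referee's eyes it is the one step that still needs to be written out. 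The paper's route avoids this bookkeeping entirely by pushing all of it into the already-proven Lemma \ref{indu}.
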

\begin{proof}
We have $\reg_{Z'}\cong \reg_Z\otimes_{\reg_S}\reg_{S'}$ and the surjection $\pi\colon \reg_Z\to \reg_{Z'}$ is induced by the surjection $\reg_S\to\reg_{S'}$. Thus we can use lemma \ref{indu}.
\end{proof}
\subsection{Partial diagonals and the standard representation}\label{diagsta}
Let $X$ be a smooth variety of dimension $d$ over $\C$. Let $I=\{i_1<\dots<i_\ell\}\subset \n$ with $\#I=\ell\ge2$. Then the \textit{partial diagonal} $\Delta_I$ is defined as the reduced subvariety given by
\[\Delta_I=\bigl\{ (x_1,\dots,x_n)\in X^n\mid x_i=x_j \,\forall\, i,j\in I\bigr\}\,.\] 
It is a $\Stab_{\sym_n}(I)=\sym_I\times\sym_{\bar I}$-equivariant smooth subvariety of codimension $d(\ell-1)$ in $X^n$. 
We denote the closed embedding by $\iota_I\colon \Delta_I\to X^n$.
The subgroup $\sym_I$ acts trivially on $\Delta_I$. Thus, the $\sym_I$-linearization of the conormal bundle $N_I^\vee$ is just a $\sym_I$-action (see subsection \ref{triv}). Since $N_I^\vee$ is locally free, this action is determined by the action on the fibers $N_I^\vee(Q)=\I_I(Q)$ over closed points $Q\in \Delta_I$.
 For $\ell\in \N$ the \textit{regular representation} $V_I$ of $\sym_I$ is the vector space $\C^I$ with $\sym_\ell$ permuting the vectors of the standard base $(e_i\mid i\in I)$. The natural representation decomposes into the direct sum of its invariants, which form the one-dimensional vector space spanned by $\sum_{i\in I}e_i$, and the space 
$\{v\in V\mid \sum_{i\in I} v_i=0\}$ spanned by the vectors $e_{i_1}-e_{i_2},e_{i_2}-e_{i_3},\dots, e_{i_{\ell-1}}-e_{i_\ell}$. The latter summand is an irreducible representation of $\sym_I$ called the \textit{standard representation} and denoted  by $\rho_I$.
For $I=[\ell]$ we denote the standart representation of $\sym_I=\sym_\ell$ also by $\rho_\ell$ or simply $\rho$.
We need the following result from local algebra (see \cite[IV Proposition 22]{Ser}).
\begin{prop}\label{calg}
 If $\{x_1,\dots,x_p\}$ are $p$ elements of the maximal ideal $\m$ of a regular local ring $A$, the following three properties are equivalent:
\begin{enumerate}
 \item $x_1,\dots,x_p$ is part of a regular system of parameters of $A$, i.e. of a system of parameters which generates $\m$.
 \item The images of $x_1,\dots,x_p$ in $\m/\m^2$ are linearly independent over $k$.
 \item The local ring $A/(x_1,\dots, x_p)$ is regular and has dimension $\dim A -p$.\\
(In particular, $(x_1,\dots, x_p)$ is a prime ideal.)
\end{enumerate}
\end{prop}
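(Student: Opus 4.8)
This is a classical fact from commutative algebra (indeed it is cited as \cite[IV Prop.\ 22]{Ser}), so in the paper one simply invokes the reference; below I sketch the argument I would give directly. Write $(A,\m,k)$ for the regular local ring, set $n=\dim A$, and recall that regularity means precisely $\dim_k \m/\m^2 = n$. Throughout I would use two standard tools: Nakayama's lemma, which says that elements of $\m$ generate $\m$ if and only if their residue classes span $\m/\m^2$; and the general inequalities $\dim B \le \dim_k \nax/\nax^2$ for any Noetherian local ring $(B,\nax)$ (embedding dimension bounds Krull dimension from above) together with $\dim A/(x_1,\dots,x_p) \ge \dim A - p$ (Krull's principal ideal theorem applied $p$ times).

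The plan is to prove $(1)\Leftrightarrow(2)$ directly and then $(2)\Leftrightarrow(3)$ by a dimension count. For $(1)\Leftrightarrow(2)$: a regular system of parameters is exactly a minimal generating set of $\m$, and by Nakayama such a set has $n$ elements whose residues form a $k$-basis of $\m/\m^2$. Hence $x_1,\dots,x_p$ extends to a regular system of parameters if and only if the classes $\bar x_1,\dots,\bar x_p$ extend to a basis of $\m/\m^2$, which happens if and only if they are linearly independent over $k$.

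For $(2)\Leftrightarrow(3)$, set $B=A/(x_1,\dots,x_p)$ with maximal ideal $\nax=\m/(x_1,\dots,x_p)$ and let $r=\dim_k\langle \bar x_1,\dots,\bar x_p\rangle$ be the dimension of their span in $\m/\m^2$. A direct computation identifies $\nax/\nax^2$ with $\m/(\m^2+(x_1,\dots,x_p))$, whence $\dim_k \nax/\nax^2 = n-r$ by elementary linear algebra. Combining this with the two inequalities above yields the chain
\[ n-p \;\le\; \dim B \;\le\; \dim_k \nax/\nax^2 \;=\; n-r \,. \]
If $(2)$ holds then $r=p$, so the outer terms agree and both squeezes are equalities, giving that $B$ is regular of dimension $n-p$, i.e.\ $(3)$. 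Conversely, if $(3)$ holds then regularity of $B$ forces $\dim B = \dim_k\nax/\nax^2 = n-p$, so $n-r=n-p$ and hence $r=p$, which is $(2)$.

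The only genuinely load-bearing inputs are the two dimension inequalities; the computation $\dim_k \nax/\nax^2 = n-r$ is purely formal once $\nax/\nax^2$ is identified with $\m/(\m^2+(x_1,\dots,x_p))$. I expect the point most worth spelling out to be the lower bound $\dim A/(x_1,\dots,x_p)\ge \dim A - p$, since this is where Krull's principal ideal theorem enters and where the squeeze in the displayed chain becomes tight. The parenthetical primeness assertion in $(3)$ then requires no extra work: a regular local ring is an integral domain, so $B$ being regular makes $(x_1,\dots,x_p)$ prime.
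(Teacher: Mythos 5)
The paper gives no proof of this proposition at all: it is quoted from the cited reference (Serre, \emph{Local Algebra}, IV, Prop.\ 22), so there is nothing internal to compare against. Your sketch is correct and is essentially the standard argument, which is also the one in the reference: $(1)\Leftrightarrow(2)$ follows from Nakayama's lemma together with $\dim_k\m/\m^2=\dim A$, and $(2)\Leftrightarrow(3)$ from squeezing $\dim A/(x_1,\dots,x_p)$ between the lower bound $\dim A-p$ coming from Krull's height theorem and the upper bound $\dim_k\nax/\nax^2=\dim A-r$ coming from the identification $\nax/\nax^2\cong\m/(\m^2+(x_1,\dots,x_p))$ and the general inequality between Krull dimension and embedding dimension. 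The load-bearing inputs you single out (the height-theorem inequality and the fact that a regular local ring is a domain, which gives the parenthetical primeness claim) are exactly the right ones, so the proposal is a faithful filling-in of what the paper delegates to the citation.
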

\begin{lemma}\label{regrep}
Let $I\subset \n$ and $Q\in \Delta_I$ be a closed point. Considering $\I_I(Q)$ with the natural action of $\sym_I$ (see proposition \ref{Gro}) we have $\I_I(Q)\cong \rho_I^{\oplus d}$.  
\end{lemma}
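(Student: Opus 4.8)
The plan is to compute the fiber $\I_I(Q)$ as a representation of $\sym_I$ by exhibiting an explicit basis on which the $\sym_I$-action is transparent, and then matching it against $\rho_I^{\oplus d}$. First I would reduce to a purely local computation. Fix the closed point $Q=(q_1,\dots,q_n)\in\Delta_I$; since $I$ indexes coordinates that are forced to agree, all $q_i$ with $i\in I$ equal a common point $x\in X$. Choose local coordinates $z^{(1)},\dots,z^{(d)}$ on $X$ centered at $x$; these pull back along the projections $\pr_i\colon X^n\to X$ to give, for each $i\in I$, functions $z_i^{(1)},\dots,z_i^{(d)}$ in the local ring $\reg_{X^n,Q}$. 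The partial diagonal $\Delta_I$ is cut out near $Q$ by the equations $z_i^{(k)}=z_j^{(k)}$ for $i,j\in I$ and $1\le k\le d$, and it suffices to take differences with a fixed index, say $z_{i}^{(k)}-z_{i_1}^{(k)}$ for $i\in I\setminus\{i_1\}$ and all $k$.

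Next I would verify that these $d(\ell-1)$ functions form part of a regular system of parameters, so that proposition \ref{calg} applies and their images span $\I_I(Q)=N_I^\vee(Q)$ freely; this is exactly the statement that $\Delta_I$ is smooth of codimension $d(\ell-1)$, which is already recorded in subsection \ref{diagsta}. The key observation is then to identify the span of the images in $\m/\m^2$ with a representation-theoretic object: for each fixed coordinate index $k$, the $\ell$ functions $z_i^{(k)}$ ($i\in I$) are permuted by $\sym_I$ according to the action on the standard basis of $V_I=\C^I$, and hence their images in $\m/\m^2$ carry a copy of the regular representation $V_I$. The differences $z_i^{(k)}-z_{i_1}^{(k)}$, however, are precisely the images of the vectors $e_i-e_{i_1}$ spanning the standard representation $\rho_I$ described in subsection \ref{diagsta}. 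Since the coordinate indices $k=1,\dots,d$ are acted on trivially by $\sym_I$ and give $d$ independent such families, I would conclude $\I_I(Q)\cong\rho_I^{\oplus d}$ as $\sym_I$-representations.

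The main obstacle is bookkeeping the $\sym_I$-action correctly through the identification of $\I_I(Q)$ with a subspace of $\m/\m^2$. By the conventions of Proposition \ref{Gro} and subsection \ref{diagsta}, the $\sym_I$-linearization on $N_I^\vee=(\iota_I^*\I_I)^\vee$ is induced from the natural linearization $\lambda$ on the ideal sheaf $\I_I$, which in turn comes from the geometric action of $\sym_I$ permuting the factors of $X^n$. I would need to check that under this geometric action a permutation $\sigma\in\sym_I$ sends the function $z_i^{(k)}$ to $z_{\sigma(i)}^{(k)}$ (up to the usual inverse convention in Remark \ref{induequi}), so that the span of $\{z_i^{(k)}\}_{i\in I}$ is genuinely the regular representation and the quotient picking out the differences is genuinely the standard representation rather than, say, its dual or a twist. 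Once this action is pinned down, the decomposition $V_I=\C\oplus\rho_I$ from subsection \ref{diagsta} — with the invariant line $\C\cdot\sum_i e_i$ corresponding to the "diagonal direction" that is \emph{not} in the conormal space — immediately yields that the conormal fiber realizes $\rho_I$ in each of the $d$ coordinate directions, giving the claimed isomorphism.
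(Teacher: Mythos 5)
Your proposal is correct and follows essentially the same route as the paper: both arguments choose local coordinates at the common point, take the $d(\ell-1)$ pairwise differences of the pulled-back coordinate functions, use proposition \ref{calg} to see that these generate $(\I_I)_Q$ and hence give a basis of the fiber, and then read off the $\sym_I$-action to identify $\I_I(Q)$ with $\rho_I^{\oplus d}$. The only cosmetic difference is that you span $\rho_I$ by differences against a fixed index $i_1$ while the paper uses consecutive differences $e_j-e_{j+1}$; this changes nothing.
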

\begin{proof}
 For each closed point $P\in X$ we choose an open affine neighbourhood with coordinate ring $A(P)$. We denote the maximal ideal of $A(P)$ corresponding to $P$ by $\m(P)$, hence $\reg_{X,P}=A(P)_{\m(P)}$. Furthermore we choose a regular system of parameters $x_1(P),\dots,x_d(P)$ of $\reg_{X,P}$. By multiplying them by their denominator if necessary, we may assume that $x_1(P),\dots,x_d(P)\in\m(P)\subset A(P)$. Let $Q=(P^1,\dots,P^n)\in X^n$. Then there is an affine open neighbourhood of $Q$ in the product
$X^n$ with coordinate ring $B(Q)=A(P^1)\otimes\dots\otimes A(P^n)$. The point $Q$ corresponds to the maximal ideal 
\[\nax(Q)=\m(P^1)\otimes A\otimes \dots \otimes A + A\otimes \m(P^2)\otimes \dots \otimes A + \dots + A\otimes A\otimes \dots \otimes \m(P^n)\,.\]
Since the $(x_i(P))_{i=1,\dots,d}$ generate $\m(P)$, the family $(x_i^j(Q))_{i=1,\dots,d }^{ j=1,\dots, n}$ with
\[x_i^j(Q)=1\otimes \dots\otimes 1\otimes x_i(P^j)\otimes 1\otimes\dots\otimes 1\]
generates $\nax(Q)$. As $\reg_{X^n,Q}$ is a regular local ring of dimension $d\cdot n$, this family is a regular system of parameters.
Without loss of generality we may assume that $I=[\ell]$ and consider a point $Q=(P,\dots,P,P^{\ell+1},\dots,P^n)$ of $\Delta_{[\ell]}$.       
For $1\le i\le d$ and $1\le j\le \ell-1$, we set $\zeta_i^j=x_i^j(Q)- x_i^{j+1}(Q)$. Clearly, the $\zeta_i^j$ are elements of the stalk of the vanishing ideal $(\I_I)_Q\subset \nax(Q)\reg_{X^n,Q}$. We denote their images 
in $\nax(Q)/\nax(Q)^2$ by $\bar\zeta_i^j=\bar x_i^j(Q)-\bar x_i^{j+1}(Q)$. By \ref{calg} the vectors
$\bar x_1^1(Q),\dots ,\bar x_1^n(Q), \bar x_2^1(Q),\dots, \bar x_d^n(Q)$ form a basis of $\nax(Q)/\nax(Q)^2$.
Hence, $\bar\zeta_1^1,\dots\bar\zeta_d^{\ell-1}$ are $d(\ell-1)$ linearly independent elements in $\nax(Q)/\nax(Q)^2$. Thus, by proposition \ref{calg} the $\zeta_i^j$ generate a prime ideal in $\reg_{X^n,Q}$ of height $d(\ell-1)$. It is contained in $(\I_I)_Q$ which is also prime of the same height.
Thus, \[(\I_I)_Q=(\{\zeta_i^j\mid 1\le i\le d, 1\le j\le \ell-1\})\] and the fiber $\I_I(Q)$ has $(\bar\zeta_i^j)_{i=1,\dots, d}^{j=1,\dots,\ell-1}$
as a base. Now the isomorphism $\I_I(Q)\overset\cong\to \rho^{\oplus d}$ is given by $\bar\zeta_i^j\mapsto (0, \dots, 0, e_j-e_{j+1},0,\dots,0)$. 
\end{proof}
By proposition \ref{Gro} the derived dual of the structural sheaf of $\Delta_I$ in $X^n$ is cohomologically concentrated in degree $d(\ell-1)$ with
\[\mathcal H^{d(\ell-1)}((\iota_*\reg_{\Delta_I})^{\dv})\cong \iota_{I*}({\wedge}^{d(\ell-1)}\iota_{I}^*\I_I)^\vee\]
equipped with the $\sym_I\times\sym_{\bar I}$-linearization which is induced by the natural linearization of $\I_I$.
\begin{lemma}\label{topwedge} 
The group $\sym_I$ acts on $\mathcal H ^{d(\ell-1)}(\reg_{\Delta_I}^{\dv})$ trivially if $d$ is even and alternating if $d$ is odd.
\end{lemma}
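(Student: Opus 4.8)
The plan is to reduce the statement to a fiberwise representation-theoretic computation. Since $\sym_I$ acts trivially on $\Delta_I$, its linearization on the locally free sheaf
\[
\mathcal H^{d(\ell-1)}(\reg_{\Delta_I}^{\dv})\cong \iota_{I*}\bigl({\wedge}^{d(\ell-1)}\iota_I^*\I_I\bigr)^\vee
\]
is simply a $\sym_I$-action, and as observed in subsection \ref{diagsta} such an action on a locally free sheaf is completely determined by its action on the fibers over closed points $Q\in\Delta_I$. So it suffices to identify the fiber of $\bigl({\wedge}^{d(\ell-1)}\iota_I^*\I_I\bigr)^\vee$ as a one-dimensional $\sym_I$-representation.

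First I would invoke Lemma \ref{regrep}, which gives the conormal fiber $(\iota_I^*\I_I)(Q)=\I_I(Q)\cong\rho_I^{\oplus d}$ as a $\sym_I$-representation, of dimension $d(\ell-1)$. The top exterior power of a vector space of its own dimension is the determinant, so the fiber of ${\wedge}^{d(\ell-1)}\iota_I^*\I_I$ is the one-dimensional representation $\det(\rho_I^{\oplus d})\cong(\det\rho_I)^{\otimes d}$, using $\det(V\oplus W)\cong\det V\otimes\det W$. The main and essentially only substantive step is then to compute $\det\rho_I$. Here I would use the decomposition $V_I=\C^I=\triv\oplus\rho_I$ recalled in subsection \ref{diagsta}: a permutation $\sigma\in\sym_I$ acts on $V_I$ by the corresponding permutation matrix, whose determinant is $\sgn(\sigma)$, while the trivial summand contributes determinant $1$. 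Hence $\det\rho_I=\sgn$, and therefore $(\det\rho_I)^{\otimes d}=\sgn^{\otimes d}$, which is the trivial character for $d$ even and the sign character for $d$ odd.

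Finally, passing to the dual $(\cdots)^\vee$ only replaces the resulting character $\chi$ by $\chi^{-1}$; since both the trivial and the sign character are their own inverses, this has no effect on the conclusion. This yields the asserted trivial action for even $d$ and alternating action for odd $d$.

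I do not expect a serious obstacle: once the fiberwise reduction is justified, everything collapses to the identity $\det\rho_I=\sgn$, which is immediate from the permutation-matrix description. The only point I would state with some care is the legitimacy of the fiberwise argument, namely that the $\sym_I$-linearization really is an honest $\sym_I$-action determined on fibers — this is valid precisely because $\sym_I$ fixes $\Delta_I$ pointwise, so that the subtleties of a genuine linearization over a moving base do not arise, and it is exactly why the lemma restricts attention to the factor $\sym_I$ rather than all of $\Stab_{\sym_n}(I)=\sym_I\times\sym_{\bar I}$.
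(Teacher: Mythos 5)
Your proposal is correct and follows essentially the same route as the paper: reduce to the fiber over a closed point (legitimate because $\sym_I$ acts trivially on $\Delta_I$ and the sheaf is a line bundle), identify the fiber as $\wedge^{d(\ell-1)}(\rho_I^{\oplus d})$ via Lemma \ref{regrep}, and observe that the top exterior power is $\det(\rho_I^{\oplus d})=(\det\rho_I)^{\otimes d}=\sgn^{\otimes d}$. The only cosmetic difference is that you compute $\det\rho_I=\sgn$ cleanly from the decomposition $\C^I=\triv\oplus\rho_I$ of the permutation representation, whereas the paper checks that each adjacent transposition acts on $\rho_I$ with determinant $-1$; both give the same conclusion.
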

\begin{proof}
The sheaf $({\wedge}^{d(\ell-1)}\iota_{I}^*\I_I)^\vee$ is a line bundle on $\Delta_I$. Thus, the action of $\sym_I$ is determined by the action on the fibers, which are isomorphic to the representation $\wedge^{d(\ell-1)}(\rho^{\oplus d})$ (dualizing can be dropped for one-dimensional representations). It suffices to consider the action of transpositions of neighbours $\tau=(k\,,\, k+1)$ since $\sym_I$ is generated by those permutations.
As one can compute, each such $\tau$ acts as a matrix $\rho(\tau)\in \GL(\C^{\ell-1})$ with determinant $-1$. The action of $\tau$ on $\wedge^{d(\ell-1)}(\rho^{\oplus d})$ 
is given by the determinant of $\rho(\tau)^{\oplus d}$. Thus the assertion follows. 
\end{proof}
In the following we will always consider the case that $X$ is a surface over $\C$, i.e $d=2$. In that case there is also a formula for the dimension of the  invariants of the lower exterior powers of $\rho^{\oplus d}$. 
\begin{lemma}\label{reginv}
 Let $\rho$ be the standard representation of the symmetric group $\sym_\ell$ for any $\ell \in \N$. 
The dimension of the space of invariants of the exterior products of the double direct sum of the standard representation is given by
\[\dim ( \wedge^p(\rho\oplus\rho))^{\sym_\ell}=\begin{cases} 1 \quad&\text{if $0\le p\le 2(\ell-1)$ is even}\\
                                                             0 \quad&\text{else.}
                                               \end{cases}
\]
\end{lemma}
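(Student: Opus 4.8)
The plan is to reduce the statement to the classical fact that the exterior powers of the standard representation are irreducible. First I would use the decomposition of the exterior algebra of a direct sum, giving an isomorphism of $\sym_\ell$-representations $\wedge^p(\rho\oplus\rho)\cong\bigoplus_{a+b=p}\wedge^a\rho\otimes\wedge^b\rho$, and hence
\[
(\wedge^p(\rho\oplus\rho))^{\sym_\ell}\cong\bigoplus_{a+b=p}(\wedge^a\rho\otimes\wedge^b\rho)^{\sym_\ell}\,.
\]
Since every irreducible representation of $\sym_\ell$ is self-dual (its character is real-valued), each $\wedge^a\rho$ is self-dual, so that $\dim(\wedge^a\rho\otimes\wedge^b\rho)^{\sym_\ell}=\langle\chi_{\wedge^a\rho},\chi_{\wedge^b\rho}\rangle$, the multiplicity pairing of the two characters.

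The key input is that for $0\le a\le\ell-1=\dim\rho$ the exterior power $\wedge^a\rho$ is the irreducible representation of $\sym_\ell$ attached to the hook partition $(\ell-a,1^a)$; in particular these $\ell$ representations are irreducible and pairwise non-isomorphic, while $\wedge^a\rho=0$ for $a>\ell-1$. Granting this, the pairing collapses to $\langle\chi_{\wedge^a\rho},\chi_{\wedge^b\rho}\rangle=\delta_{a,b}$ for $a,b$ in the admissible range, whence
\[
\dim(\wedge^p(\rho\oplus\rho))^{\sym_\ell}=\#\{a:2a=p,\ 0\le a\le\ell-1\}\,,
\]
which equals $1$ exactly when $p$ is even with $0\le p\le 2(\ell-1)$ and $0$ otherwise. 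This is precisely the asserted formula.

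The main obstacle is justifying the irreducibility statement. I would either cite it as a standard fact (the hook decomposition of the exterior powers of the standard representation, e.g. in Fulton--Harris) or establish the weaker statement I actually use, namely $\langle\chi_{\wedge^a\rho},\chi_{\wedge^b\rho}\rangle=\delta_{a,b}$, by an inductive argument: the splitting $\C^\ell\cong\triv\oplus\rho$ of the permutation module yields $\wedge^p\C^\ell\cong\wedge^p\rho\oplus\wedge^{p-1}\rho$, and the permutation characters $\chi_{\wedge^p\C^\ell}$ are computed directly on each conjugacy class (an element $\sigma$ contributes through its action on the $p$-subsets fixed by $\sigma$), from which the required orthogonality follows. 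A purely computational alternative is the Molien-type generating function $\sum_p\dim(\wedge^p(\rho\oplus\rho))^{\sym_\ell}\,t^p=\frac1{\ell!}\sum_{\sigma\in\sym_\ell}\det(1+t\,\sigma|_\rho)^2$, but matching this against $\sum_{k=0}^{\ell-1}t^{2k}$ is less transparent than the representation-theoretic route, so I would prefer the argument via irreducibility.
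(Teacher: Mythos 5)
Your proof is correct. The paper itself gives no argument for this lemma --- it simply cites Scala's Lemma C.5 --- so there is no in-text proof to compare against; your write-up supplies a complete, self-contained justification. The chain of reductions is sound: the equivariant decomposition $\wedge^p(\rho\oplus\rho)\cong\bigoplus_{a+b=p}\wedge^a\rho\otimes\wedge^b\rho$, self-duality of $\sym_\ell$-representations, and the classical fact that $\wedge^a\rho$ is the irreducible hook representation $(\ell-a,1^a)$ for $0\le a\le\ell-1$ (and zero beyond) together collapse the count to $\#\{a : 2a=p,\ 0\le a\le\ell-1\}$, which matches the asserted formula, including the degenerate case $\ell=1$. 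The one external input, irreducibility and pairwise non-isomorphism of the $\wedge^a\rho$, is standard (Fulton--Harris) and is also the substance behind Scala's cited lemma, so nothing essential is being swept under the rug; your fallback via $\wedge^p\C^\ell\cong\wedge^p\rho\oplus\wedge^{p-1}\rho$ and permutation characters would serve equally well if you prefer not to cite the hook decomposition.
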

\begin{proof}
See \cite[Lemma C.5.]{Sca1}.   
\end{proof}
\begin{cor}
For $p\in \N$ an even number and $I\subset [n]$ with $2(|I|-1)\ge p$ the sheaf $(\wedge^p N_I^\vee)^{\sym_I}=(\wedge^p \iota_I^*\I_I)^{\sym_I}$ is a line bundle on $\Delta_I$.  
\end{cor}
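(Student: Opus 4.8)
The plan is to realise $(\wedge^p N_I^\vee)^{\sym_I}$ as a direct summand of the locally free sheaf $\wedge^p N_I^\vee$ and then to compute its rank fibrewise, using Lemmas \ref{regrep} and \ref{reginv}. Set $\ell=|I|$. Since $\Delta_I$ is smooth inside the smooth variety $X^n$, the conormal sheaf $\iota_I^*\I_I$ is locally free of rank $2(\ell-1)$; by the definition of the normal sheaf (subsection \ref{Verd}) and local freeness we have $N_I^\vee=(\iota_I^*\I_I)^{\vee\vee}\cong\iota_I^*\I_I$, so the two sheaves in the statement literally agree and $\wedge^p N_I^\vee$ is locally free of rank $\binom{2(\ell-1)}{p}$ on the irreducible, hence connected, variety $\Delta_I$.

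Next I would use that $\sym_I$ acts trivially on $\Delta_I$, so its action on $\wedge^p N_I^\vee$ is an $\reg_{\Delta_I}$-linear $\sym_I$-action in the sense of subsection \ref{triv}. Working over $\C$, the Reynolds operator $e=\frac{1}{|\sym_I|}\sum_{g\in\sym_I}g$ is then an $\reg_{\Delta_I}$-linear idempotent endomorphism of $\wedge^p N_I^\vee$ whose image is exactly $(\wedge^p N_I^\vee)^{\sym_I}$. As the image of an idempotent endomorphism of a locally free sheaf, this invariant subsheaf is a direct summand, in particular locally free, and we get a splitting $\wedge^p N_I^\vee=\im(e)\oplus\im(1-e)$.

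It remains to pin down the rank, and for this I would pass to fibres. For a closed point $Q\in\Delta_I$, tensoring the split sequence with the residue field $\kappa(Q)$ stays exact, so the fibre of $(\wedge^p N_I^\vee)^{\sym_I}$ at $Q$ is the image of $e\otimes\kappa(Q)$, i.e. the $\sym_I$-invariants of the fibre $\wedge^p N_I^\vee(Q)=\wedge^p(\I_I(Q))$. By Lemma \ref{regrep} we have $\I_I(Q)\cong\rho^{\oplus 2}$ as $\sym_I$-representations (recall $d=2$), and by Lemma \ref{reginv} the space $(\wedge^p(\rho\oplus\rho))^{\sym_\ell}$ is one-dimensional precisely because $p$ is even and $0\le p\le 2(\ell-1)$, which is exactly our hypothesis $2(|I|-1)\ge p$. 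Hence the invariant subsheaf has rank $1$ at every point; since $\Delta_I$ is connected the rank is constant, and $(\wedge^p N_I^\vee)^{\sym_I}$ is a line bundle.

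The only genuinely delicate point is the compatibility of taking invariants with passing to fibres in the last step. This is what forces the explicit use of the idempotent $e$ rather than a soft left-exactness argument: it is precisely the splitness of $e$ that guarantees that the fibre of the invariant subsheaf equals the invariants of the fibre, so that the fibrewise dimension computation of Lemma \ref{reginv} controls the actual rank. Everything else is routine bookkeeping layered on top of Lemmas \ref{regrep} and \ref{reginv}.
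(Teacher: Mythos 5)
Your proof is correct and rests on the same two pillars as the paper's: Lemma \ref{regrep} to identify the fibre $\I_I(Q)$ with $\rho^{\oplus 2}$ and Lemma \ref{reginv} to see that the invariants of $\wedge^p(\rho\oplus\rho)$ are one-dimensional exactly under the stated hypotheses. The difference lies in how local freeness is extracted from the fibrewise rank computation. The paper simply observes that all fibres of the invariant sheaf have rank one and invokes the criterion that a coherent sheaf of constant fibre dimension on a reduced scheme is locally free (Hartshorne II Ex.~5.8); in particular it leaves implicit the identification of ``fibre of the invariants'' with ``invariants of the fibre.'' You instead split off $(\wedge^p N_I^\vee)^{\sym_I}$ as the image of the Reynolds idempotent, which makes it a direct summand of a locally free sheaf -- hence locally free with no appeal to the reducedness criterion -- and simultaneously justifies that taking fibres commutes with taking invariants, which is precisely the point your rank computation needs. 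So your argument is marginally longer but self-contained on the one step the paper glosses over; both are valid, and the connectedness remark at the end of yours is not actually needed since you compute the fibre dimension at every point.
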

\begin{proof}
By lemma \ref{regrep} and lemma \ref{reginv} all fibers of $\wedge^p\iota_I^*\I_I^{\sym_I}$ are of rank one. This implies the assertion (see e.g. \cite[II Ex. 5.8]{Har2}). 
\end{proof}
\begin{lemma}\label{repres}
For every  $I\subset[n]$ with $\#I=k$ the set of left cosets is given by
\[\sym_n/(\sym_I\times \overline{\sym_I})=\{\sym_{I\to J}\mid J\subset\n,\, \#J=k\}\]
where
\[\sym_{I\to J}=\{\sigma\in\sym_n\mid \sigma(I)=J\}\,.\]
The right cosets are exactly the $\sym_{J\to I}$ with $\#J=k$.
 \end{lemma}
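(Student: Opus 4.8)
The plan is to recognise the subgroup $\sym_I\times\overline{\sym_I}$ as the stabiliser of the subset $I$ under the natural action of $\sym_n$ on the $k$-element subsets of $\n$, and then read off both families of cosets from this identification. First I would observe that a permutation $\sigma\in\sym_n$ satisfies $\sigma(I)=I$ exactly when it permutes the elements of $I$ among themselves and independently permutes the elements of $\bar I$ among themselves; in other words $\Stab_{\sym_n}(I)=\sym_I\times\overline{\sym_I}=:H$. Since $\sym_n$ acts transitively on the collection of all $k$-element subsets, the orbit of $I$ is precisely $\{J\subset\n\mid \#J=k\}$.

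For the left cosets I would then argue directly: for $\sigma,\tau\in\sym_n$ one has $\tau\in\sigma H$ if and only if $\sigma^{-1}\tau\in H$, which holds if and only if $\sigma^{-1}\tau(I)=I$, i.e. if and only if $\tau(I)=\sigma(I)$. Hence, writing $J=\sigma(I)$, the left coset $\sigma H$ consists of all $\tau$ with $\tau(I)=J$, that is $\sigma H=\sym_{I\to J}$. As $\sigma$ runs through $\sym_n$ the subset $J=\sigma(I)$ runs through every $k$-element subset (by transitivity), and distinct values of $J$ yield disjoint cosets, so $\{\sym_{I\to J}\mid \#J=k\}$ is exactly the set $\sym_n/H$ of left cosets.

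For the right cosets I would invoke the inversion map $\sigma\mapsto\sigma^{-1}$, which is a bijection of $\sym_n$ sending left cosets to right cosets, since $H^{-1}=H$ gives $(\sigma H)^{-1}=H\sigma^{-1}$. Because $\sigma(I)=J$ is equivalent to $\sigma^{-1}(J)=I$, taking inverses elementwise yields $(\sym_{I\to J})^{-1}=\sym_{J\to I}$; applying this to the description of the left cosets shows that the right cosets are precisely the sets $\sym_{J\to I}$ with $\#J=k$. I do not expect any genuine obstacle: the single point deserving a word is the transitivity of the $\sym_n$-action on $k$-element subsets, which is immediate, and everything else is the standard orbit–stabiliser bookkeeping.
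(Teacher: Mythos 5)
Your proof is correct. It takes a slightly different route from the paper's: you identify $\sym_I\times\overline{\sym_I}$ as the stabiliser of $I$ under the $\sym_n$-action on $k$-element subsets and then read off the left cosets directly from the equivalence $\tau\in\sigma H\iff\tau(I)=\sigma(I)$, with transitivity supplying surjectivity onto all $J$; the right cosets follow by inversion. The paper instead argues by counting: it observes that each $\sym_{I\to J}$ is stable under right multiplication by $\sym_I\times\overline{\sym_I}$ and has exactly $k!(n-k)!$ elements, hence is a single left coset, and then checks that the $\binom{n}{k}$ pairwise disjoint sets of this form exhaust $\sym_n$ since $\binom{n}{k}\cdot k!(n-k)!=n!$. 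Your orbit–stabiliser argument avoids the cardinality computation entirely and makes the bijection between cosets and $k$-element subsets transparent; the paper's counting argument is equally elementary but verifies exhaustion numerically rather than structurally. Both are complete.
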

\begin{proof}
The sets $\sym_{I\to J}$ are invariant under multiplication by $\sym_I\times \overline{\sym_I}$ on the right and consist of $k!(n-k)!=\#(\sym_I\times \overline{\sym_I})$ elements. Thus they are indeed left cosets.
They are disjoint for distinct $J$ and $J'$. The  number of $J\subset \n$ with $\# J= k$ is $\binom n k$. Because of \[\binom n k\cdot k!(n-k)!=n!=\#\sym_n\] all left cosets are of this form. The proof for the right cosets is analogous. 
\end{proof}
\begin{lemma}\label{diaginduced}
 Let $X$ be a smooth surface over $\C$. For $k-2\ge p\ge0$ let  
\[t\colon L^{-2p}\iota_{[k]}^*\reg_{\Delta_{[k-1]}}\cong \wedge^{2p} \iota_{[k]}^*\I_{[k-1]}\to\wedge^{2p} \iota_{[k]}^*\I_{[k]}\cong L^{-2p}\iota_{[k]}^*\reg_{\Delta_{[k]}}\]
be the morphism induced by the natural surjection $\reg_{\Delta_{[k-1]}}\to \reg_{\Delta_{[k]}}$. We set $\sigma_k=\id_{[k]}$ and $\sigma_\ell=(\ell\,,\, \ell+1,\dots, k)$ for $\ell=1,\dots,k-1$. Then
\[T\colon [\wedge^{2p} \iota_{[k]}^*\I_{[k-1]}]^{\sym_{[k-1]}}\to [\wedge^{2p}\iota_{[k]}^*\I_{[k]}]^{\sym_{[k]}}\,,\, v\mapsto \sum_{\ell=1}^k \sigma_\ell t(v)\]
is an isomorphism.
\end{lemma}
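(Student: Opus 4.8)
The plan is to reduce the statement to a fibrewise claim about representations of $\sym_{[k-1]}\subset\sym_{[k]}$ and then to a single non-vanishing computation. First I would check that both sides of $T$ are line bundles on the integral variety $\Delta_{[k]}$. For the target, the corollary following Lemma~\ref{reginv}, applied to $I=[k]$, shows that $[\wedge^{2p}\iota_{[k]}^*\I_{[k]}]^{\sym_{[k]}}=[\wedge^{2p}N_{[k]}^\vee]^{\sym_{[k]}}$ is a line bundle, the hypothesis $2(k-1)\ge 2p$ being guaranteed by $p\le k-2$. For the source I would factor $\iota_{[k]}$ through $\kappa\colon\Delta_{[k]}\hookrightarrow\Delta_{[k-1]}$, so that $\iota_{[k]}^*\I_{[k-1]}\cong\kappa^*N_{[k-1]}^\vee$; since $\sym_{[k-1]}$ acts trivially on both partial diagonals, taking invariants commutes with $\kappa^*$, whence $[\wedge^{2p}\iota_{[k]}^*\I_{[k-1]}]^{\sym_{[k-1]}}=\kappa^*[\wedge^{2p}N_{[k-1]}^\vee]^{\sym_{[k-1]}}$ is the pull-back of a line bundle on $\Delta_{[k-1]}$ (here one uses $2(k-2)\ge 2p$). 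Thus $T$ is a morphism of line bundles, and it suffices to prove that it is nonzero on every fibre $\otimes\,k(Q)$, $Q\in\Delta_{[k]}$: a fibrewise surjection of line bundles is an isomorphism.

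Next I would compute the fibre of $T$ over a point $Q\in\Delta_{[k]}$. By Lemma~\ref{regrep} with $d=2$ the fibres of $\iota_{[k]}^*\I_{[k-1]}$ and $\iota_{[k]}^*\I_{[k]}$ are $\rho_{[k-1]}^{\oplus2}$ and $\rho_{[k]}^{\oplus2}$, so the two fibres in question are $[\wedge^{2p}\rho_{[k-1]}^{\oplus2}]^{\sym_{[k-1]}}$ and $[\wedge^{2p}\rho_{[k]}^{\oplus2}]^{\sym_{[k]}}$, each one-dimensional by Lemma~\ref{reginv}. Under these identifications Lemma~\ref{indu} turns the fibre of $t$ into $\wedge^{2p}(j^{\oplus2})$, where $j\colon\rho_{[k-1]}\hookrightarrow\rho_{[k]}$ is the split inclusion coming from $\rho_{[k]}|_{\sym_{[k-1]}}\cong\rho_{[k-1]}\oplus\mathbbm{1}$; in particular it is injective. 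Since $\sigma_\ell(k)=\ell$ for $\ell=1,\dots,k$, the $\sigma_\ell$ represent the left cosets $\sym_{[k]}/\sym_{[k-1]}$, so for an $\sym_{[k-1]}$-invariant vector $w$ one has $\sum_{\ell=1}^k\sigma_\ell w=\frac1{(k-1)!}\sum_{\sigma\in\sym_{[k]}}\sigma w=k\cdot P(w)$, with $P$ the averaging projector onto $[\,\cdot\,]^{\sym_{[k]}}$. Hence on fibres $T(v)=k\cdot P(t(v))$, and everything reduces to showing $P(t(v))\neq0$ for a generator $v$.

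For this last step I would equip $\rho_{[k]}^{\oplus2}=\rho_{[k]}\otimes\C^2$ with the tensor product of the standard invariant inner product on $\rho_{[k]}\subset\C^{[k]}$ and the standard one on $\C^2=\langle x,y\rangle$; as $\sym_{[k]}$ acts only on the $\rho_{[k]}$-factor this product is invariant, so $P$ is the orthogonal projection onto the invariant line. Choosing an orthonormal basis $\xi_1,\dots,\xi_{k-1}$ of $\rho_{[k]}$ with $\xi_1,\dots,\xi_{k-2}$ spanning $\rho_{[k-1]}$ and setting $\eta_a=(\xi_a\otimes x)\wedge(\xi_a\otimes y)$, the invariant forms $\Omega=\sum_{a=1}^{k-1}\eta_a$ and $\Omega'=\sum_{a=1}^{k-2}\eta_a$ have $\Omega^{\wedge p}$ and ${\Omega'}^{\wedge p}$ as nonzero generators of the two invariant lines, with $t({\Omega'}^{\wedge p})={\Omega'}^{\wedge p}$ inside $\wedge^{2p}\rho_{[k]}^{\oplus2}$. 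Using $\eta_a\wedge\eta_a=0$ and the orthonormality of the $\xi_a\otimes x,\xi_a\otimes y$, a direct expansion gives $\langle\Omega^{\wedge p},{\Omega'}^{\wedge p}\rangle=(p!)^2\binom{k-2}{p}$, which is nonzero exactly because $0\le p\le k-2$. As $\Omega^{\wedge p}$ is invariant and $P$ is self-adjoint, this equals $\langle\Omega^{\wedge p},P({\Omega'}^{\wedge p})\rangle$, forcing $P(t(v))\neq0$; since the argument is uniform in $Q$, the fibre of $T$ is nonzero everywhere and $T$ is an isomorphism. The main difficulty is concentrated in this final paragraph: one must identify the abstract generators provided by Lemma~\ref{reginv} with the explicit wedge powers $\Omega^{\wedge p},{\Omega'}^{\wedge p}$ and verify that $t$ does not annihilate the $\sym_{[k]}$-invariant component, whereas the reduction to line bundles, the passage to a single fibre, and the coset bookkeeping are routine.
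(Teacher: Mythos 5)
Your proof is correct and follows essentially the same route as the paper: reduce to a fibrewise statement between line bundles, identify the fibres via Lemma \ref{regrep} and Lemma \ref{reginv} as one-dimensional invariant subspaces of $\wedge^{2p}\rho^{\oplus 2}$, identify the fibre of $t$ with the wedge power of the inclusion $\rho_{[k-1]}\hookrightarrow\rho_{[k]}$ via Lemma \ref{indu}, and observe that the $\sigma_\ell$ are coset representatives so that $T$ is $k$ times the averaging projector composed with $t$. The only point of divergence is the final non-vanishing of $P(t(v))$, which the paper outsources to Scala's Lemma C.6(4) while you prove it directly by the explicit pairing $\langle\Omega^{\wedge p},{\Omega'}^{\wedge p}\rangle=(p!)^2\binom{k-2}{p}\neq 0$ against a self-adjoint projector --- a correct and self-contained substitute for that citation.
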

\begin{proof}
Since both sides are line bundles on $\Delta_{[k]}$ the morphism $T$ is an isomorphism if and only if it is fiberwise. So let $P\in \Delta_{[k]}$.
We have $\sigma_\ell([k-1])=[k]\setminus\{\ell\}$ so by the last lemma the $\sigma_\ell$ form a system of representatives of $\sym_{[k]}/\sym_{[k-1]}$. 
Thus for an $\sym_{[k-1]}$-invariant $v$ the  element $\sum_{\ell=1}^k \sigma_\ell t(v)$ is $\sym_{[k]}$-invariant and $T$ is indeed well-defined. 
Since both spaces are one-dimensional by \ref{reginv}, it suffices to show that $T(P)$ is non-zero.
Note that by proposition \ref{reginv} and lemma \ref{indu} the map $t(P)$ is given by the wedge product of the natural inclusion $\rho_{[k-1]}\to \rho_{[k]}$.
Now, that $T(P)\colon (\wedge^{2p} \rho_{[k-1]})^{\sym_{[k-1]}}\to (\wedge^{2p} \rho_{[k]})$ is indeed an isomorphism follows from
\cite[Lemma C.6. (4)]{Sca1}.
\end{proof}

\section{Image of tautological sheaves under the Bridgeland-King-Reid equivalence}\label{bkr}
\subsection{The Bridgeland--King--Reid equivalence}
Let $G$ be a finite group acting on a smooth quasi-projective variety $M$ over $\C$. The quotient $\pi\colon M\to M/G$ always exists as a quasi-projective variety
but is in general singular. 
The following was introduced by Nakamura (see \cite{Nakam}) as a candidate for a resolution of the singularities of $M/G$ with good properties.
\begin{defin}
The \textit{G-Hilbert scheme} $\GHilb(M)$ parametrizes $G$-clusters on $M$, i.e. it is the scheme representing the functor
\[  T\mapsto\left\{\begin{matrix} Z\subset T\times X \text{ closed}  \\ \text{G-invariant subscheme} \end{matrix} \;\middle|\;
\begin{matrix}  \text{$Z$ is flat and finite over $T$,}  \\ \text{$H^0(Z_t)\cong \C^G$ as $G$-representations $\forall t\in T$} \end{matrix}\right\} \,.\]
The \textit{Nakamura-$G$-Hilbert scheme} $\Hilb^G(M)$ is defined to be the irreducible component of $\GHilb(M)$ which contains all the $\C$-valued points corresponding to free orbits. It is equipped with a universal family $\mathcal Z\subset \Hilb^G(M)\times M$ and the \textit{$G$-Hilbert-Chow morphism} \[\tau \colon \Hilb^G(M)\to M/G\] sending a $G$-cluster to the $G$-orbit supporting it. In summary there is the commutative diagram
\[\begin{CD}
\mathcal Z
@>{p}>>
X^n \\
@V{q}VV
@VV{\pi}V \\
\Hilb^G(M)
@>>{\tau}>
S^nX
\end{CD}\,. \]
\end{defin}
\begin{theorem}[\cite{BKR}]
Let the quotient $M/G$ be Gorenstein, i.e $\omega_{M/G}$ exists as a line bundle, and \[\dim(\Hilb^G(M)\times_{M/G}\Hilb^G(M))\le \dim (M)+1\,.\] Then $\tau$ is a crepant resolution of the singularities of $M/G$, i.e $\tau^*\omega_{M/G}\cong \omega_{\Hilb^G(M)}$, and the equivariant Fourier-Mukai transform
\[\Phi_{\reg_{\mathcal Z}}=Rp_*\circ q^*\colon \D^b(\Hilb^G(M))\to \D^b_G(M)\]
is an equivalence of triangulated categories.      
\end{theorem}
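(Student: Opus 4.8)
The plan is to verify the standard intersection-theoretic criterion for a Fourier--Mukai transform to be fully faithful and then to upgrade full faithfulness to an equivalence by a Serre-functor argument. First I would record that $\Phi=Rp_*q^*$ is well defined and admits both adjoints: the map $q$ is finite and flat, while $p$ is proper because it factors through the base change of the proper morphism $\tau$ along the finite morphism $\pi$; consequently the adjoints are again Fourier--Mukai transforms, with kernels produced from $\reg_{\mathcal Z}$ by the equivariant Grothendieck duality established above, so that all the unit and counit morphisms live in the equivariant derived categories. The skyscraper sheaves $\reg_y$, for closed points $y\in \Hilb^G(M)$, form a spanning class of $\D^b(\Hilb^G(M))$; hence by Bridgeland's fully faithfulness criterion (see \cite{Huy}) it suffices to verify
\[\Hom_{\D^b_G(M)}^i(\Phi\reg_{y_1},\Phi\reg_{y_2})=\begin{cases}\C & y_1=y_2,\ i=0,\\ 0 & y_1=y_2,\ i<0 \text{ or } i>\dim\Hilb^G(M),\\ 0 & y_1\neq y_2,\ \text{for all }i.\end{cases}\]

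Next I would make these Hom-spaces explicit. Because $q$ is finite and flat, $q^*\reg_y$ is the structure sheaf of the fibre $q^{-1}(y)$, which is exactly the $G$-cluster $Z_y\subset M$, and $p$ restricted to this fibre is the closed embedding $Z_y\hookrightarrow M$; thus $\Phi\reg_y\simeq \reg_{Z_y}$ is a $G$-equivariant sheaf concentrated in degree zero, and the left-hand side above becomes the equivariant extension group $G\Ext_M^i(\reg_{Z_{y_1}},\reg_{Z_{y_2}})$. When $y_1$ and $y_2$ lie over distinct points of $M/G$ the clusters are supported on disjoint $G$-orbits, so these Ext groups vanish for all $i$, which disposes of the off-diagonal case. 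All the remaining content is therefore the computation over pairs $(y_1,y_2)$ whose clusters share support.

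The main obstacle is exactly this shared-support computation, and it is where the dimension hypothesis enters. I would organise the relevant pairs by the fibre product $W=\Hilb^G(M)\times_{M/G}\Hilb^G(M)$, whose closed points are the pairs $(y_1,y_2)$ with $\tau(y_1)=\tau(y_2)$. By equivariant base change the local extension sheaves between the two universal cluster sheaves assemble into a family over $W$, so the cohomological degrees in which $G\Ext_M^i(\reg_{Z_{y_1}},\reg_{Z_{y_2}})$ is non-zero are governed by the dimension of $W$ relative to $\dim\Hilb^G(M)=\dim M$. The hypothesis $\dim W\le\dim M+1$ is precisely what bounds the fibre dimensions of $W\to \Hilb^G(M)$ so tightly that, via local duality on the smooth variety $M$ together with a semicontinuity and dimension-count argument, the extension groups are forced into the admissible window $0\le i\le \dim\Hilb^G(M)$, vanish in negative degrees, and vanish altogether off the diagonal. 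Turning the single numerical inequality into these precise vanishings is the delicate technical heart of the whole argument.

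Finally I would upgrade full faithfulness to an equivalence. A fully faithful Fourier--Mukai transform between derived categories of smooth varieties which commutes with the Serre functors on a spanning class is an equivalence, the target being connected. On the spanning class of skyscrapers one has $S(\reg_y)\simeq \reg_y[\dim\Hilb^G(M)]$, since tensoring a length-zero sheaf by a line bundle is trivial, and the corresponding identity for the cluster sheaves $\reg_{Z_y}$ on $M$ follows from the Gorenstein hypothesis, which makes $\omega_{M/G}$ a line bundle and pins down $\omega_{\Hilb^G(M)}$. Comparing the two Serre functors on the spanning class then yields simultaneously the equivalence and the crepancy relation $\tau^*\omega_{M/G}\cong\omega_{\Hilb^G(M)}$; equivalently, equality in the dimension bound forces the discrepancy of $\tau$ to vanish.
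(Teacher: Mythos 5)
This theorem is quoted from \cite{BKR} and the paper offers no proof of it, so there is no in-paper argument to compare yours against; I can only measure your sketch against the original Bridgeland--King--Reid proof. Your overall scaffolding is the right one (existence of adjoints via equivariant Grothendieck duality, reduction to skyscrapers, $\Phi\reg_y\simeq\reg_{Z_y}$, easy vanishing for clusters with disjoint support, Serre-functor comparison at the end), but there are two genuine gaps. First, you take the smoothness of $\Hilb^G(M)$ for granted throughout: you use $\dim\Hilb^G(M)=\dim M$, the Ext-window $0\le i\le\dim\Hilb^G(M)$, the spanning-class criterion, and $S(\reg_y)\simeq\reg_y[\dim\Hilb^G(M)]$, all of which presuppose that $\Hilb^G(M)$ is smooth. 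But smoothness is part of the \emph{conclusion} of the theorem (a ``crepant resolution'' has a smooth source); in BKR it is deduced, not assumed, by showing that each $\reg_y$ has finite homological dimension over $\reg_{\Hilb^G(M),y}$ as a by-product of the main computation.

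Second, the step you yourself flag as ``the delicate technical heart'' --- converting the single inequality $\dim(\Hilb^G(M)\times_{M/G}\Hilb^G(M))\le\dim M+1$ into the vanishing of $\Hom^i(\Phi\reg_{y_1},\Phi\reg_{y_2})$ outside the admissible range --- cannot be carried out by ``local duality together with a semicontinuity and dimension-count argument.'' The actual mechanism in \cite{BKR} is to form the kernel $\mathcal Q$ of the composite $\Psi\circ\Phi$ (with $\Psi$ the left adjoint), observe that its support lies in the fibre product, and then invoke the New Intersection Theorem of commutative algebra (Peskine--Szpiro, Hochster, Roberts) together with a duality comparison of $\mathcal Q_y$ with its derived dual to force $\mathcal Q$ to be a sheaf, flat over $\Hilb^G(M)$ and supported on the diagonal; this single argument simultaneously yields the required vanishings, the smoothness of $\Hilb^G(M)$, and full faithfulness. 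As written, your proposal replaces a deep theorem with a heuristic, so the central implication of the statement is not actually established.
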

The functor $\Phi_{\reg_{\mathcal Z}}$ is called \textit{the Bridgeland-King-Reid equivalence.}
\begin{remark}
The group $G$ acts trivially on $\Hilb^G(M)$ and on $\mathcal Z\subset \Hilb^G(M)\times M$ by the action induced by the action on $M$.
To be formally correct the equivariant Fourier-Mukai transform $\Phi_{\reg_{\mathcal Z}}$ is the functor
$Rp_*\circ q^*\colon \D^b_G(\Hilb^G(M))\to \D^b_G(M)$ and the Bridgeland-King-Reid equivalence is the functor $\Phi_{\reg_{\mathcal Z}}\circ\triv$.
\end{remark}
\subsection{The Hilbert scheme of points on a surface}
Let $X$ be a smooth quasi-projective surface over $\C$ and $P\in \Q[x]$. Then the \textit{Hilbert functor} 
 \[  T\mapsto\left\{\begin{matrix} Z\subset T\times X  \\ \text{closed subscheme} \end{matrix} \;\middle|\;
\begin{matrix}  \text{$Z$ is flat and proper over $T$,}  \\ \text{$Z_t$ has Hilbert polynomial $P\,\, \forall t\in T$} \end{matrix}\right\} \]
is representable by a result of Grothendieck (see \cite{Gro2}). In case $P=n\in\N$ the representing scheme $X^{[n]}$ is called the \textit{Hilbert scheme of $n$ points on $X$}.
Its $\C$-valued points correspond to zero-dimensonal subschemes $\xi$ of $X$ of length \[\ell(\xi):=\dim_\C\Gamma( \xi,\mathcal O_\xi)=n\,.\]
The generic case of such subschemes are collections of $n$ distinct points. If the support of $\xi$ consists of less than $n$ points, the scheme structure has to be further specified. For example a subscheme of length 2 which is concentrated in one point is given by choosing a tangent direction of $X$ in that point. 
$X^{[n]}$ is a $2n$-dimensional quasi-projective smooth variety (see \cite{Fog}). 
It is a resolution of the symmetric product $S^nX:=X^n/\sym_n$ via the \textit{Hilbert-Chow morphism}
 \[\mu\colon X^{[n]}\to S^nX\quad,\quad \xi\mapsto\sum_{x\in\xi}\ell (\xi,x)\cdot x\,.\]
The \textit{isospectral Hilbert scheme}
is defined as $I^nX:=(X^{[n]}\times_{S^nX} X^n)_{\text{red}}$. It is a family of $\mathfrak S_n$-clusters in $X^n$ flat over $X^{[n]}$ (see \cite{Hai}).
Thus, it induces a morphism $\eta$ from $X^{[n]}$ to the moduli space $\sym_n\Hilb(X^n)$. Since the generic fiber of $I^nX$ over $X^{[n]}$ consist of 
$n$ distinct points, $\eta$ factorizes over $\Hilb^{\sym_n}(X^n)$.
\begin{theorem}[\cite{Hai}]
The morphism $\eta$ induces an isomorphism between the commutative diagrams
\[ \begin{CD}
I^nX
@>{p}>>
X^n \\
@V{q}VV
@VV{\pi}V \\
X^{[n]}
@>>{\mu}>
S^nX
\end{CD}\quad\cong\quad
 \begin{CD}
\mathcal Z
@>{p}>>
X^n \\
@V{q}VV
@VV{\pi}V \\
\Hilb^{\sym_n}(X^n)
@>>{\tau}>
S^nX
\end{CD}\,. \]
\end{theorem}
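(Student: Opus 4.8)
The plan is to realise $\eta$ through the universal property of the $\sym_n$-Hilbert scheme and then to upgrade it to an isomorphism of the two comparison diagrams. First I would verify that the projection $q\colon I^nX\to X^{[n]}$ is flat and finite of degree $n!$, and that each scheme-theoretic fibre is a genuine $\sym_n$-cluster, i.e.\ its ring of global sections is isomorphic to the regular representation $\C^{\sym_n}$. Over the open locus parametrising $n$ distinct points this is immediate, since there the fibre of $\pi\colon X^n\to S^nX$ is a free $\sym_n$-orbit of cardinality $n!$; the entire difficulty is concentrated on the punctual and diagonal strata. Granting this, the subscheme $I^nX\subset X^{[n]}\times X^n$ is a flat family of $\sym_n$-clusters over $X^{[n]}$, and hence induces a morphism $\eta\colon X^{[n]}\to \GHilb(X^n)$ with $\eta^{*}\mathcal Z=I^nX$ by construction. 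This identity is exactly the required matching of universal families, so the upper squares of the two diagrams will agree automatically once $\eta$ is shown to be an isomorphism.

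Next I would pin down the target and the commutativity. Since $X^{[n]}$ is irreducible and its generic point maps to a free orbit, $\eta$ lands in the irreducible component $\Hilb^{\sym_n}(X^n)$ containing the free orbits, namely the Nakamura component. The relation $\tau\circ\eta=\mu$ holds because both Hilbert--Chow morphisms send a cluster, respectively a subscheme, to the cycle supporting it, and $I^nX$ lies over the graph of $\mu$; this yields the commutativity of the lower square and identifies the two maps down to $S^nX$.

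It then remains to argue that $\eta$ is an isomorphism of $S^nX$-schemes. I would observe that $\eta$ is proper (a morphism of schemes both proper over $S^nX$ via $\mu$ and $\tau$), birational (an isomorphism over the dense open locus of $n$ distinct points, where both sides simply parametrise reduced free orbits), and quasi-finite. By Zariski's main theorem a quasi-finite proper morphism is finite, and a finite birational morphism onto a normal variety is an isomorphism; since $\Hilb^{\sym_n}(X^n)$ is normal (or smooth, once the Bridgeland--King--Reid hypotheses are checked) this concludes the argument. Alternatively, and more robustly, one can exhibit the inverse directly from the moduli interpretation on the Nakamura side and check the two maps compose to the identity on $T$-points.

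The main obstacle is unquestionably the first step: the flatness of $q$ together with the cluster property of \emph{all} its fibres. This is Haiman's $n!$-theorem, whose proof rests on the freeness of the ideal of the polygraph and on the Cohen--Macaulayness (indeed Gorensteinness) of $I^nX$; everything downstream is formal once this input is available. I would therefore invoke \cite{Hai} for this technical core rather than attempt it directly.
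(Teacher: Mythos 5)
First, note that the paper does not actually prove this statement: it is quoted from Haiman, and the only ``proof'' in the text is the citation together with the preceding remarks explaining how $\eta$ arises from the flatness of $I^nX$ over $X^{[n]}$ and why it factors through the Nakamura component. Your proposal agrees with that setup and correctly isolates the genuine mathematical content --- the flatness and finiteness of $q\colon I^nX\to X^{[n]}$ with every fibre a $\sym_n$-cluster, i.e.\ Haiman's $n!$ theorem --- as the input to be imported from \cite{Hai}. The construction of $\eta$ with $\eta^*\mathcal Z=I^nX$, the factorisation through the component containing the free orbits, and the identity $\tau\circ\eta=\mu$ are all fine.

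The step that does not hold up as written is the last one. To run Zariski's main theorem you assert that $\eta$ is quasi-finite and that $\Hilb^{\sym_n}(X^n)$ is normal, and neither is justified. Quasi-finiteness amounts to saying that only finitely many $\xi$ can give rise to the same $\sym_n$-cluster $q^{-1}([\xi])$; this is true (indeed $\eta$ is injective, because $\xi$ can be recovered from the cluster) but it requires an argument you do not give. More seriously, the parenthetical appeal to smoothness via Bridgeland--King--Reid is circular in the logic of this paper: the BKR hypothesis $\dim(\Hilb^G(M)\times_{M/G}\Hilb^G(M))\le\dim M+1$ is verified here precisely by first identifying $\Hilb^{\sym_n}(X^n)$ with $X^{[n]}$ and then using semismallness of $\mu$, so the conclusion of BKR cannot be used to prove the identification. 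Normality of a Nakamura $G$-Hilbert scheme is likewise not available in advance; such schemes can be quite singular in general. Your closing alternative is the correct repair and is essentially what Haiman does: from a $\sym_n$-cluster $Z\subset X^n$ one extracts a length-$n$ subscheme of $X$ (using the first projection and the structure of $\glob(Z,\reg_Z)$ as the regular representation $\C^{\sym_n}$), which defines a morphism $\Hilb^{\sym_n}(X^n)\to X^{[n]}$ inverse to $\eta$ on $T$-points. Leading with that construction closes the argument without any normality or quasi-finiteness input.
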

That means that $X^{[n]}$ with $I^nX$ in the role of the universal family of $\sym_n$-clusters is isomorphic to $\Hilb^{\sym_n}(X^n)$ over $S^nX$. Since $S^nX$ is Gorenstein and the Hilbert-Chow morphism is semi-small (see also \cite{Hai}), the assumptions of the Bridgeland-King-Reid theorem are fulfilled.
\begin{cor}\label{equi}
The Fourier-Mukai Transform
 \[\Phi:=\Phi^{X^{[n]}\to X^n}_{\reg_{I^nX}}\colon \D^b(X^{[n]})\to \D^b_{\mathfrak S_n}(X^n)\]
is an equivalence of triangulated categories. 
\end{cor}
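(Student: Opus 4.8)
The plan is to deduce the corollary directly from the Bridgeland--King--Reid theorem applied with $M=X^n$ and $G=\sym_n$, once its two hypotheses have been verified in this situation. First I would record the translation of data supplied by Haiman's theorem: the morphism $\eta$ identifies the commutative square built from $X^{[n]}$, $X^n$, $S^nX$ and the universal family $I^nX$ with the corresponding square for $\Hilb^{\sym_n}(X^n)$ and its universal family $\mathcal Z$. In particular this isomorphism of diagrams carries $\reg_{I^nX}$ to $\reg_{\mathcal Z}$ and the projections $p,q$ of $I^nX$ to those of $\mathcal Z$, so that the transform $\Phi=\Phi^{X^{[n]}\to X^n}_{\reg_{I^nX}}=Rp_*\circ q^*$ (precomposed with $\triv$, as in the remark following the theorem) is identified with $\Phi_{\reg_{\mathcal Z}}$. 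Hence it suffices to check that the Bridgeland--King--Reid theorem applies to $\Hilb^{\sym_n}(X^n)$.

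Next I would verify the Gorenstein condition on the quotient $M/G=S^nX$. This uses that $X$ is a smooth surface: the canonical bundle is $\omega_{X^n}=\omega_X^{\boxtimes n}$, and the stabilizer of any point of $X^n$ acts trivially on the corresponding fiber, because a transposition permuting two coincident factors acts on the top exterior power of the cotangent space by the sign $(-1)^{\dim X}=(-1)^2=1$ arising from interchanging two two-dimensional blocks. Consequently the linearization of $\omega_{X^n}$ is fiberwise trivial on stabilizers, so $\omega_{X^n}$ descends to a line bundle $\omega_{S^nX}$ and $S^nX$ is Gorenstein.

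It then remains to check the dimension bound $\dim\bigl(\Hilb^{\sym_n}(X^n)\times_{S^nX}\Hilb^{\sym_n}(X^n)\bigr)\le\dim X^n+1$. For this I would invoke the semismallness of the Hilbert--Chow morphism $\mu\colon X^{[n]}\to S^nX$ recalled from Haiman. Semismallness of a proper morphism from the smooth $2n$-dimensional variety $X^{[n]}$ means precisely that $\dim\bigl(X^{[n]}\times_{S^nX}X^{[n]}\bigr)\le\dim X^{[n]}=2n=\dim X^n$, which is strictly stronger than the inequality required by the theorem. Transporting this along $\eta$ gives the needed bound, so both hypotheses hold and the Bridgeland--King--Reid theorem yields that $\Phi_{\reg_{\mathcal Z}}$, and therefore $\Phi$, is an equivalence of triangulated categories.

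The main obstacle is not computational but a matter of careful bookkeeping: I expect the only genuinely delicate point to be confirming that Haiman's isomorphism of commutative squares intertwines the two Fourier--Mukai kernels (and not merely the base spaces), so that the equivalence proved for $\Hilb^{\sym_n}(X^n)$ transfers verbatim to $X^{[n]}$ with $I^nX$ as universal family. The two hypotheses of the Bridgeland--King--Reid theorem are already recorded, via Haiman, as the Gorenstein property of $S^nX$ and the semismallness of $\mu$, so they require no new work; the entire content of the corollary is the observation that Haiman's identification makes those hypotheses applicable to the isospectral Hilbert scheme.
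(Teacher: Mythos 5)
Your proposal is correct and follows the same route as the paper: the paper likewise deduces the corollary from the Bridgeland--King--Reid theorem via Haiman's identification of $X^{[n]}$ (with universal family $I^nX$) with $\Hilb^{\sym_n}(X^n)$, citing the Gorenstein property of $S^nX$ and the semismallness of the Hilbert--Chow morphism to verify the two hypotheses. Your write-up merely makes explicit the details the paper leaves to the reader, namely the sign computation showing $\omega_{X^n}$ descends and the observation that semismallness gives the dimension bound $\dim\bigl(X^{[n]}\times_{S^nX}X^{[n]}\bigr)\le 2n\le\dim X^n+1$.
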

The Bridgeland--King--Reid equivalence can be used to compute the extension groups of objects in the derived category of the Hilbert scheme.
\begin{cor}\label{bkrext}
 Let $\Fb,\Gb\in\D^b(X^{[n]})$. Then 
\[\Ext^i_{X^{[n]}}(\Fb,\Gb)\cong \sym_n\Ext^i_{X^n}(\Phi(\Fb),\Phi(\Gb))\quad\text{for all $i\in\Z$}\,.\] 
\end{cor}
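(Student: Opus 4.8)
The plan is to derive the formula as a purely formal consequence of the fact, established in Corollary~\ref{equi}, that $\Phi$ is an equivalence of triangulated categories, combined with the reinterpretation of extension groups as morphism spaces in the derived category. First I would rewrite both sides as Hom-groups. On the left, the general identification $\Ext^k(\Eb,\Fb)\cong\Hom_{\D^b(\mathcal A)}(\Eb,\Fb[k])$ recalled in the introduction gives
\[\Ext^i_{X^{[n]}}(\Fb,\Gb)\cong\Hom_{\D^b(X^{[n]})}(\Fb,\Gb[i])\,.\]
On the right, by the equivariant version of this identification discussed in the subsection on derived equivariant functors, where $\sym_n\Ext^i$ is defined as the $i$-th derived functor of equivariant global Hom, one has
\[\sym_n\Ext^i_{X^n}(\Phi(\Fb),\Phi(\Gb))\cong\Hom_{\D^b_{\sym_n}(X^n)}(\Phi(\Fb),\Phi(\Gb)[i])\,.\]

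The heart of the argument is then a single invocation of the equivalence. Since $\Phi$ is an equivalence of triangulated categories it is in particular fully faithful and additive, hence it induces natural isomorphisms of the underlying $\C$-vector spaces of all morphism groups; applied to the objects $\Fb$ and $\Gb[i]$ this yields
\[\Hom_{\D^b(X^{[n]})}(\Fb,\Gb[i])\cong\Hom_{\D^b_{\sym_n}(X^n)}(\Phi(\Fb),\Phi(\Gb[i]))\,.\]
Finally, because $\Phi$ is an exact functor of triangulated categories it commutes with the shift, so $\Phi(\Gb[i])\cong\Phi(\Gb)[i]$. Substituting this isomorphism into the right-hand side identifies it with the equivariant Hom-group computed above, and chaining the three displayed isomorphisms gives the claim for every $i\in\Z$.

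I expect no genuine obstacle here, as the statement is formal once the equivalence is in hand; the only points needing care are bookkeeping. Specifically, one must apply the Hom-to-Ext dictionary with a consistent shift convention on both sides, and one must use that the Fourier--Mukai transform $\Phi=Rp_*\circ q^*$ really is a triangulated functor, so that $\Phi(\Gb[i])\cong\Phi(\Gb)[i]$ holds on the nose. Compatibility with the linear structure is automatic from additivity of the equivalence, and naturality in $\Fb$ and $\Gb$ follows since each map in the chain is natural in both arguments.
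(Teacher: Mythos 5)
Your proposal is correct and follows exactly the same chain of isomorphisms as the paper's own proof: rewrite $\Ext^i$ as $\Hom$ in the derived category, use full faithfulness of the equivalence $\Phi$ from Corollary~\ref{equi}, commute $\Phi$ with the shift, and translate back to the equivariant $\Ext$ groups. No differences worth noting.
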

\begin{proof}
Using the last corollary we indeed have
\[ 
\begin{aligned} \Ext^i_{X^{[n]}}(\Fb,\Gb)\cong \Hom_{\D^b(X^{[n]})}(\Fb,\Gb[i])
 &\cong \Hom_{\D^b_{\sym_n}(X^n)}(\Phi(\Fb),\Phi(\Gb[i]))\\
&\cong \Hom_{\D^b_{\sym_n}(X^n)}(\Phi(\Fb),\Phi(\Gb)[i])\\
&\cong \sym_n\Ext^i_{X^n}(\Phi(\Fb),\Phi(\Gb))\,.
\end{aligned}
\]
\end{proof}
We will abbreviate the functor $[\_]^{\sym_n}\circ\pi_*\colon \D^b_{\sym_n}(X^n)\to\D^b(S^nX)$ by $[\_]^{\sym_n}$. Note that $\pi_*$ indeed does not need to be derived since the quotient morphism $\pi$ is finite.
\begin{prop}\label{bkrstru}
In $\D^b_{\sym_n}(X^n)$ there is a natural isomorphism $\Phi(\reg_{X^{[n]}})\simeq \reg_{X^n}$. 
\end{prop}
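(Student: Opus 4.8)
The plan is to unwind the definition of $\Phi=Rp_*\circ q^*$ and reduce the claim to a statement about the morphism $p\colon I^nX\to X^n$. Since $q\colon I^nX\to X^{[n]}$ is finite and flat, the pull-back $q^*$ is exact and $q^*\reg_{X^{[n]}}\cong\reg_{I^nX}$, so
\[\Phi(\reg_{X^{[n]}})=Rp_*q^*\reg_{X^{[n]}}\simeq Rp_*\reg_{I^nX}\,.\]
Everything therefore comes down to proving $Rp_*\reg_{I^nX}\simeq\reg_{X^n}$ $\sym_n$-equivariantly, where $\sym_n$ acts on $X^n$ and $I^nX$ and trivially on $X^{[n]}$.

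First I would record the geometry of $p$. Because the Hilbert--Chow morphism $\mu\colon X^{[n]}\to S^nX$ is proper, its base change along $\pi\colon X^n\to S^nX$ is proper, and $I^nX$ is closed in the fibre product $X^{[n]}\times_{S^nX}X^n$; hence $p$ is proper. Over the open locus of $X^n$ parametrising $n$ pairwise distinct points the fibre of $p$ is the single reduced cluster supported on those points, so $p$ is birational. Since $X^n$ is smooth, hence normal, the Zariski main theorem (a proper birational morphism onto a normal variety has $p_*$ of the structure sheaf equal to the structure sheaf) gives $p_*\reg_{I^nX}\cong\reg_{X^n}$, the isomorphism being the canonical one coming from the unit $\reg_{X^n}\to p_*\reg_{I^nX}$; in particular it is $\sym_n$-equivariant because this unit map is.

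It then remains to show $R^jp_*\reg_{I^nX}=0$ for all $j>0$. Here I would invoke Haiman's results \cite{Hai}: $I^nX$ is Cohen--Macaulay (indeed Gorenstein) and has rational singularities. Choosing a resolution $r\colon W\to I^nX$, the composite $p\circ r\colon W\to X^n$ is a resolution of the smooth variety $X^n$, so $R(p\,r)_*\reg_W\simeq\reg_{X^n}$ since smooth varieties have rational singularities, while $Rr_*\reg_W\simeq\reg_{I^nX}$ precisely because $I^nX$ has rational singularities. Combining, $Rp_*\reg_{I^nX}\simeq Rp_*Rr_*\reg_W\simeq R(p\,r)_*\reg_W\simeq\reg_{X^n}$, which upgrades the degree-zero identification of the previous step to the desired isomorphism in $\D^b_{\sym_n}(X^n)$.

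The main obstacle is exactly this vanishing of the higher direct images: $p$ is not itself a resolution (its source $I^nX$ is singular), so the conclusion is not formal and rests on the nontrivial fact that $I^nX$ has rational singularities, the deep input drawn from Haiman's work that underlies the whole Bridgeland--King--Reid picture in this situation. Throughout one must keep track of the canonical $\sym_n$-linearizations: the morphisms $p,q,r$ are all equivariant and the structure sheaves carry their natural linearizations, so each isomorphism above is automatically $\sym_n$-equivariant, yielding the stated natural isomorphism $\Phi(\reg_{X^{[n]}})\simeq\reg_{X^n}$.
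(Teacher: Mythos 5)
Your reduction of the statement to $Rp_*\reg_{I^nX}\simeq\reg_{X^n}$ is exactly right, and so is the degree-zero part: $q^*\reg_{X^{[n]}}\cong\reg_{I^nX}$, the morphism $p$ is proper and birational, $I^nX$ is integral and $X^n$ is normal, so Zariski's main theorem identifies $p_*\reg_{I^nX}$ with $\reg_{X^n}$ via the (automatically $\sym_n$-equivariant) unit map. Note that the paper itself offers no argument here at all --- it simply cites Scala's Proposition 1.7.3 --- so you are attempting more than the text does, and your overall architecture (all the content sits in the vanishing of $R^{>0}p_*\reg_{I^nX}$) is the correct one.

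The gap is in the input you invoke for that vanishing. You assert that Haiman proves $I^nX$ has rational singularities. What Haiman actually establishes is that $I^nX$ is normal, Cohen--Macaulay and Gorenstein; rational singularities is not among the stated properties, and it does not follow formally from them --- a normal Gorenstein variety admitting a proper birational map to a smooth variety need not a priori have rational singularities, and deducing that $I^nX$ does is at least as hard as the vanishing you are trying to prove. So, as written, the proof rests on an unsupported citation at its only nontrivial point. The standard way to close the gap is to route the vanishing through Haiman's actual vanishing theorem for the Procesi bundle $P:=q_*\reg_{I^nX}$: since $\pi$ and $q$ are finite, one has $\pi_*R^ip_*\reg_{I^nX}\cong R^i\mu_*\bigl(q_*\reg_{I^nX}\bigr)=R^i\mu_*P$, and Haiman proves $R^{i}\mu_*P=0$ for $i>0$ together with $\mu_*P\cong\pi_*\reg_{X^n}$ (stated for $\C^2$ and localized to general quasi-projective $X$ via the covering of lemma \ref{opencover}); because $\pi$ is affine, $\pi_*$ kills no nonzero quasi-coherent sheaf, so $R^{>0}p_*\reg_{I^nX}=0$ and your ZMT step finishes the argument. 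Alternatively, supply a genuine reference or proof for the rational singularities of $I^nX$; but that statement is usually itself deduced from the vanishing above rather than the other way around.
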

\begin{proof}
 See \cite[Proposition 1.7.3]{Sca1}.
\end{proof}
\subsection{Tautological sheaves}
\begin{defin}
We define the \textit{tautological functor for sheaves} as
\[(\_)^{[n]}:=\pr_{X^{[n]}*}(\reg_\Xi\otimes \pr_X^*(\_))\colon\Coh(X)\to\Coh(X^{[n]})\,.\]
For a sheaf $F\in\Coh(X)$ we call its image $F^{[n]}$ under this functor the \textit{tautological sheaf associated to $F$}. 
In \cite[Proposition 2.3]{Sca2} it is shown that the functor $(\_)^{[n]}$ is exact. Thus, it induces the \textit{tautological functor for objects}
$(\_)^{[n]}\colon \D^b(X)\to \D^b(X^{[n]})$.  
For an object $F^\bullet\in\D^b(X)$ the \textit{tautological object associated to
$F^\bullet$} is $(F^\bullet)^{[n]}$.
\end{defin}
\begin{remark}\label{tautexact}
The tautological functor for objects is isomorphic to the Fourier-Mukai transform with kernel the structural sheaf of the universal family, i.e.
$(F^\bullet)^{[n]}\simeq \Phi^{X\to X^{[n]}}_{\reg_\Xi}(F^\bullet)$ for every $F^\bullet\in \D^b(X)$.
\end{remark}
\begin{remark}
If $F$ is locally free of rank $k$ the \textit{tautological bundle} $F^{[n]}$ is
locally free of rank $k\cdot n$ with fibers
$F^{[n]}([\xi])=\glob(\xi,F_{\mid \xi})$ since $\pr_{X^{[n]}}\colon \Xi\to X^{[n]}$ is flat and finite of degree $n$.
\end{remark}             
\subsection{The complex $C^\bullet$}\label{ccomplex}
\begin{defin}
For two finite totally ordered sets $M$, $L$ of the same cardinality we define $u_{M\to L}$ as the unique strictly increasing map.
Let now $N$ be totally ordered, $m\in M\subset N$ and $\sigma\in\sym_N$. We define the signs
\begin{align*}\eps_{\sigma,M}:=\sgn(u_{\sigma(M)\to M}\circ \sigma_{|M})=(-1)^{\#\{(i,j)\in M\times M\mid i<j\,,\, \sigma(i)>\sigma(j)\}} 
\end{align*}
and $\eps_{m,M}:=(-1)^{\#\{j\in M\mid j<m\}}$.
\end{defin}
To any coherent sheaf $F$ on $X$ we associate a $\sym_n$-equivariant
complex $C_F^\bullet$ of sheaves on $X^n$ as follows.
For $I\subset [n]$ with $|I|\ge 2$ we define $F_I=\iota_{I*}p_I^*F$. Here $\iota_I\colon\Delta_I\to X^n$ is the closed embedding of the partial diagonal and $p_I\colon \Delta_I\to X$ is the projection induced by the projection $\pr_i\colon X^n\to X$ for any $i\in I$. We set 
\[C^0_F=\bigoplus_{i=1}^n
p_i^*F\quad, \quad C^p_F=\bigoplus_{I\subset[n]\,,\,\mid I\mid =p+1} F_I\quad\text{for
$0< p< n$}\quad,\quad C^p_F=0 \quad\text{else}\,.\]
Let $s=(s_I)_{\mid I\mid= p+1}$ be a local section of $C^p_F$. We define the $\sym_n$-linearization of $C^p_F$ by
\[\lambda_{\sigma}(s)_I:=\eps_{\sigma,I}\cdot\sigma_*(s_{\sigma^{-1}(I)})\,,\]
where $\sigma_*$ is the flat base change isomorphism from the following diagram with $p_I\circ \sigma =p_{\sigma^{-1}(I)}$
\[\xymatrix{
\Delta_{\sigma^{-1}(I)} \ar^\sigma[r] \ar_{\iota_{\sigma^{-1}(I)}}[d]  &   \Delta_I\ar^{p_I}[r] \ar_{\iota_I}[d] &   X   \\
X^n    \ar^\sigma[r]         &         X^n  \,.   &            
}
\]
This gives also a $\sym_n$-linearization of $C^0_F$ using the convention $F_{\{i\}}:=p_i^*F$ and $\Delta_{\{i\}}:=X^n$.
Note that for $J\subset [n]$ with $|J|\ge 2$ and $i\in J$ there is by projection formula a natural isomorphism $F_J\cong F_{J\setminus \{i\}|\Delta_J}$.
Using this, we define the differentials $d^p\colon C^p_F\to C^{p+1}_F$ by the formula
\[d^p(s)_J:=\sum_{i\in J} \eps_{i,J}\cdot s_{J\setminus\{i\}\mid _{\Delta_J}}\,.\]
As one can check, $C^\bullet_F$ is indeed an $\sym_n$-equivariant complex. 
\begin{remark}\label{CF}
 Since $p_I$ is a projection and $\iota_I$ a closed embedding, the functor 
\[C^p\colon \Coh(X)\to \Coh_{\sym_n}(X^n)\quad,\quad F\mapsto C^p_F\]
is exact for all $0\le p\le n-1$. Thus, the functor 
\[C^\bullet\colon \Kom(\Coh(X))\to \Kom(\Coh_{\sym_n}(X^n))\quad F^\bullet\mapsto C^\bullet_{F^\bullet}:=\tot(K^{\bullet,\bullet})\]
is also exact, where the double complex is given by $K^{i,j}=C^i_{F^j}$. Hence, without deriving we get an exact functor $C^\bullet\colon \D^b(X)\to \D^b_{\sym_n}(X^n)$. 
\end{remark}
\begin{remark}\label{FI}
For every $I\subset\n$ the sheaf $F_I$ carries a $\Stab(I)=\sym_I\times\overline{\sym_{I}}$-linearization. As a $\sym_I\times\overline{\sym_{I}}$-sheaf we can write it as
\[F_I= \iota_{I*}(\mathfrak a_I\otimes p^*_{I} F)\,.\]
Here $\alt_I$ denotes the alternating representation of $\sym_I$, i.e. the one-dimensional representation on which $\sigma\in \sym_I$ acts by multiplication by  $\sgn(\sigma)$.
See also remark \ref{induequi} for a description of the linearization of $p_I^*F$. Note that, since the group $\sym_I$ acts trivially on $\Delta_I$, the $\sym_I$-linearization of $F_I$ is just a $\sym_I$-action. An element $\sigma\in \sym_I$ acts by multiplication by $\sgn(\sigma)$ on $F_I$.
\end{remark}
\begin{remark}\label{Cinf}  
For $1\le \ell\le n$ we fix an $I_0\subset\n$ with $\ell$ elements, e.g. $I_0=[\ell]$, and choose for any $I\subset\n$ with $\#I=\ell$ a $\sigma\in \sym_n$ with $\sigma(I)= I_0$, e.g. $\sigma={_{I_0}u_I}
\times {_{\bar I_0}u_{\bar I}}$. Then the chosen $\sigma$ form a system of representatives of $(\sym_{I_0}\times\overline{\sym_{I_0}})\setminus \sym_n$ (lemma \ref{repres}) and the canonical isomorphisms $F_{\Delta_I}\cong \sigma^*F_{\Delta_{I_0}}$ induce an isomorphism (see also lemma \ref{infrem})
\[C^{\ell-1}_F\cong \Inf_{\sym_I\times\overline{\sym_{I}}}^{\sym_n}(F_{I_0})\,.\]  
\end{remark}
\begin{lemma}\label{opencover}
 Let $X$ be a quasi-projective variety. Then $X^n$ and $S^nX$ have open coverings consisting of subsets of the form $U^n$ respectively $S^nU$ for $U\subset X$ open and affine. 
\end{lemma}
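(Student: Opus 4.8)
The plan is to reduce both assertions to a single classical fact about quasi-projective varieties: \emph{any finite set of closed points of $X$ lies in a common open affine subset $U\subset X$}. Granting this, the coverings drop out almost immediately, so the first thing I would do is establish it; the rest is formal bookkeeping.

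To prove the fact, realize $X$ as a locally closed subvariety of some $\mathbbm P^N$ and let $\overline X$ be its projective closure, so that $Z:=\overline X\setminus X$ is closed in $\overline X$ and $X=\overline X\setminus Z$. Given a finite set $S=\{x_1,\dots,x_m\}\subset X$, each $x_i$ is disjoint from $Z$, so for every $i$ there is a homogeneous form in the ideal $I(Z)$ not vanishing at $x_i$; after multiplying by a suitable power of a linear form nonvanishing at $x_i$ (which does not affect vanishing on $Z$) I may assume these forms $f_1,\dots,f_m$ all have a common degree $d$. Since $\C$ is infinite, a generic linear combination $f=\sum_i c_i f_i$ still vanishes on $Z$ while satisfying $f(x_i)\neq 0$ for every $i$: for each fixed $i$ the locus $\{c:\sum_j c_j f_j(x_i)=0\}$ is a proper hyperplane of the coefficient space (the coefficient $c_i$ enters with the nonzero value $f_i(x_i)$), and a finite union of proper subspaces cannot exhaust $\C^m$. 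Setting $U:=\overline X\setminus\{f=0\}$, I obtain an open subset of $X$ (because $\{f=0\}\supset Z$ forces $U\subset X$) containing all of $S$ (because $f$ is nonvanishing there), and it is affine: $\mathbbm P^N\setminus\{f=0\}$ is an affine variety and $U=\overline X\cap(\mathbbm P^N\setminus\{f=0\})$ is closed inside it.

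Now I would assemble the two coverings. For $X^n$, given any point $P=(x_1,\dots,x_n)$ I apply the fact to $S=\{x_1,\dots,x_n\}$ to get an affine open $U$ containing every coordinate of $P$; then $U^n$ is an open neighbourhood of $P$ in $X^n$, and as $P$ varies these sets cover $X^n$. For $S^nX$, the same set $U^n$ is $\sym_n$-invariant, so $S^nU=\pi(U^n)$ is open in $S^nX$: indeed $\pi^{-1}(S^nU)=U^n$ is open and $S^nX$ carries the quotient topology for the finite map $\pi\colon X^n\to S^nX$. Every $\sym_n$-orbit has a representative whose coordinates lie in a common affine $U$, so the $S^nU$ cover $S^nX$. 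As a byproduct $U^n$ and $S^nU=U^n/\sym_n$ are themselves affine, the latter with coordinate ring $\reg(U^n)^{\sym_n}$, which is the reason for insisting that $U$ be affine.

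The only nontrivial ingredient is the embedded-points-in-a-common-affine-open fact, and that is precisely where quasi-projectivity is used; I expect the hypersurface-avoidance step (producing $f$ that vanishes on $Z$ but on none of the finitely many points) to be the main obstacle, while the passage to $X^n$ and $S^nX$ is routine.
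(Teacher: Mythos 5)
Your proof is correct. The paper gives no argument of its own here --- it simply cites Scala (\cite[Lemma 1.4.3]{Sca1}) --- and the proof there rests on exactly the classical fact you isolate and establish (finitely many closed points of a quasi-projective variety lie in a common affine open, obtained by avoiding a hypersurface containing $\overline X\setminus X$), followed by the same routine passage to $X^n$ and $S^nX$ via the $\sym_n$-invariance of $U^n$ and the fact that the finite quotient map is closed.
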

\begin{proof}
 See \cite[Lemma 1.4.3]{Sca1}.
\end{proof}
\begin{lemma}\label{invsur}
Let $F\in \Coh(X)$, $I\subset [n]$ with $|I|\ge 1$, $i\in [n]\setminus I$, and $r\colon F_I\to F_{I\cup \{i\}}$ the  morphism given by restricting local sections of $F_I$ to $\Delta_{I\cup \{i\}}$. Then the induced morphism 
\[\bar r\colon F_I^{\sym_{\bar I}}\to F_{I\cup\{i\}}^{\sym_{\overline{I\cup\{i\}}}}\]
is still surjective. 
\end{lemma}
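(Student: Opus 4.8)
The plan is to reduce the statement to an explicit computation in symmetric powers over an affine chart and then to verify surjectivity there by a filtration argument. Since surjectivity of a morphism of sheaves may be checked on an open cover of the target, by Lemma \ref{opencover} I would restrict to a chart of the form $U^n$ with $U\subset X$ affine, writing $A=\reg(U)$ and $M$ for the finite $A$-module corresponding to $F$. Over such a chart $\Delta_I\cong X^{\bar I}\times X$, where the last factor records the common value of the coordinates indexed by $I$, and accordingly $\glob(U^n,F_I)\cong A^{\otimes\bar I}\otimes_\C M$, the factor $M$ carrying the $I$-coordinate. Splitting off the tensor factor indexed by $i\in\bar I$, the restriction morphism $r$ is then given by $a\otimes b\otimes m\mapsto b\otimes(a\cdot m)$, where $a\cdot m$ uses the $A$-module structure of $M$; in particular $r$ is visibly surjective as a morphism of sheaves, being a restriction to a closed subscheme.

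Next I would compute the two spaces of invariants. By Remark \ref{FI} the subgroup $\sym_{\bar I}$ acts on $F_I$ without signs, simply permuting the tensor factors indexed by $\bar I$, so (invariants being exact in characteristic zero, see \ref{triv}) one gets $\glob(U^n,F_I)^{\sym_{\bar I}}\cong S^{|\bar I|}A\otimes_\C M$ and, applying the same remark to $I\cup\{i\}$, $\glob(U^n,F_{I\cup\{i\}})^{\sym_{\bar I\setminus\{i\}}}\cong S^{|\bar I|-1}A\otimes_\C M$. Writing a symmetric tensor as a product $a_1\cdots a_N\in S^N A$ with $N=|\bar I|$ and tracing the embedding of invariants followed by $r$, the induced map becomes, up to a nonzero scalar, the comultiplication-then-act map
\[\bar r\colon S^N A\otimes M\to S^{N-1}A\otimes M\,,\quad a_1\cdots a_N\otimes m\mapsto \sum_{l=1}^N (a_1\cdots\widehat{a_l}\cdots a_N)\otimes(a_l\cdot m)\,.\]

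To prove that this map is surjective I would filter the target by the subspaces $W_j:=(1^{N-1-j}\cdot S^j A)\otimes M$ for $0\le j\le N-1$, which are increasing with $W_{N-1}=S^{N-1}A\otimes M$, and show $W_j\subseteq\im\bar r$ by induction on $j$. For the base case, $\bar r(1^N\otimes m)=N\cdot(1^{N-1}\otimes m)$, and $N\ne0$ in characteristic zero gives $W_0\subseteq\im\bar r$. For the step, applying $\bar r$ to $1^{N-j}b_1\cdots b_j\otimes m$ produces $(N-j)\cdot(1^{N-1-j}b_1\cdots b_j)\otimes m$ plus terms each of the shape $1^{N-j}b_1\cdots\widehat{b_l}\cdots b_j\otimes(b_l\cdot m)$, which lie in $W_{j-1}$; since $N-j\ge1$ and $W_{j-1}\subseteq\im\bar r$ by the inductive hypothesis, the generator $1^{N-1-j}b_1\cdots b_j\otimes m$ of $W_j$ lies in $\im\bar r$ as well. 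This yields $W_{N-1}=S^{N-1}A\otimes M\subseteq\im\bar r$, i.e. surjectivity over each chart, and hence the surjectivity of $\bar r$ globally.

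The point to watch, and the only genuine obstacle, is that the source and target invariants are taken with respect to \emph{different} groups, namely $\sym_{\bar I}$ versus the strictly smaller $\sym_{\bar I\setminus\{i\}}$, so surjectivity does not follow formally from the exactness of the invariants functor applied to the surjection $r$. The content of the lemma is precisely that restricting the fully $\sym_{\bar I}$-symmetric sections still hits every $\sym_{\bar I\setminus\{i\}}$-symmetric section downstairs, and this is exactly what the filtration argument secures; everything else is the routine identification of $F_I$ and of $r$ in local coordinates.
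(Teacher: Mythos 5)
Your argument is correct, and its first step coincides exactly with the paper's: both reduce, via Lemma \ref{opencover}, to checking surjectivity on sections over $S^nU$ for $U\subset X$ open affine (legitimate here because the sheaves are quasi-coherent and $S^nU$ is affine), where $\glob(U^n,F_I)\cong A^{\otimes\bar I}\otimes_\C M$ with $A=\reg(U)$, $M=\glob(U,F)$, the $\sym_{\bar I}$-action permutes the $A$-factors without signs, and the two invariant spaces become $S^{N}A\otimes M$ and $S^{N-1}A\otimes M$ with $N=|\bar I|$. The difference lies in what happens next: the paper at this point simply cites Lemmas 5.1.1 and 5.1.2 of Scala's paper and gives no further argument, whereas you identify the induced map explicitly as $a_1\cdots a_N\otimes m\mapsto\sum_{l}(a_1\cdots\widehat{a_l}\cdots a_N)\otimes(a_l\cdot m)$ (up to an irrelevant nonzero scalar coming from the normalisation of the invariants) and prove its surjectivity by induction on the filtration $W_j=(1^{N-1-j}\cdot S^jA)\otimes M$, using that the scalars $N$ and $N-j$ are invertible in characteristic zero. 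This is a clean, self-contained replacement for the external citation, and you correctly isolate the actual content of the lemma, namely that the source and target invariants are taken for different groups, so that exactness of $(\_)^G$ applied to the surjection $r$ proves nothing by itself. In short: same reduction, but your proof supplies an elementary verification where the paper's proof is essentially a pointer to Scala's computation of the invariants of the terms of $C^\bullet_F$ over such charts.
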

\begin{proof}
 By the above lemma it suffices to show that the morphism $\bar r$ is surjective on the sections over $S^nU$ for every open affine subset $U\subset X$.
This can be seen by applying the lemmas 5.1.1. and 5.1.2 of \cite{Sca1} in cohomological degree zero.    
\end{proof}
\subsection{Descriptition of $\Phi(F^{[n]})$}
\begin{theorem}[\cite{Sca2}]\label{Sca}\label{Scadesc}
For every $F\in\Coh(X)$ the object $\Phi(F^{[n]})$ is cohomologically concentrated in degree zero. Furthermore, the complex $C^\bullet_F$ is a right resolution of $p_*q^*(F^{[n]})$. Hence, in $\D^b_{\sym_n}(X^n)$ there are the isomorphisms   
\[\Phi(F^{[n]})\simeq p_*q^*F^{[n]}\simeq C^\bullet_F\,.\]
\end{theorem}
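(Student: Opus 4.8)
The statement is due to Scala; I sketch the geometric strategy one would follow to reprove it. Since Haiman's theorem guarantees that $I^nX$ is flat (indeed Cohen--Macaulay) over $X^{[n]}$, the pullback $q^*$ is exact and $\Phi(F^{[n]}) = Rp_*q^*F^{[n]}$. The plan is to prove two assertions jointly: (i) that $C^\bullet_F$ is a right resolution of the ordinary pushforward $p_*q^*F^{[n]}$, and (ii) that $R^{i}p_*q^*F^{[n]} = 0$ for $i>0$. Granting both, the three objects agree in $\D^b_{\sym_n}(X^n)$: part (ii) gives $\Phi(F^{[n]}) = Rp_*q^*F^{[n]} \simeq p_*q^*F^{[n]}$, while part (i) gives $p_*q^*F^{[n]} \simeq C^\bullet_F$, since a sheaf admitting a finite right resolution is quasi-isomorphic to that resolution.

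The first task is to build the comparison morphism $p_*q^*F^{[n]} \to C^0_F = \bigoplus_i p_i^*F$ and to verify that the augmented sequence $0 \to p_*q^*F^{[n]} \to C^0_F \to C^1_F \to \cdots \to C^{n-1}_F \to 0$ is exact in $\Coh_{\sym_n}(X^n)$. This is a local question: by Lemma \ref{opencover} it suffices to work on charts $U^n$, and since $\pi$ is finite affine and taking $\sym_n$-invariants is exact in characteristic zero, one may test exactness after passing to invariants over $S^nU$ via Danila's lemma (\ref{Dan}, \ref{deriveddanila}). Over the open configuration locus $p$ is an isomorphism and $q^*F^{[n]}$ is identified with $\bigoplus_i p_i^*F$, using $F^{[n]}([\xi]) = \glob(\xi, F_{\mid \xi})$ on distinct reduced subschemes; hence the augmentation is an isomorphism there and all higher terms $F_I$ vanish. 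The content is therefore concentrated along the partial diagonals $\Delta_I$, where the terms $\iota_{I*}p_I^*F$ with their Čech-type differentials are precisely engineered to resolve the collision behaviour of the tautological sheaf, and exactness reduces to an explicit Koszul/Čech computation in the local rings describing $I^nX$ over the diagonals.

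The genuinely hard step is the vanishing (ii) of the higher direct images. One cannot reduce to the case $F=\reg_X$, where $\Phi(\reg_{X^{[n]}})\simeq\reg_{X^n}$ is Proposition \ref{bkrstru}, by a purely formal twist, since the tautological functor is not a line-bundle twist. The argument must invoke the deep Cohen--Macaulay (in fact Gorenstein) geometry of the isospectral Hilbert scheme --- Haiman's polygraph theorem --- together with the normality of $X^n$: these control the fibres of the birational proper morphism $p$ and force $Rp_*$ of the locally free sheaf $q^*F^{[n]}$ to be concentrated in degree zero. I expect this vanishing, resting on Haiman's results rather than on the formal properties of the Bridgeland--King--Reid equivalence, to be the main obstacle; once it is established, combining it with the explicit resolution from the previous step yields the chain $\Phi(F^{[n]}) \simeq p_*q^*F^{[n]} \simeq C^\bullet_F$ and the degree-zero concentration of $\Phi(F^{[n]})$.
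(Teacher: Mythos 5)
Your reading of the statement as two assertions --- (i) the augmented complex $0\to p_*q^*F^{[n]}\to C^0_F\to\cdots\to C^{n-1}_F\to 0$ is exact, and (ii) $R^ip_*q^*F^{[n]}=0$ for $i>0$ --- is the correct logical decomposition, and you rightly identify (ii) as the hard point. But the proof sketched in the paper (Scala's proof) does not establish (ii) by analysing $Rp_*$ on $I^nX$ at all, and the mechanism you propose for it is not an argument. The morphism $p\colon I^nX\to X^n$ is proper and birational but has fibres of dimension up to $n-1$ over the small diagonal, and there is no general principle by which Cohen--Macaulayness or Gorensteinness of the source forces $Rp_*$ of an arbitrary vector bundle to be concentrated in degree zero. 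The key idea you are missing is the passage to the polygraph: one writes $\Phi(F^{[n]})$ as the Fourier--Mukai transform of $F$ along the \emph{composite} kernel of $\reg_\Xi$ and $\reg_{I^nX}$, and the substantive input (resting on Haiman's work) is that this composite kernel is the structure sheaf $\reg_D$ of the polygraph $D\subset X^n\times X$, concentrated in degree zero. The whole point of this reformulation is that the projection $D\to X^n$ \emph{is} finite --- unlike $p$ --- so the pushforward needs no deriving, and $\reg_D$ is flat over $X$ (proved via the \v{C}ech-type resolution $\mathcal K^\bullet$ by the sheaves $\reg_{D_I}$, each flat over $X$), so the tensor product with $p_X^*F$ needs no deriving either. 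Both degree-zero concentration and the identification $\Phi(F^{[n]})\simeq C^\bullet_F$ then fall out simultaneously, since $p_{X^n}$ restricts to an isomorphism $D_I\to\Delta_I$ carrying $p_X^*F\otimes\reg_{D_I}$ to $F_I$. Your step (i) is in effect absorbed into the exactness of $\mathcal K^\bullet$ as a resolution of $\reg_D$, which is a concrete statement about the scheme-theoretic union of the graphs $D_i$ rather than a computation in the local rings of $I^nX$.

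There is also a localised error in your step (i): you propose to ``test exactness after passing to invariants over $S^nU$.'' Taking $\sym_n$-invariants in characteristic zero is exact but not faithful --- it annihilates, for instance, any complex on which a transposition acts by $-1$ --- so exactness of the complex of invariants does not imply exactness of the equivariant complex on $X^n$, which is what the theorem asserts. Danila's lemma lets you \emph{compute} invariants summand by summand once the resolution is in hand; it does not let you verify that $C^\bullet_F$ resolves $p_*q^*F^{[n]}$ by checking invariants.
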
 
\begin{proof}
We will give a short sketch of Scala's proof here, since we need to make a statement about the naturalness of the isomorphisms in the theorem which can be read off from the proof. The object $\Phi(F^{[n]})$ is the image of $F$ under the composition of Fourier-Mukai transforms $\Phi_{\reg_{I^nX}}^{X^{[n]}\to X^n}\circ\Phi_{\reg_\Xi}^{X\to X^{[n]}}$. The composite of the two Fourier-Mukai kernels is given by the structural sheaf of the polygraph 
\[D=\{(x_1,\dots,x_n,y)\in X^n\times X\mid y\in\{x_1,\dots,x_n\}\}\subset X^n\times X\,.\]
There is a right resolution $\mathcal K^\bullet$ of $\reg_D$ defined as follows. For $i\in\n$ let  
\[D_i=\{(x_1,\dots,x_n,y)\in X^n\times X\mid y=x_i\}\subset X^n\times X\,.\]
and for $I\subset\n$ set $D_I=\cap_{i\in I} D_i$. Then $\mathcal K^p=\oplus_{\#I=p+1}\reg_{D_I}$ and the differentials are given by the restrictions $\reg_{D_I}\to\reg_{D_{I\cup\{i\}}}$ with signs as in the complex $C^\bullet_F$. The terms $\mathcal K^p$ are all flat over $X$. Since $\mathcal K^\bullet$ is a right resolution of $\reg_D$, it follows that $\reg_D$ is also flat over $X$. Furthermore $D$ is finite over $X^n$. Thus $\Phi(F^{[n]})$ is indeed concentrated in degree 0 and can be computed as
\[\Phi(F^{[n]})\simeq\Phi_{\reg_D}^{X\to X^n}(F)\simeq p_{X^n*}( p_X^*F\otimes \mathcal K^\bullet)\,.\]
Now $p_{X^n|D_I}\colon D_I\to \Delta_I$ is an isomorphism which yields $p_{X^n*}( p_X^*F\otimes \mathcal \reg_{\Delta_I})\cong F_I$ and hence the isomorphisms of the  theorem. 
\end{proof}
\begin{remark}\label{natural}
Every morphism $\phi\colon E\to F$ of coherent sheaves on $X$ induces a morphism $\phi^{[n]}=\pr_{X^{[n]}*}(\reg_\Xi\otimes \pr_X^* \phi)\colon E^{[n]}\to F^{[n]}$. Under the isomorphisms of the theorem 
\[p_*q^*(\phi^{[n]})\simeq 
 \Phi(\phi^{[n]})\simeq\Phi_{\reg_D}^{X\to X^n}(\phi)\simeq p_{X^n*}( p_X^*\phi\otimes \mathcal K^\bullet)\,.\]
corresponds to $C^\bullet (\phi)\colon C^\bullet_E\to C^\bullet_F$. Thus, we can rephrase the  theorem by saying that there is an isomorphism of functors
\[p_*q^*(\_)^{[n]}\cong C^\bullet\colon \Coh(X)\to \Kom(\Coh_{\sym_n}(X^n))\,.\]
The functors $(\_)^{[n]}$ and $q^*$ are exact ($q$ is flat). Furthermore, $q^*F^{[n]}$ is $p_*$-acyclic for every tautological sheaf $F^{[n]}$ by the above theorem. Thus for every $E^\bullet\in \D^b(X)$ we have a natural isomorphism $\Phi((E^\bullet)^{[n]})\simeq p_*q^* (E^{[n]})^\bullet$. Since $C^\bullet$ is also an exact functor, we have the following isomorphism of functors on the level of derived categories:  
\[\Phi((\_)^{[n]})\simeq C^\bullet \colon \D^b(X)\to \D^b_{\sym_n}(X^n)\,.\]
So $\Phi((F^\bullet)^{[n]})\simeq C^\bullet_{F^\bullet}$ holds for every $F^\bullet\in \D^b(X)$.  
\end{remark}
There is also a result on the image under the  Bridgeland-King-Reid equivalence of tensor products of tautological sheaves with multiple factors. We will state here the special case of tautological bundles.
\begin{theorem}[Scala, \cite{Sca2}]\label{tenstaut}
Let $E_1,\dots,E_k$ be locally free sheaves on the smooth quasi-projective surface $X$. Then $\Phi(\otimes_{i=1}^k E_i^{[n]})$ is cohomologically concentrated in degree zero and hence isomorphic in $\D^b_{\sym_n}(X^n)$ to
$p_*q^*(\otimes_{i=1}^k E_i^{[n]})$. Furthermore we have $p_*q^*(\otimes_{i=1}^k E_i^{[n]})\cong E^{0,0}_\infty$ where $E^{0,0}_\infty$ is on the limiting sheet of the spectral sequence
\[E^{p,q}_1=\bigoplus_{i_1+\dots+i_k=p}\sTor_{-q}(C^{i_1}_{E_1},\dots,C^{i_k}_{E_k})\Longrightarrow E^n=\mathcal H^n(C^\bullet_{E_1}\otimes^L\dots\otimes ^L C^\bullet_ {E_k})\,.\]
The differentials of the spectral sequence are induced by the differentials of the complexes $C^\bullet_{E_i}$.     
\end{theorem}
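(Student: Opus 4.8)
The plan is to compute $\Phi(\bigotimes_{i=1}^k E_i^{[n]})=Rp_*q^*(\bigotimes_i E_i^{[n]})$ by reducing it to a derived tensor product on $X^n$ and then reading off the spectral sequence from the single-sheaf resolutions $C^\bullet_{E_i}$ of Theorem \ref{Scadesc}. Since the $E_i$ are locally free, each $E_i^{[n]}$ is locally free on $X^{[n]}$, and as $q$ is flat the pullback $q^*$ is exact and monoidal; hence $q^*(\bigotimes_i E_i^{[n]})\cong\bigotimes_i q^*E_i^{[n]}$ is again locally free on $I^nX$, so no derived tensor is needed at this stage. All the content is therefore concentrated in the identification, in $\D^b_{\sym_n}(X^n)$,
\[Rp_*\Bigl(\bigotimes_{i=1}^k q^*E_i^{[n]}\Bigr)\simeq C^\bullet_{E_1}\otimes^L\cdots\otimes^L C^\bullet_{E_k}\,,\]
together with the degeneration in degree zero.

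For this key isomorphism I would reuse the polygraph description from the proof of Theorem \ref{Scadesc}. Realising $\bigotimes_i E_i^{[n]}$ as the restriction to the diagonal $X^{[n]}\hookrightarrow (X^{[n]})^k$ of the external product $\boxtimes_i E_i^{[n]}$ exhibits $\Phi(\bigotimes_i E_i^{[n]})$ as a Fourier--Mukai transform $X^k\to X^n$ of $E_1\boxtimes\cdots\boxtimes E_k$ whose kernel is the convolution of $\reg_{I^nX}$ with the derived structural sheaf of the $k$-fold fibre product of the universal family $\Xi$ over $X^{[n]}$. Computing this convolution exactly as in the single-sheaf case identifies the kernel with the derived structural sheaf of the $k$-fold fibre product $D\times_{X^n}\cdots\times_{X^n}D\subset X^n\times X^k$ of the polygraph $D$, which is resolved by the external tensor product over $X^n$ of the complexes $\mathcal K^\bullet$. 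Pushing this resolution forward along the projection to $X^n$ and using that $p_{X^n}$ restricts to an isomorphism $D_I\xrightarrow\cong\Delta_I$ turns each multi-index term $\reg_{D_{I_1}}\otimes^L\cdots\otimes^L\reg_{D_{I_k}}$ into $\sTor_\ast(C^{i_1}_{E_1},\dots,C^{i_k}_{E_k})$. This is precisely the multicomplex computing $\bigotimes_i^L C^\bullet_{E_i}$, and its associated spectral sequence is the asserted one, with $E^{p,q}_1=\bigoplus_{i_1+\cdots+i_k=p}\sTor_{-q}(C^{i_1}_{E_1},\dots,C^{i_k}_{E_k})$; that the differentials are those induced by the $d^p$ of the $C^\bullet_{E_i}$ follows from Lemmas \ref{indu}, \ref{induced} and \ref{diaginduced}.

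It then remains to prove that the abutment $\mathcal H^n(\bigotimes_i^L C^\bullet_{E_i})$ vanishes for $n\neq 0$, so that $\Phi(\bigotimes_i E_i^{[n]})$ is concentrated in degree zero and equals $\mathcal H^0=E^{0,0}_\infty$. This is the genuine obstacle: $\Phi$ is merely an equivalence and has no reason to preserve concentration in degree zero, and a derived tensor product of sheaves generically acquires cohomology in negative degrees. Here I would analyse the $\sTor$-sheaves $\sTor_{-q}(C^{i_1}_{E_1},\dots,C^{i_k}_{E_k})$, which are supported on the intersection $\Delta_{I_1}\cap\cdots\cap\Delta_{I_k}$ of partial diagonals. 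Using that $X$ is a \emph{surface}, one controls the codimensions of these intersections and applies the proper-intersection and local-algebra results above, in particular Corollary \ref{propint} and Lemma \ref{mixedint} together with the Cohen--Macaulayness of the partial diagonals, to show that the higher $\sTor$-contributions appearing in nonzero total degree are annihilated by the differentials and that no cohomology survives away from degree zero.

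The hard part is exactly this last step: the careful bookkeeping of which $\sTor$-sheaves are nonzero, how their supports and codimensions behave for a two-dimensional $X$, and how the induced differentials splice them together so that the spectral sequence degenerates onto $E^{0,0}$ in total degree zero. Granting that vanishing, the concentration of $\Phi(\bigotimes_i E_i^{[n]})$ in degree zero, its identification with $p_*q^*(\bigotimes_i E_i^{[n]})$, and the final formula $p_*q^*(\bigotimes_i E_i^{[n]})\cong E^{0,0}_\infty$ all follow at once.
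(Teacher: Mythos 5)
First, note that the paper does not actually prove this statement: Theorem \ref{tenstaut} is imported verbatim from \cite{Sca2}, so there is no internal proof to compare against; your proposal has to be judged on its own terms.

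On its own terms it contains a fatal misreading of what the theorem asserts. You aim to prove the isomorphism $Rp_*\bigl(\bigotimes_i q^*E_i^{[n]}\bigr)\simeq C^\bullet_{E_1}\otimes^L\cdots\otimes^L C^\bullet_{E_k}$ in $\D^b_{\sym_n}(X^n)$ and then to show that the abutment $\mathcal H^m\bigl(\bigotimes_i^L C^\bullet_{E_i}\bigr)$ vanishes for $m\neq 0$ with $\mathcal H^0=E^{0,0}_\infty$. Neither of these is what the theorem says, and both are false. Take $k=n=2$ and $E_1=E_2=\reg_X$: then $C^1_{E_i}=\reg_\Delta$ for the diagonal $\Delta\subset X^2$, which has codimension $2$, so $E_1^{2,-1}=\sTor_1(\reg_\Delta,\reg_\Delta)$ and $E_1^{2,-2}=\sTor_2(\reg_\Delta,\reg_\Delta)\cong\iota_*\wedge^2N_\Delta^\vee$ are nonzero; all differentials into and out of these positions vanish (their sources $E_1^{1,-1}$, $E_1^{1,-2}$, $E_1^{0,-1}$ involve a locally free factor $C^0$ and hence have no higher $\sTor$, and their targets lie outside the range $p\le 2$). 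Hence $E_\infty^{2,-1}\neq 0$ and $E_\infty^{2,-2}\neq 0$, so $\mathcal H^1\bigl(C^\bullet_{E_1}\otimes^LC^\bullet_{E_2}\bigr)\neq 0$ and $\mathcal H^0$ strictly contains $E^{0,0}_\infty$ as a proper graded piece. The theorem only claims that the single sheaf $p_*q^*(\otimes_iE_i^{[n]})=R^0p_*q^*(\otimes_iE_i^{[n]})$ is the corner term $E^{0,0}_\infty$; it makes no claim that $\Phi(\otimes_iE_i^{[n]})$ is the whole derived tensor product, and indeed it cannot be, since the former is concentrated in degree zero and the latter is not.

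The error enters at the kernel computation: the composite Fourier--Mukai kernel for $\otimes_iE_i^{[n]}$ is the \emph{underived} pushforward of $\reg_{\Xi\times_{X^{[n]}}\cdots\times_{X^{[n]}}\Xi\times_{X^{[n]}}I^nX}$ (all these fibre products over $X^{[n]}$ are honest, by flatness of $\Xi$ and $I^nX$ over $X^{[n]}$), and this is \emph{not} the derived fibre product $\reg_D\otimes^L_{X^n}\cdots\otimes^L_{X^n}\reg_D$ of the polygraph with itself --- they differ exactly by the higher $\sTor$ classes exhibited above, since $D\to X^n$ is finite but not flat. With the wrong kernel, your ``remaining hard step'' becomes an attempt to prove a false vanishing statement, so the strategy cannot be repaired by more careful bookkeeping of supports and codimensions. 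What Scala actually proves is two separate things: (i) concentration of $Rp_*q^*(\otimes_iE_i^{[n]})$ in degree zero, which does not follow from finiteness (the projection $\Xi^{\times_{X^{[n]}}k}\times_{X^{[n]}}I^nX\to X^k\times X^n$ is not finite over the punctual locus) and rests on Haiman's vanishing theorems for the isospectral Hilbert scheme; and (ii) an identification of the degree-zero sheaf $p_*q^*(\otimes_iE_i^{[n]})$ with the subsheaf $E^{0,0}_\infty$ of $C^0_{E_1}\otimes\cdots\otimes C^0_{E_k}$, which is a comparison of sections over the big diagonal in the spirit of Remark \ref{koneinva}, not a consequence of a global quasi-isomorphism with $\bigotimes_i^LC^\bullet_{E_i}$.
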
 
\begin{remark}\label{koneinva}
Let $E_1,\dots, E_k$ be locally free and $F$ an arbitrary coherent sheaf on $X$.
By theorem \ref{Sca} we can make the identification $p_*q^*F^{[n]}=\ker(d^0_F\colon C^0_F\to C^1_F)$. Thus, we can describe the space of sections of $p_*q^*F^{[n]}$ over $U^n$ as the sections $s=(s_1,\dots,s_n)$ of $C^0_F$ with $s_{i|\Delta_{ij}}=s_{j|\Delta_{ij}}$ for every distinct $i,j\in\n$ or alternatively
\[p_*q^*F^{[n]}(U^n)=\{s\in C^0_F(U^n)\mid s_{|\Delta_{ij}} \text{ is $(i\,\,j)$-invariant } \forall i,j\in\n,\, i\neq j\}\,.\]    
The differential 
\[d_0^{0,0}\colon E_0^{0,0}=\bigotimes_{i=1}^k C_{E_i}^0\to E_0^{1,0}=\bigoplus_{\ell=1}^k \left(C_{E_\ell}^1\otimes \bigotimes_{i\in[n]\setminus\{\ell\}} C_{E_i}^0\right)\,.\]
on the 1-level of the spectral sequence of theorem \ref{tenstaut} is given by the direct sum of the maps $d_{E_\ell}^0\otimes\id_{\otimes_{i\in [n]\setminus \{\ell\}} C_{E_i}^0}$. Since the $\sym_n$-linearization of $C^0_{E_1}\otimes \dots \otimes C_{E_k}^0$ is given factor-wise, this implies that every local section of $E_1^{0,0}=\ker(d_0^{0,0})$, and thus also of $p_*q^*(\otimes_i E_i^{[n]})\cong E_\infty^{0,0}\subset E_1^{0,0}$, has the  property that its restriction to $\Delta_{ij}$ is $(i\,\,j)$-invariant for every pair $1\le i<j\le n$.  
\end{remark}
\section{Extension groups of twisted tautological objects}\label{chapfour}
Throughout the rest of the text let $X$ be a smooth quasi-projective surface over the complex numbers $\C$, $n\ge 2$ a positive integer and $X^{[n]}$ the Hilbert scheme of $n$ points on $X$. We will use the  Bridgeland-King-Reid equivalence to compute the extension groups of certain objects $\E^\bullet,\F^\bullet\in \D^b(X^{[n]})$ by the formula (see corollary \ref{bkrext})
\[\Ext^i_{X^{[n]}}(\E^\bullet,\F^\bullet)\cong \sym_n\Ext^i_{X^n}(\Phi(\E^\bullet),\Phi(\F^\bullet))\,.\]
We can rewrite the right hand side as
\[
\begin{aligned}
\sym_n\Ext^i_{X^n}(\Phi(\E^\bullet),\Phi(\F^\bullet)))
&\cong R^i\glob(S^nX,[\pi_*R\sHom_{X^n}(\Phi(\Eb),\Phi(\Fb)]^{\sym_n})\\
&\cong \Hyp^i(S^nX,[\pi_*R\sHom_{X^n}(\Phi(\Eb),\Phi(\Eb)]^{\sym_n})\,.
\end{aligned}\]
The functors $\pi_*$ and $[\_]^{\sym_n}$ are indeed exact and hence need not be derived. We will first compute the inner expression $[\pi_*R\sHom_{X^n}(\Phi(\Eb),\Phi(\Eb))]^{\sym_n}$. We abbreviate the occurring bifunctor by 
\[\ihom(\_,\_):=[\pi_*\sHom_{X^n}(\_,\_)]^{\sym_n}\]
and set $\iext^i=R^i\ihom$.
Because of the exactness of $[\_]^{\sym_n}:=[\_]^{\sym_n}\circ \pi_*$ we have
\[R\ihom(\_,\_)\simeq [R\sHom_{X^n}(\_,\_)]^{\sym_n}\,.\]
\subsection{Computation of the $\ihom$s}
\begin{lemma}\label{normalhom}
 Let $X$ be a normal variety together with $U\subset M$ an open subvariety such that $\codim(X\setminus U,X)\ge 2$. Given two locally free sheaves $F$ and $G$ on $X$ and 
a subsheaf $E\subset F$ with $E_{|U}=F_{|U}$ the maps $a\colon\Hom(F,G)\to \Hom(E,G)$ and $\hat a\colon\sHom(F,G)\to \sHom(E,G)$, given by restricting the domain of the morphisms, are isomorphism.
\end{lemma}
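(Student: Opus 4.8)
The plan is to prove the statement for the sheaf-Hom map $\hat a$ first and then deduce the statement for the global map $a$ by passing to global sections, using the identifications $\Hom(F,G)\cong\glob(X,\sHom(F,G))$ and $\Hom(E,G)\cong\glob(X,\sHom(E,G))$, under which $a$ is obtained from $\hat a$ by applying $\glob(X,\_)$. So everything reduces to showing that $\hat a\colon \sHom(F,G)\to\sHom(E,G)$ is an isomorphism. Write $j\colon U\hookrightarrow X$ for the open inclusion. Since $X$ is irreducible and $U\neq\emptyset$ (its complement has codimension $\ge 2$), the open set $U$ is dense, and more generally $V\cap U$ is dense in $V$ for every nonempty open $V\subset X$.

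Two observations underlie the argument. First, because $E_{|U}=F_{|U}$, the restriction $\hat a_{|U}$ is the identity isomorphism $\sHom(F_{|U},G_{|U})=\sHom(E_{|U},G_{|U})$, so $\hat a$ is already an isomorphism over $U$. Second, since $F$ and $G$ are locally free, $\sHom(F,G)$ is locally free, and since $G$ is locally free we have $\sHom(E,G)\cong\sHom(E,\reg_X)\otimes G=E^\vee\otimes G$, which is torsion-free (the dual $E^\vee$ is reflexive, hence torsion-free, and tensoring with a locally free sheaf preserves this). From torsion-freeness and the density of $U$ I immediately get injectivity of $\hat a$: a local homomorphism $F\to G$ vanishing on $E$ vanishes on $U$, hence vanishes altogether because $\sHom(F,G)$ has no nonzero section supported on the complement.

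For surjectivity I would work with local sections. Given $\psi\in\sHom(E,G)(V)=\Hom(E_{|V},G_{|V})$ over an open $V\subset X$, I restrict it to $V\cap U$, where $E=F$, to obtain a section $\psi'\in\sHom(F,G)(V\cap U)$. The crucial input is the algebraic Hartogs extension property: since $X$ is normal, $\codim(X\setminus U,X)\ge 2$, and $\sHom(F,G)$ is locally free, the restriction $\sHom(F,G)(V)\to\sHom(F,G)(V\cap U)$ is bijective, so $\psi'$ extends uniquely to $\tilde\psi\in\Hom(F_{|V},G_{|V})$. Then $\hat a(\tilde\psi)=\tilde\psi_{|E}$ agrees with $\psi$ on the dense open $V\cap U$, and since $\sHom(E,G)$ is torsion-free the two agree on all of $V$; this exhibits $\psi$ as a section in the image of $\hat a$. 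Hence $\hat a$ is surjective and therefore an isomorphism, and applying $\glob(X,\_)$ gives that $a$ is an isomorphism as well. I expect the only nonformal point to be the Hartogs extension step, which is exactly where the normality of $X$ and the codimension hypothesis enter; everything else is routine manipulation with torsion-free sheaves and the density of $U$.
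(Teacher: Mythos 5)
Your proof is correct and follows essentially the same route as the paper: both arguments hinge on the fact that $E=F$ over $U$ together with the algebraic Hartogs property (unique extension of sections of locally free sheaves over a codimension-$\ge 2$ closed subset of a normal variety). The only cosmetic differences are that you apply the extension property to the locally free sheaf $\sHom(F,G)$ while the paper applies it to $G$ when constructing the inverse $b(\phi)(s)$ as the unique extension of $\phi(s_{|V\cap U})$, and that you deduce $a$ from $\hat a$ via global sections whereas the paper deduces $\hat a$ from $a$ via an affine covering.
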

\begin{proof}
It suffices to show that $a$ is an isomorphism. Since every open $V\subset X$ is again a normal variety with $V\setminus(U\cap V)$ of codimension at least 2, it follows by considering an open affine covering that 
$\hat a$ is also an isomorphism.   
We construct the inverse $b\colon\Hom(E,G)\to \Hom(F,G)$ of $a$. For a morphism $\phi\colon E\to G$ the morphism $b(\phi)$ sends $s\in F(V)$ to the unique section in $G(V)$ which restricts to $\phi(s_{|V\cap U})\in G(V\cap U)$.   
\end{proof}
We denote by $\mathbbm D$ the big diagonal in $X^n$, i.e.
\[\mathbbm D=\bigcup_{1\le i<j\le n} \Delta_{ij}=\{(x_1,\dots, x_n)\mid \#\{x_1,\dots,x_n\}< n\}\subset X^n\,,\]
and by $U=X^n\setminus \mathbbm D$ its open complement in $X^n$.
\begin{prop}\label{invhom}
 Let $k\in\N$ and $E_1,\dots,E_k,F\in \Coh(X)$ be locally free sheaves. Then there are  natural isomorphisms
\begin{enumerate}
 \item $\ihom(\Phi(E_1^{[n]}\otimes\dots\otimes E_k^{[n]}),\reg_{X^n})\simeq \ihom(C^0_{E_1}\otimes \dots\otimes C^0_{E_k},\reg_{X^n})$,
\item $\ihom(\Phi(E_1^{[n]}\otimes\dots\otimes E_k^{[n]}),\Phi(F^{[n]}))\simeq \ihom(C^0_{E_1}\otimes \dots\otimes C^0_{E_k},C^0_F)$.
\end{enumerate}
\end{prop}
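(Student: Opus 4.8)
The plan is to prove both parts with the same two moves—a \emph{normality} argument that replaces the source and an \emph{equivariance} argument that replaces the target—combined as needed. Write $A=\Phi(E_1^{[n]}\otimes\dots\otimes E_k^{[n]})$ and $A'=C^0_{E_1}\otimes\dots\otimes C^0_{E_k}$. By Theorem \ref{tenstaut} the object $A$ is a sheaf, and via the edge map of the spectral sequence it is a subsheaf $A\subset A'$. Since the partial diagonals $\Delta_I$ with $|I|\ge 2$ are disjoint from $U=X^n\setminus\mathbbm D$, all higher terms $C^p_{E_\ell}$ vanish over $U$, so $A|_U=A'|_U$. Likewise $B':=\Phi(F^{[n]})=\ker(d^0_F\colon C^0_F\to C^1_F)$ is a subsheaf of $B:=C^0_F$ with $B'|_U=B|_U$. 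Note that $A'$ and $B$ are locally free, that $X^n$ is normal, and that $\codim(\mathbbm D,X^n)=2$. All the objects in sight are sheaves and $\ihom$ is the non-derived functor, so it suffices to produce the stated isomorphisms at sheaf level.

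For (1) I would invoke Lemma \ref{normalhom} with the locally free sheaves $A'$ and $\reg_{X^n}$ and the subsheaf $A\subset A'$ (which agree on $U$): the restriction map $\sHom(A',\reg_{X^n})\to\sHom(A,\reg_{X^n})$ is an isomorphism, and it is $\sym_n$-equivariant by naturality. Applying the exact functor $[\pi_*(\_)]^{\sym_n}$ gives the claimed isomorphism. The very same lemma, now with locally free target $B=C^0_F$, yields a natural isomorphism $\ihom(A',C^0_F)\cong\ihom(A,C^0_F)$; this is the normality step for (2).

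It remains, for (2), to identify $\ihom(A,\Phi(F^{[n]}))$ with $\ihom(A,C^0_F)$. The key claim is that \emph{every} $\sym_n$-equivariant morphism $\phi\colon A\to C^0_F$ automatically factors through $B'=\ker d^0_F$. Indeed, by Remark \ref{koneinva} the subsheaf $B'$ consists exactly of those sections of $C^0_F$ whose restriction to each $\Delta_{ij}$ is $(i\,j)$-invariant, and by the final assertion of that remark every local section $a$ of $A$ already has $a|_{\Delta_{ij}}$ invariant under $(i\,j)$. Since $(i\,j)$ fixes $\Delta_{ij}$ pointwise, the equivariance of $\phi$ specializes on $\Delta_{ij}$ to the relation $\mu_{(i\,j)}\circ\phi=\phi\circ\lambda_{(i\,j)}$ between the residual actions $\lambda_{(i\,j)}$ on $A|_{\Delta_{ij}}$ and $\mu_{(i\,j)}$ on $C^0_F|_{\Delta_{ij}}$; hence $\phi$ carries $(i\,j)$-invariant sections to $(i\,j)$-invariant sections, so $\phi(a)|_{\Delta_{ij}}$ is $(i\,j)$-invariant for every $a$ and every pair $i\neq j$, i.e. $\phi(a)\in B'$. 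A section of $\ihom(A,C^0_F)$ over any $S^nU$ (these cover $S^nX$ by Lemma \ref{opencover}) is precisely such an equivariant $\phi$, so the inclusion $B'\hookrightarrow C^0_F$ induces a \emph{surjection} $\ihom(A,B')\to\ihom(A,C^0_F)$; it is injective because $B'\hookrightarrow C^0_F$ is a monomorphism and $[\pi_*(\_)]^{\sym_n}$ is exact. Composing this isomorphism with the normality step gives (2), and every map used is natural.

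The routine parts are the two appeals to Lemma \ref{normalhom} and the verification that $A|_U=A'|_U$. I expect the equivariance argument to carry the real content: one must see that it is the $\sym_n$-invariance of the Hom, rather than any local-freeness of the target, that forces an equivariant map into $C^0_F$ to land in $\ker d^0_F$. This is exactly where the diagonal-invariance of sections of tautological objects (Remark \ref{koneinva}) is used, and it is what makes (2) work even though $\Phi(F^{[n]})$ is not locally free and hence not directly accessible to Lemma \ref{normalhom}.
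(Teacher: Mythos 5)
Your proof is correct and follows essentially the same route as the paper: identify $\Phi(\otimes_i E_i^{[n]})$ with the subsheaf $E_\infty^{0,0}\subset C^0_{E_1}\otimes\dots\otimes C^0_{E_k}$ agreeing with it off the big diagonal, apply Lemma \ref{normalhom} (normality plus codimension $2$) to replace the source, and use Remark \ref{koneinva} to show every equivariant map into $C^0_F$ factors through $p_*q^*F^{[n]}=\ker d^0_F$. Your write-up merely supplies more detail than the paper on the factorization step (the paper simply cites Remark \ref{koneinva}), and the order in which you apply the two moves in part (2) is reversed, which is immaterial.
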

\begin{proof}
 By theorem \ref{tenstaut} we have $p_*q^*(E_1^{[n]}\otimes\dots\otimes E_k^{[n]})\simeq\Phi(E_1^{[n]}\otimes\dots\otimes E_k^{[n]})$. Moreover, $p_*q^*(\otimes_{i=1}^k E_i^{[n]})$ can be identified with the subsheaf $E_\infty^{0,0}$ of $E_0^{0,0}=C^0_{E_1}\otimes \dots\otimes C^0_{E_k}$. Since the terms $E_0^{p,q}$ of the spectral sequence are for $p\ge 1$ all supported on $\mathbbm D$, we have  
\[p_*q^*(E_1^{[n]}\otimes\dots\otimes E_k^{[n]})_{|U}=  (C^0_{E_1}\otimes \dots\otimes C^0_{E_k})_{|U}\,.\]
Since $X^n$ is normal and $\mathbbm D$ of codimension 2, lemma \ref{normalhom} yields the first isomorphism of the proposition (even before taking invariants). For the second one we consider an open subset $W\subset  S^nX$ and a $\sym_n$-equivariant morphism 
\[\phi\colon p_*q^*(E_1^{[n]}\otimes\dots\otimes E_k^{[n]})_{|\pi^{-1}W}\to C^0_{F|\pi^{-1}W}\,.\]
By remark \ref{koneinva} $\phi$ factorises over $p_*q^*F^{[n]}_{|\pi^{-1}W}$. This yields 
\begin{align*}\ihom( p_*q^*(E_1^{[n]}\otimes\dots\otimes E_k^{[n]}),p_*q^*F^{[n]})&\cong\ihom( p_*q^*(E_1^{[n]}\otimes\dots\otimes E_k^{[n]}),C^0_F)\\&\cong
 \ihom( C^0_{E_1}\otimes\dots\otimes C^0_{ E_k}, C^0_F)
\end{align*}
where for the second isomorphism we use again lemma \ref{normalhom}. 
\end{proof}
\subsection{Vanishing of the higher $\iext^i(\Phi(F^{[n]}),\reg_{X^n})$}
\begin{remark}\label{globadjoint}
 Let $H\le \sym_n$ be a subgroup, $E$ a $H$-sheaf and $F$ a $\sym_n$-sheaf on $X^n$. Since $X^n$ has a covering by open affine $\sym_n$-invariant subsets, namely by $U^n$ for $U\subset  X$ open and affine (lemma \ref{opencover}), the adoint property of the inflation functor globalises to a natural isomorphism 
\[[\sHom(\Inf_H^{\sym_n}E, F)]^{\sym_n}\cong [\sHom(E,F)]^H\,.\]
This also gives a formula for the derived functors $R\sHom$, namely \[[R\sHom(\Inf_H^{\sym_n}E^\bullet, F^\bullet)]^{\sym_n}\simeq [R\sHom(E^\bullet,F^\bullet)]^H\] for $E^\bullet\in \D^b_H(X^n)$ and $F^\bullet\in \D^b_{\sym_n}(X^n)$. Alternatively, we can apply lemma \ref{deriveddanila} with regard to the functor $\sHom(\_, F^\bullet)$ to get the same isomorphism.   
\end{remark}
\begin{lemma}
 Let $F$ be a locally free sheaf on $X$. Then $[(C^p_F)^\dv]^{\sym_n}\simeq 0$ for $p>0$.
\end{lemma}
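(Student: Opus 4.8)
The plan is to reduce the computation on $X^n$ to a single partial diagonal via the inflation description of $C^p_F$, dualize there using equivariant Grothendieck duality, and then observe that the surviving cohomology carries the sign character of $\sym_{I_0}$, so that its invariants vanish.

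First, fix $\ell=p+1\ge 2$ and $I_0=[\ell]$. By Remark \ref{Cinf} we have $C^p_F\cong_{\sym_n}\Inf_{\sym_{I_0}\times\overline{\sym_{I_0}}}^{\sym_n}F_{I_0}$. Since the canonical linearization of $\reg_{X^n}$ restricts to itself, the globalized inflation adjunction for $R\sHom$ (Remark \ref{globadjoint}) gives
\[[(C^p_F)^\dv]^{\sym_n}=[R\sHom(\Inf F_{I_0},\reg_{X^n})]^{\sym_n}\simeq[R\sHom(F_{I_0},\reg_{X^n})]^{\sym_{I_0}\times\overline{\sym_{I_0}}}\,.\]
Thus it suffices to show that the $\Stab(I_0)$-invariants of $(F_{I_0})^\dv$ vanish.

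Next I would compute $(F_{I_0})^\dv$ explicitly. By Remark \ref{FI} we may write $F_{I_0}=\iota_{I_0*}(\alt_{I_0}\otimes p_{I_0}^*F)$ as a $\Stab(I_0)$-sheaf, where $\alt_{I_0}\otimes p_{I_0}^*F$ is locally free on the smooth subvariety $\Delta_{I_0}$ of codimension $c=2(\ell-1)$. Applying equivariant Grothendieck duality (Proposition \ref{Gro}), together with $\iota_{I_0}^!\reg_{X^n}\simeq \wedge^c N_{I_0}[-c]$, yields an isomorphism of $\Stab(I_0)$-equivariant complexes
\[(F_{I_0})^\dv\simeq \iota_{I_0*}\bigl(\alt_{I_0}\otimes (p_{I_0}^*F)^\vee\otimes \wedge^c N_{I_0}\bigr)[-c]\,,\]
concentrated in the single degree $c=2p$. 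Now I would identify the $\sym_{I_0}$-action on this one cohomology sheaf factor by factor. Since $\sym_{I_0}$ acts trivially on $\Delta_{I_0}$ and $p_{I_0}$ is $\sym_{I_0}$-invariant, the pullback linearization on $p_{I_0}^*F$, hence on $(p_{I_0}^*F)^\vee$, is trivial (Remark \ref{induequi}). Because $\dim X=d=2$ is even and $c=d(\ell-1)$ is precisely the top exterior power, Lemma \ref{topwedge} shows that $\sym_{I_0}$ acts trivially on $\wedge^c N_{I_0}$. The only remaining factor is $\alt_{I_0}$, on which $\sym_{I_0}$ acts by $\sgn$.

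Hence the full $\sym_{I_0}$-action on $\mathcal H^c((F_{I_0})^\dv)$ is by the sign character, and since $\ell\ge 2$ the group $\sym_{I_0}$ is nontrivial, so $[\alt_{I_0}]^{\sym_{I_0}}=0$. As the functor of $\sym_{I_0}$-invariants is exact in characteristic zero and commutes with $\pi_*$ and with taking cohomology, it annihilates this sheaf; a fortiori the $\Stab(I_0)$-invariants vanish, and therefore $[(C^p_F)^\dv]^{\sym_n}\simeq 0$. The main obstacle is the careful bookkeeping of the $\sym_{I_0}$-linearization through the duality isomorphism: one must correctly combine the sign coming from $\alt_{I_0}$ with the action on $\det N_{I_0}$, and it is essential that we are in the even-dimensional case $d=2$, where Lemma \ref{topwedge} gives a trivial action on the top wedge. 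Had $d$ been odd, the top wedge would also contribute a sign, these two signs would cancel, and the invariants would in fact survive.
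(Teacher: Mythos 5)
Your proof is correct and follows essentially the same route as the paper: reduce via the inflation adjunction to the $\Stab(I_0)$-invariants of $F_{I_0}^\dv$, compute that dual by equivariant Grothendieck duality as $\iota_{I_0*}\bigl(\alt_{I_0}\otimes (p_{I_0}^*F)^\vee\otimes \wedge^{2p}N_{I_0}\bigr)[-2p]$, and observe that since $\sym_{I_0}$ acts trivially on the pullback factor and (by Lemma \ref{topwedge}, $d=2$ even) on the top wedge, the residual sign action of $\alt_{I_0}$ kills the invariants. Your closing remark about the odd-dimensional case is a correct and worthwhile observation, though not needed for the argument.
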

\begin{proof}
Using that $C^p_F\cong\Inf_{\sym_n}^{\sym_{[p+1]}\times \sym_{\overline {[p+1]}}}F_{[p+1]}$ (remark \ref{Cinf}) and the previous remark we get $[(C^p_F)^\dv]^{\sym_n}\simeq [F_{[p+1]}^\dv]^{\sym_{[p+1]}\times \sym_{\overline {[p+1]}}}$.
By remark \ref{FI}, the equivariant Grothendieck duality for regular closed embeddings (proposition \ref{Gro}), and the fact that $F$ is locally free, we have in $\D^b_{\sym_{[p+1]}\times \sym_{\overline {[p+1]}}}(X^n)$ isomorphisms  
\begin{align*} 
(F_{[p+1]})^\dv&\simeq R\sHom_{X^n}\left(\iota_{[p+1]*}(\alt\otimes p^*_{[p+1]} F),\reg_{X^n}\right)\\
&\simeq \iota_{[p+1]*}R\sHom_{\Delta_{[p+1]}}(\alt\otimes p^*_{[p+1]} F,\iota_{[p+1]}^!\reg_{X^n})\\
&\simeq \iota_{[p+1]*}\left(\alt\otimes (p^*_{[p+1]} F)^\vee \otimes (\wedge^{2p}\iota_{[p+1]}^*\I_{[p+1]})^\vee   \right)[-2p]\,.     
\end{align*}
The group $\sym_{[p+1]}$ acts trivially on $p^*_{[p+1]} F$ and on $\wedge^{2p}\iota_{[p+1]}^*\I_I$ (lemma \ref{topwedge}). Thus, it acts alternating on the whole $\alt\otimes (p^*_{[p+1]} F)^\vee \otimes (\wedge^{2p}\iota_{[p+1]}^*\I_I)^\vee$ which makes the $\sym_{[p+1]}$-invariants vanish. Clearly, this implies the vanishing of the 
$\sym_{[p+1]}\times \sym_{\overline {[p+1]}}$-invariants.     
\end{proof}
\begin{prop}\label{Rdual}
 For every locally free sheaf $F$  on $X$ there is  in $\D^b(S^nX)$ a natural isomorphism $[(\Phi(F^{[n]}))^\dv]^{\sym_n}\simeq [(C^0_F)^\vee]^{\sym_n}$. In particular, all $\iext^i(\Phi(F^{[n]}),\reg_{X^n})$ vanish for $i>0$.
\end{prop}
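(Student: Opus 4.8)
The plan is to combine the resolution $\Phi(F^{[n]})\simeq C^\bullet_F$ of Theorem~\ref{Sca} with the preceding lemma and the derived-bifunctor criterion of Proposition~\ref{bifun}. Since the functor $[\_]^{\sym_n}$ (which here absorbs the exact $\pi_*$) is exact, we have $[(\Phi(F^{[n]}))^\dv]^{\sym_n}\simeq R\ihom(\Phi(F^{[n]}),\reg_{X^n})\simeq R\ihom(C^\bullet_F,\reg_{X^n})$, so it suffices to identify the latter object. I would do this by applying Proposition~\ref{bifun} to the bifunctor $\ihom(\_,\_)$, with its first variable read in the opposite category exactly as for $R\sHom$, and with the complex $C^\bullet_F$ in the first slot and the single sheaf $\reg_{X^n}$ in the second.

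First I would verify the vanishing hypothesis of Proposition~\ref{bifun}, namely that $R^q\ihom(C^p_F,\reg_{X^n})=0$ for all $q\neq 0$ and all $p$ (the second variable contributes only $\reg_{X^n}$ in degree zero). For $p=0$ this is clear: $C^0_F=\bigoplus_i p_i^*F$ is locally free, so $\sExt^q(C^0_F,\reg_{X^n})=0$ for $q>0$, and exactness of $[\_]^{\sym_n}$ transports this. For $p>0$ the preceding lemma gives $[(C^p_F)^\dv]^{\sym_n}\simeq 0$, and since $[\_]^{\sym_n}$ is exact this yields $R^q\ihom(C^p_F,\reg_{X^n})=[\mathcal H^q((C^p_F)^\dv)]^{\sym_n}=0$ for \emph{every} $q$.

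With these vanishings, Proposition~\ref{bifun} gives $R\ihom(C^\bullet_F,\reg_{X^n})\simeq \ihom^\bullet(C^\bullet_F,\reg_{X^n})$, the complex obtained by applying $\ihom(\_,\reg_{X^n})$ termwise. In degree $p>0$ the corresponding term $\ihom(C^p_F,\reg_{X^n})=[\pi_*\sHom(C^p_F,\reg_{X^n})]^{\sym_n}$ already vanishes before taking invariants, because $C^p_F=\bigoplus_{|I|=p+1}F_I$ is a torsion sheaf supported on the partial diagonals while $\reg_{X^n}$ is torsion-free, whence $\sHom(C^p_F,\reg_{X^n})=0$. Thus the termwise complex is concentrated in degree $0$, where it equals $[(C^0_F)^\vee]^{\sym_n}$, which is the asserted natural isomorphism; naturality in $F$ is inherited from that of $\Phi(F^{[n]})\simeq C^\bullet_F$ recorded in Remark~\ref{natural}. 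The vanishing of $\iext^i(\Phi(F^{[n]}),\reg_{X^n})$ for $i>0$ then follows immediately, since the right-hand side is a genuine sheaf placed in degree zero.

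The real work for this statement is already done in the preceding lemma (via equivariant Grothendieck duality and Lemma~\ref{topwedge}, which force the $\sym_{[p+1]}$-action on the top exterior power of the conormal bundle to be alternating and so kill the invariants). Within the present proof the only point requiring care is legitimizing the contravariant form of Proposition~\ref{bifun} for the composite functor $\ihom=[\pi_*\sHom(\_,\_)]^{\sym_n}$; this is harmless, since both $\pi_*$ (finite, hence exact) and $[\_]^{\sym_n}$ (characteristic zero) are exact and therefore commute with the formation of cohomology sheaves and with the termwise evaluation used in the criterion.
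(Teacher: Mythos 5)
Your proof is correct and follows essentially the same route as the paper: replace $\Phi(F^{[n]})$ by $C^\bullet_F$, use local freeness of $C^0_F$ and the preceding lemma to see that every term is acyclic for $[(\_)^\vee]^{\sym_n}$, compute the derived functor termwise, and observe that the terms in positive degree vanish. The only (harmless) variation is that you justify the vanishing of $\sHom(C^p_F,\reg_{X^n})$ for $p>0$ by a torsion-versus-torsion-free argument before taking invariants, whereas the paper simply cites the preceding lemma a second time.
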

\begin{proof}
By theorem \ref{Sca} in $\D^b_{\sym_n}(X^n)$ there is the isomorphism $\Phi(F^{[n]})\simeq C^\bullet_F$. The $\sym_n$-sheaf $C^0_F$ is locally free, hence $[(\_)^\vee]^{\sym_n}$-acyclic. For $p>0$ the terms $C^p_F$ are also $[(\_)^\vee]^{\sym_n}$-acyclic by the previous lemma. Thus, $[(\Phi(F^{[n]})^\dv]^{\sym_n}$ can be computed by applying the non derived functor $[(\_)^\vee]^{\sym_n}$ to $C^\bullet_F$. 
Again by the previous lemma $[(C^p_F)^\vee]^{\sym_n}=0$ for $p>0$ and  the proposition follows. 
 \end{proof}
\subsection{Vanishing of the higher $\iext^i(\Phi(E^{[n]}),\Phi(F^{[n]}))$}
\begin{remark}\label{part}
For $K\subset M\subset \n$ we denote the closed embedding of the corresponding partial diagonals by $\iota_M^K\colon \Delta_M\to \Delta_K$.   
For an subset $I\subset \n$ with $|I|\ge2$ the codimension of the partial diagonal $\Delta_I=\{(x_1,\dots,x_n)\mid x_i=x_j\,\forall i,j\in I\}$ in $X^n$ is $2(|I|-1)$. For $|I|\le 1$ we set $\Delta_I:=X^n$. Let $I$ and $J$ be subsets of $\n$. If $I\cap J=\emptyset$ the partial diagonals $\Delta_I$ and $\Delta_J$ intersect properly in $X^n=\Delta_{I\cap J}$. In this case we denote the embeddings of $\Delta_I\cap \Delta_J$ into $\Delta_I$ respectively $\Delta_J$ by $\iota_{I\cup J}^I$ respectively $\iota_{I\cup J}^J$ although the intersection does not equal $\Delta_{I\cup J}$. If $I\cap J\neq \emptyset$ we have indeed $\Delta_I\cap \Delta_J=\Delta_{I\cup J}$. Furthermore
\begin{align*}\codim(\Delta_{I\cup J},\Delta_{I\cap J})=2|(I\cup J)\setminus (I\cap J)|&=2|I\setminus (I\cap J)|+ 2| J\setminus (I\cap J)|\\&=\codim(\Delta_{I},\Delta_{I\cap J})+\codim(\Delta_{J},\Delta_{I\cap J}) \,,\end{align*}
i.e. $\Delta_I$ and $\Delta_J$ again intersect properly in $\Delta_{I\cap J}$. In summary for $I,J\subset\n$ the diagram
\[\xymatrix{
            & \Delta_I \ar^{\iota_I}[dr]\ar^{\iota_I^{I\cap J}}[d]&  \\
   \Delta_I\cap \Delta_J \ar^{\iota_{I\cup J}^I}[ur]\ar_{\iota_{I\cup J}^J}[dr]   & \Delta_{I\cap J}\ar^{\iota_{I\cap J}}[r] &  X^n\,.    \\
        &   \Delta_J\ar^{\iota_J^{I\cap J}}[u]\ar_{\iota_J}[ur]    & 
} \] 
fulfils the assumptions of lemma \ref{mixedint}. This yields for every Cohen-Macauley sheaf $\F$ on $\Delta_J$ and any $q\in \Z$ the formula
\[L^{-q}\iota_I^*\iota_{J*}\F\cong \iota_{I\cup J*}^I\iota_{I\cup J}^{J*}\left(\F\otimes(\iota_J^{I\cap J*}\wedge^q N^\vee_{I\cap J})\right)\,.\]
\end{remark}
Let $E$ and $F$ be locally free sheaves on $X$. In order to compute the invariants of the higher extension sheaves we will use the spectral sequence $A$ associated to the functor $\ihom(\_,p_*q^*F^{[n]})$ 
given by (see e.g. \cite[Remark 2.67]{Huy})
\[A_1^{p,q}=[\sExt^q(C^{-p}_E,p_*q^*F^{[n]})]^{\sym_n}\Longrightarrow A^m= [\sExt^m(C^\bullet_E,p_*q^*F^{[n]})]^{\sym_n}\cong
[\sExt^m(\Phi(E^{[n]}),\Phi(F^{[n]})]^{\sym_n}\,.\]
The terms in the  $k$-th column of $A_1$ in turn are computed by the spectral sequence $B(k)$ associated to $\ihom(C^k_E,\_)$ and given by
\[B(k)_1^{p,q}= [\sExt^q(C^{k}_E,C^p_F)]^{\sym_n}\Longrightarrow B(k)^m= [\sExt^m(C^k_E,C^\bullet_F)]^{\sym_n}\simeq
[\sExt^m(C^k_E,p_*q^*F^{[n]})]^{\sym_n}\,.\]
Here as direct summands terms of the form $\sExt^q(E_I,F_J)$ occur. For $I,J\in \n$ with $\#I= c+1$ and $\#J=d+1$ these are $\sym_{I,J}$-equivariant sheaves where
\[\sym_{I,J}:=\Stab_{\sym_n}(I,J)=(\sym_I\times \sym_{\bar I})\cap (\sym_J\times \sym_{\bar J})= \sym_{I\setminus J}\times \sym_{J\setminus I}\times \sym_{I\cap J}\times \sym_{\overline {I\cup J}}\,.\]
In $\D^b_{\sym_{I,J}}(X^n)$ we have the isomorphisms
\begin{align*}
 R\sHom_{X^n}(E_I,F_J)&\simeq E_I^\dv\otimes^L F_J\\
&\overset{\ref{Gro}}\simeq \iota_{I*}\left(p_I^*E^\vee\otimes \alt_I\otimes (\wedge^{2c}\iota_I^*\I_I)^\vee\right)\otimes^L_{X^n} \iota_{J*}(p_J^* F\otimes 
\alt_J)[-2c]\\ 
&\overset{\text{PF}}\simeq \iota_{I_*}\left(p_I^*E^\vee\otimes \alt_I\otimes (\wedge^{2c}\iota_I^*\I_I)^\vee \otimes L\iota_I^*\iota_{J*}(p_J^* F\otimes 
\alt_J)\right)[-2c]\,.
\end{align*}
We see that $R\sHom_{X^n}(E_I,F_J)$ has no cohomology in degrees greater than $2c$. 
Taking the $(2c-q)$-th cohomology for $q\ge 0$ on both sides yields
\begin{align*}
&\sExt^{2c-q}(E_I,F_J)\\
&\overset{\text{lf}}\cong 
\iota_{I*}\left( p_I^*E^\vee\otimes \alt_I\otimes (\wedge^{2c}\iota_I^*\I_I)^\vee\otimes L^{-q}\iota_I^*\iota_{J*}(p_J^* F\otimes 
\alt_J)\right) \\
&\overset{\ref{part}}\cong \iota_{I*}\left((p_I^*E\otimes \alt_I\otimes (\wedge^{2c}\iota_{I}^*\I_{I}))^\vee\otimes \iota_{I\cup J*}^I
(\wedge^{q}N^\vee_{I\cap J|\Delta_{I}\cap\Delta_J}\otimes(p_J^* F\otimes 
\alt_J)_{|\Delta_{I}\cap\Delta_J})\right)\,.\\
&\overset{\text{PF}}\cong \iota_{I\cup J*}\underbrace{\left(\iota_{I\cup J}^{I*}(p_I^*E\otimes \alt_I\otimes (\wedge^{2c}\iota_{I}^*\I_{I}))^\vee\otimes
\wedge^{q}N^\vee_{I\cap J|\Delta_{I}\cap \Delta_J}\otimes(p_J^* F\otimes 
\alt_J)_{|\Delta_{I}\cap \Delta_J}\right)}\,.
\end{align*}
We abbriviate the underbraced $\sym_{I,J}$-sheaf on $\Delta_I\cap \Delta_J$ by $T^q_{I,J}$.
We have $\sExt^{2c-q}(E_I,F_J)=0$ for $q>\codim(\Delta_{I\cap J},X^n)=2(|I\cap J|-1)$.
\begin{lemma}\label{IJvanish}
Let $I,J\subset\n$ such that $\#(I\setminus J)\ge 2$ or $\#(J\setminus I)\ge 2$. Then 
\[[R\sHom_{X^n}(E_I,F_J)]^{\sym_{I,J}}\simeq 0\,.\]
\end{lemma}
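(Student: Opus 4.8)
The plan is to reduce the vanishing of the whole complex to the vanishing of the $\sym_{I,J}$-invariants of each of its cohomology sheaves, and then to exploit the sign character carried by the larger of the two ``free'' symmetric-group factors. Since we work over $\C$, taking invariants is exact, so $[R\sHom_{X^n}(E_I,F_J)]^{\sym_{I,J}}\simeq 0$ holds if and only if $[\sExt^{2c-q}(E_I,F_J)]^{\sym_{I,J}}=0$ for every $q$. By the computation carried out just before the lemma we have $\sExt^{2c-q}(E_I,F_J)\cong \iota_{I\cup J*}T^q_{I,J}$, and since $\iota_{I\cup J}$ is an equivariant closed embedding its exact pushforward commutes with taking invariants; hence it suffices to prove $[T^q_{I,J}]^{\sym_{I,J}}=0$ for all $q$.

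Assume first $\#(I\setminus J)\ge 2$; the case $\#(J\setminus I)\ge 2$ is entirely symmetric, interchanging the roles of the two sheaves and of the first and third tensor factors of $T^q_{I,J}$. Since $\sym_{I,J}=\sym_{I\setminus J}\times\sym_{J\setminus I}\times\sym_{I\cap J}\times\sym_{\overline{I\cup J}}$ is a direct product, the subgroup $\sym_{I\setminus J}$ is a direct factor, and $\sym_{I,J}$-invariants are contained in $\sym_{I\setminus J}$-invariants; thus it is enough to show $[T^q_{I,J}]^{\sym_{I\setminus J}}=0$. The key observation is that $\sym_{I\setminus J}$ acts trivially on the base $\Delta_I\cap\Delta_J$: on this locus all coordinates indexed by $I\setminus J$ are already equal, so permuting them fixes every point. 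Consequently the $\sym_{I\setminus J}$-linearization of $T^q_{I,J}$ is an honest representation of $\sym_{I\setminus J}$ on the sheaf, and I would determine it factor by factor.

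Concretely, I would check the three tensor factors of $T^q_{I,J}$ as follows. On $\iota_{I\cup J}^{I*}(p_I^*E\otimes\alt_I\otimes(\wedge^{2c}\iota_I^*\I_I))^\vee$ the subgroup $\sym_{I\setminus J}\subset\sym_I$ acts trivially on $p_I^*E^\vee$ (a pullback from $X$) and on $(\wedge^{2c}\iota_I^*\I_I)^\vee$ (trivially, since $d=2$, by Lemma~\ref{topwedge}), while it acts by the sign character on $\alt_I$ (Remark~\ref{FI}); so this factor transforms by $\sgn$. On $\wedge^qN^\vee_{I\cap J}$ the conormal directions involve only differences of the $I\cap J$-coordinates, which $\sym_{I\setminus J}$ leaves fixed (Lemma~\ref{regrep}), so the action is trivial. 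On $(p_J^*F\otimes\alt_J)_{|\Delta_I\cap\Delta_J}$ the indices $I\setminus J$ are disjoint from $J$, so $\sym_{I\setminus J}$ fixes both $p_J^*F$ and $\alt_J$ and again acts trivially. Hence $\sym_{I\setminus J}$ acts on all of $T^q_{I,J}$ through its sign character. Because $\#(I\setminus J)\ge 2$ the group $\sym_{I\setminus J}$ is nontrivial, so its sign representation has no nonzero invariants and $[T^q_{I,J}]^{\sym_{I\setminus J}}=0$ for every $q$, which yields the claim.

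The main obstacle is precisely the bookkeeping in the last paragraph: one must verify that, on each tensor factor, the restriction to $\Delta_I\cap\Delta_J$ combined with the chosen linearizations produces \emph{exactly} the sign character of $\sym_{I\setminus J}$, with no stray signs coming from reordering bases, and in particular that the normal-bundle factor is genuinely fixed. This is where Lemma~\ref{topwedge}, Remark~\ref{FI}, and the explicit description of the conormal bundle in Lemma~\ref{regrep} are essential; everything preceding it is the formal reduction via exactness of the invariants functor and the equivariance of $\iota_{I\cup J*}$.
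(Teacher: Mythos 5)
Your proposal is correct and follows essentially the same route as the paper: both reduce to the invariants of the sheaves $T^q_{I,J}$ on $\Delta_I\cap\Delta_J$ (where the $\sym_{I\setminus J}$-linearization is an honest action) and observe that $\sym_{I\setminus J}$ acts through the sign character via the factor $\alt_I$ and trivially on all other tensor factors, so the invariants vanish. The paper phrases the last step by picking a single transposition $\tau\in\sym_{I\setminus J}$ acting by $-1$, which is the same argument in slightly more economical form.
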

\begin{proof}
Since $\sExt^{2c-q}(E_I,F_J)$ is by the  computation above the push-forward of the sheaf $T^q_{I,J}$, which is defined on $\Delta_{I}\cap\Delta_J$, the $\sym_{I\setminus J}\times \sym_{J\setminus I}\times \sym_{I\cap J}$-linearization of it is just an ordinary group action. Assume that $\#(I\setminus J)\ge 2$ and choose a transposition 
$\tau\in \sym_{I\setminus J}$. Then $\tau$ acts by $-1$ on $\alt_I$ and trivially on all other tensor factors of $T^q_{I,J}$. Hence, it acts by $-1$ on the whole $T^q_{I,J}$ which makes the invariants vanish. The case $\#(J\setminus I)\ge 2$ is analogous.   
\end{proof}
\begin{remark}\label{oddvanish}
For $I\cap J\neq \emptyset$ the $\sym_{I\cap J}$-action on $T^q_{I,J}$ is given by the $\sym_{I\cap J}$-action on the factor $\wedge^{q}N^\vee_{I\cap J|\Delta_{I\cup J}}$ since $\sym_{I\cap J}$ acts alternating on two of the other tensor factors of $T^q_{I,J}$, namely on $\alt_I$ and $\alt_J$, and trivially on the remaining three. Thus, by lemma \ref{tensinv} the invariants are given by     
\[[T^q_{I,J}]^{\sym_{I\cap J}}=\left(p_I^*E^\vee\otimes (\wedge^{2c}N_I)\otimes
p_J^* F \right)_{|\Delta_{I\cup J}}\otimes [\wedge^{q}N^\vee_{I\cap J|\Delta_{I\cup J}}]^{\sym_{I \cap J}}\,.\]
In particular, by the lemmas \ref{regrep} and \ref{reginv} the invariants vanish for $q$ odd.
\end{remark}
\begin{remark}
Let $k,p\in[n]$. We set $\mathcal P_p= \{J\subset\n\mid \#J=p\}$. The orbits of 
$\mathcal P_p$ under the action of $\sym_{[k]}\times \sym_{\overline{[k]}}$ are exactly the sets $_k\mathcal P_p^i=\{J\in \mathcal P_p\mid \#(J\cap [k])=i\}$ for $i=0,\dots,\min\{k,p\}$. A canonical choice of representatives of the orbits is
\[_kI_p^i:=\{1,\dots,i,k+1,\dots,k+p-i\}=[i]\cup[k+1,k+p-i]\in {_k\mathcal P}_p^i\,.\]
The stabilizer of $_kI_p^i$ is given by $\sym_{[k],_kI_p^i}$ (see above). 
Similarly, a system of representatives of the orbits of $\mathcal P_p$ under the $\sym_{[k]}$-action is given by all the sets of the form $[i]\cup M$ with $i=0,\dots, \min\{k,p\}$ and $M\subset [k+1,n]$.     
\end{remark}
\begin{lemma}
Let $E$, $F$ be locally free sheaves and $B$ the spectral sequence described above. For $k=0,\dots, n-1$ the only non-vanishing term on the 2-sheet of $B(k)$  is $B(k)^{k,0}_2$.
\end{lemma}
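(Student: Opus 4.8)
The plan is to write down the $E_1$-page of $B(k)$ completely, show that only a thin band of entries survives, and then prove that the horizontal differential $d_1$ kills everything except the corner $(k,0)$. The first step is a reduction to one orbit representative per column. Since $C^k_E\cong \Inf_{\sym_{[k+1]}\times\overline{\sym_{[k+1]}}}^{\sym_n}E_{[k+1]}$ by remark \ref{Cinf}, the adjunction of remark \ref{globadjoint} identifies $B(k)_1^{p,q}=[\sExt^q(C^k_E,C^p_F)]^{\sym_n}$ with $[\sExt^q(E_{[k+1]},C^p_F)]^{\sym_{[k+1]}\times\overline{\sym_{[k+1]}}}$. Writing $C^p_F=\bigoplus_{\#J=p+1}F_J$ and applying Danila's lemma in the non-transitive form (remark \ref{notrans}) to the action of $\sym_{[k+1]}\times\overline{\sym_{[k+1]}}$ on the index set, this becomes $\bigoplus_i [\sExt^q(E_{[k+1]},F_{J_i})]^{\sym_{[k+1],J_i}}$, a sum over the orbit representatives $J_i$ sorted by $i=\#(J\cap[k+1])$.

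Next I would read off which summands are nonzero. Setting $I=[k+1]$ (so $c=k$), the computation preceding lemma \ref{IJvanish} gives $\sExt^{2k-r}(E_I,F_J)\cong \iota_{I\cup J*}T^r_{I,J}$ for $0\le r\le 2(i-1)$, where $i=\#(I\cap J)$. Three facts then prune the page: lemma \ref{IJvanish} annihilates every term with $\#(I\setminus J)\ge2$ or $\#(J\setminus I)\ge2$; remark \ref{oddvanish} annihilates the invariants whenever $r$, equivalently $q$, is odd; and the range $0\le r\le 2(i-1)$ forces $i\ge k-\tfrac q2+1$. A short combinatorial check with these inequalities shows that nonzero entries occur only in the three columns $p\in\{k-1,k,k+1\}$ and in the even rows $q=0,2,\dots,2k$, with the pattern: row $q=0$ carries only the entries at $p=k$ (from $I=J$) and $p=k+1$ (from $I\subset J$); each even row $2\le q\le 2k$ carries entries at $p=k-1$ (from $J\subset I$), at $p=k$ (two summands, from $I=J$ and from $\#(I\cap J)=k$), and at $p=k+1$ (from $I\subset J$).

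The third step is to compute $d_1$, which is induced by the restriction maps $F_J\to F_{J\cup\{j\}}$ defining the differential of $C^\bullet_F$. Under the identifications of remark \ref{oddvanish}, each surviving summand is, up to a fixed locally free twist by $(p_I^*E^\vee\otimes\wedge^{2k}N_I\otimes p_J^*F)_{|\Delta_{I\cup J}}$, the rank-one invariant sheaf $[\wedge^r N^\vee_{I\cap J}]^{\sym_{I\cap J}}$ on $\Delta_{I\cup J}$ (the corollary to lemma \ref{reginv}). Lemma \ref{induced} identifies the map that a restriction $F_J\to F_{J'}$ induces on these invariants with the one coming from the natural inclusion of vanishing ideals of the partial diagonals, and after passing to invariants the differential is the Danila-averaged map of remark \ref{morphismdanila}; lemma \ref{diaginduced} then says that precisely such an averaged map between consecutive conormal-power invariants is an isomorphism. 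Using these isomorphisms I would show that each even row $2\le q\le 2k$ is an exact three-term complex, hence contributes nothing to $E_2$, while in row $q=0$ the map $d_1\colon B(k)_1^{k,0}\to B(k)_1^{k+1,0}$ is surjective (a restriction-surjectivity of the type in lemma \ref{invsur}) with kernel the surviving term $B(k)^{k,0}_2$.

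The hard part will be this last step: the several summands meeting in the column $p=k$ must be combined with the correct signs $\eps_{i,J}$ and Danila averaging coefficients so that the isomorphisms of lemma \ref{diaginduced} cancel in pairs and leave the rows genuinely exact; keeping track of the bookkeeping over which pairs $(I,J)$ contribute to each arrow is the real obstacle. I would treat the degenerate case $k=0$ separately, since there only the row $q=0$ occurs: using $p_*q^*F^{[n]}=\ker(d^0_F)$ from remark \ref{koneinva} together with lemma \ref{normalhom}, the complex $[\sHom(C^0_E,C^\bullet_F)]^{\sym_n}$ can be handled directly and its cohomology shown to be concentrated at $p=0$.
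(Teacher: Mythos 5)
Your proposal follows essentially the same route as the paper's proof: the same reduction via inflation and Danila's lemma to orbit representatives, the same pruning by lemma \ref{IJvanish} and remark \ref{oddvanish} to a three-column band in even rows, and the same analysis of $d_1$ via lemmas \ref{induced}, \ref{diaginduced}, remark \ref{morphismdanila} and lemma \ref{invsur}, with exactness of the middle rows and surjectivity onto the $(k+1,0)$-entry. The only cosmetic difference is your separate treatment of $k=0$, which the paper's uniform argument already covers, since for $k=0$ the rows with $\sExt^{q}$, $q\ge 2$, are empty and lemma \ref{invsur} (stated for $|I|\ge 1$) handles the remaining row.
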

\begin{proof}
Using remark \ref{globadjoint} and lemma \ref{deriveddanila} together with remark \ref{notrans} yields 
\begin{align*}
B(k)^{p,2k-q}_1=[\sExt^{2k-q}(C^k_E,C^p_F)]^{\sym_n}&\cong [\sExt^{2k-q}(\Inf_{\sym_{[k+1]}\times \overline{\sym_{[k+1]}}}^{\sym_n} E_{[k+1]},C^p_F)]^{\sym_n}\\
&\cong [\sExt^{2k-q}(E_{[k+1]}, C^p_F)]^{\sym_{[k+1]}\times \overline {\sym_{[k+1]}}}\tag 1\\
&\cong [\sExt^{2k-q}(E_{[k+1]},\bigoplus_{I\in \mathcal P_{p+1}} F_I)]^{\sym_{[k+1]}\times \overline {\sym_{[k+1]}}}\\
&\cong \bigoplus_{i=0}^{\min\{k,p\}}[\iota_{[k+1]\cup {_kI_p^i}*}T^q_{[k+1],_kI_p^i}]^{\sym_{[k+1],{_kI_p^i}}}\tag 2\,.
\end{align*}
By lemma \ref{IJvanish} we see that $B(k)^{p,2k-q}_1$ vanishes whenever $p\notin\{ k-1,k,k+1\}$. Furthermore, $B(k)^{p,2k-q}_1$ vanishes whenever $q$ is odd by remark \ref{oddvanish}. Thus, the only non-trivial terms on the 1-level of $B(k)$ are organised in the short sequences
\[ 0\to B(k)^{k-1,2k-q}_1\to B(k)^{k,2k-q}_1\to B(k)^{k+1,2k-q}_1\to 0\]
for $q\in[2k]$ even.  We first will show that these sequences are exact for $q<2k$, i.e. for $2k-q\ge 2$. By (1) they are isomorphic to the $(\sym_{k+1}\times\overline{\sym_{k+1}})$-invariants of the sequences
\begin{align*}0\to \sExt^{2k-q}(E_{[k+1]}, C^{k-1}_F)\to \sExt^{2k-q}(E_{[k+1]}, C^{k}_F)\to \sExt^{2k-q}(E_{[k+1]}, C^{k+1}_F)\to 0 \tag 3\,.\end{align*}
All sheaves in this sequences are push-forwards of sheaves on $\Delta_{[k+1]}$ so the $\sym_{k+1}$-linearization on them reduces to a $\sym_{k+1}$-action. We show that the sequence is already exact after taking the $\sym_{k+1}$-invariants. 
By (2) and lemma \ref{IJvanish} the $\sym_{k+1}$-invariants of the sequence (3) are given by the sequence 
\begin{align*}[T^q_{[k+1],[k]}]^{\sym_{k}}\overset \phi\to [T^q_{[k+1],[k+1]}]^{\sym_{k+1}}\oplus\bigoplus_{i=k+2}^n [T^q_{[k+1],[k]\cup \{i\}}]^{\sym_{k}}
\overset \psi\to \bigoplus_{i=k+2}^n[T^q_{[k+1],[k+1]\cup\{i\}}]^{\sym_{k+1}}\tag 4\end{align*}
where we left out the push-forwards along the closed embeddings in the notation.
We denote the components of $\phi$ and $\psi$ by 
\begin{align*}\phi^0\colon [T^q_{[k+1],[k]}]^{\sym_{k}}\to [T^q_{[k+1],[k+1]}]^{\sym_{k+1}}\,&,\,\phi^i\colon [T^q_{[k+1],[k]}]^{\sym_{k}}\to [T^q_{[k+1],[k]\cup\{i\}}]^{\sym_{k}}\\ \psi^j_0\colon [T^q_{[k+1],[k+1]}]^{\sym_{k+1}}\to [T^q_{[k+1],[k+1]\cup\{j\}}]^{\sym_{k+1}}\,&,\, 
 \psi^j_i\colon [T^q_{[k+1],[k]\cup\{i\}}]^{\sym_{k}}\to [T^q_{[k+1],[k+1]\cup\{j\}}]^{\sym_{k+1}}\,.
\end{align*}
The direct summands occurring in (4) are of the form $[T^{q}_{I,J}]^{\sym_{I\cap J}}$ with $1\in I\cap J$.
Thus, by remark \ref{oddvanish} they are given by
\[[\iota_{I\cup J*}T^{q}_{I,J}]^{\sym_{I\cap J}}=\iota_{I\cup J*}\left((p_1^*E^\vee\otimes (\wedge^{2c}N_I)\otimes p_1^*F)_{|\Delta_{I\cup J}}\otimes(\wedge^q\iota_{I\cup J}^*\I_{I\cap J})^{\sym_{I\cap J}}\right)\,.\]
 The differentials in $B(k)_1$ are induced by the differentials of the complex $C^\bullet_F$ whose components 
\[\pi_{I,i}\colon F_I\cong \iota_ {I*}\reg_{\Delta_I}\otimes \pr_{\min I}^*F\to \iota_{I\cup\{i\}*}\reg_{\Delta_{I\cup \{i\}}}\otimes \pr_{\min I}^*F\cong F_{I\cup\{i\}}\] are given by the natural surjections $\iota_ {I*}\reg_{\Delta_I}\to \iota_ {I\cup\{i\}*}\reg_{\Delta_{I\cup\{i\}}}$ up to the sign $\eps_{i,I\cup\{i\}}$. Thus, by \ref{induced} the differentials in the sequence (4) coincide up to the sign $\eps_{i,I\cup\{i\}}$ with the maps induced by the inclusion 
$\I_{J\cap I}\subset \I_{J\cap (I\cup \{i\})}$ on the factor $(\wedge^q\iota_{I\cup J}^*\I_{I\cap J})^{\sym_{I\cap J}}$.
A system of representatives of $\sym_{k+1}/\Stab_{\sym_{k+1}}([k])$ (see lemma \ref{repres}) is given by 
the $\sigma_\ell$ of lemma \ref{diaginduced}. Thus, by Danila's lemma for morphisms (remark \ref{morphismdanila}) the map $\phi^0$ coincides with the morphism $T$ from lemma \ref{diaginduced} tenzorized with the identity on $(p_1^*E^\vee\otimes \wedge^{2k}N_{[k+1]} \otimes p_1^*F)$. 
Thus, $\phi^0$ is an isomorphism.
This implies that $\phi$ is injective. Note that the $\psi_i^j$ are zero if $i\neq 0$ and $i\neq j$. The morphisms $\psi_j^j$ are isomorphisms by the same reason as $\phi^0$ is. Thus, $\psi$ is surjective. Moreover, we see that a local section in the kernel of $\psi$ is determined by its component in $T^q_{[k+1],[k+1]}$. On the other hand, for every given local section $s$ of $T^q_{[k+1],[k+1]}$ there is a section in the image of $\phi$ whose component in $T^q_{[k+1],[k+1]}$ equals $s$ because of $\phi^0$ being an isomorphism. Since the rows in $B(k)_1$ are complexes, this already shows that $\im(\phi)=\ker(\psi)$.        
So by now we have seen that all $\ell$-th rows with $\ell> 0$ are exact on the 1-level what implies the vanishing of $B(k)^{p,\ell}_2$ for all $\ell>0$ and all $p$.
For $\ell=2k-q=0$, i.e. $q=2k$, the $(\sym_{k+1}\times\overline {\sym_{k+1}})$-invariants of the sequence (3) reduce by (2) and remark \ref{IJvanish} to the two term complex 
\begin{align*}0\to [\sHom(E_{[k+1]},F_{[k+1]})]^{\sym_{[k+1]}\times \sym_{[k+2,n]}}\overset\psi\to [\sHom(E_{[k+1]},F_{[k+2]})]^{\sym_{[k+1]}\times \sym_{[k+3,n]}}\to 0\tag 5\,.\end{align*}
The fact that the other terms vanish can be seen either directly by looking at the description of $T^{2k}_{I,J}$ or using the fact that for two sheaves 
$\E,\F$ on a variety which are push-forwards of locally free sheaves along closed immersions $\sHom(\E,\F)$ is non-trivial only if $\supp \E\supset \supp \F$.    
The first term of (5) is naturally isomorphic to $\sHom(E,F)_{[k+1]}^{\sym_{[k+1]}\times \sym_{[k+2,n]}}$ and the second one to $\sHom(E,F)_{[k+2]}^{\sym_{[k+1]}\times \sym_{[k+3,n]}}$. Under these identifications the morphism $\psi$ is just given by restricting local sections to $\Delta_{[k+2]}$. Thus, by lemma \ref{invsur} it is surjective which makes $B(k)_2^{0,k+1}$ vanish. 
The map $\psi$ is not injective because the support of its domain is larger than the support of its codomain. So indeed, $B(k)^{k,0}_2$ is the only non-trivial term on the 2-level.
\end{proof}
\begin{cor}
Let $E$, $F$ be locally free sheaves on $X$ and $k=0\dots,n-1$. Then the object $[R\sHom(C^k_E,p_*q^*F^{[n]})]^{\sym_n}$ is cohomologically concentrated in degree $k$, i.e. for $m\neq k$ \[ [\sExt^m(C^k_E,p_*q^*F^{[n]})]^{\sym_n}=0\,.\]  
\end{cor}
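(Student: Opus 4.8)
The plan is to deduce the corollary formally from the spectral sequence $B(k)$ together with the lemma just established. Recall that $B(k)$ is the spectral sequence obtained by applying $\ihom(C^k_E,\_)$ to the complex $C^\bullet_F$, namely
\[B(k)_1^{p,q}=[\sExt^q(C^k_E,C^p_F)]^{\sym_n}\Longrightarrow [\sExt^{p+q}(C^k_E,C^\bullet_F)]^{\sym_n}\,,\]
and, by Theorem \ref{Sca}, the complex $C^\bullet_F$ is a right resolution of $p_*q^*F^{[n]}$, so the abutment in total degree $m$ is precisely $[\sExt^m(C^k_E,p_*q^*F^{[n]})]^{\sym_n}$, the object whose concentration in degree $k$ we wish to prove.

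First I would invoke the preceding lemma, which states that on the $2$-sheet of $B(k)$ the single surviving entry is $B(k)^{k,0}_2$. Since this entry lies in total degree $p+q=k+0=k$, the abutment can receive a non-zero contribution only in degree $k$. The remaining point is that a spectral sequence whose $E_2$-page has exactly one non-vanishing entry automatically degenerates there: for every $r\ge 2$ the differential $d_r$ into or out of any position has either zero source or zero target, so $B(k)_2=B(k)_\infty$. Consequently the filtration on $[\sExt^m(C^k_E,p_*q^*F^{[n]})]^{\sym_n}$ has trivial associated graded for $m\neq k$, which forces the group itself to vanish, while in degree $k$ it equals $B(k)^{k,0}_2$.

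The hard part is not in this corollary at all but in the preceding lemma, where the rows of $B(k)_1$ were collapsed to the single surviving term by means of Danila's lemma for morphisms (Remark \ref{morphismdanila}), the isomorphism $T$ of Lemma \ref{diaginduced}, and the surjectivity statement of Lemma \ref{invsur}. Given that analysis, the corollary is nothing more than the convergence statement of the spectral sequence, and I would present it in exactly those terms.
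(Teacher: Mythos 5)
Your argument is correct and is exactly the paper's proof: the lemma gives a single surviving entry $B(k)^{k,0}_2$ in total degree $k$, the spectral sequence therefore degenerates at the $2$-level, and the abutment $[\sExt^m(C^k_E,p_*q^*F^{[n]})]^{\sym_n}$ vanishes for $m\neq k$. No differences worth noting.
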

\begin{proof}
 The above lemma implies in particular that $B(k)$ degenerates at the 2-level. Thus, $B(k)^{p,q}_2=B(k)^{p,q}_\infty$ for all $p,q\in\Z$ and the only non-trivial term on the $\infty$-level is $B(k)^{k,0}_\infty$. Hence whenever $m\neq k$ we have 
\[0=B(k)^m=[\sExt^m(C^k_E,p_*q^*F^{[n]})]^{\sym_n}\,.\]
\end{proof}
\begin{prop}\label{RHom}
 Let $E$ and $F$ be locally free sheaves on $X$. Then $[R\sHom(\Phi(E),\Phi(F))]^{\sym_n}$ is cohomogically concentrated in degree zero and there is a natural isomorphism
\[[R\sHom(\Phi(E^{[n]}),\Phi(F^{[n]}))]^{\sym_n}\simeq [\sHom(C^0_E,C^0_F)]^{\sym_n}\,.\]
\end{prop}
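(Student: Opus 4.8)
The plan is to reduce everything to the right resolution $C^\bullet_E$ supplied by Theorem \ref{Sca}, and then to split the assertion into its two independent halves: cohomological concentration in degree zero, and the identification of the surviving zeroth cohomology. The vanishing in nonzero degrees will come from the spectral sequence $A$ together with the preceding corollary, whereas the degree-zero term will be read off directly from Proposition \ref{invhom}(2) rather than from the spectral sequence.

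First I would establish the concentration. By Theorem \ref{Sca} both $\Phi(E^{[n]})$ and $\Phi(F^{[n]})$ are honest sheaves, isomorphic to $p_*q^*E^{[n]}$ and $p_*q^*F^{[n]}$, and $C^\bullet_E$ is a right resolution of the former. Feeding this resolution into the spectral sequence $A$ gives $A_1^{p,q}=\iext^q(C^{-p}_E,p_*q^*F^{[n]})=[\sExt^q(C^{-p}_E,p_*q^*F^{[n]})]^{\sym_n}$ converging to $A^m\cong\iext^m(\Phi(E^{[n]}),\Phi(F^{[n]}))$. The decisive input is the preceding corollary: for each $k$ the column $[R\sHom(C^k_E,p_*q^*F^{[n]})]^{\sym_n}$ is concentrated in degree $k$, so setting $k=-p$ forces $A_1^{p,q}=0$ unless $q=-p$. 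Hence the entire $E_1$-page lives on the antidiagonal $p+q=0$, every nonzero entry contributes only to total degree zero, and therefore $A^m=0$ for all $m\neq 0$. This already proves that $[R\sHom(\Phi(E^{[n]}),\Phi(F^{[n]}))]^{\sym_n}$ is cohomologically concentrated in degree zero.

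Next I would identify the surviving cohomology. Since $[\_]^{\sym_n}$ is exact and $\Phi(E^{[n]}),\Phi(F^{[n]})$ are sheaves, the zeroth cohomology sheaf $\mathcal H^0$ of $[R\sHom(\Phi(E^{[n]}),\Phi(F^{[n]}))]^{\sym_n}$ is simply $[\sHom(\Phi(E^{[n]}),\Phi(F^{[n]}))]^{\sym_n}=\ihom(\Phi(E^{[n]}),\Phi(F^{[n]}))$. Applying Proposition \ref{invhom}(2) in the one-factor case rewrites this as $\ihom(C^0_E,C^0_F)=[\sHom(C^0_E,C^0_F)]^{\sym_n}$. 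Combining the two halves, the complex is concentrated in degree zero with the prescribed $\mathcal H^0$, which yields the asserted isomorphism in $\D^b(S^nX)$.

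The point I would be most careful about — and the only real conceptual subtlety, since the heavy lifting was already done in the preceding lemma and corollary — is that one must \emph{not} try to read the degree-zero term off the spectral sequence as $A_\infty^{0,0}$. Although $A$ is concentrated on the antidiagonal, total degree zero receives a contribution from \emph{every} antidiagonal entry $A_\infty^{-k,k}=\iext^k(C^k_E,p_*q^*F^{[n]})$, and these higher terms with $k\ge 1$ are genuinely nonzero (they are exactly the kernels $\ker\psi$ produced in the preceding lemma). Thus the filtration on $A^0$ has several nonzero graded pieces and $A^0$ is strictly larger than $A_\infty^{0,0}$. This is precisely why the degree-zero identification has to be obtained independently, via Proposition \ref{invhom}(2), while the role of the spectral sequence $A$ is confined to forcing the vanishing in all nonzero degrees.
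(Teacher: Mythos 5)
Your proof is correct and follows essentially the same route as the paper: the spectral sequence $A$ together with the preceding corollary forces $A^m=0$ for $m\neq 0$, and the surviving degree-zero term is then identified via Proposition \ref{invhom}(2) rather than read off the spectral sequence. Your closing caveat --- that $A^0$ is strictly larger than $A_\infty^{0,0}$ because the nonzero antidiagonal entries $A_\infty^{-k,k}$ for $k\ge 1$ (the kernels $\ker\psi$ from the preceding lemma) all contribute to total degree zero --- is an accurate account of why the identification must come from Proposition \ref{invhom} and matches how the paper's argument is structured.
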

\begin{proof}
As mentioned above we use the spectral sequence 
\[A_1^{p,q}=[\sExt^q(C^{-p}_E,p_*q^*F^{[n]})]^{\sym_n}\Longrightarrow A^m= [\sExt^m(C^\bullet_E,p_*q^*F^{[n]})]^{\sym_n}\simeq
[\sExt^m(\Phi(E^{[n]}),\Phi(F^{[n]})]^{\sym_n}\,.\]
By the above corollary the 1-sheet of $A$  is concentrated on the diagonal $p+q=0$. Thus $A^m=0$ for $m\ne 0$. This yields
\[[R\sHom(\Phi(E),\Phi(F))]^{\sym_n}\simeq [\sHom(p_*q^*(E^{[n]}),p_*q^*(F^{[n]}))]^{\sym_n}  \overset{\ref{invhom}}\simeq [\sHom(C^0_E,C^0_F)]^{\sym_n}\,.\]  
\end{proof}
\subsection{From tautological bundles to tautological objects}
\begin{prop}\label{RHomobj}
 Let $E^\bullet, F^\bullet\in\D^b(X)$. Then there are natural isomorphisms
\begin{align*}
 \left[R\sHom(\Phi((E^\bullet)^{[n]}),\Phi((F^\bullet)^{[n]})\right]^{\sym_n}&\simeq \left[R\sHom(C^0_{E^\bullet},C^0_{F\bullet}\right]^{\sym_n}\,,\\
\left[R\sHom(\Phi((E^\bullet)^{[n]}), \reg_{X^n})\right]^{\sym_n}&\simeq \left[R\sHom(C^0_{E^\bullet}, \reg_{X^n})\right]^{\sym_n}\,.\\
\end{align*}
\end{prop}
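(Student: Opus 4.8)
The plan is to reduce the statement for arbitrary objects to the locally free case, which is already settled by Proposition \ref{RHom} and Proposition \ref{Rdual}, using locally free resolutions on $X$ together with the derived-bifunctor formalism of Proposition \ref{bifun}. First I would choose bounded complexes of locally free sheaves $\Eb \simeq E^\bullet$ and $\Fb \simeq F^\bullet$ on $X$; these exist because $X$ is smooth and quasi-projective, hence has enough locally free sheaves and finite global dimension. Since $\Phi((\_)^{[n]}) \simeq C^\bullet$ is an exact functor $\D^b(X) \to \D^b_{\sym_n}(X^n)$ (Remark \ref{CF}, Remark \ref{natural}) and $[\_]^{\sym_n}$ and $R\sHom$ descend to the derived categories, both sides of the asserted isomorphisms may be computed from $\Eb$ and $\Fb$. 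Crucially, each term $C^0_{\mathcal E^j} = \bigoplus_i \pr_i^* \mathcal E^j$ of $C^0_{\Eb}$ is locally free, and $C^0$ is exact so $C^0_{E^\bullet} \simeq C^0_{\Eb}$; hence on the right-hand sides the derived functors are computed without deriving, $R\sHom(C^0_{E^\bullet}, C^0_{F^\bullet}) \simeq \sHom(C^0_{\Eb}, C^0_{\Fb})$ and $R\sHom(C^0_{E^\bullet}, \reg_{X^n}) \simeq (C^0_{\Eb})^\vee$.

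For the first isomorphism I would view $(E,F) \mapsto \ihom(C^0_E, C^0_F)$ as a bifunctor on $\Coh(X)^\circ \times \Coh(X)$ that is left exact in each variable (contravariantly in the first), and observe that Proposition \ref{RHom} says precisely that on a pair of \emph{locally free} sheaves its value already computes $R\ihom(\Phi(E^{[n]}),\Phi(F^{[n]}))$ and is concentrated in degree zero. In other words, locally free sheaves on $X$ form an acyclic (adapted) class for the composite derived bifunctor $R\ihom(\Phi((\_)^{[n]}),\Phi((\_)^{[n]}))$. Proposition \ref{bifun}, in its evident contravariant two-variable form, then lets me compute on the resolutions $\Eb$, $\Fb$ termwise: writing $C^\bullet_{\Eb} = \tot(C^i_{\mathcal E^j})$ and $C^\bullet_{\Fb} = \tot(C^{i'}_{\mathcal F^{j'}})$, the associated double complex gives a spectral sequence whose first sheet consists of the objects $R\ihom(\Phi((\mathcal E^a)^{[n]}),\Phi((\mathcal F^b)^{[n]}))$; by Proposition \ref{RHom} these are concentrated in a single degree and equal $\ihom(C^0_{\mathcal E^a}, C^0_{\mathcal F^b})$, so the spectral sequence degenerates and the abutment is exactly the total complex $[\sHom(C^0_{\Eb}, C^0_{\Fb})]^{\sym_n}$. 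This yields the first claimed isomorphism $[R\sHom(\Phi((\Eb)^{[n]}),\Phi((\Fb)^{[n]}))]^{\sym_n} \simeq [\sHom(C^0_{\Eb}, C^0_{\Fb})]^{\sym_n}$.

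For the second isomorphism I would run the identical argument with the second variable replaced by the constant object $\reg_{X^n}$ and with Proposition \ref{RHom} replaced by Proposition \ref{Rdual}. The latter provides the analogous statement $[(\Phi(F^{[n]}))^\dv]^{\sym_n} \simeq [(C^0_F)^\vee]^{\sym_n}$ together with the vanishing of all higher $\iext^i(\Phi(F^{[n]}), \reg_{X^n})$ for locally free $F$, so locally free sheaves are again acyclic for the relevant one-variable derived functor. Resolving $E^\bullet$ by $\Eb$ and applying Proposition \ref{bifun} termwise then gives $[R\sHom(\Phi((\Eb)^{[n]}), \reg_{X^n})]^{\sym_n} \simeq [(C^0_{\Eb})^\vee]^{\sym_n} \simeq [R\sHom(C^0_{E^\bullet}, \reg_{X^n})]^{\sym_n}$.

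The main obstacle I anticipate is the bookkeeping required to legitimately pass from ``resolve $\Phi(E^{[n]})$ on $X^n$'' to ``resolve $E^\bullet$ on $X$'': one must verify that locally free sheaves on $X$ really do constitute an adapted acyclic class for the composite bifunctor, and this rests entirely on the degree-zero concentration proven in Proposition \ref{RHom} (respectively Proposition \ref{Rdual}). Everything else — matching the differentials coming from the complexes $C^\bullet_{\Eb}$ and $C^\bullet_{\Fb}$ with those of the double complex, and checking the contravariant, two-variable version of Proposition \ref{bifun} — is formal, the only genuine care being the signs and the handling of contravariance in the first argument.
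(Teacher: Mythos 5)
Your proposal is correct and follows essentially the same route as the paper: replace $E^\bullet$ and $F^\bullet$ by bounded locally free resolutions, use the degree-zero concentration from Proposition \ref{RHom} (resp.\ Proposition \ref{Rdual}) to verify the acyclicity hypothesis of Proposition \ref{bifun} for the bifunctor $\ihom$, compute the derived bifunctor termwise on the resolutions, and identify the resulting total complex with $[\sHom(C^0_{\Eb},C^0_{\Fb})]^{\sym_n}\simeq[R\sHom(C^0_{E^\bullet},C^0_{F^\bullet})]^{\sym_n}$ using that the terms $C^0_{\mathcal E^j}$ are locally free. The only cosmetic difference is that the paper applies Proposition \ref{bifun} directly to the objects $p_*q^*((A^i)^{[n]})$ in $\QCoh_{\sym_n}(X^n)^\circ\times\QCoh_{\sym_n}(X^n)$ rather than phrasing locally free sheaves on $X$ as an adapted class for the composite functor, which amounts to the same thing.
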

\begin{proof}
We take locally free resolutions $A^\bullet\simeq E^\bullet$ and $B^\bullet\simeq F^\bullet$ of the complexes $E^\bullet$ and $F^\bullet$.
Then $(A^\bullet)^{[n]}\simeq (E^\bullet)^{[n]}$ and also
\[\Phi((E^\bullet)^{[n]})\simeq \Phi((A^\bullet)^{[n]})\overset{\ref{Scadesc}}\simeq p_*q^*(A^{[n]})^\bullet\,.\]
Clearly, there are the same isomorphisms for $F^\bullet$ and $B^\bullet$ instead of $E^\bullet$ and $A^\bullet$.
Now, for every pair $i,j\in \Z$ by proposition \ref{RHom} we have
\[\iext^q(p_*q^*((A^i)^{[n]}),p_*q^*( (B^j)^{[n]})=0\]
for $q\neq 0$. Thus, we can apply proposition \ref{bifun} to the situation of the bifunctor
\[\ihom(\_,\_)=[\_]^{\sym_n}\circ\pi_*\circ \sHom(\_,\_)\colon \QCoh_{\sym_n}(X^n)^\circ \times \QCoh_{\sym_n}(X^n)\to \QCoh(S^nX)\]
and obtain using the natural isomorphism of \ref{RHom}, remark \ref{natural}, and the exactness of $C^0$
\begin{align*}
\left[R\sHom(\Phi((E^\bullet)^{[n]}),\Phi((F^\bullet)^{[n]})\right]^{\sym_n}
&\simeq R\ihom(p_*q^*((A^\bullet)^{[n]}),p_*q^*((B^\bullet)^{[n]}))\\
&\simeq \ihom^\bullet((p_*q^*((A^\bullet)^{[n]}),p_*q^*((B^\bullet)^{[n]})\\
&\simeq \left[\sHom^\bullet(C^0_{A^\bullet}, C^0_{B^\bullet})\right]^{\sym_n}\\
&\overset{\text{lf}}\simeq \left[R\sHom(C^0_{A^\bullet}, C^0_{B^\bullet})\right]^{\sym_n}\\
&\simeq \left[R\sHom(C^0_{E^\bullet}, C^0_{F^\bullet})\right]^{\sym_n}\,.
\end{align*}
The second isomorphism is shown similarly using proposition \ref{Rdual}.
\end{proof}
\subsection{Determinant line bundles}
There is a homomorphism which associates to any line bundle on $X$ its associated \textit{determinant line bundle} on $X^{[n]}$ given by  
\[\mathcal D\colon \Pic X\to \Pic X^{[n]}\quad ,\quad L\mapsto \mathcal D_L:= \mu^*((L^{\boxtimes n})^{\sym_n})\,.\]
Here the $\sym_n$-linearization of $L^{\boxtimes n}$ is given by the canonical isomorphisms $p_{\sigma^{-1}(i)}^*L\cong \sigma^*p_i^*L$, i.e. given by permutation of the factors.
By \cite[Theorem 2.3]{DN} the sheaf of invariants of $L^{\boxtimes n}$ is also the decent of $L^{\boxtimes n}$, i.e. $L^{\boxtimes n}\cong \pi^*((L^{\boxtimes n})^{\sym_n})$.
\begin{remark}
The functor $\mathcal D$ maps the trivial respectively the canonical line bundle to the trivial respectively the canonical line bundle, i.e. 
$\mathcal D_{\reg_X}\cong \reg_{X^{[n]}}$ and $\mathcal D_{\omega_X}\cong \omega_{X^{[n]}}$. The assertion for the trivial line bundle is true, since the pull-back of the trivial line bundle along any morphism is the trivial line bundle and since taking the invariants of the trivial line bundle yields the trivial line bundle on the quotient by the group action. For a proof of $\mathcal D_{\omega_X}\cong \omega_{X^{[n]}}$ see \cite[Proposition 1.6]{NW}.
\end{remark}
\begin{lemma}\label{det}
Let $L$ be a line bundle on $X$.
\begin{enumerate}
\item For every $\mathcal F^\bullet\in \D^b(X^{[n]})$ there is a natural isomorphism
$\Phi(\F^\bullet\otimes \mathcal D_L)\simeq \Phi(\F^\bullet)\otimes L^{\boxtimes n}$ in $\D_{\sym_n}(X^n)$. 
\item For every $\mathcal G^\bullet\in \D^b_{\sym_n}(X^n)$ there is in $\D^b(S^nX)$ a natural isomorphism \[[\pi_*(\mathcal G^\bullet\otimes L^{\boxtimes n})]^{\sym_n}\simeq
(\pi_*\mathcal G^\bullet)^{\sym_n}\otimes (\pi_*L^{\boxtimes n})^{\sym_n}\,.\]
\end{enumerate}
\end{lemma}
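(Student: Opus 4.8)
The plan is to derive both parts from three facts that are already available in equivariant form within the framework of Subsection \ref{geomfun}: the commutative square $\pi\circ p=\mu\circ q$ furnished by Haiman's theorem, the descent identity $L^{\boxtimes n}\cong\pi^*((L^{\boxtimes n})^{\sym_n})$ of \cite{DN}, and the equivariant projection formula for the proper morphisms $p\colon I^nX\to X^n$ and $\pi\colon X^n\to S^nX$. Recall that $\Phi=Rp_*\circ q^*$ and $\mathcal D_L=\mu^*((L^{\boxtimes n})^{\sym_n})$, and that $L^{\boxtimes n}$ is locally free so that none of the tensor products below needs to be derived.

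For part (1), I would first use that $q^*$ is monoidal to write $q^*(\F^\bullet\otimes\mathcal D_L)\simeq q^*\F^\bullet\otimes q^*\mathcal D_L$. Since $\mu\circ q=\pi\circ p$, the second factor rewrites as
\[q^*\mathcal D_L=q^*\mu^*((L^{\boxtimes n})^{\sym_n})\cong p^*\pi^*((L^{\boxtimes n})^{\sym_n})\cong p^*L^{\boxtimes n}\,,\]
where the last isomorphism is the descent identity and is $\sym_n$-equivariant for the permutation linearization on $L^{\boxtimes n}$. Applying the equivariant projection formula for $p$ then gives
\[\Phi(\F^\bullet\otimes\mathcal D_L)=Rp_*(q^*\F^\bullet\otimes p^*L^{\boxtimes n})\simeq Rp_*(q^*\F^\bullet)\otimes L^{\boxtimes n}=\Phi(\F^\bullet)\otimes L^{\boxtimes n}\,,\]
and naturality is inherited from the naturality of each isomorphism used.

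For part (2), since $\pi$ is finite its push-forward is exact and $L^{\boxtimes n}$ is a line bundle, so both sides are computed without deriving; it therefore suffices to prove the isomorphism for a single $\sym_n$-sheaf $\mathcal G$ and then pass to complexes by exactness. Writing $M:=(\pi_*L^{\boxtimes n})^{\sym_n}$, the descent identity gives $L^{\boxtimes n}\cong\pi^*M$ equivariantly, so the projection formula for $\pi$ yields $\pi_*(\mathcal G\otimes L^{\boxtimes n})\cong\pi_*\mathcal G\otimes M$ as $\sym_n$-sheaves on $S^nX$, where $M$ carries the trivial linearization. Finally I would invoke the globalised form of Lemma \ref{tensinv}: because $\sym_n$ acts trivially on $M$, the invariants may be taken on the first factor alone, giving $[\pi_*\mathcal G\otimes M]^{\sym_n}\cong(\pi_*\mathcal G)^{\sym_n}\otimes M$, which is exactly the asserted identity $[\pi_*(\mathcal G\otimes L^{\boxtimes n})]^{\sym_n}\simeq(\pi_*\mathcal G)^{\sym_n}\otimes(\pi_*L^{\boxtimes n})^{\sym_n}$.

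The routine part is the chain of standard compatibilities; the point requiring care, and the main obstacle, is verifying that every isomorphism is genuinely $\sym_n$-equivariant. Concretely one must check that the descent isomorphism $\pi^*((L^{\boxtimes n})^{\sym_n})\cong L^{\boxtimes n}$ respects the permutation linearization of $L^{\boxtimes n}$ — this is precisely the content of the cited descent theorem, the counit of the invariants/pull-back adjunction being equivariant — and that the equivariant projection formula applies to $p$ and $\pi$ with these linearizations in force. Once this bookkeeping is in place, the displayed isomorphisms assemble directly into the two claimed natural isomorphisms.
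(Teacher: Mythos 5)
Your argument is correct and is essentially identical to the paper's own proof: both parts rest on the descent identity $L^{\boxtimes n}\cong\pi^*((L^{\boxtimes n})^{\sym_n})$, the commutative square $\mu\circ q=\pi\circ p$, the (equivariant) projection formula for $p$ respectively $\pi$, and Lemma \ref{tensinv} for pulling the trivially-linearized factor out of the invariants. The only difference is presentational — you name the descended bundle $M$ and emphasize the equivariance bookkeeping explicitly, which the paper leaves implicit.
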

\begin{proof}
 By the definition of the determinant line bundle and the fact that $\pi^*(L^{\boxtimes n})^{\sym_n}\cong L^{\boxtimes n}$ we have 
\[q^*\mathcal D_L\cong q^*\mu^* (L^{\boxtimes n})^{\sym_n} \cong p^*\pi^*(L^{\boxtimes n})^{\sym_n}\cong p^*L^{\boxtimes n}\,.\]
Using this, we get indeed natural isomorphisms
\begin{align*}
\Phi(\F^\bullet\otimes \mathcal D_L)\simeq Rp_*q^*(\F^\bullet\otimes \mathcal D_L)\simeq Rp_*\left(q^*\F^\bullet\otimes q^*\mathcal D_L\right)&\simeq Rp_*\left(q^*\F^\bullet\otimes p^*L^{\boxtimes n} \right)\\
&\overset{\text{PF}}\simeq Rp_*q^*\F^\bullet\otimes L^{\boxtimes n}\\
&\simeq \Phi(\F^\bullet)\otimes L^{\boxtimes n}\,. 
\end{align*}
This shows (1). For (2) we remember that the functor $(\_)^{\sym_n}$ on $\D^b_{\sym_n}(X^n)$ is a abbreviation of the composition $(\_)^{\sym_n}\circ \pi_*$.
Then
\begin{align*}\left[\pi_*(\mathcal G^\bullet\otimes L^{\boxtimes n})\right]^{\sym_n}\simeq \left[\pi_*(\mathcal G^\bullet\otimes \pi^*(L^{\boxtimes n})^{\sym_n})\right]^{\sym_n}
 &\overset{\text{PF}}\simeq \left[\pi_*(\mathcal G^\bullet)\otimes (L^{\boxtimes n})^{\sym_n}\right]^{\sym_n}
 \\&\overset{\ref{tensinv}}\simeq (\pi_*\mathcal G^\bullet)^{\sym_n}\otimes (L^{\boxtimes n})^{\sym_n}\,.
\end{align*}
\end{proof}
We call an object of the form $(E^\bullet)^{[n]}\otimes \mathcal D_L\in \D^b(X^{[n]})$ with $E^\bullet \in \D^b(X)$ and $L$ a  line bundle on $X$ a \textit{twisted tautological object}.
\begin{prop}
 Let $(E^\bullet)^{[n]}\otimes \mathcal D_L$ and $(F^\bullet)^{[n]}\otimes \mathcal D_M$ be twisted tautological objects. Then there are natural isomorphisms
\begin{align*}
 \left[R\sHom(\Phi((E^\bullet)^{[n]}\otimes \mathcal D_L),\Phi((F^\bullet)^{[n]}\otimes \mathcal D_M))\right]^{\sym_n}&\simeq \left[R\sHom(C^0_{E^\bullet}\otimes L^{\boxtimes n},C^0_{F\bullet}\otimes M^{\boxtimes n})\right]^{\sym_n}\,,\\
\left[R\sHom((E^\bullet)^{[n]}\otimes \mathcal D_L, \mathcal D_M)\right]^{\sym_n}&\simeq \left[R\sHom(C^0_{E^\bullet}\otimes L^{\boxtimes n}, M^{\boxtimes n})\right]^{\sym_n}\,.\\
\end{align*}
\end{prop}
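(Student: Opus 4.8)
The plan is to reduce both identities to Proposition~\ref{RHomobj} by treating the determinant twists as ordinary external line-bundle twists on $X^n$ and then transporting them through the invariants functor by means of Lemma~\ref{det}.

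First I would apply $\Phi$ to the twisted tautological objects. By Lemma~\ref{det}(1) there are $\sym_n$-equivariant isomorphisms $\Phi((E^\bullet)^{[n]}\otimes\mathcal D_L)\simeq\Phi((E^\bullet)^{[n]})\otimes L^{\boxtimes n}$ and $\Phi((F^\bullet)^{[n]}\otimes\mathcal D_M)\simeq\Phi((F^\bullet)^{[n]})\otimes M^{\boxtimes n}$. For the second formula I would further observe, writing $\mathcal D_M\cong\reg_{X^{[n]}}\otimes\mathcal D_M$ and combining Lemma~\ref{det}(1) with the identification $\Phi(\reg_{X^{[n]}})\simeq\reg_{X^n}$ from Proposition~\ref{bkrstru}, that $\Phi(\mathcal D_M)\simeq M^{\boxtimes n}$.

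Next I would pull the line bundles out of the sheaf-Hom. Since $L^{\boxtimes n}$ and $M^{\boxtimes n}$ are $\sym_n$-equivariant line bundles, tensoring by them is exact, and the standard equivariant compatibilities yield, for any $\mathcal G^\bullet,\mathcal H^\bullet\in\D^b_{\sym_n}(X^n)$, an isomorphism
\[
R\sHom(\mathcal G^\bullet\otimes L^{\boxtimes n},\mathcal H^\bullet\otimes M^{\boxtimes n})\simeq R\sHom(\mathcal G^\bullet,\mathcal H^\bullet)\otimes(L^\vee\otimes M)^{\boxtimes n},
\]
where I use $(L^{\boxtimes n})^\vee\otimes M^{\boxtimes n}\cong(L^\vee\otimes M)^{\boxtimes n}$ with its permutation linearization. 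Applying this with $(\mathcal G^\bullet,\mathcal H^\bullet)=(\Phi((E^\bullet)^{[n]}),\Phi((F^\bullet)^{[n]}))$ for the first formula and with $\mathcal H^\bullet=\reg_{X^n}$ for the second isolates the twist as a single external line bundle $(L^\vee\otimes M)^{\boxtimes n}$ sitting outside the $R\sHom$.

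I would then use Lemma~\ref{det}(2) to move this external line bundle outside the invariants functor, apply Proposition~\ref{RHomobj} to the remaining untwisted $[R\sHom(\ldots)]^{\sym_n}$, and run the same two manipulations in reverse on the $C^0$-side to reassemble $[R\sHom(C^0_{E^\bullet}\otimes L^{\boxtimes n},C^0_{F^\bullet}\otimes M^{\boxtimes n})]^{\sym_n}$ (respectively the version with $M^{\boxtimes n}$ as second argument). The main obstacle, and the only part needing real care, is checking that each isomorphism in this chain is natural and genuinely $\sym_n$-equivariant: the extraction isomorphism must respect the permutation linearizations of $L^{\boxtimes n}$ and $M^{\boxtimes n}$, and Lemma~\ref{det}(2) must be invoked for the complex $R\sHom(\ldots)$ in $\D^b_{\sym_n}(X^n)$ rather than for a single sheaf. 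Both are formal consequences of the compatibility of all the functors involved with the forgetful functor, so the argument is essentially bookkeeping and introduces no new geometry.
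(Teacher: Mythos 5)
Your proposal is correct and follows essentially the same route as the paper: apply Lemma \ref{det}(1) to convert the determinant twists into the external line bundles $L^{\boxtimes n}$, $M^{\boxtimes n}$, extract them from the $R\sHom$ (locally free), move them past the invariants functor via Lemma \ref{det}(2), invoke Proposition \ref{RHomobj} for the untwisted case, and reverse the manipulations on the $C^0$-side. The only cosmetic difference is that you package the twist as $(L^\vee\otimes M)^{\boxtimes n}$ where the paper keeps $(L^{\boxtimes n})^\vee\otimes M^{\boxtimes n}$, which is the same object.
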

\begin{proof}
We will only show the first isomorphism since the proof of second one is very similar. We have indeed 
\begin{align*}
&\left[R\sHom(\Phi((E^\bullet)^{[n]}\otimes \mathcal D_L),\Phi((F^\bullet)^{[n]}\otimes \mathcal D_M))\right]^{\sym_n}\\
&\overset{\ref{det}}\simeq \left[R\sHom(\Phi((E^\bullet)^{[n]})\otimes L^{\boxtimes n},\Phi((F^\bullet)^{[n]})\otimes M^{\boxtimes n})\right]^{\sym_n}\\
&\overset{\text{lf}}\simeq \left[\left(R\sHom(\Phi((E^\bullet)^{[n]}),\Phi((F^\bullet)^{[n]}))\otimes (L^{\boxtimes n})^\vee\otimes M^{\boxtimes n}\right)\right]^{\sym_n}\\
&\overset{\ref{det}}\simeq [R\sHom(\Phi((E^\bullet)^{[n]}),\Phi((F^\bullet)^{[n]}))]^{\sym_n}\otimes ((L^{\boxtimes n})^\vee)^{\sym_n}\otimes (M^{\boxtimes n})^{\sym_n}\\
&\overset{\ref{RHomobj}}\simeq [\sHom(C^0_{E^\bullet},C^0_{F^\bullet})]^{\sym_n}\otimes ((L^{\boxtimes n})^\vee)^{\sym_n}\otimes (M^{\boxtimes n})^{\sym_n}\\
&\overset{\ref{det}}\simeq \left[\left(\sHom(C^0_{E^\bullet},C^0_{F^\bullet})\otimes (L^{\boxtimes n})^\vee\otimes M^{\boxtimes n}\right)\right]^{\sym_n}\\
&\overset{\text{lf}}\simeq [\sHom(C^0_{E^\bullet}\otimes L^{\boxtimes n},C^0_{F^\bullet}\otimes M^{\boxtimes n})]^{\sym_n}\,.
\end{align*}
\end{proof}
\subsection{Global Ext-groups}
\begin{theorem}\label{main}
Let $X$ be a smooth quasi-projective complex surface, $n\ge 2$, $E^\bullet,F^\bullet\in \D^b(X)$, and $L,M\in \Pic X$. The extension groups of the associated twisted tautological objects are given by the following natural isomorphisms of graded vector spaces:
\begin{align*}
\Ext^*((E^\bullet)^{[n]}\otimes \mathcal D_L,(F^\bullet)^{[n]}\otimes \mathcal D_M)&\cong \begin{aligned}& \Ext^*(E^\bullet\otimes L,F^\bullet\otimes M)\otimes S^{n-1}\Ext^*(L,M)\oplus\\ &\Ext^*(E^\bullet\otimes L,M)\otimes \Ext^*(L,F^\bullet\otimes M)\otimes S^{n-2}\Ext^*(L,M),\end{aligned}\\
 \Ext^*((E^\bullet)^{[n]}\otimes \mathcal D_L,\mathcal D_M)& \cong \Ext^*(E^\bullet\otimes L,M)\otimes S^{n-1}\Ext^*(L,M)\,.
\end{align*}
\end{theorem}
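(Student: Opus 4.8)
The plan is to feed the preceding proposition into the Bridgeland--King--Reid description of extension groups and then evaluate the resulting $\sym_n$-invariant hypercohomology by a Künneth/Danila computation. Combining corollary \ref{bkrext} with the hypercohomology rewriting from the beginning of this section and the exactness of $\pi_*$ and $[\_]^{\sym_n}$, the preceding proposition yields the natural identification
\[\Ext^*((E^\bullet)^{[n]}\otimes \mathcal D_L,(F^\bullet)^{[n]}\otimes \mathcal D_M)\cong \left[\Ext^*_{X^n}(C^0_{E^\bullet}\otimes L^{\boxtimes n},C^0_{F^\bullet}\otimes M^{\boxtimes n})\right]^{\sym_n}.\]
Thus everything is reduced to a computation of $\sym_n$-invariant $\Ext$-groups on the product $X^n$, where both arguments are direct sums of external tensor products.

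First I would rewrite the two arguments as external products. Since $L^{\boxtimes n}=\bigotimes_{k}\pr_k^*L$ one has $\pr_i^*E^\bullet\otimes L^{\boxtimes n}\simeq \pr_i^*(E^\bullet\otimes L)\otimes\bigotimes_{k\neq i}\pr_k^*L$, and likewise for $F^\bullet$ and $M$. Hence
\[C^0_{E^\bullet}\otimes L^{\boxtimes n}=\bigoplus_{i=1}^n A_i,\qquad C^0_{F^\bullet}\otimes M^{\boxtimes n}=\bigoplus_{j=1}^n B_j,\]
where $A_i$ carries $E^\bullet\otimes L$ in the $i$-th tensor slot and $L$ elsewhere, and $B_j$ carries $F^\bullet\otimes M$ in the $j$-th slot and $M$ elsewhere. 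The functor $R\sHom_{X^n}$ splits over this double direct sum and factorises slot-by-slot on external products; combined with the Künneth formula over $\C$ (where no higher Tor occurs) this gives $\Ext^*_{X^n}(A_i,B_j)\cong\bigotimes_{k=1}^n\Ext^*_X\big((A_i)_k,(B_j)_k\big)$. For $i=j$ the slots contribute $\Ext^*(E^\bullet\otimes L,F^\bullet\otimes M)$ once and $\Ext^*(L,M)$ in the remaining $n-1$ slots; for $i\neq j$ they contribute $\Ext^*(E^\bullet\otimes L,M)$ in slot $i$, $\Ext^*(L,F^\bullet\otimes M)$ in slot $j$, and $\Ext^*(L,M)$ in the remaining $n-2$ slots.

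Next I would take the $\sym_n$-invariants. The group $\sym_n$ acts on the index set $\{(i,j)\mid i,j\in\n\}$ by the diagonal permutation, with exactly two orbits: the diagonal $\{(i,i)\}$, with $\Stab(1,1)=\sym_{\{2,\dots,n\}}\cong\sym_{n-1}$, and the off-diagonal $\{(i,j)\mid i\neq j\}$, with $\Stab(1,2)=\sym_{\{3,\dots,n\}}\cong\sym_{n-2}$. The non-transitive form of Danila's lemma (remark \ref{notrans}) then gives
\[\left[\Ext^*_{X^n}\big(\textstyle\bigoplus_iA_i,\bigoplus_jB_j\big)\right]^{\sym_n}\cong \big[\Ext^*_{X^n}(A_1,B_1)\big]^{\sym_{n-1}}\oplus\big[\Ext^*_{X^n}(A_1,B_2)\big]^{\sym_{n-2}}.\]
In the diagonal summand $\sym_{n-1}$ fixes slot $1$ and permutes the remaining $n-1$ copies of $\Ext^*(L,M)$; in the off-diagonal summand $\sym_{n-2}$ fixes slots $1,2$ and permutes the remaining $n-2$ copies. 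By the graded-symmetric-power conventions of section \ref{not} (invariants equal coinvariants in characteristic zero, with the Koszul signs built into the $\sym$-action), the invariants of such a permutation action on $\Ext^*(L,M)^{\otimes m}$ are precisely $S^m\Ext^*(L,M)$. This reproduces the two summands of the first formula, and the second formula follows verbatim: since $\Phi(\mathcal D_M)\simeq M^{\boxtimes n}$ (by proposition \ref{bkrstru} and lemma \ref{det}) is a single external product rather than a direct sum, only the transitive orbit $\{i\}$ with stabilizer $\sym_{n-1}$ appears, giving $\Ext^*(E^\bullet\otimes L,M)\otimes S^{n-1}\Ext^*(L,M)$.

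The step requiring the most care is the equivariance of the Künneth splitting together with the bookkeeping of Koszul signs: one must verify that the $\sym_n$-linearization on $R\sHom(\bigoplus_iA_i,\bigoplus_jB_j)$, restricted to a stabilizer, acts on the Künneth factors exactly by the signed permutation of graded vector spaces used to define $S^m$, so that the stabilizer invariants genuinely coincide with the graded symmetric powers rather than being merely abstractly isomorphic to them. Everything else is a formal consequence of the reduction above.
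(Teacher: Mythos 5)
Your proposal is correct and follows essentially the same route as the paper: reduction via the Bridgeland--King--Reid equivalence and the preceding proposition to $\sym_n$-invariant Ext-groups of $C^0_{E^\bullet}\otimes L^{\boxtimes n}$ against $C^0_{F^\bullet}\otimes M^{\boxtimes n}$, then Danila's lemma on the direct-sum decomposition (the paper performs this in two stages, first an inflation adjunction in the first argument and then Danila in the second, which amounts to your single application to the diagonal $\sym_n$-action on pairs $(i,j)$ with its two orbits), followed by Künneth and the identification of stabilizer invariants with graded symmetric powers. Your closing remark on verifying that the equivariant Künneth splitting carries exactly the signed permutation action defining $S^m$ is the right point to flag and is consistent with the conventions of the paper.
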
    
\begin{proof}
Using the previous proposition and the considerations at the beginning of this section, the extension groups are given by
\begin{align*} &\Ext^*\left((E^\bullet)^{[n]}\otimes \mathcal D_L,(F^\bullet)^{[n]}\otimes \mathcal D_M\right)\\
\cong&\sym_n\Ext^*\left(\Phi((E^\bullet)^{[n]}\otimes \mathcal D_L),\Phi((F^\bullet)^{[n]}\otimes \mathcal D_M)\right)\\
\cong&\Ho^*(S^nX,\left[R\sHom(\Phi((E^\bullet)^{[n]}\otimes \mathcal D_L),\Phi((F^\bullet)^{[n]}\otimes \mathcal D_M))\right]^{\sym_n})\\
\cong& \Ho^*(S^nX,\left[R\sHom(C^0_{E^\bullet}\otimes L^{\boxtimes n},C^0_{F\bullet}\otimes M^{\boxtimes n})\right]^{\sym_n})\\
\cong& \left[\Ho^*(X^n,R\sHom(C^0_{E^\bullet}\otimes L^{\boxtimes n},C^0_{F\bullet}\otimes M^{\boxtimes n}))\right]^{\sym_n}\,.
\end{align*}
Applying the adjoint property of the inflation functor for $C^0_{E^\bullet}\otimes L^{\boxtimes}\simeq \Inf_{\sym_{\overline{[1]}}}^{\sym_n} (p_1^*E\otimes L^{\boxtimes n})$ and Danilas lemma we get 
\begin{align*}&\left[R\sHom(C^0_{E^\bullet}\otimes L^{\boxtimes n},C^0_{F^\bullet}\otimes M^{\boxtimes n})\right]^{\sym_n}\\
 \simeq& \left[\bigoplus_{j\in\n} R\sHom(p_1^*{E^\bullet}\otimes L^{\boxtimes n},p_j^*{F^\bullet}\otimes M^{\boxtimes n})\right]^{\overline{\sym_{[1]}}}\\
\simeq& \left[R\sHom(p_1^*{E^\bullet}\otimes L^{\boxtimes n},p_1^*{F^\bullet}\otimes M^{\boxtimes n})   \right]^{\overline{\sym_{[1]}}} \oplus
          \left[R\sHom(p_1^*{E^\bullet}\otimes L^{\boxtimes n},p_2^*{F^\bullet}\otimes M^{\boxtimes n})   \right]^{\overline{\sym_{[2]}}}\,.\tag{$\ast$}
\end{align*}
Using the compatibility of the derived sheaf-Hom with pullbacks gives 
\begin{align*}R\sHom(p_1^*{E^\bullet}\otimes L^{\boxtimes n},p_1^*{F^\bullet}\otimes M^{\boxtimes n})\simeq R\sHom({E^\bullet}\otimes L,{F^\bullet}\otimes M)\boxtimes \sHom( L,M)^{\boxtimes n-1}
\end{align*}
and  
\begin{align*}&R\sHom(p_1^*{E^\bullet}\otimes L^{\boxtimes n},p_2^*{F^\bullet}\otimes M^{\boxtimes n})\\ \simeq &R\sHom({E^\bullet}\otimes L,M)\boxtimes\sHom(L,F^\bullet\otimes M)\boxtimes \sHom( L,M)^{\boxtimes n-2}\,. 
\end{align*}
Now by the Künneth formula
\begin{align*}&\left[ \Ho^*\left(X^n,R\sHom({E^\bullet}\otimes L,{F^\bullet}\otimes M)\boxtimes \sHom( L,M)^{\boxtimes n-1}\right)\right]^{\overline{\sym_{[1]}}}\\
 \cong&\left[ \Ho^*(R\sHom({E^\bullet}\otimes L,{F^\bullet}\otimes M))\otimes \Ho^*(\sHom( L,M))^{\otimes n-1}\right]^{\overline{\sym_{[1]}}}\\
\cong&\left[ \Ext^*({E^\bullet}\otimes L,{F^\bullet}\otimes M))\otimes \Ext^*( L,M)^{\otimes n-1}\right]^{\overline{\sym_{[1]}}}\\
\cong& \Ext^*({E^\bullet}\otimes L,{F^\bullet}\otimes M))\otimes S^{n-1}(\Ext^*( L,M))\,.
\end{align*}
Doing the same for the other direct summand in $(\ast)$ yields the result. The proof of the second formula is again similar and therefore omitted.
\end{proof}
\begin{remark}
The above formulas are natural in $E^\bullet$, $F^\bullet$, $L$, and $M$, in automorphisms of $X$, and also in pull-backs along open immersions $U\subset X$.
\end{remark}
\begin{remark}
By setting $L=M=\reg_X$ we get the following formulas for non-twisted tautological objects  
\begin{align*}
\Ext^*((E^\bullet)^{[n]},(F^\bullet)^{[n]})&\cong \begin{aligned}& \Ext^*(E^\bullet,F^\bullet)\otimes S^{n-1}(\Ho^*(\reg_X))\oplus\\ &\Ho^*((E^\bullet)^\dv)\otimes \Ho^*(E^\bullet)\otimes S^{n-2}(\Ho^*(\reg_X))\end{aligned}\\
 \Ext^*((E^\bullet)^{[n]},\reg_X) \cong \Ho^*(X^{[n]},((E^\bullet)^{[n]})^\dv)&\cong \Ho^*((E^\bullet)^\dv)\otimes S^{n-1}\Ho^*(\reg_X)\,.
\end{align*}
\end{remark}
\begin{remark}\label{objcoh}
By the same arguments as used in this and the previous subsection we can also generalise Scala's formula for the cohomology of twisted tautological sheaves 
(see formula (1.1) of the  introduction) to twisted tautological objects. Namely, for every tautological object $(E^\bullet)^{[n]}\otimes \mathcal D_L$ there is a natural isomorphism of graded vector spaces
\[\Ho^*(X^{[n]}, (E^\bullet)^{[n]}\otimes \mathcal D_L)\cong \Ho^*(E^\bullet\otimes L)\otimes S^{n-1} \Ho^*(L)\,.\]  
Also, since $\mathcal D_L^\vee\otimes \mathcal D_M\cong \mathcal D_{\sHom(L,M)}$ for $L,M\in\Pic(X)$, we have a formula for
\[\Ext^*(\mathcal D_L,(E^\bullet)^{[n]}\otimes \mathcal D_M)\cong \Ho^*(X^{[n]}, (E^\bullet)^{[n]}\otimes \mathcal D_{\sHom(L,M)})\,.\]
\end{remark}
\begin{remark}
In the case that $X$ is projective one can also directly deduce the formula for $\Ext^*((E^\bullet)^{[n]}\otimes \mathcal D_L,\mathcal D_M)$ by Serre duality from Scala's formula in the form of the previous remark and the fact that $\mathcal D_{\omega_X}=\omega_{X^{[n]}}$.
\end{remark}
\begin{remark}
Using proposition \ref{invhom} we also get for $E_1,\dots, E_k, F$ locally free sheaves on $X$ formulas for 
$\Ext^0(E_1^{[n]}\otimes \dots \otimes E_k^{[n]},\reg_{X^{[n]}})$ as well as for $\Ext^0(E_1^{[n]}\otimes \dots \otimes E_k^{[n]},F^{[n]})$. For $\ell\in \N$ and $M\subset \N$ let $P(M,\ell)$ denote the set of partitions $I=\{I_1,\dots,I_\ell\}$ of the set $M$ of lenght $\ell$. Then 
\[\Hom(E_1^{[n]}\otimes \dots \otimes E_k^{[n]},\reg_{X^{[n]}})\cong \bigoplus_{I\in P([k],\ell),\ell\le n}\left(\bigotimes_{s=1}^\ell\Ho^0(\otimes_{i\in I_s} E_i^\vee)\otimes S^{n-\ell}\Ho^0(\reg_X)\right)\]
and $\Hom(E_1^{[n]}\otimes \dots \otimes E_k^{[n]},F^{[n]})$ is isomorphic to
\[\bigoplus_{\substack{M\subset[k]\\I\in P([k]\setminus M,\ell),\ell\le n-1}}\left(\Hom(\otimes_{i\in M} E_i,F)\otimes \bigotimes_{s=1}^\ell\Ho^*(\otimes_{i\in I_s} E_i^\vee)\otimes S^{n-\ell-1}\Ho^0(\reg_X)\right)\,.\]
Again, there are similar formulas for the sheaves twisted by determinant line bundles. However, these formulas can not directly be generalized to formulas for $\Ext^*$ since the corresponding $R\ihom$ objects are in general not cohomologically concentrated in degree zero for $k\ge 2$.
\end{remark}
\subsection{Spherical and $\mathbbm P^n$-objects}
Let $X$ be a smooth projective variety with canonical bundle $\omega_X$. We call an object $\Eb\in \D^b(X)$ an \textit{$\omega_X$-invariant} object if $\Eb\otimes \omega_X\simeq \Eb$. A \textit{spherical object} in $\D^b(X)$ is an $\omega_X$-invariant object $\Eb$ with the property
\[\Ext^i(\Eb,\Eb)=\begin{cases}
                   \C \quad&\text{if $i=0,\dim(X)$,}\\
                   0 \quad &\text{else,}
                  \end{cases}\]
i.e. $\Ext^*(\Eb,\Eb)\cong \Ho^*(S^{\dim X}, \C)$, where $S^n$ denotes the real $n$-sphere. A \textit{$\mathbbm P^n$-object} in the derived category $\D^b(X)$ is a $\omega_X$-invariant object $\Fb$ such that there is an isomorphism of graded algebras $\Ext^*(\Fb,\Fb)\cong H^*(\mathbbm P^n,\C)$, where  the multiplication on the left is given by the Yoneda product and on the right by the cup product. In particular for the underlying vector spaces
\[\Ext^i(\Eb,\Eb)=\begin{cases}
                   \C \quad&\text{if $0\le i\le 2n$ is even}\\
                   0 \quad &\text{if $i$ is odd}
                  \end{cases}\]
holds. By Serre duality the dimension of $X$ must be $2n$ as soon as $\D^b(X)$ contains a $\mathbbm P^n$-object. Spherical and $\mathbbm P^n$-objects are of interest because they induce automorphisms of $\D^b(X)$ (see \cite[chapter 8]{Huy}). For a smooth projective surface $X$ with $\omega_X=\reg_X$ the canonical bundle on $X^{[n]}$ is also trivial (see remark \ref{det}). Hence, the property of being $\omega_{X^{[n]}}$-invariant is automatically satisfied for every object in $\D^b(X^{[n]})$. Thus, one could hope that there are tautological objects induced by some special objects in $\D^b(X)$ that are spherical or $\mathbbm P^n$-objects. But this is not the  case by the  following proposition.
\begin{prop}
\begin{enumerate}
 \item Let $X$ be a smooth projective surface with trivial canonical line bundle and $n\ge 2$. Then twisted tautological objects on $X^{[n]}$ are never spherical or $\mathbbm P^n$-objects.
\item Let $X$ be a smooth projective surface and $n\ge 3$. Then twisted tautological sheaves on $X^{[n]}$ are never spherical or $\mathbbm P^n$-objects.
\end{enumerate}
\end{prop}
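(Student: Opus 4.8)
The plan is to compute the self-extension algebra of a twisted tautological object $\Gb:=(E^\bullet)^{[n]}\otimes\mathcal D_L$ using Theorem~\ref{main}, and then to contradict the rigid numerical shape forced by sphericity or the $\mathbbm P^n$-property. Specialising the main formula to $E^\bullet=F^\bullet$, $L=M$ and using that $\otimes L$ is an autoequivalence, I would first record
\begin{align*}
\Ext^*(\Gb,\Gb)\cong{}& \Ext^*(E^\bullet,E^\bullet)\otimes S^{n-1}H\\
&\oplus\ \Ho^*((E^\bullet)^\dv)\otimes\Ho^*(E^\bullet)\otimes S^{n-2}H,
\end{align*}
where $H:=\Ho^*(\reg_X)=\Ext^*_X(L,L)$, having used $\Ext^*(E^\bullet\otimes L,L)\cong\Ho^*((E^\bullet)^\dv)$ and $\Ext^*(L,E^\bullet\otimes L)\cong\Ho^*(E^\bullet)$. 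Since the right side is a direct sum of graded tensor products, dimensions in each degree simply add and multiply, so the whole problem becomes combinatorial. I also note that $\omega_{X^{[n]}}$-invariance is part of both definitions, but every contradiction below comes from the $\Ext$-numerics alone, so invariance never needs to be checked. Recall that on $X^{[n]}$ (dimension $2n$) a spherical object has total $\Ext$-dimension $2$, concentrated in degrees $0$ and $2n$, whereas a $\mathbbm P^n$-object has $\Ext^{2i}\cong\C$ for $0\le i\le n$ and vanishing odd part, of total dimension $n+1$.

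For part (1) the canonical bundle is trivial, so Serre duality on $X$ gives $\Ext^2(E^\bullet,E^\bullet)\cong\Ext^0(E^\bullet,E^\bullet)^\vee$; since $E^\bullet\ne0$ (else $\Gb=0$) the identity shows $\Ext^0\ne0$, hence $a:=\dim\Ext^*(E^\bullet,E^\bullet)\ge2$. Moreover $\Ho^0(\reg_X)$ and $\Ho^2(\reg_X)$ are both one-dimensional, so the purely even monomials in $H$ give $\dim S^{n-1}H\ge n$. The first summand alone then forces $\dim\Ext^*(\Gb,\Gb)\ge a\cdot\dim S^{n-1}H\ge 2n$. As $2n>n+1\ge 2$ for all $n\ge2$, this overshoots both the spherical total $2$ and the $\mathbbm P^n$ total $n+1$, proving (1).

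For part (2) the surface is arbitrary, $E:=E^\bullet$ is a nonzero sheaf and $n\ge3$, and I would split according to the shape of $H$. Put $q:=\dim\Ho^1(\reg_X)$ and $p_g:=\dim\Ho^2(\reg_X)$. If $q>0$, then $H$ has a nonzero odd part, $(S^{n-1}H)^1\ne0$, and $\Ext^0(E,E)\otimes(S^{n-1}H)^1$ injects into $\Ext^1(\Gb,\Gb)$, contradicting the vanishing of the odd part. If $q=0$, then $H$ sits in even degrees $0,2$ with $\dim H^2=p_g$, and I distinguish three sub-cases. When $p_g=0$ we have $H=\C$, so $\Ext^*(\Gb,\Gb)\cong\Ext^*(E,E)\oplus\Ho^*(E^\dv)\otimes\Ho^*(E)$ lives in degrees $\le4$, while both notions need a nonzero $\Ext^{2n}$ with $2n\ge6$. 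When $p_g\ge2$ the piece $(S^{n-1}H)^2$ has dimension $p_g\ge2$, so $\dim\Ext^2(\Gb,\Gb)\ge p_g>1$, excluding $\mathbbm P^n$ and (being nonzero) spherical. When $p_g=1$ we have $H\cong\C\oplus\C[-2]$, so $\dim S^{n-1}H=n$ and $\dim S^{n-2}H=n-1$; sphericity fails since $\Ext^2(\Gb,\Gb)\ne0$, and for $\mathbbm P^n$ I compare totals: with $a=\dim\Ext^*(E,E)$, $\beta=\dim\Ho^*(E^\dv)$, $\gamma=\dim\Ho^*(E)$ the total is $an+\beta\gamma(n-1)$, which is $\ge2n>n+1$ when $a\ge2$, and equals $n+\beta\gamma(n-1)\ne n+1$ when $a=1$ because $\beta\gamma(n-1)$ is either $0$ or at least $n-1\ge2$.

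The hard part will be the last sub-case $q=0,\,p_g=1$ — surfaces whose structural cohomology mimics that of a K3 even when $\omega_X$ is nontrivial — where neither the crude top-degree bound nor an oversized $\Ext^2$ is available; there one must combine Serre duality on $X$ (to split on $a=1$ versus $a\ge2$) with the exact symmetric-power dimensions and the integrality of $\beta\gamma(n-1)$, and this is precisely where the hypothesis $n\ge3$ is forced, just as the bound $a\ge2$ coming from the trivial canonical bundle is what lets part (1) succeed already for $n\ge2$. A secondary, routine point is to justify the four simplifications of the $\Ext$-pieces in the displayed formula and the fact that the direct-sum decomposition allows adding dimensions degree by degree.
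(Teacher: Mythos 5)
Your proof is correct, and it rests on the same inputs as the paper's own argument: Theorem \ref{main} specialised to $E^\bullet=F^\bullet$ and $L=M$ (so the twist drops out via $\Ext^*(L,L)\cong\Ho^*(\reg_X)$), Serre duality on $X$, and the elementary numerics of graded symmetric powers; the difference lies only in which numerical consequence is contradicted. For (1) the paper pinpoints degrees rather than totals: $\Ext^0(E^\bullet,E^\bullet)\otimes S^{n-1}\Ho^*(\reg_X)$ contributes to degrees $0,2,\dots,2n-2$ and $\Ext^2(E^\bullet,E^\bullet)\otimes S^{n-1}\Ho^*(\reg_X)$ to degrees $2,\dots,2n$, giving $\ext^i\ge 2$ for $i=2,\dots,2n-2$; your bound $\dim\Ext^*(\Gb,\Gb)\ge 2n>\max\{2,n+1\}$ is an equally valid, slightly more economical way to finish. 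For (2) the paper avoids your four-way case split on $(q,p_g)$: it notes that nonvanishing of $\Ext^{2n}$ (forced by either property) already implies $\Ho^0(\reg_X),\Ho^2(\reg_X)\neq 0$ together with the nonvanishing of $\Ext^2(E,E)$ or of $\bigl(\Ext^*(E,\reg_X)\otimes\Ext^*(\reg_X,E)\bigr)^4$, and then degree $4$ receives two independent contributions --- one of them $\Ext^0(E,E)\otimes\bigl(S^{n-1}\Ho^*(\reg_X)\bigr)^4$, which is nonzero precisely because $n\ge 3$ --- so $\ext^4\ge 2$, ruling out both properties at once. Your hardest sub-case $q=0$, $p_g=1$ is thereby absorbed into a two-line argument; on the other hand your case analysis is self-contained and makes explicit where $n\ge 3$ and the integrality of $\beta\gamma(n-1)$ enter. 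Both arguments are complete.
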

\begin{proof}
Let $(E^\bullet)^{[n]}\otimes\mathcal D_L$ be a non-zero twisted tautological object on $X^{[n]}$. Then using the fact that $\sHom(L,L)\cong \reg_X$ we have \[\Ext^*((E^\bullet)^{[n]}\otimes\mathcal D_L,(E^\bullet)^{[n]}\otimes\mathcal D_L)\cong \Ext^*((E^\bullet)^{[n]},(E^\bullet)^{[n]})\,.\]
Thus, we may assume $L=\reg_X$. The graded vector space $\Ext^*((E^\bullet)^{[n]},(E^\bullet)^{[n]})$ is given by theorem \ref{main} by
\[\Ext^*(E^\bullet,E^\bullet)\otimes S^{n-1}\Ho^*(\reg_X)\oplus \Ho^*((E^\bullet)^\dv)\otimes \Ho^*(E^\bullet)\otimes S^{n-2} \Ho^*(\reg_X)\,.\]
If $\omega_X=\reg_X$ we have by Serre duality $h^2(\reg_X)=h^0(\reg_X)=1$. The vector space $\Ext^0(E^\bullet,E^\bullet)$ has positive dimension since it contains the identity. Thus $\Ext^0(E^\bullet,E^\bullet)\otimes S^{n-1}\Ho^*(\reg_X)$ contributes non-trivially to the degrees $0,2,\dots,2n-2$ of $\Ext^*((E^\bullet)^{[n]},(E^\bullet)^{[n]})$. But by Serre duality also $\Ext^2(E^\bullet, E^\bullet)$ is non-vanishing. Thus also $\Ext^2(E^\bullet,E^\bullet)\otimes S^{n-1}\Ho^*(\reg_X)$ contributes non-trivially to the degrees $2,4,\dots,2n$. This yields $\ext^i((E^\bullet)^{[n]},(E^\bullet)^{[n]})\ge 2$ for $i=2,4,2n-2$ which shows that $(E^\bullet)^{[n]}\otimes\mathcal D_L$ is indeed neither a spherical nor a $\mathbbm P^n$-object.   
Now let $n\ge 3$ and $E^{[n]}$ be a tautological sheaf. Extension groups of sheaves on $X$ can be non-trivial only in the degrees $0,1,2$. If $E^{[n]}$ was a spherical or a $\mathbbm P^n$-object then in particular the highest and the lowest extension groups, i.e. in dergee $0$ and $2n$, must not vanish. Since for $n\ge 3$ the term $\Ho^*(\reg_X)$ occurs in both direct summands of $\Ext^*((E^\bullet)^{[n]},(E^\bullet)^{[n]})$ it follows that
$\Ho^i(\reg_X)\neq 0$ for $i=0,2$. Furthermore, either $\Ext^2(E,E)$ or $(\Ext^*(E,\reg_X)\otimes \Ext^*(\reg_X,E))^4$ must not vanish. In both cases $\ext^4(E^{[n]},E^{[n]})\ge 2$ since also $\Ext^0(E,E)\otimes S^{n-1}\Ho^*(\reg_X)$ contributes non-trivially to $\Ext^4(E^{[n]},E^{[n]})$.   
\end{proof}

\section{Yoneda products and interpretation of the results}
\subsection{Yoneda products, the Künneth isomorphism and signs}\label{yonsign}
Let $\mathcal A$ be an abelian category with enough injectives. Recall that for every $A^\bullet\in \D^-(\mathcal A)$ and $B^\bullet \in\D^+(\mathcal A)$ there are natural isomorphisms $\Ext^i(A^\bullet,B^\bullet)\cong \Hom_{\D(\mathcal A)}(A^\bullet, B^\bullet[i])$. For $A^\bullet,B^\bullet, C^\bullet\in \D^b(\mathcal A)$ the \textit{Yoneda product} is the bilinear map $\Yon$ making the following diagram commute 
\[\xymatrix{
 \Ext^j(B^\bullet,C^\bullet) \ar^\cong[d]& \times & \Ext^i(A^\bullet,B^\bullet)\ar^\cong[d]\ar^\Yon[r] & \Ext^{i+j}(A^\bullet,C^\bullet)\ar^\cong[d]\\
\Hom_{\D(\mathcal A)}(B^\bullet[i], C^\bullet[i+j]) & \times & \Hom_{\D(\mathcal A)}(A^\bullet, B^\bullet[i])\ar^\circ[r] & \Hom_{\D(\mathcal A)}(A^\bullet, C^\bullet[i+j])\,.
}
\]
The second and third vertical arrows are the isomorphisms mentioned and the first is the mentioned isomorphism composed with the $i$-th  power of the shift functor of $\D(\mathcal A)$. The lower horizontal bilinear map is just the composition law in $\D(\mathcal A)$. In the following we will treat the vertical isomorphisms as they were identities and consequently denote the Yoneda product just by $\circ$. Considering the above products as maps on the homogeneous components we get the Yoneda product as a bilinear map of graded vector spaces
\[\Ext^*(B^\bullet,C^\bullet)  \times  \Ext^*(A^\bullet,B^\bullet)\overset\circ \to \Ext^*(A^\bullet,C^\bullet)\,.\]
There will occur signs because of the use of the Künneth isomorphism. Let $X,Y$ be varieties together with objects $A_1^\bullet,B_1^\bullet,C_1^\bullet\in \D^b(X)$ and $A_2^\bullet,B_2^\bullet,C_2^\bullet\in \D^b(Y)$. We consider homogeneous elements
$a_i\in \Ext^*(A_i^\bullet,B_i^\bullet)$ and  $b_i\in \Ext^*(B_i^\bullet,C_i^\bullet)$ for $i=1,2$. Then via the Künneth isomorphism we can interpret the tensor products as elements of the extension groups on the product $X\times Y$ namely
\[a_1\otimes a_2\in \Ext^*(A_1^\bullet\boxtimes A_2^\bullet,B_1^\bullet\boxtimes B_2^\bullet)\,,\, b_1\otimes b_2\in \Ext^*(B_1^\bullet\boxtimes B_2^\bullet,C_1^\bullet\boxtimes C_2^\bullet)\,.\]
The Yoneda product is then given by 
\[(b_1\otimes b_2)\circ (a_1\otimes a_2)=(-1)^{\deg b_2\deg a_1}(b_1a_1)\otimes (b_2a_2)\,,\]
where we omit the $\circ$ for the Yoneda products on $X$ and $Y$. The occurrence of the sign can be seen best when defining the Yoneda product using the cup product (see e.g \cite[section 10.1.1]{HL}).   
To capture these signs we use the following convention: Let $a_1,\dots, a_n$ and $b_1=a_{n+1},\dots,b_n=a_{2n}$ be elements in any graded algebraic objects of degree $\deg(a_i)=p_i$. Let $T$ be a term in which all the $a_i$ and $b_i$ occur. Let 
$\sigma\in\sym_{2n}$ be the permutation such that after erasing everything in $T$ besides the $a_i$ and $b_i$ we get $a_{\sigma^{-1}(1)}\dots a_{\sigma^{-1}(2n)}$. Then we set $\eps(T):=\eps_{\sigma,p_1,\dots,p_{2n}}$ (see section \ref{not}). Let $I$ be a finite set and for each $i\in I$ let $T_i$ be a term in which all the $a_i$ and $b_i$ occur. We define
\[\sum_{i\in I}^\bullet T_i:= \sum_{i\in I} \eps(T_i)\cdot T_i\,.\]
In the following we will have such sums where $a$ and $b$ will be replaced by two other letters. Then always the first letter from the left in each summand will be the same. This letter is considered as $a$ and the other as $b$.
For example if we have $x_1,x_2,y_1,y_2$ all of odd degree then
\begin{align*}
&\sum_{\sigma,\tau\in\sym_2}^\bullet (x_{\sigma^{-1}(1)}\circ y_{\tau^{-1}(1)})\otimes (x_{\sigma^{-1}(2)}\circ y_{\tau^{-1}(2)})\\  
=&-(x_1\circ y_1)\otimes(x_2\circ y_2)+ (x_1\circ y_2)\otimes(x_2\circ y_1)+ (x_2\circ y_1)\otimes(x_1\circ y_2)- (x_2\circ y_2)\otimes(x_1\circ y_1)\,.
\end{align*}
\subsection{Yoneda products for twisted tautological objects}
Let $E^\bullet,F^\bullet\in \D^b(X)$ and $L,M$ be line bundles on $X$. In the last section we computed formulas for $\Ext^*((E^\bullet)^{[n]}\otimes \mathcal D_L,\mathcal D_M)$ as well as for 
$\Ext^*((E^\bullet)^{[n]}\otimes\mathcal D_L,(F^\bullet)^{[n]}\otimes \mathcal D_M)$. It was done using natural isomorphisms
\begin{align*}
   \Ext^*((E^{\bullet})^{[n]}\otimes \mathcal D_L,\mathcal D_M)\cong \sym_n\Ext^*(\Phi((E^\bullet)^{[n]}\otimes \mathcal D_L),\Phi(\mathcal D_M))\cong \sym_n\Ext^*(C^0_{E^\bullet}\otimes L^{\boxtimes n},M^{\boxtimes n})\tag1
\end{align*}
respectively
\begin{align*}
\Ext^*((E^\bullet)^{[n]}\otimes \mathcal D_L,(F^\bullet)^{[n]}\otimes \mathcal D_M)&\cong \sym_n\Ext^*(\Phi((E^\bullet)^{[n]}\otimes \mathcal D_L),\Phi((F^\bullet)^{[n]}\otimes \mathcal D_M))\tag2\\&\cong \sym_n\Ext^*(C^0_{E^\bullet}\otimes L^{\boxtimes n},C^0_{F^\bullet}\otimes M^{\boxtimes n})\,.  
\end{align*}
There is also a formula for $\Ext^*(\mathcal D_L,(F^\bullet)^{[n]}\otimes \mathcal D_M)$ (see remark \ref{objcoh}) using the isomorphisms
\begin{align*}
 \Ext^*(\mathcal D_L,(F^\bullet)^{[n]}\otimes \mathcal D_M)\cong \sym_n\Ext^*(\Phi(\mathcal D_L),\Phi((F^\bullet)^{[n]}\otimes \mathcal D_M))\cong \sym_n\Ext^*(L^{\boxtimes n},C^0_{F^\bullet}\otimes M^{\boxtimes n})\,.\tag3
\end{align*}
Furthermore, the formula 
\begin{align*}\Ext^*(\mathcal D_L,\mathcal D_M)\simeq S^n \Ext^*(L,M)\tag 4\end{align*}
can easily be proven using the isomorphisms (see lemma \ref{det})
\begin{align*}
  \Ext^*(\mathcal D_L,\mathcal D_M)\cong{\sym_n}\Ext^*(\Phi(\mathcal D_L),\Phi(\mathcal D_M))\cong{\sym_n}\Ext^*(L^{\boxtimes n},M^{\boxtimes n})\,.\tag5  
\end{align*}
In summary, we have formulas for the extension groups $\Ext^*(\Eb,\Fb)$ on $X^{[n]}$ whenever each of $\Eb$ and $\Fb$ is a twisted tautological object or a determinant line bundle. Clearly the Yoneda products 
\[\Ext^*(\Fb,\Gb)\times \Ext^*(\Eb,\Fb)\to \Ext^{*}(\Eb,\Gb)\]
coincide under the  Bridgeland-King-Reid equivalence with the Yoneda products 
\[\sym_n\Ext^*(\Phi(\Fb),\Phi(\Gb))\times \sym_n\Ext^*(\Phi(\Eb),\Phi(\Fb))\to \sym_n\Ext^{*}(\Phi(\Eb),\Phi(\Gb))\]
because $\Phi$ is a functor. Now for locally free sheaves $A$ and $B$ on $X$ the isomorphism 
\[[\sHom(C^0_A,C^0_B)]^{\sym_n}\overset \cong \to[\sHom(p_*q^*(A^{[n]}),p_*q^*(B^{[n]}))]^{\sym_n}\]
is given by restricting the $\sym_n$-equivariant morphisms to the subsheaf $p_*q^*(A^{[n]})$ of $C^0_A$  and observing that the restricted morphisms factorise through the subsheaf $p_*q^*(B^{[n]})$ of $C^0_B$ (see proposition \ref{invhom}). Also the isomorphisms $[\sHom(C^0_A,\reg_{X^n})]^{\sym_n}\overset \cong \to[\sHom(p_*q^*(A^{[n]}),\reg_{X^n})]^{\sym_n}$ and $[\sHom(\reg_{X^n},C^0_B)]^{\sym_n}\overset \cong \to[\sHom(\reg_{X^n},p_*q^*(B^{[n]}))]^{\sym_n}$ are given by restriction of the morphisms to subsheaves. 
Now the isomorphisms on the right hand sides of (1), (2) and (3) are induced by those isomorphisms of the sheaf-Homs after choosing locally free resolutions of $E^\bullet$ and $F^\bullet$. The composition of morphisms of sheaves is compatible with restricting the morphisms to certain subsheaves. This translates into the Yoneda products between the extension groups of twisted tautological objects and determinant line bundles coinciding with the  Yoneda products between the equivariant extension groups of the terms of the form $C^0_{E^\bullet}$, $L^{\boxtimes n}$, and $C^0_{E^\bullet}\otimes L^{\boxtimes n}$ under the isomorphisms in (1), (2), (3) and (5). Now using the Künneth and Danila's isomorphism we can express the Yoneda products in terms of Yoneda products in the derived category of the surface $X$ under the isomorphisms of theorem \ref{main}, remark \ref{objcoh} and (4) . We will explicitly  state and prove the formula only for the case were all the  objects $\Eb$, $\Fb$ and $\Gb$ involved are twisted tautological objects. 
The other seven cases can be done very similarly. For $E,F\in \D^b(X)$ and $L,M\in \Pic(X)$ we set
\[P(E,L,F,M):=\begin{aligned}&\Ext^*(E\otimes L,F\otimes M)\otimes S^{n-1}(\Ext^*(L,M))\oplus\\& \Ext^*(E\otimes L,M)\otimes \Ext^*(L,F\otimes M)\otimes S^{n-2}(\Ext^*(L,M))\,.\end{aligned} 
\]
Let also be $G\in \D^b(X)$ and $N\in\Pic (X)$. We consider the elements
\begin{align*}
 \binom{\phi\otimes s_2\cdots s_n}{\eta\otimes x\otimes t_3\cdots t_n}&\in P(F,M,G,N)\cong \Ext^*(F^{[n]}\otimes \mathcal D_M, G^{[n]}\otimes \mathcal D_N)\\
\binom{\phi'\otimes s'_2\cdots s'_n}{\eta'\otimes x'\otimes t'_3\cdots t'_n}&\in P(E,L,F,M)\cong \Ext^*(E^{[n]}\otimes \mathcal D_L, F^{[n]}\otimes \mathcal D_M)\,.
\end{align*}
In order to use the sign convention above we set
\[\phi=s_1\,,\, \eta=t_1\,,\, x=t_2\,,\, \phi'=s'_1\,,\, \eta'=t'_1\,,\, x'=t'_2\,.\]      
and assume that all the $s_i$, $s'_i$, $t_i$ and $t'_i$ are  homogeneous. Now we can compute the Yoneda product
$\binom{\phi\otimes s_2\cdots s_n}{\eta\otimes x\otimes t_3\cdots t_n}\circ\binom{\phi'\otimes s'_2\cdots s'_n}{\eta'\otimes x'\otimes t'_3\cdots t'_n}$ in 
$\Ext^*(E^{[n]}\otimes \mathcal D_L, G^{[n]}\otimes \mathcal D_N)$ and express it as an element in $P(E,L,G,M)$.
\begin{prop}
The Yoneda product is given by
\begin{align*}
 &\frac1{(n-1)!}\sum_{\sigma\in \sym_{[2,n]}}^\bullet (\phi\phi')\otimes(s_2 s'_{\sigma^{-1}(2)})\cdots (s_n s'_{\sigma^{-1}(n)})\\
&+\frac1{(n-2)!}\sum_{\tau\in \sym_{[3,n]}}^\bullet (x\eta')\otimes(\eta x')\cdot(t_3 t'_{\tau^{-1}(3)})\cdots (t_n t'_{\tau^{-1}(n)})\\
&\oplus\frac1{(n-1)!}\sum_{\beta\in \sym_{[2,n]}}^\bullet (\eta\phi')\otimes(x s'_{\beta^{-1}(2)})\otimes(t_3 s'_{\beta^{-1}(3)})\cdots (t_n s'_{\beta^{-1}(n)})\\
&+\frac1{(n-1)!}\sum_{\gamma\in \sym_{[2,n]}}^\bullet (s_{\gamma^{-1}(2)}\eta')\otimes(\phi x')\otimes(s_{\gamma^{-1}(3)}t'_3)\cdots (s_{\tau^{-1}(n)}t'_n)\\
&+\frac1{(n-2)!}\sum_{\substack{i=3,\dots,n\\ \alpha\in \sym_{[3,\dots,n]}}}^\bullet (t_i\eta')\otimes(x t'_{\alpha{-1}(i)})\otimes(\eta x')\cdot (t_3 t'_{\alpha^{-1}(3)})\cdots \widehat{(t_i t'_{\alpha^{-1}(i)})}\cdots(t_n t'_{\alpha^{-1}(n)})\,.
\end{align*}
\end{prop}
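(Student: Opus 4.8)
The plan is to transport the entire computation to the symmetric variety $X^n$ via the Bridgeland--King--Reid equivalence and then read the product off from the composition of explicit $\sym_n$-equivariant morphisms of sheaves. Since $\Phi$ is an equivalence of triangulated categories, the Yoneda product on $X^{[n]}$ agrees, under corollary \ref{bkrext}, with the Yoneda product on $\sym_n\Ext^*(\Phi(-),\Phi(-))$; and by the discussion preceding the statement the latter is identified, by restricting equivariant morphisms to the subsheaves $p_*q^*(E^{[n]})\subset C^0_{E^\bullet}$ (proposition \ref{invhom}), with compositions of morphisms between the sheaves $C^0_{E^\bullet}\otimes L^{\boxtimes n}$, $C^0_{F^\bullet}\otimes M^{\boxtimes n}$ and $C^0_{G^\bullet}\otimes N^{\boxtimes n}$. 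First I would fix locally free resolutions of $E^\bullet,F^\bullet,G^\bullet$, so that all relevant $R\sHom$-objects are computed by the non-derived functors exactly as in the proof of proposition \ref{RHomobj}; this reduces the problem to a purely combinatorial bookkeeping of the composition of matrices of sheaf morphisms.

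Next I would make the Danila reduction explicit. Writing $C^0_{E^\bullet}\otimes L^{\boxtimes n}\simeq \Inf_{\overline{\sym_{[1]}}}^{\sym_n}(p_1^*E^\bullet\otimes L^{\boxtimes n})$, and likewise for $F^\bullet$ and $G^\bullet$, lemma \ref{Dan} together with remark \ref{notrans} identifies a $\sym_n$-invariant morphism $A'\colon C^0_{E^\bullet}\otimes L^{\boxtimes n}\to C^0_{F^\bullet}\otimes M^{\boxtimes n}$ with the pair of its representative components out of the index $1$: the diagonal one $p_1^*E^\bullet\to p_1^*F^\bullet$ (stabilizer $\overline{\sym_{[1]}}=\sym_{[2,n]}$) and the off-diagonal one $p_1^*E^\bullet\to p_2^*F^\bullet$ (stabilizer $\overline{\sym_{[2]}}=\sym_{[3,n]}$). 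These correspond precisely to the two summands of $P(E,L,F,M)$, the first carrying $(\phi',s'_2\cdots s'_n)$ and the second $(\eta',x',t'_3\cdots t'_n)$, and similarly for $A$. The Yoneda product is then the matrix composition: writing $A_{ij}$ for the component $p_i^*F^\bullet\to p_j^*G^\bullet$ of $A$ (and $A'_{ij}$ for $A'$), its representative component out of $1$ into a target index $k$ equals $\sum_j A_{jk}\circ A'_{1j}$, where $j$ runs over the intermediate indices. By remark \ref{morphismdanila} each such coset sum is what ultimately produces the factorial normalizations $\frac1{(n-1)!}$, $\frac1{(n-2)!}$ and the symmetric sums over $\sym_{[2,n]}$ and $\sym_{[3,n]}$.

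The core of the argument is then a case analysis on the triple of indices $(1,j,k)$. For the target index $k=1$ (the diagonal summand of the output) the sum splits into $j=1$, giving $\phi\circ\phi'$ on the distinguished factor and composing the twists $s_i$ with the $s'_i$, and $j\neq 1$, giving the composite $x\circ\eta'$ and $\eta\circ x'$ of the two off-diagonal parts; these are the first two lines of the formula. For $k\neq 1$ (the off-diagonal summand) the sum splits into $j=1$ (off-diagonal $A$ after diagonal $A'$, yielding $\eta\phi'$ and $xs'$), $j=2$ (diagonal $A$ after off-diagonal $A'$, yielding $s\eta'$ and $\phi x'$) and $j\geq 3$ (off-diagonal after off-diagonal, yielding $t_i\eta'$, $xt'$ and $\eta x'$), which are exactly the last three lines. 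In each case I would apply the Künneth isomorphism to the external tensor factors, as in the proof of theorem \ref{main}, splitting a representative component into a product of $\Ext$-groups on $X$; compatibility of composition with the Künneth decomposition then expresses everything through the stated Yoneda products on $X$.

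The hard part will be the sign and normalization bookkeeping. The Künneth isomorphism is only graded-commutative, so permuting homogeneous factors past one another introduces the Koszul signs recorded by $\eps(T)$ and the symbol $\sum^\bullet$ of section \ref{yonsign}; I would need to verify that the sign attached to each summand after the coset rewriting of remark \ref{morphismdanila} matches $\eps(T)$ exactly, and that the orders of the relevant stabilizers yield the stated factorials. Keeping the identification of which representative index plays the role of the distinguished factor consistent across the three objects, so that the symmetric sums range over the correct subgroups with the correct omitted factor $\widehat{(t_i t'_{\alpha^{-1}(i)})}$ in the last line, is the most error-prone step, and it is precisely here that the conventions fixed in section \ref{yonsign} must be invoked with care.
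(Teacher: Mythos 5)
Your proposal is correct and follows essentially the same route as the paper: transport everything through the Bridgeland--King--Reid equivalence to compositions of $\sym_n$-equivariant morphisms between the $C^0$-terms, reduce by Danila/Künneth to representative components, and run the five-fold case analysis on composable index triples $(1,j,k)$, which matches the paper's five types of composable pairs term by term. The only point to watch is that the factorial prefactors $\tfrac{1}{(n-1)!}$, $\tfrac{1}{(n-2)!}$ come from the identification $S^kV\cong S_kV$ of coinvariants with invariants (the paper explicitly notes that Danila's isomorphism itself contributes no such coefficients), whereas the coset sums of remark \ref{morphismdanila} account only for the symmetric sums over $\sym_{[2,n]}$ and $\sym_{[3,n]}$.
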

\begin{proof}
The element
$\binom{\phi\otimes s_2\cdots s_n}{\eta\otimes x\otimes t_3\cdots t_n}\in P(F,M,G,N)$
corresponds to the element
\[\frac1{(n-1)!}{\sum_{\sigma\in\sym_n}^\bullet s_{\sigma^{-1}(1)}\otimes \cdots\otimes s_{\sigma^{-1}(n)}}\oplus
\frac1{(n-2)!}
{\sum_{\tau\in\sym_n}^\bullet t_{\tau^{-1}(1)}\otimes \cdots\otimes t_{\tau^{-1}(n)}} 
\]
in $\Ext^*(C^0_F\otimes M^{\boxtimes n},C^0_G\otimes N^{\boxtimes n})$. The coefficients are coming from the canonical isomorphism $S^kV\overset\cong\to S_kV$ (see section \ref{not}, note that the isomorphism of Danilas lemma does not involve such coefficients). The same holds for $\binom{\phi'\otimes s'_2\cdots s'_n}{\eta'\otimes x'\otimes t'_3\cdots t'_n}$. The summand $s_{\sigma^{-1}(1)}\otimes \cdots\otimes s_{\sigma^{-1}(n)}$ is an element of $\Ext^*(\pr_{\sigma(1)}^*F\otimes M^{\boxtimes n},\pr_{\sigma(1)}^*G\otimes N^{\boxtimes n})$ and $t_{\tau^{-1}(1)}\otimes \cdots\otimes t_{\tau^{-1}(2)}$ an element of
$\Ext^*(\pr_{\tau(1)}^*F\otimes M^{\boxtimes n},\pr_{\tau(2)}^*G\otimes N^{\boxtimes n})$. There are five types of composable pairs of the components of the classes in $\Ext^*(C^0_F\otimes M^{\boxtimes n},C^0_G\otimes N^{\boxtimes n})\times
\Ext^*(C^0_E\otimes L^{\boxtimes n},C^0_F\otimes M^{\boxtimes n})\,,$ namely
\begin{align*}
&\pr_i^*E\otimes L^{\boxtimes n}\to \pr_i^*F\otimes M^{\boxtimes n}\to\pr_i^*G\otimes N^{\boxtimes n}\,,\,\pr_i^*E\otimes L^{\boxtimes n}\to \pr_j^*F\otimes M^{\boxtimes n}\to\pr_i^*G\otimes N^{\boxtimes n}\,,\\
&\pr_i^*E\otimes L^{\boxtimes n}\to \pr_i^*F\otimes M^{\boxtimes n}\to\pr_j^*G\otimes N^{\boxtimes n}\,,\,\pr_i^*E\otimes L^{\boxtimes n}\to \pr_j^*F\otimes M^{\boxtimes n}\to\pr_j^*G\otimes N^{\boxtimes n}\,,\\ &\pr_i^*E\otimes L^{\boxtimes n}\to \pr_j^*F\otimes M^{\boxtimes n}\to\pr_k^*G\otimes N^{\boxtimes n}  
\end{align*}
with $i,j,k\in [n]$ pairwise distinct. Thus, the Yoneda product in $\Ext^*(C^0_E\otimes L^{\boxtimes n},C^0_G\otimes N^{\boxtimes n})$ looks like this :
\begin{align*}
&\frac 1{(n-1)!^2}\sum_{\substack{\sigma,\sigma'\in\sym_n\\\sigma(1)=\sigma'(1)}}^\bullet \otimes_{i=1}^n(s_{\sigma^{-1}(i)} s'_{\sigma'^{-1}(i)})
+\frac 1{(n-2)!^2}\sum_{\substack{\tau,\tau'\in\sym_n\\ \tau(2)=\tau'(1),\tau(1)=\tau'(2)}}^\bullet \otimes_{i=1}^n(t_{\tau^{-1}(i)} t'_{\tau'^{-1}(i)})\\
&+\frac 1{(n-1)!(n-2)!}\sum_{\substack{\tau,\sigma'\in\sym_n\\\tau(1)=\sigma'(1)}}^\bullet \otimes_{i=1}^n(t_{\tau^{-1}(i)} s'_{\sigma'^{-1}(i)})
+\frac 1{(n-1)!(n-2)!}\sum_{\substack{\sigma,\tau'\in\sym_n\\\sigma(1)=\tau'(2)}}^\bullet \otimes_{i=1}^n(s_{\sigma^{-1}(i)} t'_{\tau'^{-1}(i)})\\
&+\frac 1{(n-2)!^2}\sum_{\substack{\tau,\tau'\in\sym_n\\\tau(1)=\tau'(2), \tau(2)\ne \tau'(1)}}^\bullet \otimes_{i=1}^n(t_{\tau^{-1}(i)} t'_{\tau'^{-1}(i)})\quad\tag 6\,.
\end{align*}
The first term of (6) is a $\sym_n$-invariant element of $\oplus_{i=1}^n\Ext^*(p_i^*E\otimes L^{\boxtimes n},p_i^*G\otimes N^{\boxtimes n})$. Danila's isomorphism is simply the projection to the first summand. Thus, it maps the first term of (6) to
\[\frac 1{(n-1)!^2}\sum_{\substack{\sigma,\sigma'\in\sym_n\\\sigma(1)=\sigma'(1)=1}}^\bullet \otimes_{i=1}^n(s_{\sigma^{-1}(i)} s'_{\sigma'^{-1}(i)})\in
\Ext^*(p_1^*E\otimes L^{\boxtimes n},p_1^*G\otimes N^{\boxtimes n})\]
Under the isomorphism $S_{n-1}\Ext^*(L,N)\overset\cong\to S^{n-1}\Ext^*(L,N)$ this element is mapped to 
\[\frac1{(n-1)!}\sum_{\sigma\in\sym_{[2,n]}}^\bullet (\phi\phi')\otimes (s_2 s'_{\sigma^{-1}(2)})\cdots (s_n s'_{\sigma^{-1}(n)})\in
 \Ext^*(E\otimes L,G\otimes N)\otimes S^{n-1}\Ext^*(L,N)\]
which is exactly the first term of the formula we want to prove.
Doing the same for the other four terms in (6) yields the desired element in
$P(E,L,G,M)$.   
\end{proof}
\begin{remark}
Let $D,E,F,G\in \D^b(X)$ and $L,M,N\in \Pic X$. By remark \ref{natural} for a morphism $\phi\in \Hom_{\D^b(X)}(E,F[i])\cong \Ext^i(E,F)$ the induced morphism \[\phi^{[n]}\otimes \id_{\mathcal D_M}\in \Ext^i(E^{[n]}\otimes \mathcal D_M,F^{[n]}\otimes \mathcal D_M)\] corresponds to 
\[\binom{\phi\otimes \id_{M}\cdots\id_M}0 \in P(E,M,F,M)\,.\]
For $M=\reg_X$ this shows that the tautologizing functor $(\_)^{[n]}$ is faithful but not full. Let 
\[\binom{\psi'\otimes s_2'\cdots s_n'}{\eta'\otimes x'\otimes t_3'\cdots t_n'}\in P(D,L,E,M)\,,\,
 \binom{\psi\otimes s_2\cdots s_n}{\eta\otimes x\otimes t_3\cdots t_n}\in P(F,M,G,N)\,.
\]
Then the formula for the Yoneda product gives back the naturalness of the isomorphism in theorem \ref{main} as a special case since
\begin{align*}
  \binom{\psi\otimes s_2\cdots s_n}{\eta\otimes x\otimes t_3\cdots t_n}\circ\binom{\phi\otimes \id_{M}\cdots\id_M}0&=\binom{\psi\phi\otimes s_2\cdots s_n}{\eta\phi\otimes x\otimes t_3\cdots t_n}\\
\binom{\phi\otimes \id_{M}\cdots\id_M}0 \circ\binom{\psi'\otimes s_2'\cdots s_n'}{\eta'\otimes x'\otimes t_3'\cdots t_n'}&=\binom{\phi\psi'\otimes s_2'\cdots s_n'}{\eta'\otimes \phi x'\otimes t_3'\cdots t_n'}\,.
\end{align*}
\end{remark}

\bibliographystyle{alpha}
\addcontentsline{toc}{chapter}{References}
\bibliography{references}

\begin{thebibliography}{BNW07}

\bibitem[BKR01]{BKR}
Tom Bridgeland, Alastair King, and Miles Reid.
\newblock The {M}c{K}ay correspondence as an equivalence of derived categories.
\newblock {\em J. Amer. Math. Soc.}, 14(3):535--554 (electronic), 2001.

\bibitem[BNW07]{BNW}
Samuel Boissi{\`e}re and Marc~A. Nieper-Wi{\ss}kirchen.
\newblock Universal formulas for characteristic classes on the {H}ilbert
  schemes of points on surfaces.
\newblock {\em J. Algebra}, 315(2):924--953, 2007.

\bibitem[Dan00]{Dandual}
Gentiana Danila.
\newblock Sections du fibr\'e d\'eterminant sur l'espace de modules des
  faisceaux semi-stables de rang 2 sur le plan projectif.
\newblock {\em Ann. Inst. Fourier (Grenoble)}, 50(5):1323--1374, 2000.

\bibitem[Dan01]{Dan}
Gentiana Danila.
\newblock Sur la cohomologie d'un fibr\'e tautologique sur le sch\'ema de
  {H}ilbert d'une surface.
\newblock {\em J. Algebraic Geom.}, 10(2):247--280, 2001.

\bibitem[Dan07]{Danglob}
Gentiana Danila.
\newblock Sections de la puissance tensorielle du fibr\'e tautologique sur le
  sch\'ema de {H}ilbert des points d'une surface.
\newblock {\em Bull. Lond. Math. Soc.}, 39(2):311--316, 2007.

\bibitem[DN89]{DN}
J.-M. Drezet and M.~S. Narasimhan.
\newblock Groupe de {P}icard des vari\'et\'es de modules de fibr\'es
  semi-stables sur les courbes alg\'ebriques.
\newblock {\em Invent. Math.}, 97(1):53--94, 1989.

\bibitem[Fog68]{Fog}
John Fogarty.
\newblock Algebraic families on an algebraic surface.
\newblock {\em Amer. J. Math}, 90:511--521, 1968.

\bibitem[Ful98]{Ful}
William Fulton.
\newblock {\em Intersection theory}, volume~2 of {\em Ergebnisse der Mathematik
  und ihrer Grenzgebiete. 3. Folge. A Series of Modern Surveys in Mathematics
  [Results in Mathematics and Related Areas. 3rd Series. A Series of Modern
  Surveys in Mathematics]}.
\newblock Springer-Verlag, Berlin, second edition, 1998.

\bibitem[Gro57]{Gro1}
Alexander Grothendieck.
\newblock Sur quelques points d'alg\`ebre homologique.
\newblock {\em T\^ohoku Math. J. (2)}, 9:119--221, 1957.

\bibitem[Gro95]{Gro2}
Alexander Grothendieck.
\newblock Techniques de construction et th\'eor\`emes d'existence en
  g\'eom\'etrie alg\'ebrique. {IV}. {L}es sch\'emas de {H}ilbert.
\newblock In {\em S\'eminaire {B}ourbaki, {V}ol.\ 6}, pages Exp.\ No.\ 221,
  249--276. Soc. Math. France, Paris, 1995.

\bibitem[Hai01]{Hai}
Mark Haiman.
\newblock Hilbert schemes, polygraphs and the {M}acdonald positivity
  conjecture.
\newblock {\em J. Amer. Math. Soc.}, 14(4):941--1006 (electronic), 2001.

\bibitem[Har66]{Har1}
Robin Hartshorne.
\newblock {\em Residues and duality}.
\newblock Lecture notes of a seminar on the work of A. Grothendieck, given at
  Harvard 1963/64. With an appendix by P. Deligne. Lecture Notes in
  Mathematics, No. 20. Springer-Verlag, Berlin, 1966.

\bibitem[Har77]{Har2}
Robin Hartshorne.
\newblock {\em Algebraic geometry}.
\newblock Springer-Verlag, New York, 1977.
\newblock Graduate Texts in Mathematics, No. 52.

\bibitem[HL10]{HL}
Daniel Huybrechts and Manfred Lehn.
\newblock {\em The geometry of moduli spaces of sheaves}.
\newblock Cambridge Mathematical Library. Cambridge University Press,
  Cambridge, second edition, 2010.

\bibitem[Huy06]{Huy}
D.~Huybrechts.
\newblock {\em Fourier-{M}ukai transforms in algebraic geometry}.
\newblock Oxford Mathematical Monographs. The Clarendon Press Oxford University
  Press, Oxford, 2006.

\bibitem[KS06]{KS}
Masaki Kashiwara and Pierre Schapira.
\newblock {\em Categories and sheaves}, volume 332 of {\em Grundlehren der
  Mathematischen Wissenschaften [Fundamental Principles of Mathematical
  Sciences]}.
\newblock Springer-Verlag, Berlin, 2006.

\bibitem[Leh99]{Leh}
Manfred Lehn.
\newblock Chern classes of tautological sheaves on {H}ilbert schemes of points
  on surfaces.
\newblock {\em Invent. Math.}, 136(1):157--207, 1999.

\bibitem[LH09]{Has}
Joseph Lipman and Mitsuyasu Hashimoto.
\newblock {\em Foundations of {G}rothendieck duality for diagrams of schemes},
  volume 1960 of {\em Lecture Notes in Mathematics}.
\newblock Springer-Verlag, Berlin, 2009.

\bibitem[Nak01]{Nakam}
Iku Nakamura.
\newblock Hilbert schemes of abelian group orbits.
\newblock {\em J. Algebraic Geom.}, 10(4):757--779, 2001.

\bibitem[NW04]{NW}
Marc Nieper-Wi{\ss}kirchen.
\newblock {\em Chern numbers and {R}ozansky-{W}itten invariants of compact
  hyper-{K}\"ahler manifolds}.
\newblock World Scientific Publishing Co. Inc., River Edge, NJ, 2004.

\bibitem[Sca09a]{Sca1}
Luca Scala.
\newblock Cohomology of the {H}ilbert scheme of points on a surface with values
  in representations of tautological bundles.
\newblock {\em Duke Math. J.}, 150(2):211--267, 2009.

\bibitem[Sca09b]{Sca2}
Luca Scala.
\newblock Some remarks on tautological sheaves on {H}ilbert schemes of points
  on a surface.
\newblock {\em Geom. Dedicata}, 139:313--329, 2009.

\bibitem[Ser00]{Ser}
Jean-Pierre Serre.
\newblock {\em Local algebra}.
\newblock Springer Monographs in Mathematics. Springer-Verlag, Berlin, 2000.
\newblock Translated from the French by CheeWhye Chin and revised by the
  author.

\end{thebibliography}

\end{document}